\tikzset{cd/.style=matrix of math nodes,row sep=2em,column sep=2em, text height=1.5ex, text depth=0.5ex}
\tikzset{cdar/.style=->,auto}
\setlist[enumerate,1]{label=\textup{(\arabic*)}}
\setlist[enumerate,2]{label=\textup{(\alph*)}}
\newcommand*{\MRref}[2]{ \href{http://www.ams.org/mathscinet-getitem?mr=#1}{MR #1}}
\newcommand*{\arxiv}[1]{ \href{http://www.arxiv.org/abs/#1}{arXiv:#1}}
\renewcommand{\PrintDOI}[1]{\href{http://dx.doi.org/#1}{DOI #1}}
\numberwithin{equation}{section}
\theoremstyle{plain}
\newtheorem{theorem}[equation]{Theorem}
\newtheorem{lemma}[equation]{Lemma}
\newtheorem{proposition}[equation]{Proposition}
\newtheorem{deflem}[equation]{Definition and Lemma}
\newtheorem{corollary}[equation]{Corollary}
\theoremstyle{definition}
\newtheorem{definition}[equation]{Definition}
\theoremstyle{remark}
\newtheorem{remark}[equation]{Remark}
\newtheorem{example}[equation]{Example}
\newcommand*{\braket}[2]{\langle#1, #2\rangle}
\newcommand*{\congto}{\xrightarrow\sim}
\newcommand*{\alb}{\hspace{0pt}} 
\newcommand*{\Mult}{\mathcal M}
\newcommand*{\U}{\mathcal U}
\newcommand*{\s}{s} 
\newcommand*{\rg}{r}
\newcommand*{\midpart}{\operatorname{mid}}
\newcommand{\C}{\mathbb{C}}
\newcommand{\N}{\mathbb{N}}
\newcommand{\Z}{\mathbb{Z}}
\newcommand{\R}{\mathbb{R}}
\newcommand{\T}{\mathbb{T}}
\newcommand*{\Cat}[1][C]{\mathcal #1}
\newcommand*{\Hilm}[1][E]{\mathcal #1}
\newcommand*{\Hils}[1][H]{\mathcal #1}
\newcommand*{\Gr}[1][G]{\mathcal #1}
\newcommand*{\CP}{\mathcal{O}}
\newcommand*{\diff}{\mathrm d}
\newcommand*{\Cont}{\mathrm C}
\newcommand*{\Contc}{\mathrm{C_c}}
\newcommand*{\op}{\mathrm{op}}
\newcommand*{\Star}{$^*$\nobreakdash-}
\newcommand*{\nb}{\nobreakdash}
\newcommand*{\Cst}{\mathrm C^*}
\newcommand*{\Cred}{\mathrm C^*_\mathrm r}
\newcommand*{\defeq}{\mathrel{\vcentcolon=}}
\newcommand{\ket}[1]{{\lvert#1\rangle}}
\newcommand{\bra}[1]{{\langle#1\rvert}}
\newcommand{\Comp}{\mathbb K}
\newcommand{\Bound}{\mathbb B}
\newcommand{\Mat}{\mathbb M}
\newcommand*{\abs}[1]{\lvert#1\rvert}
\newcommand*{\norm}[1]{\lVert#1\rVert}
\newcommand*{\conj}[1]{\overline{#1}}
\newcommand{\idealin}{\mathrel{\triangleleft}} 
\newcommand*{\Id}{\mathrm{id}}
\newcommand*{\K}{\mathrm{K}}
\newcommand*{\KK}{\mathrm{KK}}
\DeclareMathOperator{\pr}{pr}
\begin{document}
\title{Product systems over Ore monoids}
\author{Suliman Albandik}
\email{albandik@uni-goettingen.de}
\author{Ralf Meyer}
\email{rmeyer2@uni-goettingen.de}
\address{Mathematisches Institut\\
  Georg-August Universität Göttingen\\
  Bunsenstraße 3--5\\
  37073 Göttingen\\
  Germany}

\keywords{Crossed product; product system; Ore conditions;
  Cuntz--Pimsner algebra; correspondence; groupoid model;
  higher-rank graph algebra; topological graph algebra.}

\begin{abstract}
  We interpret the Cuntz--Pimsner covariance condition as a
  nondegeneracy condition for representations of product systems.  We
  show that Cuntz--Pimsner algebras over Ore monoids are constructed
  through inductive limits and section algebras of Fell bundles over
  groups.  We construct a groupoid model for the Cuntz--Pimsner
  algebra coming from an action of an Ore monoid on a space by
  topological correspondences.  We characterise when this groupoid is
  effective or locally contracting and describe its invariant subsets
  and invariant measures.
\end{abstract}
\maketitle

\section{Introduction}
\label{sec:intro}

Let \(A\) and~\(B\) be \(\Cst\)\nb-algebras.  A
\emph{correspondence} from~\(A\) to~\(B\) is a Hilbert
\(B\)\nb-module~\(\Hilm\) with a nondegenerate \Star{}homomorphism
from~\(A\) to the \(\Cst\)\nb-algebra of adjointable operators
on~\(\Hilm\).  It is called \emph{proper} if the left
\(A\)\nb-action is by compact operators, \(A\to \Comp(\Hilm)\).  If
\(\Hilm_{AB}\) and~\(\Hilm_{BC}\) are correspondences from~\(A\)
to~\(B\) and from~\(B\) to~\(C\), respectively, then \(\Hilm_{AB}
\otimes_B \Hilm_{BC}\) is a correspondence from~\(A\) to~\(C\).

A triangle of correspondences consists of three \(\Cst\)\nb-algebras
\(A\), \(B\), \(C\), correspondences \(\Hilm_{AB}\), \(\Hilm_{AC}\)
and~\(\Hilm_{BC}\) between them, and an isomorphism of
correspondences
\[
u\colon \Hilm_{AB} \otimes_{B} \Hilm_{BC} \to \Hilm_{AC};
\]
that is, \(u\) is a unitary operator of Hilbert
\(C\)\nb-modules that also intertwines the left \(A\)\nb-module
structures.  Such triangles appear naturally if we study the
correspondence bicategory of \(\Cst\)\nb-algebras introduced
in~\cite{Buss-Meyer-Zhu:Higher_twisted}.

This article started with the observation that a correspondence
triangle with \(A=B\) and \(\Hilm_{BC}=\Hilm_{AC}\) is the same as a
\emph{Cuntz--Pimsner covariant} representation of the correspondence
\(\Hilm\defeq \Hilm_{AB}\) by adjointable operators on
\(\Hilm[F]\defeq \Hilm_{BC}=\Hilm_{AC}\), provided~\(\Hilm_{AB}\) is
proper.  Thus we get to the Cuntz--Pimsner algebra directly, without
going through the Cuntz--Toeplitz algebra.

This is limited, however, to proper correspondences and the absolute
Cuntz--Pimsner algebra; that is, we cannot treat the relative
Cuntz--Pimsner algebras introduced by Muhly and
Solel~\cite{Muhly-Solel:Tensor} and
Katsura~\cite{Katsura:Cstar_correspondences}.  The relative versions
are most relevant if the left action map \(A\to\Comp(\Hilm)\) is not
faithful.  Then the map from~\(A\) to the Cuntz--Pimsner algebra is
not faithful, and the latter may even be zero.

Our observation about the Cuntz--Pimsner algebra of a single proper
correspondence has great conceptional value because it exhibits these
(absolute) Cuntz--Pimsner algebras as a special case of a general
construction, namely, colimits in the correspondence bicategory,
see~\cite{Albandik-Meyer:Colimits}.  Other examples of such colimits
are crossed products for group and crossed module actions, inductive
limits for chains of \Star{}homomorphisms, and Cuntz--Pimsner algebras
for proper essential product systems.

In this article, we apply our observation on the Cuntz--Pimsner
covariance condition to the case of Cuntz--Pimsner algebras for proper
essential product systems over monoids.  Much less is known about
their structure.  Following Fowler~\cite{Fowler:Product_systems}, they
are always defined and treated through the corresponding
Cuntz--Toeplitz algebra.  Many articles never get farther than the
Nica--Toeplitz algebra.  We shall prove strong results about the
structure of Cuntz--Pimsner algebras of proper essential product
systems over Ore monoids.  Commutative monoids and groups are Ore.  So
are extensions of commutative monoids by groups such as the monoid
\(\Mat_n(\Z)^\times\)
of integer matrices with non-zero determinant, with multiplication as
group structure: this is an extension of the group
\(\textup{Gl}_n(\Z)\)
by the commutative monoid~\((\N_{\ge1},\cdot)\).
Thus most of the semigroups currently being treated in the operator
algebras literature are Ore monoids.  The main exception are free
monoids, which are not Ore.

Let~\(P\)
be a cancellative Ore monoid and let~\(G\)
be its group completion.  Let~\(A\)
be a \(\Cst\)\nb-algebra
and let~\((\Hilm_g)_{p\in P}\)
be a proper, essential product system over~\(P\)
with unit fibre \(\Hilm_1=A\);
that is, the left action of~\(A\)
on each~\(\Hilm_p\)
for \(p\in P\)
is by a nondegenerate \Star{}homomorphism \(A\to\Comp(\Hilm_p)\).
The Ore conditions for~\(P\)
ensure that the diagram formed by the
\(\Cst\)\nb-algebras~\(\Comp(\Hilm_p)\) for \(p\in P\) with the maps
\[
\Comp(\Hilm_p) \to \Comp(\Hilm_p\otimes_A \Hilm_q) \cong
\Comp(\Hilm_{p q})
\]
for \(q,p\in P\) is indexed by a directed set.
Hence the colimit for this diagram behaves like an inductive limit; it
may indeed be rewritten as an inductive limit of a chain of maps
\(\Comp(\Hilm_{p_i})\to \Comp(\Hilm_{p_{i+1}})\) for a suitable map
\(\N\to P\) if~\(P\) is countable.  Let~\(\CP_1\) be the inductive
limit of this diagram of \(\Cst\)\nb-algebras.  We construct a Fell
bundle~\((\CP_g)_{g\in G}\) over~\(G\) with~\(\CP_1\) as its unit
fibre, such that its section algebra is the Cuntz--Pimsner
algebra~\(\CP\) of the given product system.  Thus the construction of
the Cuntz--Pimsner algebra of a product system over~\(P\) has two
steps: inductive limits and Fell bundle section algebras.

After putting this article on the arxiv, we learnt of the
preprint~\cite{Kwasniewski-Szymanski:Ore} by Kwa\'sniewski and
Szyma\'nski, which proves essentially the same result about the
structure of Cuntz--Pimsner algebras over Ore monoids.

Assume now that the correspondences~\(\Hilm_p\) are full as Hilbert
\(A\)\nb-modules.  Then the \(\Cst\)\nb-algebras~\(\Comp(\Hilm_p)\)
for \(p\in P\) are all Morita--Rieffel equivalent to~\(A\) and the
Fell bundle~\((\CP_g)_{g\in G}\) is saturated.  Let~\(\Comp\) be the
\(\Cst\)\nb-algebra of compact operators.  By the
Brown--Green--Rieffel Theorem, \(\Hilm_p\otimes\Comp \cong
A\otimes\Comp\) as a Hilbert \(A\otimes\Comp\)\nb-module, so we may
replace the proper correspondence~\(\Hilm_p\) by an endomorphism
\(\varphi_p\colon A\otimes\Comp \to A\otimes\Comp\).  Choose a cofinal
sequence~\((p_i)_{i\ge0}\) in~\(P\) as above with \(p_0\defeq 1\), and
let \(q_i\in P\) be such that \(p_i=p_{i-1}q_i\).  Then
\(\CP_1\otimes\Comp\) is the inductive limit of the inductive system
\[
A\otimes\Comp \xrightarrow{\varphi_{q_1}}
A\otimes\Comp \xrightarrow{\varphi_{q_2}}
A\otimes\Comp \xrightarrow{\varphi_{q_3}}
A\otimes\Comp \to \dotsb;
\]
this inductive limit carries a natural \(G\)\nb-action with \(G\ltimes
\CP_1\cong \CP\otimes\Comp\).

Thus the \(\K\)\nb-theory of~\(\CP_1\) is an inductive limit of
copies of the \(\K\)\nb-theory of~\(A\); the maps are induced by the
proper correspondences~\(\Hilm_q\) or, equivalently, the
endomorphisms~\(\varphi_q\) of~\(A\otimes\Comp\).  Roughly speaking,
we have reduced the problem of computing the \(\K\)\nb-theory for
Cuntz--Pimsner algebras of proper product systems over an Ore
monoid~\(P\) to the problem of computing the \(\K\)\nb-theory for
crossed products with the group~\(G\).  This latter problem may be
difficult, but is much studied.  We cannot hope for more because
crossed products for \(G\)\nb-actions are special cases of
Cuntz--Pimsner algebras over~\(P\).

Lots of \(\Cst\)\nb-algebras
are or could be defined as Cuntz--Pimsner algebras of product systems
over Ore monoids.  Thus our structure theory for them has lots of
potential applications.  As a sample of how K\nb-theory computations
in this context might work, we consider certain higher-rank analogues
of the Doplicher--Roberts algebras that motivated the introduction of
graph algebras.  (Our higher-rank analogues, however, need not be
higher-rank graph algebras.)

Many Cuntz--Pimsner algebras are constructed from generalised
dynamical systems, such as higher-rank topological graphs.  The
appropriate topological analogue of a product system over~\(P\) is
given by locally compact spaces \(X\) and~\(M_p\) for \(p\in P\) with
continuous maps \(\rg_p,\s_p\colon M_p \to X\) and
\(\sigma_{p,q}\colon M_{p q} \congto M_p\times_{\s_p,X,\rg_q} M_q\).
We assume~\(\rg_p\) to be proper and~\(\s_p\) to be local
homeomorphisms to turn~\((M_p,\s_p,\rg_p)\) into proper
correspondences over~\(\Cont_0(X)\).  These form a product system
over~\(P\) with unit fibre~\(\Cont_0(X)\).  The data above may be
called a topological higher-rank graph over~\(P\); we prefer to call
it an action of~\(P\) on~\(X\) by topological correspondences.

In the above situation, we construct a groupoid model for the
Cuntz--Pimsner algebra of our product system.  This model is a
Hausdorff, locally compact, étale groupoid.  We translate what it
means for this groupoid to be effective, locally contracting, or
minimal into the original data \((X,M_p,\s_p,\rg_p,\sigma_{p,q})\).
We also describe invariant subsets and invariant measures for the
object space of our groupoid model.  This gives criteria when the
Cuntz--Pimsner algebra of an action by topological correspondences is
simple or purely infinite and often describes its traces and
KMS-states for certain one-parameter groups of automorphisms.

Our results are interesting already for the commutative Ore
monoids~\((\N^k,+)\).  Several authors have considered examples of
product systems over these and other commutative cancellative monoids
(\cites{Brownlowe-Raeburn:Exel-Larsen,
  Farthing-Patani-Willis:Crossed-product,
  Exel-Renault:Semigroups_interaction, Larsen:Crossed_abelian,
  Yeend:Groupoid_models}).  Commutativity seems to be a red herring:
what is relevant are Ore conditions.  Commutativity is hidden also in
Exel's idea in~\cite{Exel:New_look} to extend a semigroup action to an
``interaction semigroup.''  Examples
in~\cite{Exel-Renault:Semigroups_interaction} show that interaction
semigroups for product systems over~\(\N^2\) only exist under some
commutativity assumptions about certain conditional expectations.  Our
approach shows that a deep study of these examples is possible without
such technical commutativity assumptions (see
Section~\ref{sec:partial_local_homeo}).

Topological higher-rank graphs are already very close to our
situation, so we compare our constructions to the existing ones for
this class as we go along.  As an example involving noncommutative Ore
monoids, we discuss how the semigroup \(\Cst\)\nb-algebras
of Xin Li~\cite{Li:Semigroup_amenability} fit into our approach.

\section{The Cuntz--Pimsner covariance condition}
\label{sec:endos_CP}

We first reinterpret the Cuntz--Pimsner covariance condition for a
single correspondence as a nondegeneracy condition.

\begin{definition}
  \label{def:correspondence}
  A \emph{correspondence} from~\(A\) to~\(B\) is a Hilbert
  \(B\)\nb-module~\(\Hilm[F]\) with a \emph{nondegenerate} left
  action of~\(A\) by adjointable operators.  A correspondence is
  \emph{proper} if~\(A\) acts by compact operators.  We often
  write the left action multiplicatively as \(a\cdot\xi\) for \(a\in
  A\), \(\xi\in \Hilm[F]\).  An \emph{isomorphism} between two
  correspondences from~\(A\) to~\(B\) is an \(A,B\)-bimodule map
  that is unitary for the \(B\)\nb-valued inner products.
\end{definition}

\begin{definition}
  \label{def:transformation_correspondence}
  Let \(A\) and~\(B\) be \(\Cst\)\nb-algebras and let~\(\Hilm\) be a
  correspondence from~\(A\) to itself.  A \emph{transformation}
  from~\((A,\Hilm)\) to~\(B\) is a correspondence~\(\Hilm[F]\)
  from~\(A\) to~\(B\) with an isomorphism of correspondences \(u\colon
  \Hilm\otimes_B \Hilm[F] \congto \Hilm[F]\).
\end{definition}

This definition is a special case of the standard notion of a
``transformation'' between two ``morphisms'' between two
``bicategories'' (see~\cite{Leinster:Basic_Bicategories}).  This
point of view is developed further
in~\cite{Albandik-Meyer:Colimits}.  Here it will not play any role
besides guiding our choice of notation.

We want to relate transformations to certain Toeplitz
representations of correspondences.  The next proposition is already
implicit in~\cite{Meyer:Generalized_Fixed}*{\S5} and has also been
used by other authors before.

\begin{proposition}
  \label{pro:unitary_to_correspondence_representation}
  Let \(A\), \(B_1\), \(B_2\) be \(\Cst\)\nb-algebras.  Let
  \(\Hilm\colon A\to B_1\), \(\Hilm[F]_1\colon B_1\to B_2\) and
  \(\Hilm[F]_2\colon A\to B_2\) be correspondences.
  Isomorphisms \(\Hilm\otimes_{B_1} \Hilm[F]_1\to\Hilm[F]_2\) of
  correspondences are in
  natural bijection with linear maps \(S\colon
  \Hilm\to\Bound(\Hilm[F]_1,\Hilm[F]_2)\) that satisfy
  \begin{enumerate}
  \item\label{en:representation1} \(S(a\xi) = a S(\xi)\) for all
    \(a\in A\), \(\xi\in\Hilm\);
  \item\label{en:representation2} \(S(\xi_1)^* S(\xi_2) = \langle
    \xi_1,\xi_2\rangle_{B_1}\) for all \(\xi_1,\xi_2\in\Hilm\);
  \item\label{en:representation3} \(S(\Hilm)\cdot\Hilm[F]_1\) spans a
    dense subspace of~\(\Hilm[F]_2\).
  \end{enumerate}
  Furthermore, \ref{en:representation2} implies
  \begin{enumerate}[resume]
  \item\label{en:representation4} \(S(\xi b)=S(\xi)b\) for all \(b\in
    B_1\).
  \end{enumerate}
\end{proposition}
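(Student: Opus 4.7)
The plan is to establish the two constructions in opposite directions and verify they are mutually inverse, after first deriving the bonus condition \ref{en:representation4} from \ref{en:representation2}.

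\textbf{Step 1 (showing \ref{en:representation2} $\Rightarrow$ \ref{en:representation4}).} I would compute
\[
\bigl(S(\xi b) - S(\xi)b\bigr)^*\bigl(S(\xi b) - S(\xi)b\bigr)
\]
by expanding the four terms and replacing each product $S(\eta_1)^* S(\eta_2)$ by $\braket{\eta_1}{\eta_2}_{B_1}$ using \ref{en:representation2}. All four terms collapse to $b^*\braket{\xi}{\xi}_{B_1}b$ and cancel, so the expression is zero, hence $S(\xi b) = S(\xi)b$.

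\textbf{Step 2 (from $u$ to $S$).} Given $u\colon \Hilm\otimes_{B_1}\Hilm[F]_1 \congto \Hilm[F]_2$, I define $S(\xi) \defeq u\circ L_\xi$, where $L_\xi\colon \Hilm[F]_1 \to \Hilm\otimes_{B_1}\Hilm[F]_1$, $\eta\mapsto \xi\otimes\eta$, is the adjointable creation operator with $L_\xi^*(\xi'\otimes\eta') = \braket{\xi}{\xi'}_{B_1}\eta'$. Then $S(\xi)\in\Bound(\Hilm[F]_1,\Hilm[F]_2)$. I verify:
\ref{en:representation1} follows because $u$ is left $A$-linear and $L_{a\xi} = a\cdot L_\xi$;
\ref{en:representation2} follows since $S(\xi_1)^*S(\xi_2) = L_{\xi_1}^* u^* u L_{\xi_2} = L_{\xi_1}^* L_{\xi_2}$ acts as $\eta\mapsto \braket{\xi_1}{\xi_2}_{B_1}\eta$;
\ref{en:representation3} is just the surjectivity of the unitary~$u$ combined with the density of simple tensors.

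\textbf{Step 3 (from $S$ to $u$).} Going back, I define $u$ first on the algebraic tensor product by $u(\xi\otimes\eta)\defeq S(\xi)\eta$. Property~\ref{en:representation4} (now available by Step~1) makes $u$ descend to the balanced tensor product since $u(\xi b\otimes\eta) = S(\xi)b\eta = u(\xi\otimes b\eta)$. A direct computation using~\ref{en:representation2} gives
\[
\braket{u(\xi_1\otimes\eta_1)}{u(\xi_2\otimes\eta_2)}_{B_2}
= \braket{\eta_1}{S(\xi_1)^*S(\xi_2)\eta_2}_{B_2}
= \braket{\eta_1}{\braket{\xi_1}{\xi_2}_{B_1}\eta_2}_{B_2},
\]
which is the inner product on $\Hilm\otimes_{B_1}\Hilm[F]_1$. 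So $u$ is isometric, extends by continuity to the completed balanced tensor product, and has dense range by~\ref{en:representation3}; hence $u$ is unitary. Left $A$-linearity follows from~\ref{en:representation1}, and right $B_2$-linearity is automatic.

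\textbf{Step 4 (inverse bijection).} The two constructions are visibly inverse on generators: starting from $u$, recovering $S(\xi)\eta = u(\xi\otimes\eta)$ and then rebuilding $u$ gives back $\xi\otimes\eta\mapsto u(\xi\otimes\eta)$; going the other way is equally direct. The only real obstacle is the Step~1 $C^*$-identity, which takes a moment to set up correctly; after that everything is bookkeeping with the inner product on a tensor product of Hilbert modules.
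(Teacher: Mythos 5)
Your proposal is correct and follows essentially the same route as the paper: adjointable creation operators give $S$ from $u$, and conditions \ref{en:representation1}--\ref{en:representation3} give back a unitary $u$ from $S$, the two constructions being determined on simple tensors. The only cosmetic difference is that you derive \ref{en:representation4} up front by the $C^*$-identity computation, whereas the paper obtains it as a by-product of the isometry induced by \ref{en:representation2} descending to the balanced tensor product; both arguments are fine.
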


\begin{proof}
  First let \(u\colon \Hilm\otimes_{B_1} \Hilm[F]_1\to\Hilm[F]_2\) be
  an isomorphism of correspondences.  Define \(S(\xi)(\eta)\defeq
  u(\xi\otimes\eta)\) for \(\xi\in\Hilm\), \(\eta\in\Hilm[F]_1\).  For
  fixed~\(\xi\), this is an adjointable operator \(S(\xi)\colon
  \Hilm[F]_1\to\Hilm[F]_2\) because~\(u\) and the operator
  \(\Hilm[F]_1\to\Hilm\otimes_{B_1} \Hilm[F]_1\),
  \(\eta\mapsto\xi\otimes\eta\), are adjointable.  This map~\(S\) clearly
  satisfies~\ref{en:representation1}.  Since~\(u\) is isometric,
  \[
  \langle \eta_1,S(\xi_1)^*S(\xi_2)\eta_2\rangle
  = \langle S(\xi_1)\eta_1,S(\xi_2)\eta_2\rangle
  = \langle \xi_1\otimes\eta_1,\xi_2\otimes\eta_2\rangle
  = \langle \eta_1,\langle\xi_1,\xi_2\rangle\eta_2\rangle
  \]
  for all \(\xi_1,\xi_2\in\Hilm\), \(\eta_1,\eta_2\in\Hilm[F]_1\).
  This is equivalent to~\ref{en:representation2}.  Since~\(u\) is
  unitary, it has dense range, which gives~\ref{en:representation3}.

  Conversely, let \(S\colon \Hilm\to\Bound(\Hilm[F]_1,\Hilm[F]_2)\) be
  given.  Define~\(u\) on the algebraic tensor product of \(\Hilm\)
  and~\(\Hilm[F]_1\) by linear extension of \(u(\xi\otimes\eta)\defeq
  S(\xi)(\eta)\).  Condition~\ref{en:representation2} ensures that
  this is an isometry and hence extends to the completion
  \(\Hilm\otimes_{B_1} \Hilm[F]_1\).  Hence~\(S\) satisfies \(S(\xi
  b)(\eta) = S(\xi)(b\eta)\) for all \(\xi\in\Hilm\), \(b\in B_1\),
  \(\eta\in\Hilm[F]_1\), which is equivalent
  to~\ref{en:representation4}.  Condition~\ref{en:representation1}
  says that~\(u\) is \(A\)\nb-linear, and~\ref{en:representation3}
  says that it has dense range.  Being isometric, this means
  that~\(u\) is unitary.  The two constructions \(u\leftrightarrow S\)
  are inverse to each other because~\(u\) is determined by its values
  on the monomials \(\xi\otimes\eta\).
\end{proof}

A (Toeplitz) \emph{representation} of~\(\Hilm\)
is usually defined as a map~\(S\)
satisfying \ref{en:representation1} and~\ref{en:representation2} in
the case \(\Hilm[F]_1=\Hilm[F]_2\);
we also allow the case \(\Hilm[F]_1\neq\Hilm[F]_2\)
for a while because this is used in~\cite{Albandik-Meyer:Colimits} and
in Proposition~\ref{pro:relative_CP}.

\begin{definition}
  \label{def:representation_correspondence}
  A representationn is \emph{nondegenerate} if it
  satisfies~\ref{en:representation3}.
\end{definition}

By Proposition~\ref{pro:unitary_to_correspondence_representation}, a
transformation from~\((A,\Hilm)\) to~\(B\) is equivalent to a
correspondence \(\Hilm[F]\colon A\to B\) with a nondegenerate
representation of~\(\Hilm\) by operators on~\(\Hilm[F]\).  We now
relate nondegeneracy to the Cuntz--Pimsner covariance condition:

\begin{proposition}
  \label{pro:CP_vs_nondegenerate}
  Nondegenerate representations of a correspondence~\(\Hilm\) are
  Cuntz--Pimsner covariant.  The converse holds if~\(\Hilm\) is
  proper.
\end{proposition}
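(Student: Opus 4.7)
The plan is to translate the representation~$S$ into the tensor product isomorphism of Proposition~\ref{pro:unitary_to_correspondence_representation} (applied with $B_1 = A$ and $\Hilm[F]_1 = \Hilm[F]_2 = \Hilm[F]$), and to express both Cuntz--Pimsner covariance and nondegeneracy~\ref{en:representation3} in terms of the map $u(\xi\otimes\eta) \defeq S(\xi)\eta$. Properties~\ref{en:representation1}, \ref{en:representation2} and~\ref{en:representation4} already make~$u$ a well-defined $A,B$-bimodule isometry $\Hilm\otimes_A \Hilm[F]\to\Hilm[F]$; nondegeneracy is precisely the statement that~$u$ is surjective, and hence unitary.

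The key identity to establish is
\[
u\circ (T\otimes 1_{\Hilm[F]}) = \psi_S(T)\circ u \qquad \text{for all } T\in\Comp(\Hilm),
\]
where $\psi_S\colon \Comp(\Hilm)\to\Bound(\Hilm[F])$ is the \Star{}homomorphism determined by $\psi_S(\ket{\xi_1}\bra{\xi_2}) \defeq S(\xi_1)S(\xi_2)^*$. I would verify this identity on rank-one operators directly, where it is an immediate consequence of property~\ref{en:representation2}, and then extend by linearity and norm-continuity; the boundedness of~$\psi_S$ then follows from the identity itself, since~$u$ is an isometry.

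For the forward implication, I assume~$S$ is nondegenerate, so that~$u$ is a unitary intertwiner of the left $A$-actions. Applying the displayed identity with $T=\varphi(a)$ for any $a\in A$ such that $\varphi(a)\in\Comp(\Hilm)$, and using that the left action of~$a$ on $\Hilm\otimes_A \Hilm[F]$ is $\varphi(a)\otimes 1_{\Hilm[F]}$, yields $a\cdot\eta = \psi_S(\varphi(a))\eta$ for every $\eta\in\Hilm[F]$, which is exactly Cuntz--Pimsner covariance.

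For the converse, properness gives $\varphi(a)\in \Comp(\Hilm)$ for every $a\in A$, so Cuntz--Pimsner covariance now controls the action of every element of~$A$. Each operator $\psi_S(\ket{\xi_1}\bra{\xi_2})\eta = S(\xi_1)S(\xi_2)^*\eta$ lies in $S(\Hilm)\cdot \Hilm[F]$, hence by norm-continuity each $a\cdot\eta = \psi_S(\varphi(a))\eta$ lies in $\overline{S(\Hilm)\cdot \Hilm[F]}$. Since the left $A$-action on $\Hilm[F]$ is nondegenerate by Definition~\ref{def:correspondence}, $\Hilm[F] = \overline{A\cdot\Hilm[F]}\subseteq \overline{S(\Hilm)\cdot \Hilm[F]}$, which is~\ref{en:representation3}. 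The only delicate point is this converse: without properness, Cuntz--Pimsner covariance pins down the action only of $\varphi^{-1}(\Comp(\Hilm))$, which may be too small to force nondegeneracy—consistent with the fact that relative Cuntz--Pimsner algebras are the appropriate framework in the non-proper case.
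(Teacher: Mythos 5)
Your strategy is the same as the paper's: encode the representation as the isometry \(u(\xi\otimes\eta)\defeq S(\xi)\eta\) via Proposition~\ref{pro:unitary_to_correspondence_representation}, prove the intertwining relation between \(T\otimes 1\) and \(\psi_S(T)\), get Cuntz--Pimsner covariance from unitarity of~\(u\) in one direction, and in the other direction use properness together with \(S(\xi_1)S(\xi_2)^*\Hilm[F]\subseteq S(\Hilm)\cdot\Hilm[F]\) and nondegeneracy of the left \(A\)\nb-action on~\(\Hilm[F]\) to conclude \(\Hilm[F]=\clsp\bigl(A\cdot\Hilm[F]\bigr)\subseteq\clsp\bigl(S(\Hilm)\Hilm[F]\bigr)\); this is exactly the paper's chain of inclusions. (That you specialise to \(\Hilm[F]_1=\Hilm[F]_2\) is immaterial here; the paper works with two different targets only because it reuses the statement later for Proposition~\ref{pro:relative_CP}, and the argument is identical.)

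The one genuine flaw is your justification of the boundedness of \(\psi_S\) on \(\Comp(\Hilm)\): you claim it ``follows from the identity itself, since \(u\) is an isometry.'' The identity \(\psi_S(T)\circ u=u\circ(T\otimes 1)\) only controls \(\psi_S(T)\) on the range of~\(u\), which is the closed submodule \(\clsp\bigl(S(\Hilm)\Hilm[F]\bigr)\); when~\(S\) is not yet known to be nondegenerate, \(u\)~is merely an isometry (possibly not even adjointable), there is no complemented decomposition of~\(\Hilm[F]\) along this range, and so no bound on \(\norm{\psi_S(T)}\) over all of~\(\Hilm[F]\) follows. Worse, in the converse direction the density of \(\operatorname{ran}(u)\) is precisely the conclusion you are after, so the reasoning is circular---and you do need the continuity of \(\psi_S\) on finite-rank operators exactly at the step where you pass from \(\psi_S\bigl(\ket{\xi_1}\bra{\xi_2}\bigr)\eta\in S(\Hilm)\Hilm[F]\) to \(\psi_S(\varphi(a))\eta\in\clsp\bigl(S(\Hilm)\Hilm[F]\bigr)\). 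The fact you need is true but not free: for \emph{any} Toeplitz representation, \(\ket{\xi}\bra{\eta}\mapsto S(\xi)S(\eta)^*\) extends to a (possibly degenerate) \Star{}homomorphism \(\Comp(\Hilm)\to\Bound(\Hilm[F])\); the paper simply quotes this from \cite{Pimsner:Generalizing_Cuntz-Krieger}*{p.~202}, and it is in any case presupposed in the very formulation of Cuntz--Pimsner covariance in Definition~\ref{def:CP_covariance}. Replace your one-line argument by that citation (or by the standard iteration argument using that \(\psi_S\) is multiplicative and \Star{}preserving on finite-rank operators), and the remainder of your proof coincides with the paper's.
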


\begin{proof}
  Let \(A\), \(B_1\) and~\(B_2\) be \(\Cst\)\nb-algebras and let
  \(\Hilm\colon A\to B_1\), \(\Hilm[F]_1\colon B_1\to B_2\) and
  \(\Hilm[F]_2\colon A\to B_2\) be correspondences as in
  Proposition~\ref{pro:unitary_to_correspondence_representation}.
  The left actions of~\(A\) in our correspondences are nondegenerate
  \Star{}homomorphisms
  \[
  \varphi_{\Hilm}\colon A\to\Bound(\Hilm),\qquad
  \varphi_{\Hilm[F]_2}\colon A\to\Bound(\Hilm[F]_2).
  \]

  Let \(u\colon \Hilm\otimes_{B_1}\Hilm[F]_1 \to \Hilm[F]_2\) be an
  isomorphism of correspondences.  The map
  \[
  \vartheta\colon \Bound(\Hilm) \to \Bound(\Hilm[F]_2),\qquad
  T\mapsto u(T\otimes 1)u^*,
  \]
  is a strictly continuous, unital \Star{}homomorphism.  It satisfies
  \(\vartheta\circ\varphi_{\Hilm}=\varphi_{\Hilm[F]_2}\) because~\(u\)
  intertwines the left actions of~\(A\).  If \(\xi_1,\xi_2\in\Hilm\)
  and \(\ket{\xi_1}\bra{\xi_2}\) is the corresponding rank-one compact
  operator on~\(\Hilm\), then
  \[
  \vartheta(\ket{\xi_1}\bra{\xi_2}) = S(\xi_1)S(\xi_2)^*.
  \]
  This formula still defines a (possibly degenerate) \Star{}homomorphism
  \(\vartheta\colon \Comp(\Hilm)\to \Bound(\Hilm[F]_2)\) for any
  representation \(S\colon \Hilm\to\Bound(\Hilm[F]_1,\Hilm[F]_2)\),
  see \cite{Pimsner:Generalizing_Cuntz-Krieger}*{p.~202}.

  \begin{definition}
    \label{def:CP_covariance}
    A representation~\(S\) is \emph{Cuntz--Pimsner covariant} if
    \(\vartheta(\varphi_{\Hilm}(a)) = \varphi_{\Hilm[F]_2}(a)\) for
    all \(a\in A\) with \(\varphi_{\Hilm}(a)\in \Comp(\Hilm)\).
  \end{definition}

  By Proposition~\ref{pro:unitary_to_correspondence_representation},
  a nondegenerate representation comes from an
  isomorphism of correspondences.  We have already seen
  \(\vartheta(\varphi_{\Hilm}(a)) = \varphi_{\Hilm[F]_2}(a)\) for all
  \(a\in A\) in that case.  So nondegenerate representations are
  Cuntz--Pimsner covariant.

  Conversely, let~\(S\) be Cuntz--Pimsner covariant and assume
  that~\(\Hilm\) is proper, that is, \(\varphi_{\Hilm}(A)\subseteq
  \Comp(\Hilm)\).  Let \(\langle X\rangle\) denote the closed linear
  span of~\(X\).  We have
  \begin{multline*}
    \langle S(\Hilm)\Hilm[F]_1 \rangle
    \supseteq   \langle S(\Hilm)S(\Hilm)^*\Hilm[F]_2 \rangle
    =   \langle \vartheta(\Comp(\Hilm))\Hilm[F]_2 \rangle
    \\\supseteq   \langle \vartheta(\varphi_{\Hilm}(A))\Hilm[F]_2 \rangle
    =   \langle \varphi_{\Hilm[F]_2}(A)\Hilm[F]_2\rangle
    =   \langle \Hilm[F]_2  \rangle
  \end{multline*}
  because~\(\varphi_{\Hilm[F]_2}\) is nondegenerate.  Thus~\(S\) is
  nondegenerate.
\end{proof}

For a proper correspondence~\(\Hilm\), we may now reformulate the
universal property that defines its Cuntz--Pimsner
algebra~\(\CP_{\Hilm}\): it is the universal \(\Cst\)\nb-algebra for
\emph{nondegenerate} representations of~\(\Hilm\).
Equivalently, \(\CP_{\Hilm}\) is the universal target for
transformations from~\((A,\Hilm)\) to \(\Cst\)\nb-algebras.  The
Cuntz--Pimsner algebra comes with a nondegenerate
\Star{}\alb{}homomorphism \(\varphi_0\colon A\to\CP_{\Hilm}\) and a
representation \(S_0\colon \Hilm\to\CP_{\Hilm}\), which is
Cuntz--Pimsner covariant and thus nondegenerate.  This is equivalent
to a transformation from~\((A,\Hilm)\) to~\(\CP_{\Hilm}\); the
underlying correspondence is~\(\CP_{\Hilm}\) itself as a Hilbert
\(\CP_{\Hilm}\)-module, with~\(A\) acting via~\(\varphi_0\).  The
isomorphism \(u_0\colon \Hilm\otimes_A \CP_{\Hilm} \cong \CP_{\Hilm}\)
is the unitary that corresponds to~\(S_0\) by
Proposition~\ref{pro:unitary_to_correspondence_representation}.

The transformation~\((\CP_{\Hilm},u_0)\) has the following universal
property: if \((\Hilm[F],u)\) is another transformation
from~\((A,\Hilm)\) to a \(\Cst\)\nb-algebra~\(B\), then there is a
unique representation \(\psi\colon \CP_{\Hilm}\to\Bound(\Hilm[F])\)
for which \(u= u_0 \otimes_{\psi} \Id_{\Hilm[F]}\).  Conversely, a
representation \(\psi\colon \CP_{\Hilm}\to\Bound(\Hilm[F])\)
provides a unitary \(u= u_0 \otimes_{\psi} \Id_{\Hilm[F]}\) from
\(\Hilm\otimes_A \Hilm[F] \cong \Hilm \otimes_A \CP_{\Hilm}
\otimes_{\CP_{\Hilm}} \Hilm[F]\) to \(\Hilm[F]\cong
\CP_{\Hilm}\otimes_{\CP_{\Hilm}} \Hilm[F]\).  The
pair~\((\Hilm[F],\psi)\) is the same as a correspondence
from~\(\CP_{\Hilm}\) to~\(B\).  Thus transformations
from~\((A,\Hilm)\) to~\(B\) are the same as correspondences
from~\(\CP_{\Hilm}\) to~\(B\).

What happens for a representation \(S\colon \Hilm\to\Bound(\Hilm[F])\)
that does not satisfy the Cuntz--Pimsner covariance condition?  The
construction in the proof of
Proposition~\ref{pro:unitary_to_correspondence_representation} still
gives a map \(u\colon \Hilm\otimes_A \Hilm[F]\to \Hilm[F]\), which is
an \(A,B\)-bimodule map and isometric for the \(B\)\nb-valued inner
product.  But this isometry~\(u\) need not be unitary, not even
adjointable.  Thus allowing all Toeplitz representations replaces the
unitary in the definition of a transformation by a possibly
non-adjointable isometry.

\begin{example}
  \label{exa:Cuntz_infinity}
  What goes wrong if~\(\Hilm\) is not proper?  Let us consider the
  simplest case, \(A=\C\) and \(\Hilm=\ell^2(\N)\).  In this case,
  no non-zero element of~\(A\) acts by a compact operator, so there
  is no difference between the Cuntz--Pimsner and the
  Cuntz--Toeplitz algebra.  A correspondence from~\(A\) to~\(B\) is
  the same as a Hilbert \(B\)\nb-module.  The Cuntz--Pimsner
  algebra~\(\CP_{\Hilm}\) is the famous Cuntz
  algebra~\(\CP_\infty\).  The identity map on~\(\CP_\infty\)
  corresponds to a Cuntz--Pimsner covariant representation
  \(S_0\colon\ell^2(\N)\to \CP_\infty\), which maps the basis
  vector~\(\delta_i\) to the generating isometry~\(S_i\).  The
  induced \Star{}homomorphism \(\Comp(\ell^2\N)\to\CP_\infty\) is
  degenerate, however, because~\(\CP_\infty\) is unital.  It
  corresponds to the isometry of Hilbert \(\CP_\infty\)\nb-modules
  \(\ell^2(\N)\otimes\CP_\infty \hookrightarrow \CP_\infty\),
  \(E_{ij}\otimes x\mapsto S_i x S_j^*\).  If this were adjointable,
  its range would be of the form \(p\CP_\infty\) for a projection
  \(p\in \CP_\infty\) because~\(\CP_\infty\) is unital.  Then
  \([1]+p=p\) in \(\K_0(\CP_\infty)\) because \(\CP_\infty\oplus
  (\ell^2(\N)\otimes\CP_\infty) \cong \ell^2(\N)\otimes\CP_\infty\),
  giving \([1]=0\) in \(\K_0(\CP_\infty)\), which is false.
\end{example}

Katsura's definition of a relative Cuntz--Pimsner algebra only
requires the Cuntz--Pimsner covariance condition on a certain ideal
\(K\idealin A\) that acts on~\(\Hilm\) by compact operators
(see \cite{Katsura:from_correspondences} or
\cite{Katsura:Cstar_correspondences}*{Definition 3.4}).  We may
reformulate this as a \emph{partial nondegeneracy} condition:

\begin{proposition}
  \label{pro:relative_CP}
  Let \(A\) and~\(B\) be \(\Cst\)\nb-algebras, let \(\Hilm\)
  and~\(\Hilm[F]\) be correspondences from~\(A\) to~\(A\) and
  from~\(A\) to~\(B\), respectively.  Let~\(K\) be an ideal in~\(A\)
  that acts on~\(\Hilm\) by compact operators.  A representation
  \(S\colon \Hilm\to\Bound(\Hilm[F])\) satisfies the Cuntz--Pimsner
  covariance condition on~\(K\) if and only if
  \(K\cdot S(\Hilm)\Hilm[F] = K\cdot \Hilm[F]\).  Equivalently, the
  isometry \(\Hilm\otimes_A\Hilm[F] \to \Hilm[F]\) induced by~\(S\)
  restricts to an isomorphism of correspondences
  \(K\Hilm\otimes_A\Hilm[F] \to K\Hilm[F]\).
\end{proposition}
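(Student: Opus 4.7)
The plan is to reduce both implications to two identities that hold for \emph{any} representation~$S$, not just Cuntz--Pimsner covariant ones. Write $\pi\defeq\varphi_{\Hilm[F]}\colon A\to\Bound(\Hilm[F])$ and let $\vartheta\colon\Comp(\Hilm)\to\Bound(\Hilm[F])$ be the \Star{}homomorphism with $\vartheta(\ket{\xi_1}\bra{\xi_2})=S(\xi_1)S(\xi_2)^*$. Checking on rank-one operators and extending by linearity and norm continuity yields
\begin{enumerate}
\item[(A)] $S(T\xi)=\vartheta(T)S(\xi)$ for $T\in\Comp(\Hilm)$, $\xi\in\Hilm$;
\item[(B)] $\vartheta(T)\pi(b)=\vartheta(T\varphi_\Hilm(b))$ for $T\in\Comp(\Hilm)$, $b\in A$.
\end{enumerate}

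For the forward direction, assume Cuntz--Pimsner covariance on~$K$. Let $a\in K$ and $\eta\in\Hilm[F]$, and pick an approximate unit $(e_\lambda)$ of~$K$. Then $\pi(ae_\lambda)\eta\to\pi(a)\eta$ in norm, and covariance at $ae_\lambda\in K$ gives $\pi(ae_\lambda)\eta=\vartheta(\varphi_\Hilm(a)\varphi_\Hilm(e_\lambda))\eta$. Approximating $\varphi_\Hilm(e_\lambda)\in\Comp(\Hilm)$ by finite sums $\sum_i\ket{\xi_i}\bra{\xi_i'}$ and using $\varphi_\Hilm(a)\ket{\xi_i}\bra{\xi_i'}=\ket{a\xi_i}\bra{\xi_i'}$, one rewrites $\vartheta(\varphi_\Hilm(ae_\lambda))\eta$ as a norm limit of expressions $\sum_i\pi(a)S(\xi_i)S(\xi_i')^*\eta\in\pi(K)\cdot S(\Hilm)\Hilm[F]$. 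This yields $K\cdot\Hilm[F]\subseteq\overline{K\cdot S(\Hilm)\Hilm[F]}$; the reverse inclusion is obvious.

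For the converse, assume $K\cdot\Hilm[F]=\overline{K\cdot S(\Hilm)\Hilm[F]}$ and fix $a\in K$. For any $z=\pi(b)S(\xi)\eta'$ with $b\in K$, identity~(A) applied to $ab\in K$ yields $\pi(a)z=\pi(ab)S(\xi)\eta'=S(ab\xi)\eta'=\vartheta(\varphi_\Hilm(ab))S(\xi)\eta'=\vartheta(\varphi_\Hilm(a))\pi(b)S(\xi)\eta'=\vartheta(\varphi_\Hilm(a))z$, where the penultimate step uses $\vartheta(\varphi_\Hilm(b))S(\xi)=S(b\xi)=\pi(b)S(\xi)$. Hence $\pi(a)$ and $\vartheta(\varphi_\Hilm(a))$ agree on $K\cdot S(\Hilm)\Hilm[F]$, and by continuity and the hypothesis also on $K\cdot\Hilm[F]$. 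Now take an arbitrary $\eta\in\Hilm[F]$ and an approximate unit $(e_\lambda)$ of~$K$: since $\pi(e_\lambda)\eta\in K\cdot\Hilm[F]$, we have $\pi(a)\pi(e_\lambda)\eta=\vartheta(\varphi_\Hilm(a))\pi(e_\lambda)\eta$. The left-hand side equals $\pi(ae_\lambda)\eta\to\pi(a)\eta$ in norm; by identity~(B), the right-hand side equals $\vartheta(\varphi_\Hilm(ae_\lambda))\eta\to\vartheta(\varphi_\Hilm(a))\eta$ in norm. Thus $\pi(a)=\vartheta(\varphi_\Hilm(a))$, which is Cuntz--Pimsner covariance on~$K$.

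The equivalent reformulation is then immediate from Proposition~\ref{pro:unitary_to_correspondence_representation}: the isometry $u\colon\Hilm\otimes_A\Hilm[F]\to\Hilm[F]$ with $u(\xi\otimes\eta)=S(\xi)\eta$ is $A,B$\nb-bilinear, so its restriction to $K\Hilm\otimes_A\Hilm[F]$ is an isometry onto the closed span of $\pi(K)S(\Hilm)\Hilm[F]$, and this image coincides with $K\cdot\Hilm[F]$ exactly when the equivalent condition above holds. I expect the converse to be the main obstacle: identity~(A) alone already forces $\pi(a)$ and $\vartheta(\varphi_\Hilm(a))$ to agree on $K\cdot S(\Hilm)\Hilm[F]$ without invoking covariance, the hypothesis is what spreads this agreement to $K\cdot\Hilm[F]$, and the final approximate-unit step must rely on the operator-norm convergences $\pi(ae_\lambda)\to\pi(a)$ and $\vartheta(\varphi_\Hilm(ae_\lambda))\to\vartheta(\varphi_\Hilm(a))$ rather than on $\pi(e_\lambda)\eta\to\eta$, which generally fails.
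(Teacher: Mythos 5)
Your proof is correct, but it follows a genuinely different route from the paper. The paper does not argue by hand at all: it applies Proposition~\ref{pro:unitary_to_correspondence_representation} and Proposition~\ref{pro:CP_vs_nondegenerate} to the auxiliary correspondences \(K\Hilm\colon K\to A\), \(\Hilm[F]\colon A\to B\), \(K\Hilm[F]\colon K\to B\) (using that \(K\Hilm\) is automatically proper because \(K\) acts by compact operators), which immediately identifies the condition \(K\Hilm\cdot\Hilm[F]=K\Hilm[F]\) with the covariance identity \(\vartheta(\varphi_{\Hilm}(k))\xi=\varphi_{\Hilm[F]}(k)\xi\) for \(\xi\in K\Hilm[F]\); the only remaining work is to pass from vectors in \(K\Hilm[F]\) to all of \(\Hilm[F]\), which the paper does with the operators \(T_k\defeq\vartheta(\varphi_{\Hilm}(k))-\varphi_{\Hilm[F]}(k)\): both \(T_k\) and \(T_k^*=T_{k^*}\) map \(\Hilm[F]\) into \(K\Hilm[F]\) and vanish there, so \(\braket{T_k\xi}{T_k\xi}=\braket{\xi}{T_k^*T_k\xi}=0\). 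Your argument instead proves everything directly from the representation axioms: your identities (A) and (B), the factorisation \(\varphi_{\Hilm}(ae_\lambda)=\varphi_{\Hilm}(a)\varphi_{\Hilm}(e_\lambda)\) together with finite-rank approximation in the forward direction, and the approximate-unit limit with the operator-norm convergences \(\pi(ae_\lambda)\to\pi(a)\), \(\vartheta(\varphi_{\Hilm}(ae_\lambda))\to\vartheta(\varphi_{\Hilm}(a))\) in the converse --- you are right that one must not rely on \(\pi(e_\lambda)\eta\to\eta\), and your step via (B) correctly avoids that. What the paper's route buys is brevity and a conceptual point (the relative covariance condition is literally the nondegeneracy statement of Proposition~\ref{pro:CP_vs_nondegenerate} for the proper correspondence \(K\Hilm\)), with the \(T_k\)/\(T_{k^*}\) trick replacing your approximate-unit extension step; what your route buys is a self-contained, elementary verification that makes explicit exactly where compactness of the \(K\)-action and the ideal property of \(K\) enter. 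One cosmetic remark: the equality \(K\cdot S(\Hilm)\Hilm[F]=K\cdot\Hilm[F]\) should be read as equality of closed linear spans, which is how you (and the paper) in fact use it.
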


\begin{proof}
  Proposition~\ref{pro:unitary_to_correspondence_representation} says
  that an isomorphism \(K\Hilm\otimes_A\Hilm[F] \to K\Hilm[F]\) is
  equivalent to a nondegenerate representation
  \(K\Hilm\to\Bound(\Hilm[F],K\Hilm[F])\).  Now apply
  Proposition~\ref{pro:CP_vs_nondegenerate} to the correspondences
  \(K\Hilm\colon K\to A\), \(\Hilm[F]\colon A\to B\), and
  \(K\Hilm[F]\colon K\to B\), so substitute
  \(K,A,B,K\Hilm,\Hilm[F],K\Hilm[F]\) for
  \(A,B_1,B_2,\Hilm,\Hilm[F]_1,\Hilm[F]_2\).  Since we assume~\(K\) to
  act by compact operators on~\(\Hilm\), the correspondence
  \(K\Hilm\colon K\to A\) is always proper.  So the nondegeneracy
  condition \(K\Hilm\cdot\Hilm[F] = K\Hilm[F]\) is equivalent to the
  Cuntz--Pimsner covariance condition for the restriction of the left
  action of~\(K\) to~\(K\Hilm[F]\).  That is,
  \(\vartheta(\varphi_{\Hilm}(k))\xi = \varphi_{\Hilm[F]}(k)\xi\) for
  all \(k\in K\) and \(\xi\in K\Hilm[F]\), with \(\vartheta\colon
  \Comp(\Hilm)\to\Bound(\Hilm[F])\) as in the proof of
  Proposition~\ref{pro:CP_vs_nondegenerate}.  It remains to show that
  this equality for all \(\xi\in K\Hilm[F]\) implies the same equality
  for all \(\xi\in \Hilm[F]\): the latter is the usual coisometry
  condition for the ideal~\(K\).  Let \(T_k\defeq
  \vartheta(\varphi_{\Hilm}(k)) - \varphi_{\Hilm[F]}(k)\) for \(k\in
  K\).  Both \(T_k\) and~\(T_k^* = T_{k^*}\) map~\(\Hilm[F]\) to
  \(K\Hilm[F] = K\Hilm\cdot \Hilm[F]\), and they vanish on~\(K\Hilm[F]\)
  by the above computation.  Therefore, \(\braket{T_k\xi}{T_k\xi} =
  \braket{\xi}{T_k^*T_k\xi} = 0\) for all \(\xi\in \Hilm[F]\).
\end{proof}

\section{Cuntz--Pimsner algebras of product systems over Ore monoids}
\label{sec:Ore_monoids}

Product systems over a monoid~\(P\) were introduced by
Fowler~\cite{Fowler:Product_systems}, inspired by previous definitions
by Arveson~\cite{Arveson:Continuous_Fock} and
Dinh~\cite{Dinh:Discrete_product}.  The following data is equivalent
to a product system in Fowler's sense with the mild extra condition
that each fibre be an essential left module over the unit fibre:
\begin{itemize}
\item a \(\Cst\)\nb-algebra~\(A\);
\item correspondences~\(\Hilm_p\) from~\(A\) to itself for all
  \(p\in P\setminus\{1\}\);
\item isomorphisms of correspondences
  \(\mu_{p,q}\colon \Hilm_p\otimes_A \Hilm_q \to \Hilm_{p q}\)
  for all \(p,q\in P\setminus\{1\}\),
  which are \emph{associative}, that is, the following diagram commutes for
  all \(p,q,t\in P\):
  \[
  \begin{tikzpicture}[baseline=(current bounding box.west)]
    \matrix (m) [cd,column sep=6em] {
      \Hilm_p\otimes_A \Hilm_q\otimes_A \Hilm_t&
      \Hilm_p\otimes_A \Hilm_{q t}\\
      \Hilm_{p q}\otimes_A \Hilm_t&
      \Hilm_{p q t}\\
    };
    \draw[cdar] (m-1-1) -- node {\(\Id_{\Hilm_p}\otimes_A \mu_{q,t}\)} (m-1-2);
    \draw[cdar] (m-1-1) -- node[swap] {\(\mu_{p,q} \otimes_A \Id_{\Hilm_t}\)} (m-2-1);
    \draw[cdar] (m-1-2) -- node {\(\mu_{p,q t}\)} (m-2-2);
    \draw[cdar] (m-2-1) -- node[swap] {\(\mu_{p q,t}\)} (m-2-2);
  \end{tikzpicture}
  \]
\end{itemize}
here we let \(\Hilm_1=A\), and we let \(\mu_{1,q}\)
and~\(\mu_{p,1}\) be the isomorphisms \(A\otimes_A \Hilm_q\cong
\Hilm_q\) and \(\Hilm_p\otimes_A A\cong \Hilm_p\) from the left and
right \(A\)\nb-module structures, respectively; this is needed to
write down~\(\mu_{p,q}\) if \(p\cdot q=1\) and to formulate the
associativity condition for \(\Hilm_p\otimes_A \Hilm_q\otimes_A
\Hilm_t \to\Hilm_{p q t}\) if \(p\cdot q=1\) or \(q\cdot t=1\).

Our main theorems will only hold if all correspondences~\(\Hilm_p\)
are proper.  Then we speak of a \emph{proper product system
  over~\(P\)}.

\begin{remark}
  \label{rem:left_right}
  A nondegenerate \Star{}homomorphism \(f\colon A\to B\) gives a
  proper correspondence~\(\Hilm_f\) from~\(A\) to~\(B\): take
  \(\Hilm_f=B\) with~\(A\) acting through~\(f\).  For two composable
  nondegenerate \Star{}homomorphisms, we have a natural isomorphism
  \(\Hilm_f\otimes_B \Hilm_g \cong \Hilm_{gf}\).  Due to this change
  in the order of products, an action of the opposite
  monoid~\(P^\op\) by ordinary nondegenerate \Star{}homomorphisms
  gives a product system over~\(P\).  The Cuntz--Pimsner algebra of
  this product system is the same as the crossed product for the
  original action by endomorphisms, see
  \cite{Fowler:Product_systems}*{Section~3}.

  The change from~\(P\) to~\(P^\op\) also explains why left Ore
  conditions are needed in
  \cites{Laca:Endomorphisms_back,Cuntz-Echterhoff-Li:K-theory} to
  study actions of~\(P\) by endomorphisms, while we will need right Ore
  conditions to study product systems over~\(P\).
\end{remark}

\begin{definition}
  \label{def:transformation_product_system}
  Let~\((A,\Hilm_p,\mu_{p,q})\) be a product system over~\(P\).  A
  \emph{transformation} from it to a \(\Cst\)\nb-algebra~\(B\)
  consists of a correspondence~\(\Hilm[F]\) from~\(A\) to~\(B\) and
  isomorphisms of correspondences \(V_p\colon \Hilm_p \otimes_A
  \Hilm[F]\to\Hilm[F]\) for \(p\in P\setminus\{1\}\), such that for
  all \(p,q\in P\setminus\{1\}\), the following diagram of
  isomorphisms commutes:
  \begin{equation}
    \label{eq:transformation_product_system}
    \begin{tikzpicture}[baseline=(current bounding box.west)]
      \matrix (m) [cd,column sep=6em] {
        \Hilm_p\otimes_A \Hilm_q\otimes_A \Hilm[F]&
        \Hilm_p\otimes_A \Hilm[F]\\
        \Hilm_{p q}\otimes_A \Hilm[F]&
        \Hilm[F]\\
      };
      \draw[cdar] (m-1-2) -- node {\(V_p\)} (m-2-2);
      \draw[cdar] (m-2-1) -- node[swap] {\(V_{p q}\)} (m-2-2);
      \draw[cdar] (m-1-1) -- node {\(\Id_{\Hilm_p}\otimes_A V_q\)} (m-1-2);
      \draw[cdar] (m-1-1) -- node[swap] {\(\mu_{p,q} \otimes_A \Id_{\Hilm[F]}\)} (m-2-1);
    \end{tikzpicture}
  \end{equation}
  We let~\(V_1\) be the canonical isomorphism \(A\otimes_A
  \Hilm[F]\cong\Hilm[F]\) and use this
  in~\eqref{eq:transformation_product_system} if \(p\cdot q=1\).
\end{definition}

By Proposition~\ref{pro:unitary_to_correspondence_representation},
each isomorphism~\(V_p\) corresponds to a nondegenerate
representation \(S_p\colon \Hilm_p\to \Bound(\Hilm[F])\) of the
correspondence~\(\Hilm_p\).  By convention, \(S_1\colon
\Hilm_1=A\to\Bound(\Hilm[F])\) is the representation of~\(A\) that
is part of the correspondence~\(\Hilm[F]\).
Equation~\eqref{eq:transformation_product_system} means that both
maps around the square agree on all monomials
\(\xi_p\otimes\xi_q\otimes\eta\in \Hilm_p\otimes_A \Hilm_q\otimes_A
\Hilm[F]\).  This amounts to the condition
\[
S_p(\xi_p)\cdot S_q(\xi_q) = S_{p q}(\mu_{p,q}(\xi_p\otimes\xi_q))
\qquad
\text{for all }\xi_p\in\Hilm_p, \xi_q\in\Hilm_q,
\]
which is standard for representations of product systems.

Example~\ref{exa:Cuntz_infinity} shows that we cannot expect enough
transformations to exist unless our product system is proper.  We
assume this from now on.  By
Proposition~\ref{pro:CP_vs_nondegenerate}, the nondegeneracy of the
representations~\(S_p\) is equivalent to the Cuntz--Pimsner covariance
condition for all of them.  Hence the universal property that defines
the Cuntz--Pimsner algebra gives a natural bijection between
correspondences from it to a \(\Cst\)\nb-algebra~\(B\) and
transformations from the product system to~\(B\); this bijection
leaves the underlying Hilbert module~\(\Hilm[F]\) unchanged.

A transformation~\((\Hilm[F],V_p)\) gives unital, strictly
continuous \Star{}homomorphisms
\[
\vartheta_p\colon \Bound(\Hilm_p)\to \Bound(\Hilm_p\otimes_A\Hilm[F])
\xrightarrow{\cong} \Bound(\Hilm[F]),\qquad
T\mapsto V_p(T\otimes_A \Id_{\Hilm[F]})V_p^*,
\]
for all \(p\in P\).  Similarly, the isomorphisms \(\mu_{p,q}\colon
\Hilm_p\otimes_A \Hilm_q\to\Hilm_{p q}\) induce nondegenerate
\Star{}\alb{}homomorphisms
\begin{align}
  \label{eq:Fixed_Point_algebra_m}
  \varphi_{p,q}\colon \Comp(\Hilm_p)\to \Comp(\Hilm_{p q}),\qquad
  T\mapsto \mu_{p,q}(T\otimes_A \Id_{\Hilm_q})\mu_{p,q}^*.
\end{align}
Since~\(\Hilm_q\) is proper, \(\varphi_{p,q}(\Comp(\Hilm_p))\) is
contained in~\(\Comp(\Hilm_{p q})\).  The commuting
diagram~\eqref{eq:transformation_product_system} gives
\(\vartheta_{p q}\circ\varphi_{p,q}=\vartheta_p\) for all \(p,q\in
P\).

This situation invites us to take a colimit (or inductive limit) of
the \(\Cst\)\nb-algebras \(\Comp(\Hilm_p)\) along the
maps~\(\varphi_{p,q}\).  More precisely, let~\(\Cat_P\) be the
category with object set~\(P\) and arrow set \(P\times P\),
where~\((p,q)\) is an arrow from~\(p\) to~\(p q\), and where
\((p q,t)\cdot (p,q)\defeq (p,q t)\) for all \(p,q,t\in P\).

\begin{lemma}
  \label{lem:varphi_functor}
  The maps \(p\mapsto\Comp(\Hilm_p)\) and
  \((p,q)\mapsto\varphi_{p,q}\) form a functor from~\(\Cat_P\) to
  the category of\/ \(\Cst\)\nb-algebras and nondegenerate
  \Star{}homomorphisms.
\end{lemma}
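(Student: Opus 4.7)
The plan has three steps: first, check that each $\varphi_{p,q}$ is a nondegenerate \Star{}homomorphism $\Comp(\Hilm_p)\to\Comp(\Hilm_{pq})$; second, verify the identity law $\varphi_{p,1}=\Id$; third, verify the composition law $\varphi_{pq,t}\circ\varphi_{p,q}=\varphi_{p,qt}$.

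For the first step, since $\mu_{p,q}$ is unitary, conjugation by it is a \Star{}isomorphism $\Bound(\Hilm_p\otimes_A\Hilm_q)\cong\Bound(\Hilm_{pq})$, so it suffices to check that $T\mapsto T\otimes_A\Id_{\Hilm_q}$ sends $\Comp(\Hilm_p)$ into $\Comp(\Hilm_p\otimes_A\Hilm_q)$. On rank-one operators $\ket{\xi}\bra{\xi'}$ the image sends $\eta\otimes\zeta$ to $\xi\otimes(\braket{\xi'}{\eta}\cdot\zeta)$, which is compact on $\Hilm_p\otimes_A\Hilm_q$ precisely because $\Hilm_q$ is proper (standard); extend by continuity. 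For nondegeneracy, take an approximate unit $(e_i)$ of $\Comp(\Hilm_p)$, so $e_i\xi\to\xi$ in norm for every $\xi\in\Hilm_p$. Then $\varphi_{p,q}(e_i)$ acts on a rank-one operator $\ket{\xi\otimes\eta}\bra{\xi'\otimes\eta'}\in\Comp(\Hilm_{pq})$ as $\ket{e_i\xi\otimes\eta}\bra{\xi'\otimes\eta'}$, which converges to it in norm. Hence $\varphi_{p,q}(e_i)$ is an approximate unit for $\Comp(\Hilm_{pq})$, giving nondegeneracy of $\varphi_{p,q}$.

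For the identity law, the standing convention in the definition of a product system makes $\mu_{p,1}$ the right unitor $\Hilm_p\otimes_A A\congto\Hilm_p$; under this identification $T\otimes_A\Id_A$ corresponds to $T$, so $\varphi_{p,1}=\Id_{\Comp(\Hilm_p)}$. For composition, unfolding definitions gives
\[
\varphi_{pq,t}(\varphi_{p,q}(T))=\mu_{pq,t}(\mu_{p,q}\otimes_A\Id_{\Hilm_t})(T\otimes_A\Id_{\Hilm_q}\otimes_A\Id_{\Hilm_t})(\mu_{p,q}^*\otimes_A\Id_{\Hilm_t})\mu_{pq,t}^*,
\]
and applying the associativity identity $\mu_{pq,t}(\mu_{p,q}\otimes_A\Id_{\Hilm_t})=\mu_{p,qt}(\Id_{\Hilm_p}\otimes_A\mu_{q,t})$ (and its adjoint) on both sides, together with $(\Id_{\Hilm_p}\otimes_A\mu_{q,t})(T\otimes_A\Id_{\Hilm_q}\otimes_A\Id_{\Hilm_t})(\Id_{\Hilm_p}\otimes_A\mu_{q,t}^*)=T\otimes_A\Id_{\Hilm_{qt}}$, the right-hand side collapses to $\mu_{p,qt}(T\otimes_A\Id_{\Hilm_{qt}})\mu_{p,qt}^*=\varphi_{p,qt}(T)$. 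The main obstacle is merely notational bookkeeping in this last step; no conceptual difficulty arises, since the associativity axiom of the product system is precisely what forces functoriality.
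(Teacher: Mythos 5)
Your proof is correct and follows essentially the same route as the paper: the paper's proof of this lemma consists precisely of the observation that the composition law $\varphi_{pq,t}\circ\varphi_{p,q}=\varphi_{p,qt}$ is equivalent to the associativity of~$\mu$, which is the core of your computation. The additional checks you carry out (that $\varphi_{p,q}$ lands in $\Comp(\Hilm_{pq})$ by properness of~$\Hilm_q$, that it is nondegenerate, and the identity law via the convention on $\mu_{p,1}$) are correct and correspond to what the paper states in the text surrounding the lemma rather than inside its proof.
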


\begin{proof}
  Functoriality means that \(\varphi_{p q,t}\circ \varphi_{p,q} =
  \varphi_{p,q t}\) for all \(p,q,t\in P\).  This is equivalent to
  the associativity of~\(\mu\).
\end{proof}

The diagram in Lemma~\ref{lem:varphi_functor} has a colimit in the
category of \(\Cst\)\nb-algebras and nondegenerate
\Star{}homomorphisms.  This colimit will act nondegenerately
on~\(\Hilm[F]\) by its universal property.  Therefore, it is part of
the Cuntz--Pimsner algebra of the product system.  In general, the
colimit involves amalgamated free products, which make it rather
intractable.  To get a well-behaved Cuntz--Pimsner algebra, we
assume that~\(\Cat_P\) is a \emph{filtered} category in the
following sense:

\begin{definition}[\cite{MacLane:Categories}*{Section IX.1}]
  \label{def:filtered}
  A category~\(\Cat\) is \emph{filtered} if it is nonempty and
  \begin{enumerate}[label=\textup{(F\arabic*)}]
  \item\label{item:filtered1} for any two objects \(x,y\in\Cat_0\),
    there are an object \(z\in\Cat_0\) and arrows \(g\in\Cat(x,z)\)
    and \(h\in\Cat(y,z)\);
  \item\label{item:filtered2} for any two parallel arrows \(g,h\in
    \Cat(x,y)\), there are \(z\in \Cat_0\) and \(k\in \Cat(y,z)\) with
    \(kg=kh\).
  \end{enumerate}
\end{definition}

These conditions for~\(\Cat_P\) are equivalent to the following
\emph{Ore conditions} for~\(P\):
\begin{enumerate}[label=\textup{(O\arabic*)}]
\item \label{enum:Ore1} for all \(x_1,x_2\in P\), there are
  \(y_1,y_2\in P\) with \(x_1y_1=x_2y_2\);
\item \label{enum:Ore2} if \(xy_1=xy_2\) for \(y_1,y_2,x\in P\), then
  there is \(z\in P\) with \(y_1z=y_2z\).
\end{enumerate}

\begin{definition}
  \label{def:Ore_monoid}
  We call~\(P\) a \emph{right Ore monoid} if it has these two
  properties or, equivalently, \(\Cat_P\) is filtered.  We
  call~\(P\) a \emph{left Ore monoid} if~\(P^\op\) is a \emph{right
    Ore monoid}.
\end{definition}

Condition~\ref{enum:Ore2} follows if~\(P\)
has cancellation.  Both hold if \(P\subseteq G\)
for a group~\(G\)
with \(PP^{-1}=G\).
Cancellative Ore monoids have already been considered by
\(\Cst\)\nb-algebraists;
see, for instance,
\cites{Laca:Endomorphisms_back,Cuntz-Echterhoff-Li:K-theory}.  We do
not expect product systems over non-cancellative monoids to be very
interesting (Lemma~\ref{lem:unique_truncation} hints strongly towards
this), but it costs little extra effort to work in this greater
generality.  The monoid~\(P\)
is cancellative if and only if there is at most one arrow between any
two objects in~\(\Cat_P\).
Then~\(\Cat_P\)
is the category associated to a directed set, and colimits
over~\(\Cat_P\)
are the same as colimits over this directed set.  Directed sets are
familiar to analysts as the indexing sets for nets.

Let~\(P\) be a right Ore monoid.  We may construct a group out
of~\(P\) by taking equivalence classes of formal quotients
\(p q^{-1}\defeq (p,q)\) for \(p,q\in P\), where \((p_1,q_1)\sim
(p_2,q_2)\) if there are \(t_1,t_2\in P\) with \((p_1 t_1,q_1 t_1) =
(p_2 t_2,q_2 t_2)\) (see also~\cite{Clifford-Preston:Semigroups_I}).
Condition~\ref{enum:Ore1} implies that this relation is transitive and
that products \(p_1q_1^{-1}\cdot p_2q_2^{-1}\) may be rewritten as
\(p q^{-1}\) by finding a common multiple of \(q_1\) and~\(p_2\): if
\(q_1t_1=p_2t_2\), then
\[
p_1q_1^{-1}\cdot p_2q_2^{-1}
= (p_1t_1)(q_1t_1)^{-1}\cdot (p_2t_2)(q_2t_2)^{-1}
= (p_1t_1)(q_2t_2)^{-1}.
\]
Hence we define the multiplication by \([p_1,q_1]\cdot [p_2,q_2]
\defeq [p_1t_1,q_2t_2]\) for \(t_1,t_2\in P\) with \(q_1t_1=p_2t_2\).
The conditions \ref{enum:Ore1} and~\ref{enum:Ore2} imply that this is
a well-defined group structure on \(G\defeq P/{\sim}\).

\begin{example}
  \label{exa:Ore}
  All commutative monoids are Ore: we may take \(y_1=x_2\) and
  \(y_2=x_1\) in~\ref{enum:Ore1} and \(z=x\) in~\ref{enum:Ore2}.
  Groups are also clearly Ore monoids.  We may combine both classes
  as follows.

  Assume that \(G\subseteq P\) is a group such that \(x g_1 = x
  g_2\) for \(x\in P\), \(g_1,g_2\in G\) implies \(g_1=g_2\); assume
  further that \(pG=Gp\) for all \(p\in P\) and \(p_1 p_2 G = p_2
  p_1 G\) for all \(p_1,p_2\in P\); roughly speaking, \(G\) is a
  normal subgroup of~\(P\) such that \(P/G\) is commutative.  We
  claim that such a monoid~\(P\) is Ore.  In~\ref{enum:Ore1}, given
  \(x_1,x_2\in P\), we first take \(y_1 = x_2\) and \(y_2^0 = x_1\)
  as in the commutative case; then \(x_1 y_1 G = x_2 y_2^0 G\), so
  there is \(g\in G\) with \(x_1 y_1 = x_2 y_2^0 g\), so \(y_2=
  y_2^0 g\) will do.  In~\ref{enum:Ore2}, assume \(x y_1 = x y_2\)
  for some \(x,y_1,y_2\in P\).  Then \(y_1 x G = x y_1 G = x y_2 G =
  y_2 x G\), so there is \(g\in G\) with \(y_1 x = y_2 x g\).  Then
  \(x y_1 x = x y_2 x g = x y_1 x g\), which implies \(g=1\) by one
  of our assumptions.  Thus \(y_1 x = y_2 x\), so \(z=x\) works
  in~\ref{enum:Ore2}.
\end{example}

\begin{example}
  \label{exa:axb}
  Let~\(R\)
  be a commutative, unital ring.  Let \(R^\times\subseteq R\)
  be the subset of all elements that are not zero divisors; this is
  the largest submonoid of~\((R,\cdot)\)
  that does not contain~\(0\).
  The ``\(ax+b\)-monoid''
  of~\(R\)
  is the monoid \(P=R^\times\ltimes R\),
  consisting of pairs \((a,b)\in R^\times\ltimes R\)
  with the multiplication
  \((a_1,b_1)\cdot (a_2,b_2) = (a_1 a_2, a_1 b_2 + b_1)\).
  This monoid is cancellative because we have taken away the zero
  divisors.  It acts on~\(R\)
  by \((a,b)\cdot x\defeq ax+b\).
  It contains the additive group~\((R,+)\)
  as a normal subgroup, and the quotient~\(P/R\)
  is the commutative monoid~\((R^\times,\cdot)\).
  Hence it is a special case of Example~\ref{exa:Ore}.
  \(\Cst\)\nb-algebras
  associated to semigroups of this form have recently been studied by
  several authors, following Cuntz~\cite{Cuntz:axb_N} and
  Li~\cite{Li:Ring_Cstar}.
\end{example}

\begin{example}
  \label{exa:Gl_number_field}
  Assume that~\(P\) is cancellative and has a ``norm'' homomorphism
  \(N\colon P\to (\N^\times,\cdot)\) such that \(G\defeq \ker N\) is
  a subgroup.  This is a special case of the situation in
  Example~\ref{exa:Ore}, with \(P/G\subseteq (\N^\times,\cdot)\).
  One example of this type is the monoid \(\Mat_n(\Z)^\times\), the
  ring of integer matrices with non-zero determinant, with the
  determinant as norm: the kernel of the norm is the
  group~\(\mathrm{Gl}_n(\Z)\).  Another example is the monoid
  \(\Mat_n(\Z)^\times\ltimes \Z^n\), with the determinant of the
  matrix part as norm and the semidirect product group
  \(\mathrm{Gl}_n(\Z)\ltimes\Z^n\) as the kernel of the norm.  More
  generally, we may replace~\(\Mat_n(\Z)\) by the ring of integers
  in a simple algebra over~\(\mathbb{Q}\); this also includes
  integer quaternion algebras.  Semigroups of this form appear in
  generalisations of the Bost--Connes dynamical system, see
  \cites{Bost-Connes:Hecke, Connes-Marcolli-Ramachadran:KMS}.
\end{example}

\begin{example}
  \label{exa:Ore_monoid_H}
  The matrices of the form
  \[
  h(a,b,c) \defeq
  \begin{pmatrix}
    1&a&c\\0&1&b\\0&0&1
  \end{pmatrix}
  \]
  for \(a,b,c\in \N\) form a noncommutative, cancellative
  monoid~\(H_\N\) under matrix multiplication:
  \[
  h(a_1,b_1,c_1)\cdot h(a_2,b_2,c_2) =
  h(a_1+a_2,b_1+b_2,c_1+c_2+a_1b_2).
  \]
  This is an Ore monoid.  To check the Ore
  condition~\ref{enum:Ore1}, pick \(h(a_1,b_1,c_1)\) and
  \(h(a_2,b_2,c_2)\) in~\(H_\N\).  Let
  \[
  a\defeq \max(a_1,a_2),\quad
  b\defeq \max(b_1,b_2),\quad
  c\defeq \max(c_1+a_1(b-b_1),c_2+a_2(b-b_2)).
  \]
  For \(i=1,2\), let \(a_i^\bot \defeq a-a_i\), \(b_i^\bot \defeq
  b-b_i\), and \(c_i^\bot \defeq c-c_i- a_i b_i^\bot\); then
  \(h(a_i,b_i,c_i) \cdot h(a_i^\bot,b_i^\bot,c_i^\bot) = h(a,b,c)\)
  for \(i=1,2\), so we have found the desired common multiple.  A
  similar formula works for the opposite monoid, so~\(H_\N\) is both
  left and right Ore.
\end{example}

An inductive limit in the usual sense is the same as a colimit over
the category associated to the poset~\((\N,\le)\), which is easily
seen to be filtered.  Colimits over general filtered categories
behave very much like inductive limits.  This is well-known to
category theorists.  For the operator algebraists, we now assume
that~\(P\) is a countable Ore monoid, so that~\(\Cat_P\) is a
countable filtered category.  Then we may replace a colimit
over~\(\Cat_P\) by an inductive limit over~\((\N,\le)\):

\begin{lemma}
  \label{lem:countable_filtered_cofinal}
  Let~\(\Cat\) be a countable filtered category.  Then there is a
  sequence of objects \((x_n)_{n\in\N}\) and maps \(f_n\in
  \Cat(x_{n-1},x_n)\) such that for any object~\(y\) of~\(\Cat\) there
  is \(n\in\N\) and an arrow \(y\to x_n\).  Furthermore, if \(y\to
  x_n\) and \(y\to x_m\) are two such arrows, they become equal by
  composing with \(f_{N-1}\circ \dotsb\circ f_n\colon x_n\to
  x_{n+1}\to \dotsb \to x_N\) and \(f_{N-1}\circ \dotsb\circ f_m\colon
  x_m\to x_{m+1} \to \dotsb \to x_N\) for sufficiently large~\(N\).
\end{lemma}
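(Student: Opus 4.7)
The plan is to build the sequence recursively using a diagonal enumeration of countably many ``tasks'' --- one per object of~$\Cat$ that must be reached by the chain, and one per pair of parallel arrows of~$\Cat$ whose codomain lies in the chain, which must be coequalised. Enumerate the objects of~$\Cat$ as $(y_i)_{i\geq 1}$ and the pairs of parallel arrows in~$\Cat$ as $((p_j,q_j))_{j\geq 1}$; both are countable by hypothesis. I would construct $(x_n)_{n\geq 0}$ and $f_n\colon x_{n-1}\to x_n$ inductively, maintaining, at each stage, for each object $y$ already ``reached'', a chosen arrow $\gamma_y\colon y\to x_{N_y}$ into the chain. Start with $x_0\defeq y_1$ and $\gamma_{y_1}\defeq\Id$.

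At step $n\geq 1$, process the next task in the diagonal enumeration. A \emph{cofinality} task for $y_i$ is handled by applying~\ref{item:filtered1} to $x_{n-1}$ and $y_i$, producing $x_n$ together with $f_n\colon x_{n-1}\to x_n$ and $\gamma_{y_i}\colon y_i\to x_n$. A \emph{coequalisation} task for $(p_j,q_j)\colon y\to z$, once $z$ has been reached, is handled by forming $\alpha\defeq f_{n-1}\circ\dotsb\circ f_{N_z+1}\circ\gamma_z\colon z\to x_{n-1}$ and applying~\ref{item:filtered2} to the parallel arrows $\alpha\circ p_j$ and $\alpha\circ q_j$ to obtain $x_n$ and $f_n\colon x_{n-1}\to x_n$ with $f_n\alpha p_j=f_n\alpha q_j$; any coequalisation task whose codomain is not yet reached is deferred and re-offered at later stages. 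After each step, update $\gamma_y$ for every reached object by post-composition with the new $f_n$.

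Property~1 is then immediate, since every object's cofinality task is processed at some finite step. For property~2, given $g\colon y\to x_n$ and $h\colon y\to x_m$, first post-compose the shorter one with the intervening chain maps to reduce to $n=m$; then $(g,h)$ equals some $(p_j,q_j)$ in the enumeration, and since its codomain $x_n$ lies in the chain (take $\gamma_{x_n}\defeq\Id$, $N_{x_n}\defeq n$, as a canonical choice made when $x_n$ is constructed), its coequalisation task is processed at some finite step~$N$, yielding the desired identity. The main obstacle is the bookkeeping of the diagonal scheme: a coequalisation task cannot execute before its codomain is reached by a cofinality task, so deferrals are unavoidable; but since there are only countably many tasks and each deferred one is revisited at later stages (while its prerequisite is itself one of the countably many cofinality tasks), every task is eventually handled.
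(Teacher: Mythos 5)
Your construction is correct, but it follows a genuinely different route from the paper. The paper first invokes the theorem of Andr\'eka--N\'emeti that every filtered category admits a cofinal functor from a directed partially ordered set, and uses this to replace~\(\Cat\) by a countable directed poset; in a poset there is at most one arrow between any two objects, so the second assertion of the lemma (coequalising parallel arrows into the chain) becomes vacuous, and only the easy inductive cofinality construction via~\ref{item:filtered1} remains. You instead prove the full statement by hand inside the given category, interleaving cofinality tasks (handled by~\ref{item:filtered1}) with coequalisation tasks (handled by~\ref{item:filtered2}) in a dovetailed schedule, and your reduction of an arbitrary pair \(y\to x_n\), \(y\to x_m\) to a parallel pair with codomain a chain object, which then appears in the fixed enumeration and is eventually processed, is exactly what the second assertion needs. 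What your approach buys is self-containedness: no appeal to an external (and not entirely trivial) result, and an explicit verification of the coequalisation property rather than making it disappear by a change of index category. What it costs is the scheduling bookkeeping; to be fully rigorous you should fix a concrete dovetailing in which every deferred task is re-offered infinitely often (for instance, use a surjection \(\N\to\N\) attaining each value infinitely many times, and at a step where the scheduled task is not yet enabled simply take \(f_n\) from an application of~\ref{item:filtered1} to \(x_{n-1}\) and the next unreached object). Alternatively, you could lighten the bookkeeping by only enumerating, as they arise, the parallel pairs whose codomain is one of the already constructed~\(x_n\), since these are the only pairs relevant to the second assertion and their codomains are reached by definition.
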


Such a sequence of objects and maps is called \emph{cofinal} or
final.  More precisely, the functor \((\N,\le)\to\Cat\) given by the
objects~\(x_n\) and the maps~\(f_n\) is called final
in~\cite{MacLane:Categories}.

\begin{proof}
  It is shown in~\cite{Andreka-Nemethi:Limits} that any filtered
  category receives a cofinal functor from a directed (partially
  ordered) set.  A partially ordered set is viewed as a category by
  putting a unique arrow \(x\to y\) if \(x\le y\), and no arrow
  otherwise.  A category is of this form if and only if for any two
  objects there is at most one arrow between them.  To simplify the
  proof, we first use~\cite{Andreka-Nemethi:Limits} to reduce to a
  countable, directed set.  The category~\(\Cat_P\) comes from a
  directed set if and only if~\(P\) has cancellation.

  Let~\((y_n)_{n\in\N}\) be an enumeration of the objects of~\(\Cat\).
  We construct~\(x_n\) for \(n\in\N\) inductively so that it receives
  maps from \(y_1,\dotsc,y_n\).  We start with \(x_0=y_0\).  Assume
  \(x_i\) and~\(f_i\) have been constructed for \(i<n\).
  Since~\(\Cat\) is filtered, there is an object~\(x_n\) that receives
  maps from \(y_n\) and~\(x_{n-1}\).  Let~\(f_n\) be the arrow
  \(x_{n-1}\to x_n\).  Since already~\(x_{n-1}\) receives maps
  from~\(y_i\) for \(i<n\), so does~\(x_n\) by composing with~\(f_n\).
  Thus every object~\(y\) has a map to some~\(x_n\).  Our simplifying
  assumption makes the second part of the lemma trivial.
\end{proof}

We now describe the colimit of the inductive system on~\(\Cat_P\)
given by the \(\Cst\)\nb-algebras \(\Comp(\Hilm_p)\) for \(p\in P\)
and the maps \(\varphi_{p,q}\) for \(p,q\in P\) defined
in~\eqref{eq:Fixed_Point_algebra_m}.

We first do this quickly in the countable case.  Then
Lemma~\ref{lem:countable_filtered_cofinal} allows us to choose a
cofinal functor~\((\N,\le)\) to~\(\Cat_P\), that is, we get a pair
of sequences
\((p_n)_{n\in\N}\) and \((q_n)_{n\in\N}\) in~\(P\) with \(p_{n+1} =
p_n q_n\) for all \(n\in\N\) that is ``cofinal'' in~\(\Cat_P\).  The
\(\Cst\)\nb-algebras \(\Comp(\Hilm_{p_n})\) and the nondegenerate
\Star{}homomorphisms \(\varphi_{p_n,q_n}\colon \Comp(\Hilm_{p_n}) \to
\Comp(\Hilm_{p_n q_n}) = \Comp(\Hilm_{p_{n+1}})\) form an inductive
system in the usual sense.  Let~\(\CP_1\) be its inductive limit
\(\Cst\)\nb-algebra.  Cofinality implies that this inductive limit is
also a colimit of the whole diagram on~\(\Cat_P\).

Now we give the more complicated construction without using
Lemma~\ref{lem:countable_filtered_cofinal}, which also works in the
uncountable case.  Let
\[
\CP_\sqcup\defeq \bigsqcup_{p\in P} \Comp(\Hilm_p).
\]
Let~\(\CP_\sim\) be the set of equivalence classes for the equivalence
relation on~\(\CP_\sqcup\) generated by the relations \((x,p)\sim
(\varphi_{p,q}(x),p q)\) for all \(p,q\in P\), \(x\in\Comp(\Hilm_p)\).

\begin{lemma}
  \label{lem:Ore_limit_star}
  There is a unique \Star{}algebra structure for which all maps
  \(\Comp(\Hilm_p)\to \CP_\sim\) are \Star{}homomorphisms, and the
  maximal \(\Cst\)\nb-seminorm on~\(\CP_\sim\) exists.
  Let~\(\CP_1\) be the resulting \(\Cst\)\nb-completion
  of~\(\CP_\sim\).  If \((\Hilm[F],V_p)\) is a transformation from
  \((A,\Hilm_p,\mu_{p,q})\) to a \(\Cst\)\nb-algebra~\(B\), then the
  resulting maps \(\vartheta_p\colon \Comp(\Hilm_p) \to
  \Bound(\Hilm[F])\) factor through a unique nondegenerate
  \Star{}\alb{}homomorphism \(\Theta\colon \CP_1\to
  \Bound(\Hilm[F])\).
\end{lemma}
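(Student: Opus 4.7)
My plan is to handle the three assertions in turn: first equip $\CP_\sim$ with a $*$-algebra structure by transport from the constituent algebras $\Comp(\Hilm_p)$ using the Ore conditions; then exhibit a termwise a priori bound that produces the maximal $\Cst$-seminorm; and finally show that a transformation $(\Hilm[F], V_p)$ yields a compatible family of nondegenerate \Star{}homomorphisms that descends through $\CP_\sim$ to $\CP_1$.

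For the $*$-algebra structure, I would define the product of classes $[x,p]$ and $[y,q]$ by choosing $s,t\in P$ with $ps=qt$ via~\ref{enum:Ore1} and setting
\[
[x,p]\cdot [y,q]\defeq [\varphi_{p,s}(x)\,\varphi_{q,t}(y),\,ps],
\]
with involution $[x,p]^*\defeq [x^*,p]$. Independence from the choice of representative in each variable is immediate, since replacing $x$ by $\varphi_{p,r}(x)$ and passing to a common multiple of $pr$ and $q$ lands in the same class by the functoriality of Lemma~\ref{lem:varphi_functor}. Independence from the choice of common multiple is where the Ore conditions do real work: given two choices $ps_1=qt_1$ and $ps_2=qt_2$, I would apply~\ref{enum:Ore1} to find $u_1,u_2$ with $s_1u_1=s_2u_2$, then use $q(t_1u_1)=ps_1u_1=ps_2u_2=q(t_2u_2)$ together with~\ref{enum:Ore2} to force $t_1u_1=t_2u_2$ after further right multiplication, and invoke Lemma~\ref{lem:varphi_functor} to conclude that both candidate products represent the same element of $\CP_\sim$. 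Associativity reduces to choosing a threefold common multiple, and the involution is evidently compatible since each $\varphi_{p,q}$ is a \Star{}homomorphism.

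The maximal $\Cst$-seminorm exists because any \Star{}representation $\pi$ of $\CP_\sim$ on Hilbert space restricts on each $\Comp(\Hilm_p)$ to a \Star{}representation of a $\Cst$-algebra, hence is contractive there, giving $\norm{\pi([x,p])}\le\norm{x}$. The supremum over all such $\pi$ thus defines a $\Cst$-seminorm, and the Hausdorff completion yields $\CP_1$. For the universal property, the commuting square~\eqref{eq:transformation_product_system} gives $\vartheta_{pq}\circ\varphi_{p,q}=\vartheta_p$, so the restrictions of the $\vartheta_p$ to compacts descend to a well-defined map $\Theta_\sim\colon\CP_\sim\to\Bound(\Hilm[F])$. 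Multiplicativity is again routed through a common multiple: for $ps=qt$,
\[
\vartheta_p(x)\,\vartheta_q(y)=\vartheta_{ps}(\varphi_{p,s}(x))\,\vartheta_{ps}(\varphi_{q,t}(y))=\vartheta_{ps}\bigl(\varphi_{p,s}(x)\,\varphi_{q,t}(y)\bigr),
\]
which matches the defined product in $\CP_\sim$. As a Hilbert-space representation, $\Theta_\sim$ is bounded by the maximal $\Cst$-seminorm and extends uniquely to $\Theta\colon\CP_1\to\Bound(\Hilm[F])$. Nondegeneracy is inherited from the nondegeneracy of each $\Comp(\Hilm_p)$ acting on $\Hilm_p$: since $V_p$ is unitary, $\vartheta_p(\Comp(\Hilm_p))=V_p(\Comp(\Hilm_p)\otimes_A\Id)V_p^*$ acts nondegenerately on $\Hilm[F]\cong V_p(\Hilm_p\otimes_A\Hilm[F])$.

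I expect the main obstacle to be well-definedness of the multiplication: reconciling two choices of common multiple requires invoking~\ref{enum:Ore1} and~\ref{enum:Ore2} in succession and then tracking the images under the $\varphi_{p,q}$ at each step via Lemma~\ref{lem:varphi_functor}. Everything else is a formal consequence of this construction together with the standard properties of maximal $\Cst$-seminorms.
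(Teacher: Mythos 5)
Your proposal is correct and follows essentially the same route as the paper: the operations on \(\CP_\sim\) are defined by passing to a common multiple via~\ref{enum:Ore1}, well-definedness is settled by combining \ref{enum:Ore1}, \ref{enum:Ore2} and the functoriality of Lemma~\ref{lem:varphi_functor}, and \(\Theta\) is obtained by descending the compatible family \((\vartheta_p)\) through \(\CP_\sim\) and extending by maximality of the seminorm. The only cosmetic difference is that you realise the maximal \(\Cst\)\nb-seminorm as a supremum over Hilbert-space representations, bounded termwise by \(\norm{x}\), whereas the paper works with the quotient seminorms on the images of the \(\Comp(\Hilm_p)\); both rest on contractivity of \Star{}homomorphisms out of \(\Cst\)\nb-algebras, and your explicit nondegeneracy check (via \(\vartheta_p(\Comp(\Hilm_p))\) acting nondegenerately on \(\Hilm[F]\cong V_p(\Hilm_p\otimes_A\Hilm[F])\)) is a point the paper leaves implicit.
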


\begin{proof}
  Let \(x\in\Comp(\Hilm_p)\), \(y\in\Comp(\Hilm_q)\).  There are
  \(t_1,t_2\in P\) with \(p t_1=q t_2\).  Then \((x,p)\sim
  (\varphi_{p,t_1}(x),p t_1)\) and \((y,q)\sim (\varphi_{q,t_2}(y),q
  t_2)\) both belong to the \(\Cst\)\nb-algebra \(\Comp(\Hilm_{p
    t_1}) = \Comp(\Hilm_{q t_2})\); this dictates what their sum or
  product should be in~\(\CP_\sim\).  If we choose \(t'_1,t'_2\in
  P\) with \(p t'_1 = q t'_2\) instead, then we may find
  \(m_1,m_2\in P\) with \(p t'_1 m_1 = p t_1 m_2\) and hence \(q
  t'_2 m_1 = q t_2 m_2\).  If not yet \(t'_2 m_1 = t_2 m_2\), then
  we find \(n\in P\) with \(t'_2 m_1 n = t_2 m_2 n\) and replace
  \(m_1,m_2\) by \(m_1 n, m_2 n\).  Similarly, we achieve \(t'_1 m_1
  = t_1 m_2\).  Then multiplication with \(m_2\) and~\(m_1\) will
  map our two choices of the sum or product to the same sum or
  product in \(\Comp(\Hilm_{p t_1 m_2})\), respectively.  Thus the
  multiplication and addition on~\(\CP_\sim\) are well-defined.

  A similar argument shows that any finite subset of~\(\CP_\sim\)
  belongs to the image of~\(\Comp(\Hilm_p)\) in~\(\CP_\sim\) for
  some \(p\in P\).  Since the algebraic operations are defined using
  those in~\(\Comp(\Hilm_p)\), \(\CP_\sim\) is a \Star{}algebra.  By
  construction, this is the only \Star{}algebra structure for which
  all maps \(\Comp(\Hilm_p)\to \CP_\sim\) are \Star{}homomorphisms.

  The kernel of the map \(\Comp(\Hilm_p)\to \CP_\sim\) is the union
  of the kernels of the \Star{}\alb{}homomorphisms
  \(\varphi_{q,p}\colon \Comp(\Hilm_p)\to \Comp(\Hilm_{p q})\).
  Thus the image of~\(\Comp(\Hilm_p)\) in~\(\CP_\sim\) is the
  quotient by a union of closed \Star{}ideals.  We equip it with the
  quotient seminorm, which is a \(\Cst\)\nb-seminorm (there may be a
  nullspace because the union of ideals need not be closed).  All
  these \(\Cst\)\nb-seminorms on subalgebras of~\(\CP_\sim\)
  together are compatible with each other and thus define a
  \(\Cst\)\nb-seminorm on~\(\CP_\sim\).  Since any
  \Star{}homomorphism between \(\Cst\)\nb-algebras is contractive,
  this is the maximal \(\Cst\)\nb-seminorm on the
  \Star{}algebra~\(\CP_\sim\).  Let~\(\CP_1\) be the (Hausdorff)
  completion of~\(\CP_\sim\) for this \(\Cst\)\nb-seminorm.  This is
  a \(\Cst\)\nb-algebra with \Star{}homomorphisms
  \(\vartheta^0_p\colon \Comp(\Hilm_p)\to \CP_1\) for all \(p\in P\)
  that satisfy \(\vartheta^0_{p q}\circ\varphi_{p,q} =
  \vartheta^0_p\) for all \(p,q\in P\).

  Now take a transformation to~\(B\) as above.  The resulting maps
  \(\vartheta_p\colon \Comp(\Hilm_p)\to \Bound(\Hilm[F])\) satisfy
  \(\vartheta_{p q}\circ\varphi_{p,q} = \vartheta_p\).  Hence the
  map \(\bigsqcup_{p\in G} \vartheta_p\colon \CP_\sqcup \to
  \Bound(\Hilm[F])\) descends to a map \(f\colon \CP_\sim\to
  \Bound(\Hilm[F])\).  Since all~\(\vartheta_p\) are
  \Star{}homomorphisms, so is~\(f\).  Since we took the maximal
  \(\Cst\)\nb-seminorm on~\(\CP_\sim\), we may extend~\(f\) uniquely
  to a \Star{}homomorphism \(\Theta\colon \CP_1\to
  \Bound(\Hilm[F])\) with \(\Theta\circ \vartheta^0_p =
  \vartheta_p\) for all \(p\in P\).
\end{proof}

Any functor \((p_n,q_n)\colon (\N,\le)\to\Cat_P\) induces a
\Star{}homomorphism from the inductive limit \(\Cst\)\nb-algebra of
the inductive system \((\Comp(\Hilm_{p_n}),\varphi_{p_n,q_n})\)
described above Lemma~\ref{lem:Ore_limit_star} to~\(\CP_1\).  If the
functor is cofinal, then this map is an isomorphism.  Hence the
simplified construction for countable~\(P\) gives the same
\(\Cst\)\nb-algebra~\(\CP_1\).

So far, we have described only a part of the Cuntz--Pimsner algebra of
the product system.  For a single endomorphism, this is the
fixed-point subalgebra of the gauge action.  We now describe
the whole Cuntz--Pimsner algebra through a Fell bundle over the group
completion~\(G\) of~\(P\).

Elements of~\(G\) are equivalence classes of formal
fractions~\(p_1p_2^{-1}\) for \(p_1,p_2\in P\), with \(p_1p_2^{-1}
\sim (p_1 q)(p_2 q)^{-1}\).  The fibre of the desired Fell bundle
over~\(G\) at \(1\in G\) is the \(\Cst\)\nb-algebra~\(\CP_1\)
described above.

\begin{definition}
  \label{def:Cat_P_g}
  Fix \(g\in G\).  Let
  \[
  R_g = \{(p_1,p_2)\in P\times P\mid p_1 p_2^{-1} = g \text{ in }G\}
  \]
  be its set of representatives.  Let~\(\Cat_P^g\) be the category
  with object set~\(R_g\) and arrow set \(R_g\times P\), where
  \((p_1,p_2,q)\) is an arrow \((p_1,p_2)\to (p_1 q,p_2 q)\); the
  multiplication is \((p_1 q, p_2 q,t)\cdot (p_1, p_2,q) =
  (p_1,p_2,t q)\).
\end{definition}

\begin{lemma}
  \label{lem:Ore_category_g}
  The categories~\(\Cat_P^g\) for \(g\in G\) are filtered if~\(P\)
  is an Ore monoid.  The functor \(\Cat_P\to\Cat_P^1\) that maps an
  object~\(p\) in~\(\Cat_P\) to~\((p,p)\) and an arrow~\((p,q)\)
  in~\(\Cat_P\) to \((p,p,q)\) is cofinal.
\end{lemma}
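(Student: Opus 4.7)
The plan is to verify the two filteredness conditions~(F1) and~(F2) for each $\Cat_P^g$ directly from the Ore conditions for~$P$, and then to establish cofinality of $F\colon\Cat_P\to\Cat_P^1$ by checking the standard two-part criterion: for every object of $\Cat_P^1$, the corresponding slice category under~$F$ should be non-empty and connected.

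For~(F1), two objects $(p_1,p_2),(p_1',p_2')\in R_g$ both represent~$g$ in~$G$, so by the explicit construction of $G$ as equivalence classes of formal fractions there exist $t,t'\in P$ with $p_1 t=p_1' t'$ and $p_2 t=p_2' t'$; the arrows $(p_1,p_2,t)$ and $(p_1',p_2',t')$ then share the common target $(p_1 t,p_2 t)$. For~(F2), a parallel pair $(p_1,p_2,q),(p_1,p_2,q')$ with common target satisfies $p_1 q=p_1 q'$, so Ore condition~\ref{enum:Ore2} applied to $x=p_1$, $y_1=q$, $y_2=q'$ produces $z\in P$ with $qz=q'z$, and then the arrow $(p_1 q,p_2 q,z)$ coequalizes the pair upon composition.

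For cofinality, I handle non-emptiness and connectedness of the slices in turn. Non-emptiness is immediate: $(p_1,p_2)\in R_1$ unfolded through the definition of the equivalence relation means there is $q\in P$ with $p_1 q=p_2 q$, giving the arrow $(p_1,p_2,q)\colon(p_1,p_2)\to(p_1 q,p_1 q)=F(p_1 q)$. For connectedness, given two arrows $(p_1,p_2,q)\colon(p_1,p_2)\to F(p)$ and $(p_1,p_2,q')\colon(p_1,p_2)\to F(p')$, I apply Ore~\ref{enum:Ore1} to $p,p'$ to obtain $r,r'\in P$ with $pr=p'r'\defeq p''$; the images $F(p,r)=(p,p,r)$ and $F(p',r')=(p',p',r')$ lie in the image of~$F$, and composing with the original arrows yields a parallel pair $(p_1,p_2)\rightrightarrows F(p'')$. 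Since $p_1 qr=pr=p'r'=p_1 q'r'$, one further application of~\ref{enum:Ore2} produces $z\in P$ with $qrz=q'r'z$, and composition with $F(p'',z)=(p'',p'',z)$ equalizes the two arrows as morphisms into $F(p''z)$, confirming connectedness.

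The main subtlety lies in this cofinality step: a direct appeal to~(F2) inside $\Cat_P^1$ would coequalize by an arrow of $\Cat_P^1$ which need not come from~$F$. The key observation is that the equalizer furnished by~\ref{enum:Ore2} is automatically diagonal, of the form $(p'',p'',z)=F(p'',z)$, so the coequalization genuinely takes place within the image of~$F$. Everything else reduces to a direct translation of the two Ore conditions into the categorical statements~(F1) and~(F2), together with the observation that the diagonal objects $(p,p)$ are cofinal in $R_1$.
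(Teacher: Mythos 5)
Your proof is correct and follows essentially the same route as the paper: filteredness of $\Cat_P^g$ via the definition of the equivalence relation defining~$G$ (for~(F1)) and via~\ref{enum:Ore2} applied to the first components (for~(F2)), and cofinality from the fact that every $(p_1,p_2)\in R_1$ admits an arrow to a diagonal object. The only difference is presentational: the paper invokes full faithfulness of the functor and leaves the connectedness of the comma categories implicit, whereas you spell it out with one more application of \ref{enum:Ore1} and~\ref{enum:Ore2}, noting (as the paper does via full faithfulness) that arrows out of diagonal objects automatically lie in the image of the functor.
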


\begin{proof}
  First we check that~\(\Cat_P^g\) is filtered.  Let \(p=(p_1,p_2)\)
  and \(q=(q_1,q_2)\) be elements of~\(R_g\).  We must prove two
  things.  First, there should be arrows \(h\colon p\to t\) and
  \(k\colon q\to t\) with the same target \(t\in R_g\).  Secondly,
  if \(h,k\colon p\rightrightarrows q\) are two parallel arrows,
  there is an arrow \(l\colon q\to t\) for some object~\(t\) such
  that \(l\circ h=l\circ k\).  Since \(p\) and~\(q\) both represent
  \(g\in G\), there are \(h,k\in P\) with \(p_1 h = q_1 k\) and
  \(p_2 h = q_2 k\).  Hence \(h\colon (p_1,p_2)\to (p_1 h,p_2 h)\)
  and \(k\colon (q_1,q_2)\to (q_1 k,q_2 k)\) have the same target,
  as desired.  The second claim above is immediate
  from~\ref{enum:Ore2}: we may simply forget \(p_2\) and~\(q_2\).

  The functor \(\Cat_P\to\Cat_P^1\) is fully faithful.  If
  \((p_1,p_2)\in R_1\), then there are \(q,h\in P\) with \((p_1
  h,p_2 h) = (q,q)\).  Hence the functor \(\Cat_P\to\Cat_P^1\) is
  cofinal.
\end{proof}

For \((p_1,p_2)\in R_g\), let \(\CP_{p_1,p_2}\defeq
\Comp(\Hilm_{p_2},\Hilm_{p_1})\); for now, we view this as a Banach
space.  For \(q\in P\), \((p_1,p_2)\in R_g\), we define a contraction
\[
\varphi_{p_1,p_2,q}\colon \Comp(\Hilm_{p_2},\Hilm_{p_1})\to
\Comp(\Hilm_{p_2 q},\Hilm_{p_1 q}),\qquad
T\mapsto \mu_{p_1,q}(T\otimes_A \Id_{\Hilm_q})\mu_{p_2,q}^*.
\]
These maps form a functor from~\(\Cat_P^g\) to the category of
Banach spaces with linear contractions.  Since~\(\Cat_P^g\) is
filtered by Lemma~\ref{lem:Ore_category_g}, the colimit~\(\CP_g\) of
this diagram may be constructed as in
Lemma~\ref{lem:Ore_limit_star}: first take the disjoint union of the
Banach spaces~\(\CP_{p_1,p_2}\) for all \((p_1,p_2)\in R_g\); then
divide out the relations given by the maps~\(\varphi_{p_1,p_2,q}\);
this gives a vector space, and it inherits a canonical seminorm by
taking the quotient seminorms on the images of
\(\Comp(\Hilm_{p_2},\Hilm_{p_1})\); finally, take the completion to
get~\(\CP_g\).  If~\(P\) is countable, then we may also use a
cofinal sequence in~\(\Cat_P^g\) to describe the colimit as an
inductive limit over~\((\N,\le)\).

Since the functor \(\Cat_P\to\Cat_P^1\) is cofinal, the colimit of a
diagram over~\(\Cat_P^1\) is the same as the colimit of its
restriction to~\(\Cat_P\).  Hence the construction of~\(\CP_g\) for
\(g=1\) gives the same \(\Cst\)\nb-algebra~\(\CP_1\) as in
Lemma~\ref{lem:Ore_limit_star}, as suggested by our notation.

If \(g_1,g_2\in G\), \((p_1,p_2)\in R_{g_1}\), \((p_2,p_3)\in
R_{g_2}\), then \(p_1 p_3^{-1} = p_1 p_2^{-1}\cdot p_2 p_3^{-1} = g_1\cdot
g_2\), that is, \((p_1,p_3)\in R_{g_1\cdot g_2}\).  The composition of
compact operators gives a bounded bilinear map \(\CP_{p_1,p_2}\times
\CP_{p_2,p_3} \to \CP_{p_1,p_3}\).  These maps define a bounded
bilinear map
\[
\CP_{g_1}\times \CP_{g_2} \to \CP_{g_1g_2}
\]
because for any \((p_1',p_2')\), \((p_2,p_3)\) in \(R_{g_1}\times
R_{g_2}\) there are \(h,k\in P\) with \(p_2 h = p_2' k\), so that the
composition is defined on \(\CP_{p_1' k,p_2' k}\times \CP_{p_2 h, p_3
  h}\), and these composition maps are compatible with the structure
maps of the inductive systems.  Similarly, taking adjoints gives maps
\(\CP_{p_1,p_2}\to \CP_{p_2,p_1}\), \(T\mapsto T^*\), for all
\(p_1,p_2\in R_g\); these maps induce an involution \(\CP_g\to
\CP_{g^{-1}}^*\).  These multiplication maps and involutions
on~\((\CP_g)_{g\in G}\) give a Fell bundle over the group~\(G\).  The
resulting \(\Cst\)\nb-algebra structure on its unit fibre~\(\CP_1\) is
the one already described in Lemma~\ref{lem:Ore_limit_star}.

\begin{theorem}
  \label{the:Ore_colimit_Fell}
  Let~\(P\) be an Ore monoid and let \((A,\Hilm_p,\mu_{p,q})\) be a
  proper, nondegenerate product system over~\(P\).  Its Cuntz--Pimsner
  algebra is isomorphic to the full sectional \(\Cst\)\nb-algebra of
  the Fell bundle~\((\CP_g)_{g\in G}\) described above.
\end{theorem}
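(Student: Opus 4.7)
The plan is to match universal properties. The Cuntz--Pimsner algebra $\CP$ is universal for transformations $(\Hilm[F], V_p)$ of the proper product system, as explained after Proposition~\ref{pro:CP_vs_nondegenerate}, while $\Cst((\CP_g)_{g\in G})$ is tautologically universal for nondegenerate representations of the Fell bundle on Hilbert modules. It therefore suffices to exhibit a natural bijection between transformations of the product system and nondegenerate Fell bundle representations on the same correspondence $\Hilm[F]$ from $A$ to $B$; both $\Cst$-algebras will then represent the same functor and be canonically isomorphic.

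For the forward direction, given a transformation $(\Hilm[F], V_p)$, I would define, for each $g \in G$ and each $(p_1, p_2) \in R_g$, the contraction
\[
\pi_{p_1, p_2} \colon \Comp(\Hilm_{p_2}, \Hilm_{p_1}) \to \Bound(\Hilm[F]), \qquad T \mapsto V_{p_1}(T \otimes_A \Id_{\Hilm[F]})V_{p_2}^*.
\]
The coherence diagram~\eqref{eq:transformation_product_system} yields $V_{p_i q} \circ (\mu_{p_i,q} \otimes \Id_{\Hilm[F]}) = V_{p_i} \circ (\Id_{\Hilm_{p_i}} \otimes V_q)$ for $i = 1, 2$; substituting these into $\pi_{p_1 q, p_2 q}(\varphi_{p_1, p_2, q}(T))$ and cancelling $V_q V_q^{-1}$ in the middle gives $\pi_{p_1 q, p_2 q} \circ \varphi_{p_1, p_2, q} = \pi_{p_1, p_2}$. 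Hence the maps $\pi_{p_1,p_2}$ descend to a bounded linear $\pi_g \colon \CP_g \to \Bound(\Hilm[F])$ on the colimit described before the theorem. For $(p_1,p_2) \in R_{g_1}$ and $(p_2,p_3) \in R_{g_2}$ (attainable by the Ore condition~\ref{enum:Ore1}), unitarity of $V_{p_2}$ yields
\[
\pi_{g_1}(T)\pi_{g_2}(S) = V_{p_1}(TS \otimes \Id_{\Hilm[F]})V_{p_3}^* = \pi_{g_1 g_2}(TS),
\]
and $\pi_g(T)^* = V_{p_2}(T^* \otimes \Id_{\Hilm[F]})V_{p_1}^* = \pi_{g^{-1}}(T^*)$. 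Nondegeneracy of the resulting integrated representation reduces to nondegeneracy of $\pi_1$, which coincides with the \Star{}homomorphism $\Theta$ of Lemma~\ref{lem:Ore_limit_star}.

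Conversely, given a nondegenerate Fell bundle representation $\Pi$ on $\Hilm[F]$, the canonical identification $\Comp(A, \Hilm_p) \cong \Hilm_p$ (via $\xi \mapsto \ket{\xi}$) allows me to restrict $\Pi|_{\CP_{p,1}}$ to a linear map $S_p \colon \Hilm_p \to \Bound(\Hilm[F])$. The identity $\ket{\xi}^*\ket{\eta} = \braket{\xi}{\eta}$ in $A \subset \CP_1$ gives the inner-product axiom $S_p(\xi)^*S_p(\eta) = \braket{\xi}{\eta}$ of Proposition~\ref{pro:unitary_to_correspondence_representation}; the left $A$-linearity comes from $\Pi$ respecting the $\CP_1$-bimodule structure of each $\CP_{p,1}$; the product system identities $S_p(\xi)S_q(\eta) = S_{pq}(\mu_{p,q}(\xi \otimes \eta))$ follow from multiplicativity of $\Pi$ once one matches $\mu_{p,q}$ with the Fell bundle multiplication $\CP_{p,1} \times \CP_{q,1} \to \CP_{pq,1}$; and nondegeneracy of each $S_p$ follows from nondegeneracy of $\Pi$. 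Proposition~\ref{pro:unitary_to_correspondence_representation} then produces the unitaries $V_p$, and associativity of Fell bundle multiplication translates into~\eqref{eq:transformation_product_system}, completing the transformation. I expect the main obstacle to lie precisely here: verifying that the Fell bundle multiplication $\CP_p \times \CP_q \to \CP_{pq}$, implemented by composition of compact operators modulo the colimit identifications, canonically realises $\mu_{p,q}$ under $\Comp(A,\Hilm_p) \cong \Hilm_p$, and checking that the forward and reverse constructions are mutually inverse; once these compatibilities are in place, the universal properties force $\CP \cong \Cst((\CP_g)_{g\in G})$.
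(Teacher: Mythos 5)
Your proposal is correct and follows essentially the same route as the paper: both directions use the same formulas ($T\mapsto V_{p_1}(T\otimes_A\Id_{\Hilm[F]})V_{p_2}^*$ descending to the colimits $\CP_g$, and restriction along $\Comp(A,\Hilm_p)\cong\Hilm_p$ for the converse), and the conclusion is drawn from the matching universal properties. The only cosmetic difference is that the paper phrases the comparison via nondegenerate \Star{}homomorphisms into $\Mult(B)$ with $\Hilm[F]=B$ rather than via arbitrary correspondences, which changes nothing of substance.
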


\begin{proof}
  Let~\(C\) denote the Cuntz--Pimsner algebra of our product system.
  By construction, a nondegenerate \Star{}homomorphism \(C\to
  \Mult(B)\) for a \(\Cst\)\nb-algebra~\(B\) is the same as a
  Cuntz--Pimsner covariant representation of our product system
  on~\(B\) that is nondegenerate on the unit fibre~\(A\).  The
  Cuntz--Pimsner covariance condition is equivalent to the
  nondegeneracy condition \(\Hilm_p\cdot B=B\) for all \(p\in P\) by
  Proposition~\ref{pro:CP_vs_nondegenerate} because we assume
  all~\(\Hilm_p\) to be proper and nondegenerate left \(A\)\nb-modules,
  and the left \(A\)\nb-action on~\(B\) is nondegenerate as well.

  We are going to find a natural bijection between representations of
  the product system with \(\Hilm_p\cdot B=B\) for all \(p\in P\) and
  representations of the Fell bundle~\((\CP_g)_{g\in G}\)
  in~\(\Mult(B)\).  By the universal property of the sectional
  \(\Cst\)\nb-algebra of a Fell bundle, this gives a natural bijection
  between nondegenerate \Star{}homomorphisms \(C\to \Mult(B)\) and
  \(\Cst((\CP_g)_{g\in G})\to\Mult(B)\), and this implies \(C\cong
  \Cst((\CP_g)_{g\in G})\).

  By Proposition~\ref{pro:unitary_to_correspondence_representation}, a
  representation of the product system that is nondegenerate in the
  above sense is equivalent to a transformation from
  \((A,\Hilm_p,\mu_{p,q})\) to~\(B\) with underlying Hilbert
  \(B\)\nb-module~\(B\).  We write \(\Hilm[F]=B\) to be consistent
  with our previous notation.  We already constructed
  \Star{}homomorphisms \(\vartheta_p\colon
  \Comp(\Hilm_p)\to\Bound(\Hilm[F])\) with
  \(\vartheta_{p q}\circ\varphi_{p,q}=\vartheta_p\) for all \(p,q\in
  P\).  The same recipe gives linear contractions
  \[
  \vartheta_{p_1,p_2} \colon
  \Comp(\Hilm_{p_2},\Hilm_{p_1})\to\Bound(\Hilm[F]),\qquad
  T\mapsto V_{p_1}(T\otimes \Id_{\Hilm[F]})V_{p_2}^*.
  \]
  These satisfy \(\vartheta_{p_1 q,p_2 q} \circ \varphi_{p_1,p_2,q} =
  \vartheta_{p_1,p_2}\) for all \(p_1,p_2,q\in P\).  Hence they induce
  maps \(\Theta_g\colon \CP_g\to \Bound(\Hilm[F])\) on the Banach
  space inductive limits.  Routine computations show that
  \begin{equation}
    \label{eq:Fell_bundle_rep}
    \vartheta_{p_2,p_1}(T)^* = \vartheta_{p_1,p_2}(T^*),\qquad
    \vartheta_{p_1,p_2}(T) \circ \vartheta_{p_2,p_3}(T_2)
    = \vartheta_{p_1,p_3}(T\circ T_2)
  \end{equation}
  for all \(p_1,p_2,p_3\in P\),
  \(T\in\Comp(\Hilm_{p_2},\Hilm_{p_1})\),
  \(T_2\in\Comp(\Hilm_{p_3},\Hilm_{p_2})\).  Hence the
  maps~\(\Theta_g\) form a representation of the Fell
  bundle~\((\CP_g)_{g\in G}\).

  Conversely, a representation of the Fell bundle~\((\CP_g)_{g\in G}\)
  gives maps
  \[
  \Comp(\Hilm_{p_2},\Hilm_{p_1}) \to \Bound(\Hilm[F])
  \]
  that satisfy~\eqref{eq:Fell_bundle_rep}.  For \(p_2=1\), there is a
  canonical isomorphism \(\Comp(\Hilm_{p_2},\Hilm_{p_1})\cong
  \Hilm_{p_1}\) because \(\Hilm_1=A\).  Hence the Fell bundle
  representation gives maps \(S_p\colon \Hilm_p\to \Bound(\Hilm[F])\).
  Since \(A=\Comp(\Hilm_1)\subseteq \CP_1\), the conditions of a Fell
  bundle representation imply that the maps~\(S_p\) form a
  representation of the product system.  Since the maps
  \(\Comp(\Hilm_p)\to \CP_1\to \Bound(\Hilm[F])\) are nondegenerate,
  \(S_p(\Hilm_p)\Hilm[F]\supseteq \Comp(\Hilm_p)\Hilm[F] =
  \Hilm[F]\).  This gives the desired bijection between Fell bundle
  representations and Cuntz--Pimsner covariant representations of the
  product system and finishes the proof.
\end{proof}

Theorem~\ref{the:Ore_colimit_Fell} is similar to
\cite{Kwasniewski-Szymanski:Ore}*{Theorem 3.8}, only the assumptions
on the product system differ slightly.  Unlike
in~\cite{Kwasniewski-Szymanski:Ore}, we assume the product system to
be nondegenerate, but allow the left action to be non-injective.
Similar ideas are used
in~\cite{Cuntz-Echterhoff-Li:K-theory}*{Section~4} to dilate certain
actions of Ore semigroups by endomorphisms to group actions on a
larger algebra.  In fact, actions by endomorphisms are a special
case of product systems by Remark~\ref{rem:left_right}.  To reduce
the dilation results in \cite{Cuntz-Echterhoff-Li:K-theory}
or~\cite{Laca:Endomorphisms_back} to
Theorem~\ref{the:Ore_colimit_Fell}, another step is missing:
checking when the Fell bundle in Theorem~\ref{the:Ore_colimit_Fell}
comes from an ordinary group action by automorphisms.  We refrain
from doing this because, from our point of view, saturated Fell
bundles are already actions of the underlying group.

\begin{proposition}
  \label{pro:Fell_bundle_saturated}
  The Fell bundle~\((\CP_g)_{g\in G}\) is saturated if~\(\Hilm_p\) is
  a full Hilbert \(A\)\nb-module for each \(p\in P\).
\end{proposition}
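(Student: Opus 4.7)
The plan is to show that for all $g_1, g_2 \in G$, the closed linear span $\clsp(\CP_{g_1} \cdot \CP_{g_2})$ equals $\CP_{g_1 g_2}$. Since the images of $\Comp(\Hilm_{s_2}, \Hilm_{s_1})$ for $(s_1, s_2) \in R_{g_1 g_2}$ have dense union in $\CP_{g_1 g_2}$ by construction, it suffices to exhibit each such image inside $\clsp(\CP_{g_1} \cdot \CP_{g_2})$. The strategy is to find an intermediate element $m \in P$ such that $(s_1, m) \in R_{g_1}$ and $(m, s_2) \in R_{g_2}$ (after possibly passing to an equivalent representative of $(s_1, s_2)$), and then to approximate each $T \in \Comp(\Hilm_{s_2}, \Hilm_{s_1})$ by finite sums of compositions $T_1 T_2$ with $T_1 \in \Comp(\Hilm_m, \Hilm_{s_1})$ and $T_2 \in \Comp(\Hilm_{s_2}, \Hilm_m)$; such products represent elements of $\CP_{g_1} \cdot \CP_{g_2}$ by definition of the Fell bundle multiplication.

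The existence of $m$ would require $g_1^{-1} s_1 = g_2 s_2$ (an element of $G$) to have a representative in $P$, which need not hold as stated. To arrange this, I would write $g_2 = a b^{-1}$ for some $a, b \in P$ and apply the Ore condition~\ref{enum:Ore1} to produce $u, q \in P$ with $s_2 q = b u$. Replacing $(s_1, s_2)$ by the equivalent representative $(s_1 q, s_2 q)$ and setting $m \defeq a u \in P$, one computes $m = a b^{-1} s_2 q = g_2 s_2 q$ in $G$, so that $(s_1 q, m) \in R_{g_1}$ and $(m, s_2 q) \in R_{g_2}$. Since the image of $T$ in $\CP_{g_1 g_2}$ coincides with that of $\varphi_{s_1, s_2, q}(T) \in \Comp(\Hilm_{s_2 q}, \Hilm_{s_1 q})$, it suffices to work with $(s_1 q, s_2 q)$ in place of $(s_1, s_2)$, so we may assume $m \in P$ from the outset.

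For the density, I would exploit fullness of $\Hilm_m$: the assumption gives that $\langle \Hilm_m, \Hilm_m\rangle$ is dense in $A$, so there is an approximate unit $(u_\lambda)$ for $A$ consisting of finite sums $u_\lambda = \sum_j \langle x_j^\lambda, y_j^\lambda\rangle$ with $x_j^\lambda, y_j^\lambda \in \Hilm_m$. For $e \in \Hilm_{s_1}$ and $f \in \Hilm_{s_2}$ one has $e \cdot u_\lambda \to e$ in norm because the right $A$-action on a Hilbert $A$-module is nondegenerate, and therefore
\[
\ket{e}\bra{f} = \lim_\lambda \sum_j \ket{e \langle x_j^\lambda, y_j^\lambda\rangle}\bra{f} = \lim_\lambda \sum_j \ket{e}\bra{x_j^\lambda} \circ \ket{y_j^\lambda}\bra{f}
\]
in operator norm. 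This exhibits every rank-one compact as a norm limit of compositions of operators in $\Comp(\Hilm_m, \Hilm_{s_1})$ and $\Comp(\Hilm_{s_2}, \Hilm_m)$, and taking finite linear combinations extends this approximation to all of $\Comp(\Hilm_{s_2}, \Hilm_{s_1})$.

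The main obstacle is the first step: the intermediate element $m$ does not exist a priori as an element of $P$, and one must genuinely invoke~\ref{enum:Ore1} to bring it into $P$ after enlargement of the representative. Once this reduction is in place, the second step is routine Hilbert-module bookkeeping, and compatibility of the composition with the structure maps $\varphi_{p_1, p_2, q}$ (already checked in the construction of the Fell bundle multiplication) ensures that the approximating compositions descend correctly to products in $\CP_{g_1} \cdot \CP_{g_2} \subseteq \CP_{g_1 g_2}$.
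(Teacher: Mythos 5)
Your argument is correct, but it takes a different route from the paper. You verify the saturation condition in its general form, \(\clsp(\CP_{g_1}\cdot\CP_{g_2})=\CP_{g_1g_2}\) for all \(g_1,g_2\in G\): the Ore bookkeeping producing the intermediate element \(m=au\in P\) with \((s_1q,m)\in R_{g_1}\), \((m,s_2q)\in R_{g_2}\) is exactly what is needed, and the factorisation \(\ket{e\langle x,y\rangle}\bra{f}=\ket{e}\bra{x}\circ\ket{y}\bra{f}\) together with an approximate unit of \(A\) built (up to approximation) from inner products of the full module \(\Hilm_m\) correctly exhibits every rank-one operator in \(\Comp(\Hilm_{s_2q},\Hilm_{s_1q})\), and hence the image of every \(T\), inside \(\clsp(\CP_{g_1}\cdot\CP_{g_2})\); compatibility of composition with the structure maps \(\varphi_{p_1,p_2,q}\) then finishes the argument. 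The paper instead only checks the condition at the unit fibre: for each \(g\in G\) it shows that the right inner products from \(\CP_g\) span a dense subspace of \(\CP_1\), by noting that fullness makes each \(\Comp(\Hilm_{p_i})\) Morita--Rieffel equivalent to \(A\), so that \(\Comp(\Hilm_{p_2},\Hilm_{p_1})\subseteq\CP_g\) is an imprimitivity bimodule between \(\Comp(\Hilm_{p_1})\) and \(\Comp(\Hilm_{p_2})\), whose inner products exhaust \(\Comp(\Hilm_{p_1})\); the images of these algebras are dense in \(\CP_1\) by a cofinality argument as in your first step. So the paper's proof is shorter because it delegates the passage from ``\(\CP_g^*\CP_g\) dense in \(\CP_1\) for all \(g\)'' to full saturation to standard Fell-bundle facts (nondegeneracy of each fibre as a \(\CP_1\)-module), while yours is more self-contained: it avoids the Morita--Rieffel machinery and proves the multiplicativity statement for arbitrary pairs \((g_1,g_2)\) directly, at the cost of the extra Ore manipulations to bring the intermediate point into \(P\). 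Both proofs use fullness in essentially the same place, namely to make the \(A\)-valued inner products of \(\Hilm_m\) (respectively \(\Hilm_{p_2}\)) dense in \(A\).
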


\begin{proof}
  Let \(g\in G\) and let \(p\in P\).  We want to show that the image
  of~\(\Comp(\Hilm_p)\) in~\(\CP_1\) is contained in the space of
  right inner products from~\(\CP_g\).  There is \((p_1,p_2)\in R_g\)
  and \(q\in P\) with \(p q = p_1\).  The image of~\(\Comp(\Hilm_p)\)
  in~\(\CP_1\) is contained in the image of \(\Comp(\Hilm_{p q}) =
  \Comp(\Hilm_{p_1})\).

  Since \(\Hilm_{p_1}\) and~\(\Hilm_{p_2}\) are full, both
  \(\Comp(\Hilm_{p_1})\) and \(\Comp(\Hilm_{p_2})\) are
  Morita--Rieffel equivalent to~\(A\) and hence equivalent to each
  other.  The equivalence between them is
  \(\Comp(A,\Hilm_{p_1})\otimes_A\Comp(\Hilm_{p_2},A) \cong
  \Comp(\Hilm_{p_2},\Hilm_{p_1})\).  Hence the latter is a full
  Hilbert bimodule over \(\Comp(\Hilm_{p_2})\)
  and~\(\Comp(\Hilm_{p_1})\).  Since
  \(\Comp(\Hilm_{p_2},\Hilm_{p_1})\subseteq \CP_g\), it follows that
  the right inner products from~\(\CP_g\) give a dense subspace
  of~\(\Comp(\Hilm_{p_1})\) in~\(\CP_1\).  Thus the Fell
  bundle~\((\CP_g)_{g\in G}\) is saturated.
\end{proof}

\begin{remark}
  \label{rem:full_not_necessary}
  The criterion in Proposition~\ref{pro:Fell_bundle_saturated} is not
  necessary for rather trivial reasons.  If the left \(A\)\nb-actions
  on~\(\Hilm_p\) are not faithful, then it may happen that
  \(\CP_1=0\).  Since this has nothing to do with~\(\Hilm_p\) being
  full as a right Hilbert module, the Fell bundle~\((\CP_g)\) may be
  saturated although not all~\(\Hilm_p\) are full.
\end{remark}

Saturated Fell bundles over a group~\(G\) are interpreted as actions
of~\(G\) by correspondences in~\cite{Buss-Meyer-Zhu:Higher_twisted}.
Long before, it was known that one may replace a saturated Fell
bundle~\((\CP_g)_{g\in G}\) with unit fibre~\(\CP_1\) by an action
of~\(G\) by automorphisms on a \(\Cst\)\nb-algebra~\(\tilde{\CP}_1\)
that is Morita--Rieffel equivalent to~\(\CP_1\): this is the
Packer--Raeburn Stabilisation Trick.  Non-saturated Fell bundles
over~\(G\) are interpreted in~\cite{Buss-Meyer:Actions_groupoids} as
actions of~\(G\) by Hilbert bimodules, that is, \emph{partial
  Morita--Rieffel equivalences}.  The analogue of the Packer--Raeburn
Stabilisation Trick says that any Fell bundle, saturated or not, is
equivalent to an action of~\(G\) by partial \Star{}isomorphisms.

A saturated Fell bundle over~\(G\) may, of course, be restricted to a
product system over~\(P\).  Which product systems are of this form?

\begin{proposition}
  \label{pro:Fell_bundle_vs_product_system}
  A proper product system over~\(P\) is the restriction of a saturated
  Fell bundle over~\(G\) if and only if each~\(\Hilm_p\) is an
  \(A,A\)\nb-imprimitivity bimodule, that is, each~\(\Hilm_p\) is a
  full right Hilbert \(A\)\nb-module and the left action is by an
  \emph{isomorphism} \(A\cong\Comp(\Hilm_p)\).  The saturated Fell
  bundle over~\(G\) is unique up to isomorphism.
\end{proposition}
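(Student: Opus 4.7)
For the forward direction, if $\Hilm_p$ is the fibre $\CP_p$ of a saturated Fell bundle $(\CP_g)_{g\in G}$ over~$G$, then each fibre carries a natural pre-Hilbert $\CP_1,\CP_1$-bimodule structure via $\braket{x}{y}_R = x^*y$ and ${}_L\braket{x}{y} = xy^*$, and both inner products are full precisely because the bundle is saturated. I would then verify that the left $A$-action is by compact operators (since ${}_L\braket{\xi}{\eta}\cdot\zeta = \xi\braket{\eta}{\zeta}_R$ realises it as a rank-one operator on $\Hilm_p$), that saturation forces the induced homomorphism $A\to\Comp(\Hilm_p)$ to be injective (if $a\cdot\Hilm_p=0$ then $a\cdot\CP_p\cdot\CP_{p^{-1}}=0$, hence $a\cdot A=0$), and that its image already exhausts the dense subset of rank-one operators and is a closed \Star{}subalgebra.

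For the converse I would appeal directly to the construction in Theorem~\ref{the:Ore_colimit_Fell}. The crucial auxiliary fact is that tensoring with an imprimitivity bimodule is a Morita equivalence on the category of Hilbert $A$-modules; in particular the natural map
\[
\Comp(\Hilm[F]_1,\Hilm[F]_2) \to \Comp(\Hilm[F]_1\otimes_A\Hilm_q,\Hilm[F]_2\otimes_A\Hilm_q), \qquad T\mapsto T\otimes_A\Id_{\Hilm_q},
\]
is an isomorphism for any Hilbert $A$-modules $\Hilm[F]_1,\Hilm[F]_2$ as soon as $\Hilm_q$ is an imprimitivity bimodule. Applied to the structure maps $\varphi_{p_1,p_2,q}$ of the inductive systems defining~$\CP_g$, this makes every such map an isomorphism of Banach spaces, so $\CP_g$ is canonically identified with any of its building blocks $\Comp(\Hilm_{p_2},\Hilm_{p_1})$. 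Taking the representative $(p,1)\in R_p$ identifies $\CP_p\cong\Comp(A,\Hilm_p)\cong\Hilm_p$, and a routine diagram chase shows that the Fell bundle multiplication $\CP_p\cdot\CP_q\to\CP_{pq}$ pulls back to the product system isomorphism $\mu_{p,q}$, and likewise for the bimodule structure and involution.

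Saturation is then immediate: for any $g\in G$ and $(p_1,p_2)\in R_g$, the product $\CP_g\cdot\CP_{g^{-1}}$ is realised inside $\CP_1$ as $\Comp(\Hilm_{p_2},\Hilm_{p_1})\cdot\Comp(\Hilm_{p_1},\Hilm_{p_2})=\Comp(\Hilm_{p_1})$, which, via the imprimitivity bimodule identification $A\cong\Comp(\Hilm_{p_1})$, coincides with all of $A$ inside $\CP_1$. For uniqueness, saturation of any extending Fell bundle $(\CP'_g)$ gives $\CP'_g$ as the closed linear span of $\CP'_{p_1}\cdot(\CP'_{p_2})^* = \Hilm_{p_1}\cdot\Hilm_{p_2}^*$ for any $(p_1,p_2)\in R_g$; since the multiplications and involutions between fibres over~$P$ are forced by the product system data together with the imprimitivity bimodule structure, $\CP'_g$ is determined up to canonical isomorphism.

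The main technical obstacle is the compatibility verification in the converse direction: unpacking how the Fell bundle multiplication from Theorem~\ref{the:Ore_colimit_Fell}, defined via composition of compact operators between distinct modules $\Hilm_{p_i}$, becomes $\mu_{p,q}$ under the identifications $\CP_p\cong\Hilm_p$. This requires carefully tracking the alignment of representatives $(p,1)\in R_p$ and $(q,1)\in R_q$ to a common chain in $\Cat_P^{pq}$ allowing composition, and then applying the explicit form of the isomorphism $\Comp(A,\Hilm_p)\cong\Hilm_p$ via the rank-one operators $\ket{\xi}\bra{a}\leftrightarrow\xi\cdot a^*$.
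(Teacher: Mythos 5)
Your plan is correct and follows essentially the same route as the paper: the converse rests on the observation that the imprimitivity-bimodule hypothesis makes all structure maps of the inductive systems defining the fibres~\(\CP_g\) isomorphisms, so that \(\CP_1\cong A\) and \(\CP_p\cong\Hilm_p\), and uniqueness follows because the systems built from a restricted saturated Fell bundle are constant. The only deviations are cosmetic: you spell out the standard forward direction (which the paper states in one sentence) and prove saturation directly from \(\Comp(\Hilm_{p_2},\Hilm_{p_1})\cdot\Comp(\Hilm_{p_1},\Hilm_{p_2})=\Comp(\Hilm_{p_1})\cong A\) rather than citing Proposition~\ref{pro:Fell_bundle_saturated}.
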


\begin{proof}
  In a saturated Fell bundle over~\(G\), each~\(\Hilm_g\) is an
  imprimitivity bimodule.  Conversely, assume that~\(\Hilm_p\) is an
  imprimitivity bimodule for each \(p\in P\).  Then all the maps
  \(\Comp(\Hilm_p)\to \Comp(\Hilm_{p q})\) in our inductive system are
  isomorphisms, so that the inductive limit~\(\CP_1\) is
  isomorphic to \(A= \Comp(\Hilm_1)\).  Similarly, \(\CP_p \cong
  \Hilm_p\) for all \(p\in P\).  Thus our product system is the
  restriction to~\(P\) of a Fell bundle over~\(G\).  Since
  all~\(\Hilm_p\) are assumed to be full, this Fell bundle is
  saturated by Proposition~\ref{pro:Fell_bundle_saturated}.

  Now start with a saturated Fell bundle~\((\CP_g)_{g\in G}\),
  restrict it to~\(P\), and then go back to a Fell bundle over~\(G\).
  The maps \(\Comp(\Hilm_{p_2},\Hilm_{p_1}) \to \Comp(\Hilm_{p_2 q},
  \Hilm_{p_1 q})\) are isomorphisms for all \(p_1,p_2,q\in P\), so the
  inductive systems that give the fibres of the new Fell bundle are
  constant.  Thus the colimit~\(\CP_g\) is canonically isomorphic
  to \(\Comp(\Hilm_{p_2},\Hilm_{p_1})\) for any \((p_1,p_2)\in R_g\),
  and our construction of a Fell bundle from~\((\Hilm_p)_{p\in P}\)
  reproduces the original Fell bundle up to isomorphism.  Hence the
  product system on~\(P\) determines the saturated Fell bundle
  over~\(G\) uniquely up to isomorphism.
\end{proof}

A non-saturated Fell bundle over~\(G\) need not give a \emph{proper}
product system on~\(P\): this requires~\(\Hilm_p\) to be full as a
left Hilbert \(A\)\nb-module for each \(p\in P\).

\begin{theorem}
  \label{the:CP_nuclear}
  If~\(A\) is nuclear or exact, then so is~\(\CP_1\).  If~\(A\) is
  nuclear and the group~\(G\) generated by~\(P\) is amenable, then
  the Cuntz--Pimsner algebra~\(\CP\) is nuclear.  If~\(A\) is exact
  and~\(G\) is amenable, then~\(\CP\) is exact.
\end{theorem}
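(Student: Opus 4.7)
The plan is to handle the three assertions in two stages: first establish nuclearity or exactness of the unit fibre $\CP_1$, then promote this to all of $\CP$ by exploiting amenability of $G$ and the Fell bundle description from Theorem~\ref{the:Ore_colimit_Fell}.

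For $\CP_1$, the first step is to note that each $\Comp(\Hilm_p)$ is Morita--Rieffel equivalent to the closed two-sided ideal $\overline{\braket{\Hilm_p}{\Hilm_p}_A}\subseteq A$ spanned by the right inner products. Since both nuclearity and exactness pass to ideals of a $\Cst$-algebra and are Morita--Rieffel invariants, each $\Comp(\Hilm_p)$ inherits the relevant property from $A$. The algebra $\CP_1$ was constructed in Lemma~\ref{lem:Ore_limit_star} as a colimit of the $\Comp(\Hilm_p)$ along the $\varphi_{p,q}$; in the countable case Lemma~\ref{lem:countable_filtered_cofinal} lets us rewrite this as an ordinary sequential inductive limit, and in the general case the construction of $\CP_\sim$ shows that every finite subset of $\CP_1$ already lies in the image of a single $\Comp(\Hilm_p)$, so that $\CP_1$ is the closure of a directed union of nuclear (resp.\ exact) subalgebras. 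Since nuclearity and exactness are preserved under such inductive limits of $\Cst$-algebras (the exact case being Kirchberg's permanence theorem), we conclude that $\CP_1$ is nuclear (resp.\ exact) whenever $A$ is.

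For $\CP$ when $G$ is amenable, I would use Theorem~\ref{the:Ore_colimit_Fell} to identify $\CP$ with the full sectional $\Cst$-algebra of the Fell bundle $(\CP_g)_{g\in G}$ over $G$, whose unit fibre $\CP_1$ has just been shown to be nuclear or exact. The key permanence input is that, for a Fell bundle over a discrete amenable group, nuclearity (resp.\ exactness) of the unit fibre transfers to the full sectional algebra. Concretely, one applies a Packer--Raeburn-style stabilisation (in the saturated case, or its Abadie--Exel partial-action version in general) to realise $\CP\otimes\Comp$ as a (possibly partial) crossed product of a stabilisation of $\CP_1$ by $G$; amenability of $G$ forces full and reduced crossed products to coincide, and crossed products by (partial) actions of amenable groups preserve both nuclearity and exactness. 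Stability of these properties under tensoring with $\Comp$ then transfers the conclusion from $\CP\otimes\Comp$ back to $\CP$.

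The main obstacle is justifying the Fell bundle permanence result in the non-saturated case. The cleanest route is to invoke Exel's amenability theorem for Fell bundles, which for amenable $G$ yields $\Cst((\CP_g)_{g\in G})=\Cred((\CP_g)_{g\in G})$, together with the partial crossed product form of Packer--Raeburn. Alternatively, the exactness half can be obtained directly: the left regular representation embeds $\Cred((\CP_g))$ into $\Bound(\ell^2(G,\CP_1))\subseteq \Mult(\CP_1\otimes\Comp(\ell^2 G))$, which is exact as soon as $\CP_1$ is, and a $\Cst$-subalgebra of an exact algebra is exact. This avoids partial actions entirely for the exactness statement, leaving only the nuclearity assertion to be handled via the stabilisation argument.
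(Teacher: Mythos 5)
Your treatment of \(\CP_1\) is essentially the paper's: there, too, \(\CP_1\) is viewed as a filtered inductive limit of the algebras \(\Comp(\Hilm_p)\), each Morita--Rieffel equivalent to (an ideal of) \(A\), and one uses that nuclearity and exactness are inherited by ideals, Morita--Rieffel equivalent algebras and filtered inductive limits; your explicit reduction to the ideal \(\overline{\braket{\Hilm_p}{\Hilm_p}_A}\) when the modules are not full is a sensible refinement of the paper's wording. For the passage from \(\CP_1\) to \(\CP\) the paper is more direct: after identifying \(\CP\) with the full sectional algebra of the Fell bundle \((\CP_g)_{g\in G}\) via Theorem~\ref{the:Ore_colimit_Fell}, it quotes Exel's amenability theorem (an amenable group forces the approximation property, hence full \(=\) reduced sectional algebra), Exel's theorem that reduced sectional algebras of Fell bundles over exact groups with exact unit fibre are exact, and Abadie's theorem that full sectional algebras over amenable groups with nuclear unit fibre are nuclear. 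Your stabilisation route---replacing the possibly non-saturated bundle by a partial action via the Packer--Raeburn/Abadie--Exel trick and then invoking permanence of (partial) crossed products by amenable groups---can be made to work, but be aware that the permanence theorems for partial crossed products are in the literature obtained from precisely these Fell-bundle results (or from Morita enveloping actions), so the detour does not make the argument more elementary; it repackages the same external inputs.

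The genuine flaw is your ``alternative'' shortcut for exactness. The regular representation does embed \(\Cred((\CP_g)_{g\in G})\) into the adjointable operators on a Hilbert \(\CP_1\)\nb-module, an algebra of the form \(\Bound(\ell^2(G,\CP_1))\cong\Mult(\CP_1\otimes\Comp(\ell^2G))\); but this ambient algebra is \emph{not} exact just because \(\CP_1\) is. Exactness does not pass to multiplier algebras: already \(\Mult(\Comp(\ell^2\N))=\Bound(\ell^2\N)\) fails to be exact although \(\Comp\) is nuclear. Exactness is inherited by \(\Cst\)\nb-subalgebras of \emph{exact} algebras, so the embedding yields nothing here, and this branch cannot replace Exel's exactness theorem for Fell bundles (or the crossed-product permanence input in your stabilisation argument). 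With that shortcut deleted, your proof is correct, but it then rests on the same Fell-bundle permanence theorems that the paper cites.
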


\begin{proof}
  The first claim follows because~\(\CP_1\) is an inductive limit of
  \(\Cst\)\nb-algebras Morita--Rieffel equivalent to~\(A\) and
  because nuclearity and exactness are hereditary under
  Morita--Rieffel equivalence and filtered inductive limits.

  The other statements follow from
  Theorem~\ref{the:Ore_colimit_Fell} and general results about
  nuclearity and exactness of Fell bundle \(\Cst\)\nb-algebras.
  First, if the group is amenable, then any Fell bundle over it has
  the approximation property, which implies that the full and
  reduced sectional \(\Cst\)\nb-algebras coincide
  (see~\cite{Exel:Amenability}).  The exactness of the reduced
  sectional \(\Cst\)\nb-algebra is proved
  in~\cite{Exel:Exact_groups_Fell}, assuming exact unit fibre and an
  exact group.  The nuclearity of the full sectional
  \(\Cst\)\nb-algebra is proved in~\cite{Abadie:Tensor}, assuming
  nuclear unit fibre and an amenable group.
\end{proof}

Next we describe the \(\K\)\nb-theory of the unit fibre~\(\CP_1\) of
our Fell bundle.  Since~\(\Hilm_p\) is a proper correspondence
from~\(A\) to~\(A\), it gives an element \([\Hilm_p]\in\KK_0(A,A)\)
with zero operator~\(F\).  This gives a map
\[
(\Hilm_p)_*\colon \K_*(A)\to\K_*(A);
\]
here~\(\K_*(A)\) denotes the \(\Z/2\)\nb-graded \(\K\)\nb-theory of~\(A\)
comprising both \(\K_0\) and~\(\K_1\).  The Kasparov product of
\([\Hilm_p]\) and~\([\Hilm_q]\) is \([\Hilm_p\otimes_A \Hilm_q]\) with
zero operator; since the Fredholm operator is irrelevant, this case
of the Kasparov product is easy.  The
isomorphisms~\(\mu_{p,q}\) now show that \([\Hilm_p]\otimes_A
[\Hilm_q] = [\Hilm_{p q}]\) and hence \((\Hilm_q)_*\circ (\Hilm_p)_* =
(\Hilm_{p q})_*\) for all \(p,q\in P\).  The order of \(p\) and~\(q\)
is changed here because~\(\otimes_A\) is the composition product
in~\(\KK\) in reverse order.  Hence our product system over~\(P\)
gives an action of~\(P^\op\) on~\(\K_*(A)\).  We view this as a right
module structure over the monoid ring~\(\Z[P]\).  The group
ring~\(\Z[G]\) is a left module over~\(\Z[P]\).

\begin{theorem}
  \label{the:K_fibre}
  Let~\(P\) be an Ore monoid and let \((A,\Hilm_p,\mu_{p,q})\) give
  a proper, nondegenerate product system over~\(P\).  Assume also
  that all~\(\Hilm_p\) are full right Hilbert \(A\)\nb-modules.
  Then the \(\K\)\nb-theory of~\(\CP_1\) is isomorphic to \(\K_*(A)
  \otimes_{\Z[P]} \Z[G]\) as a right \(\Z[G]\)-module; here we use
  the canonical right module structure on \(\K_*(A) \otimes_{\Z[P]}
  \Z[G]\) by right multiplication and the module structure
  on~\(\K_*(\CP_1)\) induced by the saturated Fell
  bundle~\((\CP_g)_{g\in G}\).
\end{theorem}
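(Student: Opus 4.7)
The plan is to compute $\K_*(\CP_1)$ as a filtered colimit of copies of $\K_*(A)$, then recognise this colimit algebraically as $\K_*(A)\otimes_{\Z[P]}\Z[G]$, and finally check that the $\Z[G]$-actions match up.

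First I would invoke Lemma~\ref{lem:Ore_limit_star}: $\CP_1$ is a filtered colimit in $\Cst$-algebras of the $\Comp(\Hilm_p)$ along the nondegenerate $*$-homomorphisms $\varphi_{p,q}$, and K-theory commutes with such colimits, so $\K_*(\CP_1)\cong\colim_{p\in P}\K_*(\Comp(\Hilm_p))$. The fullness hypothesis makes each $\Hilm_p$ a $\Comp(\Hilm_p)$-$A$ imprimitivity bimodule, giving invertible classes $[\Hilm_p^{\textup{ME}}]\in\KK_0(\Comp(\Hilm_p),A)$ that identify every $\K_*(\Comp(\Hilm_p))$ with $\K_*(A)$.

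Next I would verify that, under these Morita identifications, the transition map $(\varphi_{p,q})_*$ becomes $(\Hilm_q)_*$. This reduces to the KK-equality
\[
[\varphi_{p,q}]\otimes_{\Comp(\Hilm_{pq})}[\Hilm_{pq}^{\textup{ME}}]
=[\Hilm_p^{\textup{ME}}]\otimes_A[\Hilm_q]
\quad\text{in }\KK_0(\Comp(\Hilm_p),A),
\]
which holds because both sides compute the class of $\Hilm_{pq}\cong\Hilm_p\otimes_A\Hilm_q$ as a $\Comp(\Hilm_p)$-$A$ correspondence with left action through $\varphi_{p,q}$. Thus the colimit becomes a filtered colimit of copies of $\K_*(A)$ indexed by $p\in P$, whose $(p,q)$-transition is precisely the right $\Z[P]$-action of $q$ on $\K_*(A)$. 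I would then identify this colimit with $\K_*(A)\otimes_{\Z[P]}\Z[G]$ as abelian groups via $x\mapsto x\otimes p^{-1}$ on the $p$-copy; the balance relation $(x\cdot q)\otimes(pq)^{-1}=x\otimes q(pq)^{-1}=x\otimes p^{-1}$ makes this compatible with the transitions. The inverse, given by sending $x\otimes p_1p_2^{-1}$ to the image of $(\Hilm_{p_1})_*(x)$ in the $p_2$-copy, is well defined by the Ore condition, which lets one compare two presentations $g=p_1p_2^{-1}=p_1'(p_2')^{-1}$ inside a common enlargement.

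The hard part will be matching the two $\Z[G]$-module structures. For $q\in P$ the Fell bundle fibre $\CP_q$ is represented by $\Comp(A,\Hilm_q)=\Hilm_q$ at the representative $(q,1)\in R_q$, and I would check that right multiplication by $[\CP_q]\in\KK_0(\CP_1,\CP_1)$ corresponds under the isomorphism above to right multiplication by $q\in P\subseteq\Z[G]$. Since $P$ generates $G$ and the $[\CP_g]$ are invertible by saturation (Proposition~\ref{pro:Fell_bundle_saturated}), this pins down the entire $\Z[G]$-module structure. The delicate computation is this last one, because it requires tracing how the Fell bundle multiplication -- itself defined via colimits over $R_g$ -- interacts with the Morita identifications chosen at the outset; the KK-computation of the transition maps in the second step is routine but relies essentially on the associativity of $\mu_{p,q}$.
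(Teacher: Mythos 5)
Your proposal is correct and follows essentially the same route as the paper's proof: $\K$-theory commutes with the filtered colimit defining $\CP_1$, fullness gives $\KK$-equivalences between $\Comp(\Hilm_p)$ and $A$ that turn the transition maps $(\varphi_{p,q})_*$ into $(\Hilm_q)_*$, the resulting diagram of copies of $\K_*(A)$ has colimit $\K_*(A)\otimes_{\Z[P]}\Z[G]$, and the $\Z[G]$-module structures are matched using the saturated Fell bundle. The only cosmetic differences are that the paper identifies the colimit with the tensor product by realising $\Z[G]$ as a colimit of copies of $\Z[P]$ rather than by your explicit mutually inverse maps, and it computes the action of $(\CP_g)_*$ directly for arbitrary $g=p_1p_2^{-1}$ instead of reducing to the generators $q\in P$ and invoking invertibility.
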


\begin{proof}
  It is well-known that \(\K\)\nb-theory is compatible with inductive
  limits.  This extends to colimits over countable filtered categories
  by Lemma~\ref{lem:countable_filtered_cofinal}.  We leave it to the
  reader interested in uncountable monoids to check that the result
  remains true for arbitrary filtered colimits.  Hence \(\K_*(\CP_1)\)
  is the colimit of the diagram over~\(\Cat_P\) that maps \(p\in P\)
  to \(\K_*(\Comp(\Hilm_p))\) and \((p,q)\colon p\to p q\) to
  \((\varphi_{p,q})_*\colon \K_*(\Comp(\Hilm_p)) \to
  \K_*(\Comp(\Hilm_{p q}))\).

  Since~\(\Hilm_p\) is a full Hilbert bimodule, it gives a
  Morita--Rieffel equivalence from~\(\Comp(\Hilm_p)\) to~\(A\).  This
  correspondence with zero operator~\(F\) is a cycle for
  \(\KK_0(\Comp(\Hilm_p),A)\).  This is a \(\KK\)\nb-equivalence: the
  inverse is the inverse imprimitivity bimodule~\(\Comp(\Hilm_p,A)\)
  with zero operator~\(F\).  We use this \(\KK\)\nb-equivalence to
  identify \(\K_*(\Comp(\Hilm_p))\cong \K_*(A)\) for all \(p\in P\).
  Composing the maps
  \[
  \K_*(A) \xrightarrow[\cong]{}
  \K_*(\Comp(\Hilm_p)) \xrightarrow{(\varphi_{p,q})_*}
  \K_*(\Comp(\Hilm_{p q})) \xrightarrow[\cong]{}
  \K_*(A)
  \]
  requires composing three \(\KK_0\)-cycles with zero
  operator~\(F\), which amounts to tensoring the underlying
  correspondences.  Identifying \(\Hilm_{p q} \cong \Hilm_p\otimes_A
  \Hilm_q\) as in the definition of~\(\varphi_{p,q}\), we see that
  this composite is~\([\Hilm_q]\).  Thus the inductive system with
  colimit~\(\K_*(\CP_1)\) is isomorphic to the inductive system with
  entries \(\K_*(A)\) at all \(p\in P\), where the arrow \((p,q)\colon
  p\to p q\) in~\(\Cat_P\) induces the map \((\Hilm_q)_*\colon
  \K_*(A)\to \K_*(A)\).

  Define a diagram of left \(\Z[P]\)-modules over~\(\Cat_P\) by taking
  the free module~\(\Z[P]\) at all objects and letting \(q\colon
  p\to p q\) act by \(\delta_x\mapsto \delta_{x q}\) for all
  \(x,p,q\in P\).  The colimit of this diagram of modules is
  isomorphic to~\(\Z[G]\) by mapping \(\Z[P]\ni\delta_x\) at the
  object~\(p\) of~\(\Cat_P\) to \(\delta_{x p^{-1}}\in\Z[G]\).  Hence
  \(M\otimes_{\Z[P]} \Z[G]\) for a right \(\Z[P]\)\nb-module~\(M\) is the
  colimit of the diagram over~\(\Cat_P\) with entries
  \(M\otimes_{\Z[P]} \Z[P] \cong M\) and with \(q\colon p\to p q\)
  acting by \(m\mapsto m\cdot q\) for all \(m\in M\), \(p,q\in P\).
  Now compare this with our description of the inductive system that
  computes~\(\K_*(\CP_1)\) to get \(\K_*(\CP_1) \cong \K_*(A)
  \otimes_{\Z[P]} \Z[G]\).

  Since the Hilbert modules~\(\Hilm_p\) are full, the Fell
  bundle~\((\CP_g)_{g\in G}\) is saturated by
  Proposition~\ref{pro:Fell_bundle_saturated}.  Then each~\(\CP_g\) is
  a proper correspondence from~\(\CP_1\) to itself and hence gives a
  class~\([\CP_g]\) in \(\KK_0(\CP_1,\CP_1)\).  This induces maps
  \((\CP_g)_*\colon \K_*(\CP_1)\to \K_*(\CP_1)\).  Since we have a
  saturated Fell bundle, we have \(\CP_g\otimes_{\CP_1} \CP_h \cong
  \CP_{gh}\).  Therefore, \(g\mapsto (\CP_g)_*\) defines a
  representation of~\(G^\op\) on the Abelian group~\(\K_*(\CP_1)\); we
  view this as a right \(\Z[G]\)-module structure.

  To describe this action, it suffices to compute, for \(p\in P\),
  how~\((\CP_g)_*\) acts on the image of~\(\K_*(\Comp(\Hilm_p))\)
  in~\(\K_*(\CP_1)\) under the map \(\K_*(\vartheta^0_p)\) induced by
  \(\vartheta^0_p\colon \Comp(\Hilm_p)\to \CP_1\).  First choose
  \((p_1,p_2)\in R_g\) and then \(q\in P\) with \(p q = p_1\).  Then
  \(\K_*(\vartheta^0_p) = \K_*(\vartheta^0_{p_1}) \circ
  \K_*(\varphi_{p,q})\), so it suffices to describe how~\((\CP_g)_*\)
  acts on the image of~\(\K_*(\Comp(\Hilm_{p_1}))\).  The map
  \(\Comp(\Hilm_{p_2},\Hilm_{p_1})\to\CP_g\) shows that
  \((\vartheta^0_{p_1})^*(\CP_g) \cong \Comp(\Hilm_{p_2},\Hilm_{p_1})
  \otimes_{\vartheta^0_{p_2}} \CP_1\) as correspondences
  from~\(\Comp(\Hilm_{p_1})\) to~\(\CP_1\).  Now compose these
  correspondences with the \(\KK\)\nb-equivalences between
  \(\Comp(\Hilm_{p_1})\) and~\(A\).  Then we see that~\((\CP_g)_*\)
  acts on the entry~\(\K_*(A)\) at~\(p_1\) in the inductive system
  describing \(\K_*(\CP_1)\) by sending it to the same entry
  at~\(p_2\).  Right multiplication by \(g = p_1 p_2^{-1}\) in
  \(\K_*(A) \otimes_{\Z[P]} \Z[G]\) has the same effect.  Thus the
  action of~\(G\) on~\(\K_*(\CP_1)\) induced by the Fell bundle
  corresponds to the one by right multiplication on \(\K_*(A)
  \otimes_{\Z[P]} \Z[G]\).
\end{proof}

By the Packer--Raeburn Stabilisation Trick, there is a \(G\)\nb-action
by automorphisms on the stabilisation \(\tilde{\CP}_1 \defeq
\CP_1\otimes\Comp(L^2G)\) such that the (full) crossed product
\(G\ltimes \tilde{\CP}_1\) is Morita--Rieffel equivalent to the
Cuntz-Pimsner algebra~\(\CP\) (this follows from
\cite{Buss-Meyer-Zhu:Higher_twisted}*{Corollary 5.5}).  Thus computing
the \(\K\)\nb-theory of the Cuntz--Pimsner algebra becomes a matter
for the (full) Baum--Connes conjecture for~\(G\) with certain
coefficients.

For a-T-menable groups, the Baum--Connes assembly map is known to be
an isomorphism for all coefficients, also for the full crossed product
(see~\cite{Higson-Kasparov:E_and_KK}).  The meaning of the
Baum--Connes conjecture here is that we may compute \(\K_*(\CP)\) by
topological means from \(\K_*(\CP|_H)\), the section algebras for
restrictions of~\((\CP_g)_{g\in G}\) to all finite subgroups~\(H\).
These topological means may be expressed as a spectral sequence, and
it can be quite hard to perform this computation in practice.  At
least, the results above show that the computation for a
Cuntz--Pimsner algebra over~\(P\) is not more difficult than in the
special case of an action of~\(G\) by automorphisms.

For instance, let \(P=(\N^k,+)\)
for \(k\in\N\).
Then \(G=\Z^k\),
and the computation of \(\K_*(\CP)\)
is a matter of iterating the Pismner--Voiculescu sequence \(k\)~times.
We will consider a concrete case where a \(\K\)\nb-theory
computation along these lines is feasible in
Section~\ref{sec:higher-rank_DR}.  Already two iterations may be very
hard because the boundary maps for the second iteration are not
determined by the original data.

\begin{remark}
  \label{rem:PV_twice_Deaconu}
  The iteration of the Pimsner--Voiculescu sequence that we get is
  equivalent to one by Deaconu in~\cite{Deaconu:Iterating}.  This is
  because the Pimsner--Voiculescu sequence for \(\Z\)\nb-actions
  \emph{can} be obtained from the Cuntz--Toeplitz algebra of the
  product system over~\(\N\) associated to the \(\Z\)\nb-action.
\end{remark}

Our theory contains the case of semigroup crossed products for
actions of left Ore monoids by endomorphisms by
Remark~\ref{rem:left_right}.  In that case, \((\Hilm_p)_*\colon
\K_*(A)\to \K_*(A)\) is simply the map induced by the underlying
endomorphism.

The K\nb-theory computation for semigroup \(\Cst\)\nb-algebras
in~\cite{Cuntz-Echterhoff-Li:K-theory} also uses the Baum--Connes
isomorphism for the group~\(G\).  In the situation
of~\cite{Cuntz-Echterhoff-Li:K-theory}, a dilation of the semigroup
action to an action of~\(G\) on a larger \(\Cst\)\nb-algebra is easy
to write down by hand, giving a direct route to \(\K_*(\CP_1)\).
But then extra assumptions on the action of~\(G\) on \(\K_*(\CP_1)\)
are needed to compute \(\K_*(\CP)\).

\subsection{Making left actions faithful}
\label{sec:making_faithful}

Let \((A,\Hilm_p,\mu_{p,q})\) be a proper product system over an Ore
monoid~\(P\).  Taking suitable quotients of~\(A\) and~\(\Hilm_p\), we
are going to construct another product system
\((A',\Hilm'_p,\mu'_{p,q})\) with the same nondegenerate
representations and hence the same Cuntz--Pimsner algebra, such that
the left actions \(\varphi'_p\colon A'\to\Bound(\Hilm'_p)\) are
injective for all \(p\in P\).

For \(p\in P\), let \(\varphi_p\colon A\to \Comp(\Hilm_p)\) denote
the left action map and let \(I_p\defeq \ker \varphi_p\).  Recall
the maps \(\varphi_{p,q}\colon \Comp(\Hilm_p) \to
\Comp(\Hilm_{pq})\) for \(p,q\in P\).  Since
\(\varphi_{p,q}\circ\varphi_p = \varphi_{pq}\), we have
\(I_p\subseteq I_{pq}\) for all \(p,q\in P\).  Since~\(\Cat_P\) is
filtered, this implies that the ideals~\(I_p\) form a directed set
of ideals in~\(A\).  Thus \(I \defeq \overline{\bigcup_{p\in P}
  I_p}\) is another ideal in~\(A\).  We let \(A'\defeq A/I\) and
\(\Hilm'_p \defeq \Hilm_p\otimes_A A' \cong
\Hilm_p\mathbin/(\Hilm_p\cdot I)\).

\begin{lemma}
  \label{lem:Aprime_acts_Hilmprime}
  The induced left action \(A\to\Comp(\Hilm_p')\) factors
  through~\(A'\), and the isomorphism \(\mu_{p,q}\colon
  \Hilm_p\otimes_A \Hilm_q \congto \Hilm_{pq}\) descends to an
  isomorphism of correspondences
  \(\mu'_{p,q}\colon \Hilm'_p\otimes_A \Hilm'_q \congto
  \Hilm'_{pq}\).  This gives a product system
  \((A',\Hilm'_p,\mu'_{p,q})\).
\end{lemma}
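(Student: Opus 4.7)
The plan is to reduce the first assertion of the lemma to a single calculation: for every $a\in I_q$ and every $\xi\in\Hilm_p$, the element $\varphi_p(a)\xi$ lies in $\Hilm_p\cdot I$. Once this is established, the left action of~$A$ on~$\Hilm_p'$ annihilates the dense subset $\bigcup_{q\in P} I_q\subseteq I$ and hence factors through $A'=A/I$. The map~$\mu_{p,q}$ is $A$-bilinear, so it descends to an isomorphism~$\mu'_{p,q}$ of the quotients $\Hilm_p'\otimes_{A'}\Hilm_q'\congto\Hilm_{pq}'$; the associativity of $(\mu'_{p,q})$ is inherited from that of $(\mu_{p,q})$, so the final assertion follows formally.

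As a preliminary I would check that $(I_q)_{q\in P}$ is a directed family of ideals. The associativity of~$\mu$ yields $\varphi_{pq}=\varphi_{p,q}\circ\varphi_p$, hence $I_p\subseteq I_{pq}$ for all $p,q\in P$; combined with the Ore condition~\ref{enum:Ore1}, this makes $(I_q)$ directed and $I=\overline{\bigcup_q I_q}$ a closed two-sided ideal in~$A$.

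For the key calculation, let $a\in I_q$ and $\xi\in\Hilm_p$. Use Ore~\ref{enum:Ore1} to pick $m_1,m_2\in P$ with $p m_1=q m_2$. Then $a\in I_q\subseteq I_{q m_2}=I_{p m_1}$ by the preceding step, so the operator $\varphi_p(a)\otimes_A\Id_{\Hilm_{m_1}}$ vanishes on $\Hilm_p\otimes_A\Hilm_{m_1}$ via~$\mu_{p,m_1}$. Evaluating the $A$-valued inner product of $(\varphi_p(a)\xi)\otimes\eta$ with itself shows that $\varphi_{m_1}(\braket{\varphi_p(a)\xi}{\varphi_p(a)\xi})\eta=0$ for all $\eta\in\Hilm_{m_1}$, so $b\defeq\braket{\varphi_p(a)\xi}{\varphi_p(a)\xi}$ lies in $I_{m_1}\subseteq I$. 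The standard functional-calculus identity $\varphi_p(a)\xi=\lim_n \varphi_p(a)\xi\cdot b^{1/n}$ then places $\varphi_p(a)\xi$ in the closed right submodule $\Hilm_p\cdot I$, as required.

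The main obstacle is exactly this calculation: \emph{a priori}, having $a$ act as zero on~$\Hilm_q$ gives no information about its action on~$\Hilm_p$. The Ore condition supplies the common refinement $p m_1=q m_2$ in~$\Cat_P$ at which the kernel inclusions force $a\in I_{p m_1}$, and the rigidity of the Hilbert-module inner product then transmits this back to an inner-product statement in~$\Hilm_p$. The remaining verifications (that $\mu_{p,q}$ matches $(\Hilm_p\otimes_A\Hilm_q)\cdot I$ with $\Hilm_{pq}\cdot I$, and that associativity survives the quotient) are routine quotient-of-Hilbert-module arguments.
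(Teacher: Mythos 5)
Your proof is correct and follows essentially the same route as the paper: use the Ore condition to pass to a common multiple \(p m_1 = q m_2\), so that \(a\in I_q\subseteq I_{p m_1}\) acts as \(\varphi_p(a)\otimes\Id\) on \(\Hilm_p\otimes_A\Hilm_{m_1}\cong\Hilm_{p m_1}\), and then use the inner-product rigidity of the interior tensor product to conclude \(\braket{\varphi_p(a)\xi}{\varphi_p(a)\xi}\in I\) and hence \(\varphi_p(a)\xi\in\Hilm_p\cdot I\). The only cosmetic difference is that the paper packages this computation as the sharper equality \(I_{p q}=\{a\in A\mid a\Hilm_p\subseteq\Hilm_p I_q\}\) (which it reuses later) and then descends \(\mu_{p,q}\) by the tensor identity \(\Hilm'_p\otimes_{A'}\Hilm'_q\cong\Hilm_p\otimes_A\Hilm_q\otimes_A A'\), which is the same formal step you defer to as routine.
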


\begin{proof}
  Let \(p\in P\).  To prove that the induced left \(A\)\nb-module
  structure on~\(\Hilm'_p\) descends to~\(A'\), we must show that \(I
  \Hilm_p\subseteq \Hilm_p I\).  Since~\(P\) is an Ore monoid, the
  subset \(pP\) is cofinal in~\(P\), so \(\bigcup_{q\in P} I_{pq}\) is
  still dense in~\(I\).  Thus it suffices to prove \(I_{pq}
  \Hilm_p\subseteq \Hilm_p I\) for all \(p,q\in P\).  We will prove
  the following more precise result:
  \begin{equation}
    \label{eq:I_pq}
    I_{p q} = \{a\in A\mid a\Hilm_p\subseteq \Hilm_p I_q\}.
  \end{equation}
  Let \(\xi\in\Hilm_p\).  We have \(\xi\otimes_A \eta=0\) in
  \(\Hilm_p\otimes_A \Hilm_q\) for all \(\eta\in\Hilm_q\) if and only
  if
  \[
  0 = \braket{\xi\otimes\eta_1}{\xi\otimes\eta_2}
  = \braket{\eta_1}{\varphi_q(\braket{\xi}{\xi}_A)\eta_2}
  \]
  for all \(\eta_1,\eta_2\in\Hilm_q\), if and only
  \(\varphi_q(\braket{\xi}{\xi}_A)=0\), if and only if
  \(\braket{\xi}{\xi}_A\in I_q\).  We claim that this is equivalent to
  \(\xi\in\Hilm_p\cdot I_q\).  Since~\(I_q\) is an ideal, we have
  \(\braket{\xi}{\xi}_A\in I_q\) for \(\xi\in\Hilm_p\cdot I_q\).
  Conversely, if \(\braket{\xi}{\xi}_A\in I_q\), then the closure of
  \(\xi\cdot A\) in~\(\Hilm_p\) is a Hilbert \(I_q\)\nb-module
  containing~\(\xi\), and
  thus it is nondegenerate as a right \(I_q\)\nb-module, so that
  \(\xi\in \Hilm_p I_q\).  Hence \(\xi\otimes_A\eta=0\) in
  \(\Hilm_p\otimes_A \Hilm_q\) for all \(\eta\in\Hilm_q\) if and only
  \(\xi\in\Hilm_p I_q\).

  Now let \(a\in A\).  Then \(a\xi\in\Hilm_p I_q\) for all
  \(\xi\in\Hilm_p\) if and only if \(a\xi\otimes_A\eta=0\) for all
  \(\xi\in\Hilm_p\), \(\eta\in\Hilm_q\), if and only if the left
  action by~\(a\) vanishes on \(\Hilm_p\otimes_A \Hilm_q \cong
  \Hilm_{pq}\).  This is equivalent to \(a\in I_{pq}\).  This
  finishes the proof of~\eqref{eq:I_pq}.  In turn, this implies that
  the left \(A\)\nb-module structure on~\(\Hilm'_p\) descends
  to~\(A'\).  Now
  \[
  \Hilm'_p \otimes_{A'} \Hilm'_q
  = \Hilm_p \otimes_A A' \otimes_{A'} \Hilm_q \otimes_A A'
  = \Hilm_p \otimes_A \Hilm_q \otimes_A A'
  \cong \Hilm_{p q} \otimes_A A'
  = \Hilm'_{p q}.
  \]
  This gives the multiplication maps~\(\mu'_{p,q}\).  From another
  point of view, \(\mu'_{p,q}\) is the map on the quotient spaces
  \(\Hilm'_p\) and~\(\Hilm'_q\) induced by~\(\mu_{p,q}\).  Hence these
  maps inherit associativity from the maps~\(\mu_{p,q}\), so we have
  constructed a product system.
\end{proof}

\begin{theorem}
  \label{the:make_faithful}
  The product system \((A',\Hilm'_p,\mu'_{p,q})\) has faithful left
  action maps \(A' \to \Comp(\Hilm'_p)\), and it has the same
  nondegenerate representations as the original system.  Hence it also
  has the same Cuntz--Pimsner algebra.
\end{theorem}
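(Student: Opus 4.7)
The plan is to prove three things: \textup{(i)} the left action maps $\varphi'_p\colon A'\to\Comp(\Hilm'_p)$ are injective for every $p\in P$; \textup{(ii)} nondegenerate representations of $(A,\Hilm_p,\mu_{p,q})$ and $(A',\Hilm'_p,\mu'_{p,q})$ are in natural bijection; \textup{(iii)} the two Cuntz--Pimsner algebras coincide. Claim~\textup{(ii)} is routine and I would handle it first. Given a nondegenerate representation $(\Hilm[F],V_q)$ of the original system and $i\in I_q$, the identity $i\cdot V_q(\xi\otimes\eta)=V_q(\varphi_q(i)\xi\otimes\eta)=0$ together with the surjectivity of $V_q$ yields $I_q\cdot\Hilm[F]=0$; by continuity $I\cdot\Hilm[F]=0$, and hence $\Hilm[F]$ carries a unique $A'$\nobreakdash-correspondence structure. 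The canonical isomorphism $\Hilm_q\otimes_A\Hilm[F]\cong\Hilm_q\otimes_A A'\otimes_{A'}\Hilm[F]\cong\Hilm'_q\otimes_{A'}\Hilm[F]$ turns $V_q$ into an isomorphism~$V'_q$; compatibility with $\mu'_{p,q}$ is inherited from $(V_q,\mu_{p,q})$ by construction. Pulling a representation of $(A',\Hilm'_p,\mu'_{p,q})$ back along $A\twoheadrightarrow A'$ is visibly the inverse operation, so \textup{(ii)} holds, and \textup{(iii)} then follows from the universal property established after Proposition~\ref{pro:CP_vs_nondegenerate}.

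Claim~\textup{(i)} is the main content. The kernel of the composite $A\twoheadrightarrow A'\xrightarrow{\varphi'_p}\Comp(\Hilm'_p)$ is $J\defeq\{a\in A:a\Hilm_p\subseteq\Hilm_p I\}$, a closed ideal containing~$I$, so faithfulness of $\varphi'_p$ amounts to $J\subseteq I$. Since the induced map $\bar\varphi_{pq}\colon A/I_{pq}\hookrightarrow\Comp(\Hilm_{pq})$ is isometric, we have $\operatorname{dist}(a,I_{pq})=\|\varphi_{pq}(a)\|$ for every $a\in A$; and cofinality of $pP$ in $P$ (a direct consequence of the right Ore condition~\ref{enum:Ore1}) gives $I=\overline{\bigcup_{q\in P}I_{pq}}$. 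It therefore suffices to show that $\|\varphi_{pq}(a)\|\to 0$ along the directed set $P$ whenever $a\in J$.

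For this I would rewrite $\|\varphi_{pq}(a)\|^2=\|\varphi_{p,q}(\varphi_p(a^*a))\|$ and use that $\varphi_{p,q}$ factors as an isometric embedding $\Comp(\Hilm_p)/\ker\varphi_{p,q}\hookrightarrow\Comp(\Hilm_{pq})$, so this norm equals $\operatorname{dist}(\varphi_p(a^*a),\ker\varphi_{p,q})$. As $q$ grows the ideals $\ker\varphi_{p,q}$ increase (since $I_{q_1}\subseteq I_{q_1q_2}$ forces $\ker\varphi_{p,q_1}\subseteq\ker\varphi_{p,q_1q_2}$), so the distances decrease to $\operatorname{dist}(\varphi_p(a^*a),B')$ with $B'\defeq\overline{\bigcup_{q\in P}\ker\varphi_{p,q}}$. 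The tensor-product argument used in the proof of Lemma~\ref{lem:Aprime_acts_Hilmprime} identifies $\ker\varphi_{p,q}=\{T\in\Comp(\Hilm_p):T\Hilm_p\subseteq\Hilm_p I_q\}$; the Rieffel correspondence between closed ideals of $\Comp(\Hilm_p)$ and of the range ideal $\overline{\braket{\Hilm_p}{\Hilm_p}_A}\subseteq A$ then identifies $B'$ with $B\defeq\{T\in\Comp(\Hilm_p):T\Hilm_p\subseteq\Hilm_p I\}$, provided the directed union $I=\overline{\bigcup_q I_q}$ has the property that intersecting with $\overline{\braket{\Hilm_p}{\Hilm_p}_A}$ commutes with the union; this last point is a short approximate-unit computation in $\overline{\braket{\Hilm_p}{\Hilm_p}_A}$. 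Finally $\varphi_p(a^*a)=\varphi_p(a)^*\varphi_p(a)$ lies in~$B$, because the $A$\nobreakdash-linearity of $\varphi_p(a)^*$ forces $\varphi_p(a^*a)\Hilm_p\subseteq\varphi_p(a)^*(\Hilm_p I)\subseteq\Hilm_p I$. Hence $\operatorname{dist}(\varphi_p(a^*a),B')=0$, completing the argument.

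The main obstacle is the identification $B=B'$, specifically the inclusion $B\subseteq B'$; the converse is immediate from $I_q\subseteq I$. The Rieffel correspondence itself is standard, but verifying the commutation of the closed directed union with the intersection by $\overline{\braket{\Hilm_p}{\Hilm_p}_A}$ is the one place in this proof where the Ore structure enters essentially beyond the cofinality of $pP$.
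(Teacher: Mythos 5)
Your proposal is correct. Parts (ii) and (iii) are essentially the paper's argument in different clothing: the paper checks that any nondegenerate representation \((\vartheta,S_p)\) kills \(I\) and \(\Hilm_p\cdot I\) via \(\vartheta=\vartheta_p\circ\varphi_p\) and \(S_p(\xi)^*S_p(\xi)=\vartheta(\braket{\xi}{\xi}_A)\), which is the same computation you package as ``\(I\cdot\Hilm[F]=0\), so \(\Hilm[F]\) is an \(A'\)\nobreakdash-correspondence and \(V_q\) descends.'' For part (i), however, you take a genuinely different route. The paper uses \eqref{eq:I_pq} to get injections \(A/I_{pq}\hookrightarrow\Comp(\Hilm_p/\Hilm_p I_q)\), observes that these form a morphism of inductive systems over the filtered category \(\Cat_P\), and concludes by the fact that a filtered colimit of injective maps is injective, identifying the colimits with \(A'\) and \(\Comp(\Hilm'_p)\); your proof replaces this structural step by a quantitative one, turning everything into distances via the isometry of injective \Star{}homomorphisms, so that the whole issue becomes \(\operatorname{dist}\bigl(\varphi_p(a^*a),\overline{\bigcup_q\ker\varphi_{p,q}}\bigr)=0\) for \(a\) in the kernel. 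This works, and it buys you independence from the identification \(\Comp(\Hilm'_p)\cong\varinjlim\Comp(\Hilm_p/\Hilm_p I_q)\), at the cost of the ideal identification \(B=B'\). One remark: the step you single out as the main obstacle is easier than you fear and needs no Rieffel correspondence and no extra Ore input beyond the directedness already used. Indeed, for any closed ideal \(K\idealin A\) one has \(\{T\in\Comp(\Hilm_p)\mid T\Hilm_p\subseteq\Hilm_p K\}=\clsp\bigl\{\ket{\xi k}\bra{\eta}\mid \xi,\eta\in\Hilm_p,\ k\in K\bigr\}\) (approximate \(T\) by \(Tu_\lambda\) for an approximate unit \((u_\lambda)\) of \(\Comp(\Hilm_p)\) built from rank-one operators); since \(\bigcup_q I_q\) is dense in \(I\), every generator \(\ket{\xi i}\bra{\eta}\) of \(B\) is a norm limit of elements \(\ket{\xi i_q}\bra{\eta}\in\ker\varphi_{p,q}\), giving \(B\subseteq B'\) directly. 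With that simplification your argument is complete and is an attractive, self-contained alternative to the colimit argument in the paper.
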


\begin{proof}
  Fix \(p\in P\).  An operator on~\(\Hilm_p\) induces the zero
  operator on \(\Hilm_p/\Hilm_p I_q \cong \Hilm_p \otimes_A (A/I_q)\)
  if and only if it maps~\(\Hilm_p\) into~\(\Hilm_p I_q\).
  Thus~\eqref{eq:I_pq} shows that the map \(\varphi_p\colon
  A\to\Comp(\Hilm_p)\) descends to an injective \Star{}homomorphism
  \(A/I_{p q} \hookrightarrow \Comp(\Hilm_p/\Hilm_p I_q)\).  The
  \(\Cst\)\nb-algebras \(A/I_{p q}\) and \(\Comp(\Hilm_p/\Hilm_p
  I_q)\) for \(q\in P\) form inductive systems indexed by the filtered
  category~\(\Cat_P\), and the maps \(A/I_{p q} \hookrightarrow
  \Comp(\Hilm_p/\Hilm_p I_q)\) form a morphism of inductive systems,
  consisting of injective maps.  It follows that the induced map
  between the inductive limits \(\varinjlim A/I_{p q} = A/
  \overline{\bigcup_{q\in P} I_{p q}} = A'\) and \(\varinjlim
  \Comp(\Hilm_p/\Hilm_p I_q) = \Comp(\Hilm'_p)\) is injective as well.
  That is, the left action \(A' \to \Comp(\Hilm'_p)\) is faithful.

  Now let \(\vartheta'\colon A'\to \Bound(\Hilm[F])\) and \(S'_p\colon
  \Hilm'_p\to\Bound(\Hilm[F])\) for \(p\in P\) give a nondegenerate
  representation of the product system \((A',\Hilm'_p,\mu'_{p,q})\).
  Composing with the quotient maps \(A\to A'\) and
  \(\Hilm_p\to\Hilm'_p\) then gives a nondegenerate representation of
  \((A,\Hilm_p,\mu_{p,q})\).  We claim that any nondegenerate
  representation~\((\vartheta,S_p)\) of \((A,\Hilm_p,\mu_{p,q})\)
  factors through the quotient maps \(A\to A'\) and
  \(\Hilm_p\to\Hilm'_p\) and thus comes from a unique
  representation~\((\vartheta',S'_p)\).  This gives a bijection on the
  level of nondegenerate representations and thus an isomorphism of
  Cuntz--Pimsner algebras because they are universal for nondegenerate
  representations by Proposition~\ref{pro:CP_vs_nondegenerate}.

  Recall the maps \(\vartheta_p\colon
  \Comp(\Hilm_p)\to\Bound(\Hilm[F])\) with \(\vartheta_{pq}\circ
  \varphi_{p,q} = \vartheta_p\) for all \(p,q\in P\).  In particular,
  \(\vartheta= \vartheta_p\circ \varphi_p\colon A\to
  \Bound(\Hilm[F])\), so \(\vartheta\) must vanish on~\(I_p\).  Since
  this holds for all \(p\in P\), we get \(\vartheta|_I=0\),
  so~\(\vartheta\) factors through the quotient map \(A\to A'\).
  Since \(S_p(\xi)^* S_p(\xi) = \vartheta(\braket{\xi}{\xi}_A)\) for
  \(\xi\in \Hilm_p\) and \(\braket{\xi}{\xi} \in I\) for \(\xi\in
  \Hilm_p\cdot I\), we also get \(S_p(\xi)=0\) for \(\xi\in
  \Hilm_p\cdot I\).  Thus~\(S_p\) factors through~\(\Hilm'_p\).
\end{proof}

\begin{proposition}
  \label{pro:injective_maps_CP}
  If the maps \(A\to\Comp(\Hilm_p)\) are injective for all \(p\in P\),
  then so are the induced maps \(\varphi_{p,q,t}\colon
  \Comp(\Hilm_q,\Hilm_p) \to \Comp(\Hilm_{q t},\Hilm_{p t})\) for
  \(p,q,t\in P\) and the maps \(\Comp(\Hilm_q,\Hilm_p) \to \CP\) to
  the Cuntz--Pimsner algebra.
\end{proposition}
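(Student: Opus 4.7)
The plan is to establish a stronger fact: each map $\varphi_{p,q,t}$ is not merely injective but isometric, and this property then carries through the Banach-space colimits that define the Fell bundle fibres $\CP_g$ and through the embedding of each fibre into the full sectional $C^*$-algebra $\CP$.

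First I would deal with $\varphi_{p,q,t}$ itself. Since $\varphi_{p,q,t}(T)=\mu_{p,t}(T\otimes_A \Id_{\Hilm_t})\mu_{q,t}^*$ with $\mu_{p,t}$ and $\mu_{q,t}$ unitary, it suffices to show that the tensoring map $T\mapsto T\otimes_A \Id_{\Hilm_t}$ is isometric on $\Comp(\Hilm_q,\Hilm_p)$. The clean way to do this is the linking $C^*$-algebra trick: $\Comp(\Hilm_q,\Hilm_p)$ is a corner of the $C^*$-algebra $\Comp(\Hilm_p\oplus\Hilm_q)$, and tensoring with $\Id_{\Hilm_t}$ extends to a $*$-homomorphism
\[
\Phi_t\colon \Comp(\Hilm_p\oplus\Hilm_q)\to \Comp\bigl((\Hilm_p\oplus\Hilm_q)\otimes_A\Hilm_t\bigr).
\]
If $S\in\ker\Phi_t$, then $S\xi\otimes\eta=0$ for all $\xi\in\Hilm_p\oplus\Hilm_q$ and $\eta\in\Hilm_t$, so $\varphi_t(\braket{S\xi}{S\xi})=0$ on $\Hilm_t$; by the hypothesis that $\varphi_t\colon A\to\Comp(\Hilm_t)$ is injective, this forces $\braket{S\xi}{S\xi}=0$, hence $S=0$. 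Thus $\Phi_t$ is an injective $*$-homomorphism, hence isometric, and its restriction to the corner $\Comp(\Hilm_q,\Hilm_p)$ is isometric as well.

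Next I would transfer this to $\CP_g$. Fix $g\in G$ and recall from the construction preceding Theorem~\ref{the:Ore_colimit_Fell} that $\CP_g$ is the Banach-space colimit of $\bigl(\Comp(\Hilm_{p_2},\Hilm_{p_1})\bigr)_{(p_1,p_2)\in R_g}$ along the maps $\varphi_{p_1,p_2,t}$ over the filtered category $\Cat_P^g$. In such a colimit with isometric transition maps, the seminorm on the image of any entry is given by the decreasing net of norms $\|\varphi_{p,q,t}(T)\|$, which by Step~1 is constantly $\|T\|$; hence the structure maps $\Comp(\Hilm_q,\Hilm_p)\to\CP_g$ are isometric, and in particular injective.

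Finally, by Theorem~\ref{the:Ore_colimit_Fell}, $\CP$ is the full sectional $C^*$-algebra of the Fell bundle $(\CP_g)_{g\in G}$, and it is standard that each fibre $\CP_g$ embeds isometrically into the full section algebra. Composing with the previous step gives an isometric, hence injective, map $\Comp(\Hilm_q,\Hilm_p)\to\CP$. The only nontrivial ingredient is the linking-algebra argument of Step~1, which is needed because $\Comp(\Hilm_q,\Hilm_p)$ is not itself a $C^*$-algebra when $p\neq q$ and one cannot directly apply ``injective implies isometric''. Once that isometry is established, the passage through the filtered colimit and through the Fell bundle is formal.
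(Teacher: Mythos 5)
Your proof is correct and follows essentially the same route as the paper: the core step is the observation that \(T\otimes_A\Id_{\Hilm_t}=0\) forces \(\varphi_t(\braket{T\xi}{T\xi}_A)=0\) and hence \(T\xi=0\) by injectivity of \(A\to\Comp(\Hilm_t)\), which is exactly the computation from~\eqref{eq:I_pq} that the paper's proof invokes, followed by passage through the filtered colimits \(\CP_g\) and the Fell bundle. Your linking-algebra upgrade from injectivity to isometry of \(\varphi_{p,q,t}\) just makes explicit the point the paper leaves implicit when it asserts that injectivity of the structure maps passes to the maps \(\Comp(\Hilm_q,\Hilm_p)\to\CP_g\subseteq\CP\).
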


\begin{proof}
  We assume that \(I_p=\{0\}\) for all \(p\in P\).  The proof
  of~\eqref{eq:I_pq} shows that \(\xi\in\Hilm_p\) satisfies
  \(\xi\otimes_A \eta=0\) in \(\Hilm_p \otimes_A \Hilm_q\) for all
  \(\eta\in\Hilm_q\) if and only if \(\xi=0\).  Hence the
  maps~\(\varphi_{p,q,t}\) are injective.  Since \(\CP_g\subseteq
  \CP\) is the filtered colimit of the spaces
  \(\Comp(\Hilm_q,\Hilm_p)\), this implies the same for the maps
  \(\Comp(\Hilm_q,\Hilm_p)\to \CP_g\subseteq \CP\).
\end{proof}

\subsection{What happens without the Ore conditions?}
\label{sec:non-Ore}

We now consider an example of a monoid without the Ore conditions
where we can, nevertheless, describe the Cuntz--Pimsner algebra by
hand.  Let~\(F_n^+\) be the free monoid on $n$~generators, \(n\ge2\).
Elements in~\(F_n^+\) are finite words in the letters
\(a_1,\dotsc,a_n\), including the empty word.  This monoid violates
the Ore conditions: there are no words \(w_1,w_2\in F_n^+\) with \(a_1
w_1 = a_2 w_2\).  A proper product system over~\(F_n^+\) is equivalent
to a \(\Cst\)\nb-algebra~\(A\) with proper correspondences~\(\Hilm_i\)
from~\(A\) to itself for \(i=1,\dotsc,n\), without any further data or
conditions: given this data, we may define~\(\Hilm_w\) for a
word~\(w\) by composing the correspondences for the letters in~\(w\),
and we use the canonical multiplication maps between them.

\begin{proposition}
  \label{pro:CP_over_free}
  Let~\(A\) be a \(\Cst\)\nb-algebra and let~\(\Hilm_i\) for \(1\le
  i\le n\) be proper correspondences from~\(A\) to~\(A\).
  Let~\(\CP_i\) be the Cuntz--Pimsner algebra of~\(\Hilm_i\) for
  \(1\le i\le n\).  The Cuntz--Pimsner algebra of the resulting
  product system over~\(F_n^+\) is the amalgamated free product of
  the Cuntz--Pimsner algebras~\(\CP_i\) over~\(A\).
\end{proposition}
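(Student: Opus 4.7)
The plan is to verify that both sides satisfy the same universal property with respect to nondegenerate representations, and conclude by Yoneda. Throughout, the key reduction (already available from Proposition~\ref{pro:CP_vs_nondegenerate}) is that nondegenerate representations coincide with Cuntz--Pimsner covariant ones, since all correspondences are proper.

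First I would spell out what a representation of this particular product system is. Since~\(F_n^+\) is free on \(a_1,\dotsc,a_n\), every element~\(w\in F_n^+\) has a unique factorisation into letters, and the fibres are forced to be \(\Hilm_w = \Hilm_{i_1}\otimes_A \dotsb \otimes_A \Hilm_{i_k}\) with the canonical associator as multiplication map. Hence giving the product system is the same as giving the \(n\) correspondences~\(\Hilm_i\), and a transformation to a \(\Cst\)\nb-algebra~\(B\) (equivalently, a nondegenerate representation on a correspondence~\(\Hilm[F]\) from~\(A\) to~\(B\)) amounts to specifying nondegenerate representations \(S_i\colon \Hilm_i\to\Bound(\Hilm[F])\) for \(i=1,\dotsc,n\) extending a fixed nondegenerate left action of~\(A\) on~\(\Hilm[F]\). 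No extra compatibility is imposed: the representations \(S_w(\xi_{i_1}\otimes\dotsb\otimes\xi_{i_k}) \defeq S_{i_1}(\xi_{i_1})\dotsb S_{i_k}(\xi_{i_k})\) at longer words are forced by~\eqref{eq:transformation_product_system}, and nondegeneracy on a word~\(w = a_{i_1}\dotsb a_{i_k}\) follows by collapsing from the right, using nondegeneracy of each~\(S_{i_j}\) inductively to obtain \(S_w(\Hilm_w)\Hilm[F] = \Hilm[F]\).

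Next I would invoke the universal property of each individual Cuntz--Pimsner algebra~\(\CP_i\): by the reformulation after Proposition~\ref{pro:CP_vs_nondegenerate}, a nondegenerate representation of~\(\Hilm_i\) on~\(\Hilm[F]\) over a fixed nondegenerate \(\vartheta\colon A\to\Bound(\Hilm[F])\) is the same thing as a nondegenerate \Star{}homomorphism \(\CP_i\to\Bound(\Hilm[F])\) whose restriction to~\(A\) equals~\(\vartheta\). Gluing these for \(i=1,\dotsc,n\), the data of a nondegenerate representation of the product system over~\(F_n^+\) on~\(\Hilm[F]\) is exactly a family of nondegenerate \Star{}homomorphisms \(\CP_i\to\Bound(\Hilm[F])\) that agree on the common subalgebra~\(A\). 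This is precisely the universal property of the amalgamated free product \(\CP_1 *_A \dotsb *_A \CP_n\) in the category of \(\Cst\)\nb-algebras with nondegenerate \Star{}homomorphisms. Hence \(\CP_1 *_A \dotsb *_A \CP_n\) and the Cuntz--Pimsner algebra of the product system corepresent the same functor on~\(B\mapsto\{\)nondegenerate representations on Hilbert \(B\)\nb-modules\(\}\), and Yoneda's lemma gives the claimed isomorphism intertwining the canonical maps from~\(A\) and from each~\(\CP_i\).

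The only subtle point is formulating the amalgamated free product in the ``nondegenerate'' category rather than the unital category; this is why the inclusions \(A\to\CP_i\) must be taken to be the canonical nondegenerate \Star{}homomorphisms from Proposition~\ref{pro:CP_vs_nondegenerate}. Once this is set up, the verification that assembling the \(S_i\) into one representation~\(S_w\) of the whole product system is well-defined and bijective is routine, since the free monoid imposes no relations among the generators and therefore no compatibility among the individual representations beyond sharing the base \(A\)-action.
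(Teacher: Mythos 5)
Your proposal is correct and follows essentially the same route as the paper: reduce via Proposition~\ref{pro:CP_vs_nondegenerate} to nondegenerate representations, note that freeness of \(F_n^+\) means a representation of the product system is exactly an \(n\)-tuple of nondegenerate representations of the \(\Hilm_i\) over a common \(A\)-action, identify each such representation with a representation of \(\CP_i\), and match this with the universal property of the amalgamated free product. Your added details (the forced formula for \(S_w\) on longer words, the inductive nondegeneracy argument, and the explicit Yoneda step) are elaborations of points the paper treats as routine.
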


\begin{proof}
  Let~\(D\) be another \(\Cst\)\nb-algebra and let~\(\Hilm[G]\) be a
  Hilbert module over~\(D\).  A nondegenerate representation of our
  product system over~\(F_n^+\) on~\(\Hilm[G]\) is already
  determined by what it does on the correspondences~\(\Hilm_i\),
  and~\(\Hilm_i\) may act by arbitrary nondegenerate representations
  because~\(F_n^+\) is a free monoid.  A nondegenerate
  representation of~\(\Hilm_i\) is equivalent to a representation of
  the Cuntz--Pimsner algebra~\(\CP_i\) by
  Proposition~\ref{pro:CP_vs_nondegenerate}.  Since all these
  representations give the same representation when we compose with
  the canonical map \(A\to\CP_i\), we get a representation of the
  amalgamated free product of the \(\Cst\)\nb-algebras~\(\CP_i\)
  over~\(A\).  Conversely, a representation of this free product
  gives nondegenerate representations of the
  correspondences~\(\Hilm_i\) and thus of~\(A\), and it gives the
  same representation on~\(A\) for each~\(i\).  This data may be
  extended to a nondegenerate representation of the product system
  over~\(F_n^+\).
\end{proof}

Free products with amalgamation are, unfortunately, rather large and
complicated.  In particular, they are almost never nuclear or exact.
Thus we view Proposition~\ref{pro:CP_over_free} as a negative
result: it tells us that we should not expect Cuntz--Pimsner
algebras for proper product systems over~\(F_n^+\) to have a nice
structure.  Standard assumptions in the theory of Cuntz--Toeplitz
and Cuntz--Pimsner algebras are that the underlying semigroup be
``quasi-lattice-ordered'' and the product system ``compactly
aligned,'' see~\cite{Fowler:Product_systems}.  Both assumptions are
satisfied for proper product systems over~\(F_n^+\).  If two
elements in~\(F_n^+\) have an upper bound, they have a least upper
bound because two elements in~\(F_n^+\) only have an upper bound if
one of them is a subword of the other, and then the longer of the
two is a least upper bound.  Hence Cuntz--Pimsner algebras of
compactly aligned product systems over quasi-lattice-ordered monoids
need not be tractable.

\subsection{Higher-rank Doplicher--Roberts algebras}
\label{sec:higher-rank_DR}

In this section, we consider higher-rank analogues of the
\(\Cst\)\nb-algebras introduced by Doplicher and Roberts
in~\cite{Doplicher-Roberts:Duals}.  The Doplicher--Roberts
\(\Cst\)\nb-algebras were an important motivation for Kumjian, Pask,
Raeburn and Renault when they defined graph \(\Cst\)\nb-algebras
in~\cite{Kumjian-Pask-Raeburn-Renault:Graphs}.

Our higher-rank analogue is constructed from a compact Lie
group~\(G\) and finite-dimensional representations
\(\pi_1,\dotsc,\pi_k\) of~\(G\); in addition, we need a
representation \(\rho\colon G\to\U(\Hils)\) on a Hilbert
space~\(\Hils\) that contains each irreducible representation
of~\(G\).  Different choices for~\(\rho\) will, however, give
Morita--Rieffel equivalent \(\Cst\)\nb-algebras, so we
consider~\(\rho\) to be auxiliary data only.  From the above data,
we are going to construct a product system over the commutative
monoid~\((\N^k,+)\) and then take its Cuntz--Pimsner algebra.  The
case \(k=1\) is considered
in~\cite{Kumjian-Pask-Raeburn-Renault:Graphs}.

For \(m = (m_1,\dotsc,m_k)\in\N^k\), we form the representation
\[
\pi^m \defeq \pi_1^{\otimes m_1} \otimes \dotsb \otimes
\pi_k^{\otimes m_k}\colon G \to \U(V^m);
\]
here~\(V^m\) denotes the finite-dimensional Hilbert space on
which~\(\pi^m\) acts.  There are canonical unitary operators
\(\mu_{m_1,m_2}\colon V^{m_1} \otimes V^{m_2} \cong V^{m_1+m_2}\)
that intertwine the representations \(\pi^{m_1} \otimes \pi^{m_2}\)
and~\(\pi^{m_1+m_2}\), and which satisfy the properties of a
symmetric monoidal category.

Let \(\Hilm_m \subseteq \Comp(\Hils,V^m\otimes\Hils)\) be the space
of all compact intertwining operators between the representations
\(\rho\) and~\(\pi^m\otimes\rho\).  Define multiplication maps
\(\Hilm_{m_1} \times \Hilm_{m_2} \to \Hilm_{m_1+m_2}\) by
mapping~\((T_1,T_2)\) to the composite intertwining operator
\[
\rho
\xrightarrow{T_2} \pi^{m_2}\otimes\rho
\xrightarrow{1\otimes T_1} \pi^{m_2}\otimes\pi^{m_1}\otimes\rho
\xrightarrow{\mu_{m_2,m_1}\otimes1} \pi^{m_1+m_2}\otimes\rho;
\]
this composite is compact because~\(T_2\) is compact.

\begin{lemma}
  \label{lem:associative_DR}
  The multiplication above is associative.
\end{lemma}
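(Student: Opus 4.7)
The plan is to verify $(T_1\cdot T_2)\cdot T_3 = T_1\cdot(T_2\cdot T_3)$ for $T_i\in\Hilm_{m_i}$, $i=1,2,3$, by expanding both sides according to the definition of the multiplication and comparing the resulting composite intertwiners $\rho\to\pi^{m_1+m_2+m_3}\otimes\rho$. First I would substitute $T_1\cdot T_2 = (\mu_{m_2,m_1}\otimes 1_\rho)(1_{\pi^{m_2}}\otimes T_1)T_2$ into $(T_1\cdot T_2)\cdot T_3$, and similarly substitute $T_2\cdot T_3$ into $T_1\cdot(T_2\cdot T_3)$, obtaining each side as a composite of five operators.

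Next I would use bifunctoriality of $\otimes$ once, in the form $(1_{V^{m_2+m_3}}\otimes T_1)(\mu_{m_3,m_2}\otimes 1_\rho) = (\mu_{m_3,m_2}\otimes 1_{V^{m_1}}\otimes 1_\rho)(1_{V^{m_3}\otimes V^{m_2}}\otimes T_1)$, to slide the $T_1$-layer on the right-hand side past the coherence map $\mu_{m_3,m_2}$. After this rearrangement, both sides factor as the common composite $(1_{V^{m_3}\otimes V^{m_2}}\otimes T_1)\circ(1_{\pi^{m_3}}\otimes T_2)\circ T_3$, followed by a reassembly of tensor factors. For $(T_1\cdot T_2)\cdot T_3$ the reassembly is $(\mu_{m_3,m_1+m_2}\otimes 1_\rho)\circ(1_{V^{m_3}}\otimes\mu_{m_2,m_1}\otimes 1_\rho)$, while for $T_1\cdot(T_2\cdot T_3)$ it is $(\mu_{m_2+m_3,m_1}\otimes 1_\rho)\circ(\mu_{m_3,m_2}\otimes 1_{V^{m_1}}\otimes 1_\rho)$. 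These two agree in view of the associativity identity
\[
\mu_{m_3,m_1+m_2}\circ(1_{V^{m_3}}\otimes\mu_{m_2,m_1}) = \mu_{m_2+m_3,m_1}\circ(\mu_{m_3,m_2}\otimes 1_{V^{m_1}})
\]
which is part of the symmetric monoidal structure on finite-dimensional representations of $G$ already invoked when the maps $\mu_{m_1,m_2}$ were introduced.

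The main obstacle is purely one of bookkeeping: the multiplication is arranged so that $T_1$ ends up acting in the left-most slot of the target $V^{m_1+m_2+m_3}\otimes\Hils$, although it is introduced last in the composite, while $T_3$ is applied first and is placed into the right-most slot. Keeping track of which $V^{m_i}$ sits in which tensor position, and which coherence map $\mu_{a,b}$ has to appear at each step, accounts for essentially all of the work. No analytic input is required beyond the observation, already used implicitly in the definition, that compactness of any one factor $T_i$ forces the whole composite to be compact.
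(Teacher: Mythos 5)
Your proposal is correct and is essentially the paper's own argument: the paper likewise expands both triple products, uses the commuting square expressing bifunctoriality of $\otimes$ (sliding the $1\otimes1\otimes T_1$ layer past $\mu_{m_3,m_2}\otimes1$), and concludes with the coherence identity $\mu_{m_3+m_2,m_1}\circ(\mu_{m_3,m_2}\otimes1)=\mu_{m_3,m_2+m_1}\circ(1\otimes\mu_{m_2,m_1})$, which is the identity you invoke (the index order only differs by commutativity of $\N^k$). The only difference is presentational: the paper records the computation as a commuting diagram rather than as explicit operator composites, and the remark about compactness is not needed for associativity itself.
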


\begin{proof}
  Let \(m_1,m_2,m_3\in\N^k\) and let \(T_i\in\Hilm_{m_i}\) for
  \(i=1,2,3\).  Then the products \((T_1 T_2) T_3\) and \(T_1 (T_2
  T_3)\) are equal to the composite operators
  \[
  \begin{tikzpicture}
    \matrix (m) [cd] {
      \rho&
      \pi^{m_3}\otimes\rho&
      \pi^{m_3}\otimes\pi^{m_2}\otimes\rho&&
      \pi^{m_3}\otimes\pi^{m_2}\otimes\pi^{m_1}\otimes\rho\\
      && \pi^{m_3+m_2}\otimes\rho&&
      \pi^{m_3+m_2}\otimes\pi^{m_1}\otimes\rho\\
      &&&&\pi^{m_3+m_2+m_1}\otimes\rho\\
    };
    \draw[cdar] (m-1-1) -- node {\(\scriptstyle T_3\)} (m-1-2);
    \draw[cdar] (m-1-2) -- node {\(\scriptstyle 1\otimes T_2\)} (m-1-3);
    \draw[cdar] (m-1-3) -- node {\(\scriptstyle 1\otimes 1\otimes T_1\)} (m-1-5);
    \draw[cdar] (m-1-3) -- node[swap] {\(\scriptstyle \mu_{m_3,m_2}\otimes 1\)} (m-2-3);
    \draw[cdar] (m-1-5) -- node {\(\scriptstyle \mu_{m_3,m_2}\otimes 1\)} (m-2-5);
    \draw[cdar] (m-2-3) -- node {\(\scriptstyle 1\otimes 1\otimes T_1\)} (m-2-5);
    \draw[cdar] (m-2-5) -- node {\(\scriptstyle \mu_{m_3+m_2,m_1}\otimes 1\)} (m-3-5);
  \end{tikzpicture}
  \]
  because
  \(\mu_{m_3+m_2,m_1} \circ (\mu_{m_3,m_2}\otimes 1) =
  \mu_{m_3,m_2+m_1} \circ (1\otimes \mu_{m_2,m_1})\).
\end{proof}

The unit fibre~\(\Hilm_0\) is the \(\Cst\)\nb-algebra of all compact
intertwining operators of~\(\rho\).  The multiplication maps above
turn each~\(\Hilm_m\) into an \(\Hilm_0\)-bimodule.  We define an
\(\Hilm_0\)-valued right inner product on~\(\Hilm_m\) by
\(\braket{T_1}{T_2} \defeq T_1^* T_2\) for \(T_1,T_2\in\Hilm_m\).
This turns each~\(\Hilm_m\) into a correspondence from the
\(\Cst\)\nb-algebra \(\Hilm_0\) to itself.

\begin{lemma}
  \label{lem:DR_compact_operators}
  The \(\Cst\)\nb-algebra \(\Comp(\Hilm_m)\) is isomorphic to the
  \(\Cst\)\nb-algebra of compact intertwiners of the representation
  \(\pi^m\otimes\rho\), acting on~\(\Hilm_m\) by left
  multiplication.  More generally,
  \(\Comp(\Hilm_{m_1},\Hilm_{m_2})\) is isomorphic to the space of
  compact intertwiners \(\pi^{m_1}\otimes\rho\to
  \pi^{m_2}\otimes\rho\).
\end{lemma}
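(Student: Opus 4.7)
The plan is to define an explicit map
\begin{equation*}
\Psi_{m_1,m_2}\colon \Comp(\Hilm_{m_1},\Hilm_{m_2}) \to \mathcal{I}_{m_1,m_2},
\end{equation*}
where $\mathcal{I}_{m_1,m_2}$ denotes the compact $G$-equivariant operators from $V^{m_1}\otimes\Hils$ to $V^{m_2}\otimes\Hils$, and to verify that it is an isometric bijection whose inverse realises $S\in\mathcal{I}_{m_1,m_2}$ as the operator of left multiplication $T\mapsto ST$ on $\Hilm_{m_1}$. On rank-one operators I would set $\Psi_{m_1,m_2}(\ket{T_2}\bra{T_1})\defeq T_2 T_1^*$; the right side is compact and $G$-equivariant since $T_1^*$ intertwines $\pi^{m_1}\otimes\rho$ with $\rho$ and $T_2$ intertwines $\rho$ with $\pi^{m_2}\otimes\rho$. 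Using $\ket{T_2}\bra{T_1}T = T_2\braket{T_1}{T} = T_2 T_1^* T$ for $T\in\Hilm_{m_1}$, I would check that $\Psi_{m_1,m_2}$ is well-defined on finite linear combinations of rank-ones, extends by continuity to the whole Banach space, and (taken over all $m_1,m_2$) satisfies $\Psi(X^*)=\Psi(X)^*$ and $\Psi(XY)=\Psi(X)\Psi(Y)$; in particular, for $m_1=m_2$ it is a \Star{}homomorphism.

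For a putative inverse, any compact $G$-intertwiner $S\colon V^{m_1}\otimes\Hils\to V^{m_2}\otimes\Hils$ produces an adjointable operator $L_S\colon \Hilm_{m_1}\to\Hilm_{m_2}$, $L_S(T)\defeq ST$, whose adjoint is $L_{S^*}$, and a short computation gives $\Psi_{m_1,m_2}(L_{T_2T_1^*}) = T_2 T_1^*$. So everything reduces to showing that $L_S$ is a Hilbert-module compact operator for every compact $G$-intertwiner~$S$, equivalently that the finite sums $\sum_i T_2^{(i)}(T_1^{(i)})^*$ form a norm-dense subset of $\mathcal{I}_{m_1,m_2}$.

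This density is the main obstacle, and it is precisely where the hypothesis that $\rho$ contains every irreducible representation of~$G$ is used. I would argue via Peter--Weyl: write $\Hils \cong \bigoplus_\sigma V_\sigma\otimes M_\sigma$ over the irreducibles $V_\sigma$ of~$G$, with multiplicity Hilbert spaces $M_\sigma$ that are all \emph{non-zero} by hypothesis; correspondingly $V^m\otimes\Hils \cong \bigoplus_\sigma V_\sigma\otimes M_\sigma^{(m)}$ with
\begin{equation*}
M_\sigma^{(m)} \defeq \bigoplus_\tau \mathrm{Hom}_G(V_\sigma, V^m\otimes V_\tau)\otimes M_\tau.
\end{equation*}
Schur's Lemma identifies $\Hilm_0 \cong \bigoplus_\sigma \Comp(M_\sigma)$ and $\Hilm_m \cong \bigoplus_\sigma \Comp(M_\sigma, M_\sigma^{(m)})$ as $c_0$-direct sums of Hilbert modules; non-vanishing of every $M_\sigma$ makes $\Comp(M_\sigma, M_\sigma^{(m)})$ the standard imprimitivity bimodule between $\Comp(M_\sigma)$ and $\Comp(M_\sigma^{(m)})$ (and zero when $M_\sigma^{(m)}=0$, which then matches the $\sigma$-component on the right). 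Computing the Hilbert-module compacts from this decomposition yields $\Comp(\Hilm_{m_1},\Hilm_{m_2}) \cong \bigoplus_\sigma \Comp(M_\sigma^{(m_1)}, M_\sigma^{(m_2)})$, which is exactly $\mathcal{I}_{m_1,m_2}$, and tracing the identification shows it agrees with $\Psi_{m_1,m_2}$, giving injectivity and surjectivity at once.
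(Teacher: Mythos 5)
Your proposal is correct, and it starts exactly as the paper does: the same map \(\ket{T_2}\bra{T_1}\mapsto T_2T_1^*\), with injectivity seen from its realisation as left multiplication on \(\Hilm_{m_1}\). Where you genuinely diverge is at the crux, surjectivity, i.e.\ the density of the finite sums \(\sum_i T_2^{(i)}(T_1^{(i)})^*\) in the compact intertwiners. The paper argues this directly and briefly: any compact intertwiner \(\pi^{m_1}\otimes\rho\to\pi^{m_2}\otimes\rho\) is approximated by linear combinations of intertwiners with irreducible range, and since every irreducible representation of~\(G\) occurs in~\(\rho\), each such intertwiner factors through~\(\rho\) and is therefore of the form \(T_2T_1^*\) with \(T_i\in\Hilm_{m_i}\). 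You instead invoke the full isotypic decomposition \(\Hils\cong\bigoplus_\sigma V_\sigma\otimes M_\sigma\) and the standard Morita-type identification of the compacts between modules of the form \(\Comp(M_\sigma,N)\), which lets you compute \(\Hilm_0\), \(\Hilm_m\) and \(\Comp(\Hilm_{m_1},\Hilm_{m_2})\) explicitly as \(c_0\)\nb-direct sums over~\(\hat{G}\); the hypothesis on~\(\rho\) enters as \(M_\sigma\neq0\), playing the same role as the factorisation through~\(\rho\) in the paper. The paper's route is shorter and avoids bookkeeping with block decompositions (one does need, as you implicitly use, that a block-diagonal operator is compact precisely when its blocks are compact with norms in \(c_0\)); your route is more structural and yields as a by-product the explicit description of \(\Comp(\Hilm_a)\) as a \(c_0\)\nb-sum of elementary \(\Cst\)\nb-algebras indexed by~\(\hat{G}\), which the paper re-derives separately when discussing the AF-structure and \(\K\)\nb-theory of~\(\mathcal{D}_0\). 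Your caveat about \(M_\sigma^{(m)}=0\) is in fact vacuous, since every irreducible occurs in \(V^m\otimes\rho\), but keeping it does no harm.
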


\begin{proof}
  The map sending \(\ket{T_1}\bra{T_2}\in\Comp(\Hilm_m)\) for
  \(T_1,T_2\in\Hilm_m\) to the intertwiner \(T_1 T_2^*\colon
  \pi^m\otimes\rho\to\pi^m\otimes\rho\) extends to a
  \Star{}homomorphism from \(\Comp(\Hilm_m)\) to the
  \(\Cst\)\nb-algebra of compact intertwiners of
  \(\pi^m\otimes\rho\).  Since \(\ket{T_1}\bra{T_2} T_3 = T_1 T_2^*
  T_3\), this representation is faithful.  It remains to show that
  it is surjective.

  Any compact intertwiner on \(\pi^m\otimes\rho\) may be
  approximated by linear combinations of intertwiners with
  irreducible range because the representation~\(\pi^m\otimes\rho\),
  like any representation of~\(G\), is a direct sum of irreducible
  representations.  Since any irreducible representation of~\(G\)
  occurs in~\(\rho\), any intertwiner with irreducible range factors
  through the representation~\(\rho\).  Thus we may write it as
  \(T_1 T_2^*\) for \(T_1,T_2\in \Hilm_m\).  This shows that
  \(\Comp(\Hilm_m)\) is mapped onto the \(\Cst\)\nb-algebra of
  compact intertwiners of \(\pi^m\otimes\rho\).

  The same argument still works in the more general case of
  \(\Comp(\Hilm_{m_1},\Hilm_{m_2})\).
\end{proof}

If \(T\in\Hilm_0\), then the induced operator \(1\otimes T\colon
\pi^m\otimes\rho \to \pi^m\otimes \rho\) is compact as well because
the representation~\(\pi^m\) has finite dimension.  Thus the
correspondence~\(\Hilm_m\) is proper by
Lemma~\ref{lem:DR_compact_operators}.

\begin{lemma}
  \label{lem:mult_DR_surjective}
  The multiplication maps induce unitary operators \(\Hilm_{m_1}
  \otimes_{\Hilm_0} \Hilm_{m_2} \to \Hilm_{m_1+m_2}\).
\end{lemma}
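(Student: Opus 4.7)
The plan is to verify first that the multiplication descends to an isometric $\Hilm_0$-bimodule map $\Hilm_{m_1}\otimes_{\Hilm_0}\Hilm_{m_2}\to\Hilm_{m_1+m_2}$, and then to prove separately that its image is dense. The isometry part is a bookkeeping exercise: for $T_1,T_1'\in\Hilm_{m_1}$ and $T_2,T_2'\in\Hilm_{m_2}$, expand
\[
(T_1 T_2)^*(T_1' T_2')
= T_2^* (1\otimes T_1)^*(\mu_{m_2,m_1}\otimes 1)^*(\mu_{m_2,m_1}\otimes 1)(1\otimes T_1')T_2'
\]
and use the unitarity of $\mu_{m_2,m_1}$ to cancel the middle, leaving $T_2^*(1\otimes T_1^*T_1')T_2' = \braket{T_2}{(T_1^*T_1')\cdot T_2'}$, where $(T_1^*T_1')\cdot T_2'$ is exactly the left action of $\braket{T_1}{T_1'}\in\Hilm_0$ on $T_2'$ as unwound from the definition of the product. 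This shows balanced over $\Hilm_0$ and isometric; the bimodule property is immediate from associativity (Lemma~\ref{lem:associative_DR}).

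The real work is surjectivity. I would argue that the image is dense as follows. Since every $T\in\Hilm_{m_1+m_2}$ is compact and $\pi^{m_1+m_2}\otimes\rho$ decomposes into irreducibles, $T$ is a norm limit of finite sums of intertwiners whose range lies in a single irreducible subrepresentation of $\pi^{m_1+m_2}\otimes\rho$, exactly as in the last paragraph of the proof of Lemma~\ref{lem:DR_compact_operators}. It therefore suffices to realise each such ``irreducible'' summand as a product $T_1\cdot T_2$. Identifying $\pi^{m_1+m_2}\otimes\rho$ with $\pi^{m_2}\otimes\pi^{m_1}\otimes\rho$ through $\mu_{m_2,m_1}\otimes 1$, an intertwiner $T\in\Hilm_{m_1+m_2}$ with irreducible range corresponds to an intertwiner $T'\colon\rho\to\pi^{m_2}\otimes(\pi^{m_1}\otimes\rho)$ whose range sits in $\pi^{m_2}\otimes W$ for some irreducible subrepresentation $W\subseteq\pi^{m_1}\otimes\rho$ (split the second tensor factor into irreducibles and note that an irreducible subspace of a direct sum lies in one summand).

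Given this $W$, the hypothesis that $\rho$ contains every irreducible representation of~$G$ provides an isometric embedding $j\colon W\hookrightarrow\rho$. Write $\iota\colon W\hookrightarrow\pi^{m_1}\otimes\rho$ for the given inclusion. Set $T_1\defeq \iota\circ j^*\in\Hilm_{m_1}$ (finite-rank, hence compact, intertwiner) and $T_2\defeq (1\otimes j)\circ T'_{\mathrm{red}}\in\Hilm_{m_2}$, where $T'_{\mathrm{red}}\colon\rho\to\pi^{m_2}\otimes W$ is obtained from $T'$ by the obvious corestriction. Then
\[
(1\otimes T_1)\circ T_2
= (1\otimes \iota)\circ(1\otimes j^*)\circ(1\otimes j)\circ T'_{\mathrm{red}}
= (1\otimes\iota)\circ T'_{\mathrm{red}} = T',
\]
so $T_1\cdot T_2 = (\mu_{m_2,m_1}\otimes 1)\circ T' = T$. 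This exhibits the dense set of ``irreducible-range'' intertwiners in the image of the multiplication map, finishing surjectivity. The main obstacle is precisely this factorisation step: it is where the hypothesis on $\rho$ is used, and the bookkeeping with the swap $\mu_{m_2,m_1}$ versus $\mu_{m_1,m_2}$ has to be handled carefully.
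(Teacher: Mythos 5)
Your isometry computation is fine (this is the part the paper dismisses as routine), and your overall strategy for surjectivity — approximate by intertwiners with irreducible range and factor each one using an embedding of an irreducible into $\rho$ — is a genuinely different route from the paper's. But the crucial step, exactly the one you flag as the real work, has a gap. The principle ``an irreducible subspace of a direct sum lies in one summand'' is false: inside an isotypic component, irreducible subrepresentations can sit diagonally across several summands (the diagonal copy of $\sigma$ in $\sigma\oplus\sigma$). More importantly, the conclusion you draw from it is itself false in general: the range of $T'$ need not be contained in $\pi^{m_2}\otimes W$ for any \emph{irreducible} $W\subseteq\pi^{m_1}\otimes\rho$. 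The smallest subrepresentation $W$ with $\operatorname{range}(T')\subseteq \pi^{m_2}\otimes W$ is the span of the ``second legs'' $\{(\phi\otimes 1)(u)\mid u\in\operatorname{range}(T'),\ \phi\in (V^{m_2})^*\}$; it is finite-dimensional but typically reducible. Concretely, if an irreducible $U$ is embedded in $(\pi^{m_2}\otimes W_1)\oplus(\pi^{m_2}\otimes W_2)$ with nonzero components in both summands and $W_1\not\cong W_2$, then any $W$ with $U\subseteq\pi^{m_2}\otimes W$ must contain $W_1\oplus W_2$, so it cannot be irreducible; in the degenerate case of a trivial group this is just the fact that a rank-one operator into $V^{m_2}\otimes\Hils$ whose defining vector is not an elementary tensor does not factor through a one-dimensional subspace of $\Hils$.

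The gap is repairable along your own lines: take $W$ to be the finite-dimensional subrepresentation just described, decompose it as $W=\bigoplus_\alpha W_\alpha$ into finitely many irreducibles, insert $1_W=\sum_\alpha \iota_\alpha\iota_\alpha^*$ to write $T'$ as a finite sum of intertwiners whose ranges lie in $\pi^{m_2}\otimes W_\alpha$, and factor each summand exactly as you do, using an isometric embedding $W_\alpha\hookrightarrow\rho$ (note you cannot embed the possibly reducible $W$ itself, since $\rho$ is only assumed to contain each irreducible, not with multiplicity). Finite sums are enough, because the image of the multiplication map on the tensor product is a closed linear span of products. With that fix your argument works, and it is quite different from the paper's proof, which never factors individual intertwiners: there one observes that the isometry induces the \Star{}homomorphism $\Comp(\Hilm_{m_1})\to\Comp(\Hilm_{m_1+m_2})$, $S\mapsto S\otimes 1$, identified via Lemma~\ref{lem:DR_compact_operators} with tensoring compact intertwiners by $1_{V^{m_2}}$, which is nondegenerate because $\pi^{m_2}$ is finite-dimensional; nondegeneracy then forces the isometry to be surjective. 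The paper's route is shorter and pushes the representation theory entirely into Lemma~\ref{lem:DR_compact_operators}; yours, once repaired, makes the use of the hypothesis on $\rho$ more explicit at this step.
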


\begin{proof}
  It is routine to check that the map \(\Hilm_{m_1} \times
  \Hilm_{m_2} \to \Hilm_{m_1+m_2}\) defined above preserves the
  inner products, so it gives an isometry \(\Hilm_{m_1}
  \otimes_{\Hilm_0} \Hilm_{m_2} \to \Hilm_{m_1+m_2}\).  This induces
  a \Star{}homomorphism \(\Comp(\Hilm_{m_1}) \to
  \Comp(\Hilm_{m_1+m_2})\), \(T\mapsto T\otimes1\).  In terms of
  Lemma~\ref{lem:DR_compact_operators}, this is given by the map
  \(T\mapsto T\otimes 1\) from compact intertwiners of
  \(\pi^{m_1}\otimes\rho\) to compact intertwiners of
  \(\pi^{m_1}\otimes\pi^{m_2}\otimes\rho \cong
  \pi^{m_1+m_2}\otimes\rho\).  This \Star{}homomorphism on compact
  operators is nondegenerate.  Hence the underlying isometry
  \(\Hilm_{m_1} \otimes_{\Hilm_0} \Hilm_{m_2} \to \Hilm_{m_1+m_2}\)
  must be surjective.
\end{proof}

Lemma~\ref{lem:mult_DR_surjective} says that the
correspondences~\(\Hilm_m\) with the above multiplication maps form
an essential product system over the commutative
monoid~\((\N^k,+)\).  As remarked above,
Lemma~\ref{lem:DR_compact_operators} implies that this product
system is proper.  Since any irreducible representation occurs
in~\(\rho\), the Hilbert \(\Hilm_0\)-module~\(\Hilm_m\) is full and
carries a faithful left \(\Hilm_0\)-action.

\begin{definition}
  \label{def:DR}
  The Cuntz--Pimsner algebra of the product
  system~\((\Hilm_m)_{m\in\N^k}\) over~\((\N^k,+)\) is the
  \emph{higher-rank Doplicher--Roberts algebra} for the
  representations \(\pi_1,\dotsc,\pi_n\) of~\(G\), relative
  to~\(\rho\).
\end{definition}

\begin{lemma}
  \label{lem:DR_unique}
  The higher-rank Doplicher--Roberts algebras for different choices
  of~\(\rho\) are canonically Morita--Rieffel equivalent.
\end{lemma}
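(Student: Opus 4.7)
The natural approach is to combine the two choices \(\rho\) and~\(\rho'\) into the direct sum \(\tilde\rho\defeq\rho\oplus\rho'\) on \(\Hils\oplus\Hils'\), which again contains every irreducible representation of~\(G\) and hence is an admissible auxiliary datum. Writing \(\CP^\rho\), \(\CP^{\rho'}\), \(\CP^{\tilde\rho}\) for the three Doplicher--Roberts algebras, the plan is to realise both \(\CP^\rho\) and~\(\CP^{\rho'}\) as canonical full corners of~\(\CP^{\tilde\rho}\); the Morita--Rieffel equivalence of \(\CP^\rho\) and \(\CP^{\rho'}\) then follows.

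The key algebraic observation is that compact intertwiners decompose along direct sums: for each \(m\in\N^k\),
\[
\Hilm_m^{\tilde\rho}\cong
\begin{pmatrix}
\Hilm_m^\rho & \Comp(\Hils',\pi^m\otimes\Hils)^G\\
\Comp(\Hils,\pi^m\otimes\Hils')^G & \Hilm_m^{\rho'}
\end{pmatrix},
\]
and this decomposition is compatible with the multiplication maps~\(\mu_{m_1,m_2}\) on the product system. Let \(e\in\Mult(\Hilm_0^{\tilde\rho})\) be the projection onto~\(\Hils\). Then \(e\Hilm_m^{\tilde\rho}e=\Hilm_m^\rho\) and \((1-e)\Hilm_m^{\tilde\rho}(1-e)=\Hilm_m^{\rho'}\) for every~\(m\), while the off-diagonal entries are the obvious bimodules linking the two corners.

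Invoking Theorem~\ref{the:Ore_colimit_Fell}, \(\CP^{\tilde\rho}\) is the full section \(\Cst\)\nb-algebra of a Fell bundle \((\CP_g^{\tilde\rho})_{g\in\Z^k}\). Since the structure maps \(\varphi_{p,q}\) of~\eqref{eq:Fixed_Point_algebra_m} are built from~\(\mu\) and hence commute with cutting down by~\(e\), the projection~\(e\) extends to a projection in \(\Mult(\CP_1^{\tilde\rho})\) and, via compatibility with the Fell bundle multiplication, to a projection in \(\Mult(\CP^{\tilde\rho})\). The cornered bundle \((e\CP_g^{\tilde\rho}e)_{g\in\Z^k}\) is by construction the Fell bundle associated to~\((\Hilm_m^\rho)\); hence \(e\CP^{\tilde\rho}e\cong\CP^\rho\) canonically, and symmetrically \((1-e)\CP^{\tilde\rho}(1-e)\cong\CP^{\rho'}\). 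Fullness of~\(e\) in~\(\CP^{\tilde\rho}\) follows from the assumption that~\(\rho\) already contains every irreducible of~\(G\): each isotypic component of~\(\Hils'\) is isomorphic to one inside~\(\Hils\), which yields partial isometries in \(\Comp(\Hils',\Hils)^G\subseteq\Hilm_0^{\tilde\rho}\) witnessing the fullness of~\(e\) in the unit fibre, hence in~\(\CP^{\tilde\rho}\).

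The main obstacle is verifying that the corner operation genuinely commutes with the colimit and Fell bundle construction of Section~\ref{sec:Ore_monoids}; this is essentially formal, since filtered colimits of Banach spaces commute with cutting by a fixed projection and the projection~\(e\) respects the Fell bundle multiplication and involution. Care is required, however, to keep the resulting isomorphism \(e\CP^{\tilde\rho}e\cong\CP^\rho\) natural with respect to the data, so that the equivalence bimodule \(e\CP^{\tilde\rho}(1-e)\) between \(\CP^\rho\) and~\(\CP^{\rho'}\) depends only on \(\rho\) and~\(\rho'\) and not on auxiliary choices; the cleanest route is to phrase the argument via the universal property of Cuntz--Pimsner algebras from Proposition~\ref{pro:CP_vs_nondegenerate}, so that the identifications of the corners are fixed by nondegenerate representations rather than by any generator-level recipe.
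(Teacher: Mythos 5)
Your argument is correct, but it takes a different route from the paper. The paper does not pass to the direct sum \(\tilde\rho=\rho\oplus\rho'\): it directly takes \(\Hilm[F]_{\rho\rho'}\), the space of compact intertwiners \(\rho'\to\rho\), observes that it is a full Hilbert bimodule over \(\Hilm_0^\rho\) and \(\Hilm_0^{\rho'}\), and identifies \(\Hilm[F]_{\rho\rho'}\otimes_{\Hilm_0^{\rho'}}\Hilm_m^{\rho'}\) and \(\Hilm_m^{\rho}\otimes_{\Hilm_0^{\rho}}\Hilm[F]_{\rho\rho'}\) with the compact intertwiners \(\rho'\to\pi^m\otimes\rho\); this is a Morita equivalence of the two product systems, which then induces a Morita--Rieffel equivalence of the Cuntz--Pimsner algebras. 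Your linking-algebra argument realises both algebras as complementary full corners of \(\CP^{\tilde\rho}\); note that your equivalence bimodule \(e\,\CP^{\tilde\rho}(1-e)\) is built on the same space \(\Hilm[F]_{\rho\rho'}=e\,\Hilm_0^{\tilde\rho}(1-e)\), so the two constructions produce essentially the same equivalence. What each buys: the paper's route is shorter but quietly relies on the general principle that a Morita equivalence of (proper, nondegenerate) product systems induces a Morita--Rieffel equivalence of their Cuntz--Pimsner algebras, which is not proved in the paper; your route replaces that by the corner computation, which stays inside the machinery of Section~\ref{sec:Ore_monoids}. The real content in your version is the point you flag yourself: one must check that cutting by~\(e\) commutes with the colimits and with passing to the section \(\Cst\)\nb-algebra of the Fell bundle. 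Fibrewise this is fine (the image of~\(e\) in \(\Mult(\Comp(\Hilm_a^{\tilde\rho}))\) is \(1_{\pi^a}\otimes e\), and cutting \(\Comp(\Hilm_a^{\tilde\rho},\Hilm_{a+m}^{\tilde\rho})\) by these projections gives exactly \(\Comp(\Hilm_a^{\rho},\Hilm_{a+m}^{\rho})\)), but identifying \(e\,\CP^{\tilde\rho}e\) with the \emph{full} section algebra of the cornered bundle, rather than some exotic completion, is where an argument is needed; your suggestion to fix the identification through the universal property (nondegenerate representations of the big system restrict to the corner system, and fullness of~\(e\) lets you go back) is the right way to close this. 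Also, in the fullness argument, "partial isometries" is slightly loose: what you actually use is that intertwiners with irreducible range are finite rank (irreducibles of a compact group are finite-dimensional) and that every such intertwiner on \(\Hils'\) factors through~\(\Hils\), so that \((1-e)\Hilm_0^{\tilde\rho}(1-e)\) lies in the closed ideal generated by~\(e\); combined with nondegeneracy of \(\Hilm_0^{\tilde\rho}\to\CP^{\tilde\rho}\), this gives fullness of~\(e\) in \(\CP^{\tilde\rho}\).
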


\begin{proof}
  Let \(\rho\) and~\(\rho'\) be two representations of~\(G\) that
  contain all irreducible representations.
  Lemma~\ref{lem:DR_compact_operators} identifies \(\Hilm_0^\rho\)
  and~\(\Hilm_0^{\rho'}\) with the \(\Cst\)\nb-algebras of compact
  intertwiners on the representations \(\rho\) and~\(\rho'\),
  respectively.  Let \(\Hilm[F]_{\rho \rho'}\) be the space of all
  compact intertwining operators \(\rho'\to\rho\).  This is a full
  Hilbert bimodule for \(\Hilm_0^\rho\) and \(\Hilm_0^{\rho'}\).
  Furthermore, we may naturally identify both \(\Hilm[F]_{\rho
    \rho'} \otimes_{\Hilm_0^{\rho'}} \Hilm_m^{\rho'}\) and
  \(\Hilm_m^\rho \otimes_{\Hilm_0^\rho} \Hilm[F]_{\rho \rho'}\) with
  the space of compact intertwiners from \(\rho'\) to
  \(\pi^m\otimes\rho\).  These identifications provide a Morita
  equivalence between the product systems for \(\rho\) and~\(\rho'\)
  and thus induce a Morita--Rieffel equivalence between their
  Cuntz--Pimsner algebras.
\end{proof}

To clarify the link to previous constructions, take \(k=1\) and
let~\(\rho\) be the direct sum of all irreducible representations
with multiplicity~\(1\).  Then the \(\Cst\)\nb-algebra~\(\Hilm_0\)
of compact intertwiners of~\(\rho\) is \(\Cont_0(\hat{G})\).  Since
\(k=1\), our product system is determined by the single
self-correspondence
of~\(\Cont_0(\hat{G})\) given by~\(\Hilm_1\).  Such a
self-correspondence is equivalent to a graph with vertex
set~\(\hat{G}\).  Since the left action on~\(\Hilm_1\) is faithful
and~\(\Hilm_1\) is proper and full, our graph has neither sources
nor sinks and no infinite emitters.  Hence our absolute
Cuntz--Pimsner algebra agrees with the relative one used to define
graph \(\Cst\)\nb-algebras.  Our Doplicher--Roberts algebra is
exactly the graph \(\Cst\)\nb-algebra considered in
\cite{Kumjian-Pask-Raeburn-Renault:Graphs}*{Section~7}.  As shown
there, the \(\Cst\)\nb-algebra defined by Doplicher and Roberts
in~\cite{Doplicher-Roberts:Duals} is isomorphic to a full corner in
this graph \(\Cst\)\nb-algebra (assuming that each irreducible
representation of~\(G\) occurs in~\(\pi^m\) for some \(m\in\N\)).

\begin{remark}
  For \(k>1\), it seems unlikely that our higher-rank
  Doplicher--Roberts algebras are higher-rank graph
  \(\Cst\)\nb-algebras.  For \(k=1\), any product system
  over~\((\N^k,+)\) with unit fibre of the form~\(\Cont_0(V)\) for a
  discrete set~\(V\) (``vertices'') gives a higher-rank graph
  \(\Cst\)\nb-algebra.  For \(k>1\), this fails: we also need the
  multiplication isomorphisms in the product system to be given by
  permutation matrices in some chosen bases for our
  self-correspondences.
\end{remark}

Let~\(\mathcal{D}\) denote our higher-rank Doplicher--Roberts
algebra.  The general theory above applies here and shows that
\(\mathcal{D} = \bigoplus_{m\in \Z^k} \mathcal{D}_m\) is the section
\(\Cst\)\nb-algebra of a Fell bundle~\((\mathcal{D}_m)_{m\in\Z^k}\)
over~\(\Z^k\), where~\(\mathcal{D}_m\) is the filtered colimit of
the Banach spaces \(\Comp(\Hilm_{a},\Hilm_{a+m})\) for \(a\in\N^k\)
with \(a+m\in\N^k\).  Here Lemma~\ref{lem:DR_compact_operators}
identifies this Banach space with the space of compact intertwiners
\(\pi^a\otimes\rho \to \pi^{a+m}\otimes\rho\).  In particular, the
zero fibre~\(\mathcal{D}_0\) is the inductive limit of the system of
\(\Cst\)\nb-algebras \(\Comp(\Hilm_a)\) for \(a\in\N^k\); the
sequence given by \(a=(a_1,a_1,\dotsc,a_1)\) for \(a_1\in\N\) is
cofinal in~\(\N^k\),
so we may as well take this sequence.

For each \(a\in\N^k\), \(\Comp(\Hilm_a)\) is a \(\Cst\)\nb-algebra
of compact operators, hence a direct sum of matrix algebras.  The
summands are in bijection with~\(\hat{G}\) because all irreducible
representations of~\(G\) occur in~\(\rho\) and hence in
\(\pi^a\otimes\rho\).  In particular, \(\Comp(\Hilm_a)\) is
Morita--Rieffel equivalent to \(\Cst(G)\) for each \(a\in\N^k\), and
it is an AF-algebra whose K-theory is equal to the K-theory
\(\K_0(\Cst(G))\) of the group \(\Cst\)\nb-algebra of~\(G\) and
hence to the representation ring of~\(G\).  Concretely, its elements
are functions \(\hat{G}\to(\Z,+)\), and the positive cone in
\(\K_0(\Comp(\Hilm_a))\) consists of all functions
\(\hat{G}\to(\N,+)\).

A countable inductive limit of AF-algebras remains an AF-algebra,
so~\(\mathcal{D}_0\) is AF.  We compute its \(\K\)\nb-theory.  The
group~\(\K_0(\Cst(G))\) is a commutative ring through the tensor
product of representations: the representation ring of~\(G\).  The
map
\[
\K_0(\Cst(G)) \cong \K_0(\Comp(\Hilm_a)) \to
\K_0(\Comp(\Hilm_{a+m})) \cong \K_0(\Cst(G))
\]
induced by the canonical \Star{}homomorphism
\(\Comp(\Hilm_a)\to\Comp(\Hilm_{a+m})\) is the multiplication with
\([\pi^m]\) in the ring structure on \(\K_0(\Cst(G))\).  Hence
\[
\K_0(\mathcal{D}_0) = \varinjlim
\bigl(\K_0(\Cst(G)) \xrightarrow{[\pi^{1_k}]}
\K_0(\Cst(G)) \xrightarrow{[\pi^{1_k}]}
\K_0(\Cst(G)) \to \dotsb
\bigr),
\]
where \([\pi^{1_k}]\) denotes multiplication with the class of the
representation \(\pi_1\otimes\dotsb\otimes\pi_k\) in the
representation ring.  This inductive limit is the localisation of
the representation ring \(\K_0(\Cst(G))\) of~\(G\) in which we
invert~\([\pi^{1_k}]\).  Our K\nb-theory computation
determines~\(\mathcal{D}_0\) uniquely up to Morita--Rieffel
equivalence by Elliott's classification of AF-algebras.

Each~\(\mathcal{D}_m\) is a Hilbert bimodule from~\(\mathcal{D}_0\)
to itself, which acts on~\(\K_0\) by multiplication with the class
of~\(\pi^m\).  From this information, it is sometimes possible to
compute the \(\K\)\nb-theory of~\(\mathcal{D}\).  We do not pursue
this in general but merely consider one special case where we can
completely describe the higher-rank Doplicher--Roberts algebra.

\begin{theorem}
  \label{the:DR_example}
  Let \(G=\mathrm{SU}(n)\)
  for \(n\ge2\)
  and let \(\pi_i = \Lambda^i(\C^n)\in\hat{G}\)
  for \(i=1,\dotsc,n-1\)
  be the exterior powers of the standard representation on~\(\C^n\).
  The associated rank-\(n-1\)
  Doplicher--Roberts algebra~\(\mathcal{D}\)
  is purely infinite, simple, separable, nuclear and in the bootstrap
  class, and has \(\K_0(\mathcal{D})=\Z\), \(\K_1(\mathcal{D})=0\).
\end{theorem}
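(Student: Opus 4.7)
My strategy is to combine the general colimit/Fell-bundle structure theorems already established with the specific representation theory of $SU(n)$. Separability is immediate (all data are countable), while nuclearity and bootstrap-class membership follow from Theorem~\ref{the:CP_nuclear}: the unit fibre $\Hilm_0$, realised as compact intertwiners of $\rho$, is a $c_0$-direct sum of matrix algebras indexed by $\widehat{SU(n)}$, hence type~I (so nuclear and bootstrap), and $G=\mathbb{Z}^{n-1}$ is amenable.

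For the \emph{$K$-theory}, Theorem~\ref{the:K_fibre} gives $K_0(\mathcal{D}_0) = R(SU(n)) \otimes_{\mathbb{Z}[\mathbb{N}^{n-1}]} \mathbb{Z}[\mathbb{Z}^{n-1}]$ and $K_1(\mathcal{D}_0) = 0$. A classical fact gives $R(SU(n)) = \mathbb{Z}[x_1,\dots,x_{n-1}]$ as the polynomial ring on $x_i = [\Lambda^i\mathbb{C}^n]$, so the tensor product is the Laurent ring $\mathbb{Z}[x_1^{\pm 1},\dots,x_{n-1}^{\pm 1}]$. The Fell bundle is saturated (Proposition~\ref{pro:Fell_bundle_saturated}), and after Packer--Raeburn stabilisation, $\mathcal{D}\otimes\Comp$ is a $\mathbb{Z}^{n-1}$-crossed product in which the $i$-th generator acts on $K_0$ by multiplication by $x_i$. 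I iterate Pimsner--Voiculescu once per generator: at each step, $1-x_i$ is injective on the current Laurent ring (an integral domain), so $K_1$ stays zero and $K_0$ reduces to the quotient by $x_i = 1$. After $n-1$ iterations, $K_0(\mathcal{D})=\mathbb{Z}$ and $K_1(\mathcal{D})=0$.

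For \emph{simplicity}, I first show that $\mathcal{D}_0$ is a simple AF algebra via its Bratteli diagram: vertices at level $a\in\mathbb{N}^{n-1}$ are indexed by $\widehat{SU(n)}$ (each appearing with positive multiplicity because $\rho$ contains every irrep), and because $\pi_1\otimes\dots\otimes\pi_{n-1}$ is a faithful representation of $SU(n)$, a high enough tensor power of it contains every irrep with positive multiplicity (Burnside's theorem for compact groups); hence from any Bratteli vertex at level $a_1$ one has paths to every vertex at some higher level, yielding simplicity. Next, the $\mathbb{Z}^{n-1}$-action on the stabilisation is faithful on $K_0$ by the distinct units $x^m$; one then upgrades this to proper outerness and applies the simplicity criterion for crossed products of simple $C^*$-algebras by properly outer actions of countable amenable groups to conclude that $\mathcal{D}$ is simple.

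For \emph{pure infiniteness}, since $\dim\pi_i=\binom{n}{i}\geq 2$, each bimodule $\mathcal{D}_{e_i}$ has ``rank'' $\geq 2$ over $\mathcal{D}_0$: for any non-zero projection $p\in\mathcal{D}_0$, one finds two mutually orthogonal partial isometries in $\mathcal{D}$ with common initial projection $p$ and range projections both below $p$, exhibiting $p$ as properly infinite; combined with simplicity and Cuntz's theorem, $\mathcal{D}$ is purely infinite. The \textbf{main obstacle} in this plan is promoting faithfulness-on-$K_0$ to proper outerness of the $\mathbb{Z}^{n-1}$-action, where a direct Bratteli-level analysis, or an appeal to the groupoid model the paper builds in later sections, is likely needed.
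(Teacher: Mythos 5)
Your computation of the \(\K\)-theory follows the paper's own route almost verbatim: \(\K_0(\mathcal{D}_0)\) is the Laurent ring \(\Z[x_1^{\pm1},\dotsc,x_{n-1}^{\pm1}]\), and the iterated Pimsner--Voiculescu argument works because each successive quotient is again a Laurent ring, i.e.\ because \(1-x_1,\dotsc,1-x_{n-1}\) is a regular sequence; that part is fine. (A small inaccuracy: Theorem~\ref{the:CP_nuclear} gives nuclearity but not bootstrap membership; the paper gets the bootstrap class from the description of \(\mathcal{D}\) as stably a crossed product of an AF-algebra by \(\Z^{n-1}\), which you could also invoke.)

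The genuine gap is in your simplicity argument: its first step, that \(\mathcal{D}_0\) is a \emph{simple} AF-algebra, is false, and the paper explicitly says ``we do not claim that the AF-algebra \(\mathcal{D}_0\) is simple.'' The Burnside-type statement you invoke does not say that a high tensor power of a faithful representation contains \emph{every} irreducible; tensor powers of \(\pi^{1_{n-1}}=\pi_1\otimes\dotsb\otimes\pi_{n-1}\) are constrained by the centre \(\Z/n\) of \(\mathrm{SU}(n)\): every irreducible occurring in \((\pi^{1_{n-1}})^{\otimes m}\otimes\tau\) has central character \(\chi_\tau\cdot\chi_{\pi^{1_{n-1}}}^m\). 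Already for \(n=2\), \((\C^2)^{\otimes m}\otimes\tau\) only contains representations of one fixed parity, so in the Bratteli diagram the blocks split into central-character cosets that are permuted coherently by the connecting maps; blocks in different cosets at the same level generate distinct proper ideals, and \(\mathcal{D}_0\) is not simple. Consequently your plan ``simple AF algebra \(\rtimes\) properly outer \(\Z^{n-1}\)-action'' has no foundation, and you yourself flag the proper-outerness step as unresolved; the pure-infiniteness sketch via partial isometries also implicitly uses simplicity of the coefficient algebra. The paper's actual route repairs exactly this point: the first crossed product \(\mathcal{D}_0\rtimes\Z\) is a rank-one Doplicher--Roberts algebra, stably a graph \(\Cst\)-algebra, and is simple and purely infinite by \cite{Kumjian-Pask-Raeburn-Renault:Graphs}*{Corollary 7.3} precisely because \(\pi_1\) is \emph{faithful} (this is where faithfulness enters, not at the level of \(\mathcal{D}_0\)); then the remaining \(\Z^{n-2}\)-action is outer because it acts on \(\K_0\) by multiplication with nontrivial Laurent monomials, and \cite{Kishimoto-Kumjian:Crossed_Cuntz}*{Lemma 10} shows that crossing a simple, separable, purely infinite algebra by such an action preserves simplicity and pure infiniteness. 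You would need to reroute your simplicity and pure-infiniteness arguments through this (or an equivalent) intermediate step.
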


Kirchberg's Classification Theorem implies that~\(\mathcal{D}\)
is isomorphic to the Cuntz algebra~\(\mathcal{O}_\infty\),
but we would not expect this isomorphism to be constructible.

\begin{proof}
  The representations \(\pi_1,\dotsc,\pi_{n-1}\in\hat{G}\)
  are the fundamental representations of \(\mathrm{SU}(n)\),
  that is, they are irreducible and generate a ring isomorphism
  \(\Z[x_1,\dotsc,x_{n-1}]\mapsto R(G)\),
  \(x_i\mapsto [\pi_i]\).

  Our Fell bundle description of~\(\mathcal{D}\)
  shows that it is stably isomorphic to a crossed product of an action
  of~\(\Z^{n-1}\)
  on an AF-algebra.  Hence it is separable, nuclear and in the
  bootstrap class.  We compute the \(\K\)\nb-theory
  of~\(\mathcal{D}\)
  by iterating the Pimsner--Voiculescu exact sequence \(n-1\)~times.
  We write \(\mathcal{D}\rtimes\Z^i\)
  for the crossed product with \(\Z^i\subseteq \Z^{n-1}\),
  although this is really a Fell bundle section algebra; the action by
  automorphisms only occurs on a stably isomorphic
  \(\Cst\)\nb-algebra.

  The AF-algebra~\(\mathcal{D}_0\)
  has \(\K_0(\mathcal{D}_0) = \Z[x_1^{\pm1},\dotsc,x_{n-1}^{\pm1}]\)
  because the representation ring of~\(G\)
  is isomorphic to the polynomial algebra \(\Z[x_1,\dotsc,x_{n-1}]\)
  with \(x_i=[\pi_i]\)
  and localising at the elements \(x_1,\dotsc,x_{n-1}\)
  simply adjoins their inverses.  The elements
  \(1-x_1,\dotsc,1-x_{n-1}\)
  form a regular sequence in this algebra; that is, for each~\(i\),
  multiplication by \(1-x_{i+1}\)
  is injective on the quotient
  \(\Z[x_1^{\pm1},\dotsc,x_{n-1}^{\pm1}]/(1-[x_1],\dotsc,1-[x_i])\)
  by the ideal generated by \(1-[x_1],\dotsc,1-[x_i]\).
  This is what allows us to compute the \(\K\)\nb-theory
  by repeated application of the Pimsner--Voiculescu exact sequence.

  In each step, we are supposed to consider the kernel and cokernel of
  the map \(1-\alpha_i\)
  on \(\K_*(\mathcal{D}_0\rtimes \Z^{i-1})\),
  where~\(\alpha_i\)
  is induced by the action of the \(i\)th
  factor of~\(\Z\)
  on \(\mathcal{D}_0\rtimes \Z^{i-1}\).
  By induction, we show that \(\K_1(\mathcal{D}_0\rtimes \Z^i)\)
  vanishes and that \(\K_0(\mathcal{D}_0\rtimes \Z^i)\)
  is the quotient ring
  \(\Z[x_1^{\pm1},\dotsc,x_{n-1}^{\pm1}]/(1-x_1,\dotsc,1-x_i) \cong
  \Z[x_{i+1}^{\pm1},\dotsc,x_{n-1}^{\pm1}]\).
  This is clear for \(i=0\).
  In each induction step, we use the Pimsner--Voiculescu exact
  sequence.  Since we have a regular sequence, multiplication by
  \(1-x_i\)
  is injective on
  \(\Z[x_1^{\pm1},\dotsc,x_{n-1}^{\pm1}]/(1-x_1,\dotsc,1-x_{i-1})\).
  Thus \(\K_1(\mathcal{D}_0\rtimes \Z^i)\)
  vanishes and \(\K_0(\mathcal{D}_0\rtimes \Z^i)\)
  is \(\Z[x_1^{\pm1},\dotsc,x_{n-1}^{\pm1}]/(1-x_1,\dotsc,1-x_i)\).
  After \(n-1\) steps, we get \(\K_1(\mathcal{D})=0\) and
  \[
  \K_0(\mathcal{D}) \cong
  \Z[x_1^{\pm1},\dotsc,x_{n-1}^{\pm1}]/(1-x_1,\dotsc,1-x_{n-1})
  \cong \Z.
  \]
  Thus~\(\mathcal{D}\) has the asserted \(\K\)\nb-theory.

  Next we prove that~\(\mathcal{D}\)
  is simple.  We do not claim that the AF-algebra~\(\mathcal{D}_0\)
  is simple.  The crossed product \(\mathcal{D}_0\rtimes \Z^1\),
  however, is simple by
  \cite{Kumjian-Pask-Raeburn-Renault:Graphs}*{Corollary 7.3} because
  the representation~\(\pi_1\)
  of~\(G\)
  on~\(\C^n\)
  is faithful.  This crossed product is just a rank-\(1\)
  Doplicher--Roberts algebra, hence stably isomorphic to a graph
  algebra.  The graph algebra description of
  \(\mathcal{D}_0\rtimes \Z^1\) shows also that it is purely infinite.

  The \(\K\)\nb-theory
  computation above shows that \(\mathcal{D}_0\rtimes \Z^1\)
  has \(\K\)\nb-theory
  isomorphic to \(\Z[x_2^{\pm1},\dotsc,x_{n-1}^{\pm1}]\),
  where the automorphism associated to
  \(\pi_2^{m_2}\dotsb \pi_{n-1}^{m_{n-1}}\)
  acts by multiplication with \(x_2^{m_2}\dotsb x_{n-1}^{m_{n-1}}\).
  Since this is never the identity map, none of these automorphisms
  can be inner.  Since \(\mathcal{D}_0\rtimes \Z^1\)
  is simple, separable and purely infinite, the (reduced) crossed
  product by the group~\(\Z^{n-2}\)
  remains simple and purely infinite by
  \cite{Kishimoto-Kumjian:Crossed_Cuntz}*{Lemma 10}.  Since the
  stabilisation \((\mathcal{D}_0\rtimes \Z^1)\rtimes \Z^{n-2}\)
  of~\(\mathcal{D}\)
  is simple and purely infinite, so is~\(\mathcal{D}\) itself.
\end{proof}

There is another way to construct higher-rank Doplicher--Roberts
algebras using the comultiplication \(\Delta\colon \Cst(G)\to
\Cst(G)\otimes\Cst(G)\), which is defined by \(\lambda_g\mapsto
\lambda_g\otimes\lambda_g\) for the standard
multipliers~\(\lambda_g\) of~\(\Cst(G)\) for \(g\in G\).  This
comultiplication turns~\(\Cst(G)\) into a discrete quantum group.
Our comultiplication is only a morphism, that is, its image is only
in the multiplier algebra of~\(\Cst(G)\).  We know, however, that
\((\Cst(G)\otimes1)\cdot \Delta(\Cst(G)) = \Cst(G)\otimes\Cst(G)\).

Let \(\pi\colon \Cst(G)\to\Mat_n(\C)\) be some finite-dimensional
representation of~\(\Cst(G)\) or, equivalently, of~\(G\).  We get a
morphism
\[
\Cst(G)
\xrightarrow{\Delta} \Cst(G)\otimes\Cst(G)
\xrightarrow{\pi\otimes\Id} \Mat_n\otimes\Cst(G)
\cong \Comp(\Cst(G)^n).
\]
Since \((\Cst(G)\otimes1)\cdot \Delta(\Cst(G)) =
\Cst(G)\otimes\Cst(G)\), this morphism has values in
\(\Comp(\Cst(G)^n)\).  Thus any finite-dimensional
representation~\(\pi\) of~\(\Cst(G)\) induces a \emph{proper}
correspondence~\(\Hilm(\pi)\) from \(\Cst(G)\) to itself.

Let \(\pi_i\colon \Cst(G)\to\Mat_{n_i}(\C)\) for \(i=1,2\) be two
finite-dimensional representations and let \(\pi_1\otimes\pi_2\colon
\Cst(G)\to \Mat_{n_1 n_2}(\C)\) be their tensor product
representation.  The coassociativity of~\(\Delta\) gives an
isomorphism of correspondences
\[
\Hilm(\pi_1) \otimes_{\Cst(G)} \Hilm(\pi_2) \cong \Hilm(\pi_1\otimes\pi_2).
\]
Hence the obvious intertwining unitary \(\pi_1\otimes\pi_2\cong
\pi_2\otimes\pi_1\) gives canonical isomorphisms \(\Hilm(\pi_1)
\otimes_{\Cst(G)} \Hilm(\pi_2) \cong \Hilm(\pi_2) \otimes_{\Cst(G)}
\Hilm(\pi_1)\).  The category of representations of~\(G\) with the
tensor product of representations and the obvious associators and
commuters \(\pi_1\otimes\pi_2\cong \pi_2\otimes\pi_1\) is a
symmetric monoidal category.  Therefore, \(k\)~representations
\(\pi_1,\dotsc,\pi_k\) of~\(G\) give a product system over the
monoid~\((\N^k,+)\) with fibres
\[
\Hilm(m_1,\dotsc,m_k) \defeq \Hilm(\pi_1^{\otimes m_1} \otimes
\dotsb \otimes \pi_k^{\otimes m_k})
\]
and with the canonical isomorphisms
\[
\Hilm(m_1,\dotsc,m_k) \otimes_{\Cst(G)}
\Hilm(m'_1, \dotsc,m'_k)
\cong \Hilm(m_1+m'_1, \dotsc, m_k+m'_k).
\]

We claim that this product system is the same as the one constructed
above if~\(\rho\) is the regular representation.  A first point is
that the \(\Cst\)\nb-algebra of compact intertwiners of~\(\rho\) is
canonically isomorphic to~\(\Cst(G)\), acting by the right regular
representation.  Hence \(\Hilm_0^\rho \cong \Cst(G)\).  Furthermore,
\(\rho\) absorbs every other representation by the Fell absorption
principle: \(\pi\otimes \rho\) is \emph{canonically} isomorphic to a
sum of \(n\) copies of~\(\rho\) if~\(\pi\) has dimension~\(n\); the
intertwiners \(L^2(G,\C^n)\leftrightarrow L^2(G,\C^n)\) are given by
pointwise multiplication with the matrix~\(\pi_g\) at \(g\in G\).
Hence we may identify \(\Cst(G)^n\) canonically with the Hilbert
\(\Hilm_0^\rho\)-module of compact intertwiners
\(\rho\to\pi\otimes\rho\).  These identifications provide an
isomorphism between our product systems because the tensor product
of representations of~\(G\) is induced by the
comultiplication~\(\Delta\).

\section{Actions of Ore monoids on spaces}
\label{sec:monoid_on_space}

Now let~\(X\) be a locally compact, Hausdorff space and let
\(A=\Cont_0(X)\).  Since any automorphism of~\(A\) comes from a
homeomorphism on~\(X\), we may turn an action of a group~\(G\)
on~\(A\) by automorphisms into an action of~\(G\) on the space~\(X\)
and form a transformation groupoid \(G\ltimes X\).  The crossed
product \(G\ltimes \Cont_0(X)\) is canonically isomorphic to the
groupoid \(\Cst\)\nb-algebra of~\(G\ltimes X\).  When is there such
a groupoid model for the Cuntz--Pimsner algebra of a
self-correspondence on~\(A\)?

As a counterexample, consider a Hermitian vector bundle over~\(X\).
It gives a proper self-correspondence from~\(A\) to itself by taking
the Hilbert module of sections with its usual inner product and the
left action by pointwise multiplication.  The resulting Cuntz--Pimsner
algebra is a locally trivial field of \(\Cst\)\nb-algebras over~\(X\)
with Cuntz algebras as fibres.  Such \(\Cst\)\nb-algebras are
classified by D\u{a}d\u{a}rlat in~\cite{Dadarlat:Cstar_vector} in
terms of certain cohomology groups.  Unless the field of
\(\Cst\)\nb-algebras over~\(X\) is particularly simple, it seems to
have no natural groupoid model.

Therefore, we restrict attention to self-correspondences
of~\(\Cont_0(X)\) that are induced by topological correspondences
(see~\cite{Katsura:class_I}).  We define product systems of such
topological correspondences in the obvious fashion, so that they
induce a product system of \(\Cst\)\nb-correspondences.  We will
build a ``transformation groupoid'' for a \emph{proper} product
system of topological correspondences and show that its groupoid
\(\Cst\)\nb-algebra is isomorphic to the Cuntz--Pimsner algebra of
the product system.  Our transformation groupoid construction is
similar in spirit to the boundary path groupoid of
Yeend~\cite{Yeend:Groupoid_models} for a higher-rank topological
graph, that is, for the case \(P=\N^k\) for some \(k\ge1\).  Yeend's
construction, however, depends on special features of~\(\N^k\).  In
contrast, our construction depends on the properness of the product
systems.

A topological correspondence between two spaces \(X\) and~\(Y\) is
given by a third space~\(M\) with two maps \(\rg\colon M\to X\) and
\(\s\colon M\to Y\).  We want to turn this into a
\(\Cst\)\nb-correspondence from~\(\Cont_0(X)\) to~\(\Cont_0(Y)\).
There are two ways to do this.  First, we may assume that~\(\s\) is
a local homeomorphism; this is Katsura's definition of a
\emph{topological correspondence} in~\cite{Katsura:class_I}.  Other
names for this are \emph{continuous graphs}
(see~\cite{Deaconu:Continuous_graphs}) or \emph{polymorphisms} (see
\cites{Arzumanian-Renault:Pseudogroups,
  Cuntz-Vershik:Endomorphisms}).  Secondly, we may add extra
data, namely, a family of measures~\((\lambda_x)_{x\in X}\) on the
fibres of~\(\s\); this is what Muhly and Tomforde call a
\emph{topological quiver} in~\cite{Muhly-Tomforde:Quivers}.  The
family of measures~\((\lambda_x)\) is equivalent to a \emph{transfer
  operator} for~\(\s\) in the notation of
Exel~\cite{Exel:Look_endomorphism}, or a Markov operator in the
notation of~\cite{Ionescu-Muhly-Vega:Markov}.  A topological
correspondence gives a topological quiver when combined with a
suitably normalised family of counting measures on the (discrete)
fibres of~\(\s\).

A topological quiver \((M,\rg,\s,\lambda_x)\) gives a
\(\Cst\)\nb-correspondence~\(\Hilm_{\rg,M,\s}\) over~\(\Cont_0(X)\):
complete~\(\Contc(M)\) with respect to the \(\Cont_0(X)\)\nb-valued
inner product
\[
\braket{\xi_1}{\xi_2}(x) \defeq \int_{\s^{-1}(x)}
\bigl(\conj{\xi_1}\xi_2\bigr)(y) \,\diff\lambda_x(y)
\]
for \(\xi_1,\xi_2\in\Contc(M)\), \(x\in X\); the left and right
module structures are \((f\xi)(m) \defeq f(\rg(m))\xi(m)\) and
\((\xi f)(m) \defeq \xi(m)f(\s(m))\) for all \(m\in M\),
\(f\in\Cont_0(X)\), \(\xi\in\Contc(M)\).  In particular, this
construction applies to topological correspondences, where we always
take the family of counting measures.

\begin{proposition}
  \label{pro:proper_topological_correspondence}
  The \(\Cst\)\nb-correspondence~\(\Hilm_{\rg,M,\s}\) is proper if
  and only if~\(\rg\) is proper and~\(\s\) is a local homeomorphism.
  In that case, the isomorphism class of~\(\Hilm_{\rg,M,\s}\) does
  not depend on~\((\lambda_x)\), so we may always use the family of
  counting measures.  The
  \(\Cst\)\nb-correspondence~\(\Hilm_{\rg,M,\s}\) is full if and
  only if~\(\s\) is surjective.
\end{proposition}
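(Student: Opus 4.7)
The plan is to analyze all three assertions by a local description of $\Hilm_{\rg,M,\s}$ coming from a trivializing cover for $\s$, exploiting that the left action of $f\in\Cont_0(X)$ is pointwise multiplication by $f\circ\rg$.

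For properness, I would handle sufficiency first. Fix $f\in\Contc(X)$; since $\rg$ is proper, $\rg^{-1}(\supp f)$ is compact and covered by finitely many open sets $U_1,\dotsc,U_n\subseteq M$ on which $\s$ is a homeomorphism onto open $V_i\subseteq X$. The sub-Hilbert module coming from $\Contc(U_i)$ identifies with $\Cont_0(V_i)$ as a Hilbert $\Cont_0(X)$-module, the left action of $f$ becoming multiplication by $f\circ\rg\circ(\s|_{U_i})^{-1}\in\Cont_0(V_i)$, which is a compact operator on $\Cont_0(V_i)$. A partition of unity subordinate to $\{U_i\}$ then patches these local compacts into a global compact left-multiplication by $f$; density of $\Contc(X)$ in $\Cont_0(X)$ finishes the argument. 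For necessity, I argue contrapositively. If $\rg^{-1}(K)$ is non-compact for some compact $K\subseteq X$, I pick an escaping sequence $m_n\in\rg^{-1}(K)$ and disjointly supported unit vectors $\xi_n\in\Contc(M)$ peaked near $m_n$, then choose $f\in\Contc(X)$ with $f\equiv 1$ on $K$; the $\xi_n$ are almost orthogonal with $f\cdot\xi_n=\xi_n$, contradicting compactness of left-multiplication by $f$. If $\s$ fails to be a local homeomorphism at some $m_0$, then either $\s^{-1}(\s(m_0))$ accumulates at $m_0$ or $\s$ is not open there; in either case the continuity axioms on $(\lambda_x)$ yield an infinite-dimensional local summand of $\Hilm_{\rg,M,\s}$ on which a bump $f$ near $\rg(m_0)$ acts by a nonzero scalar, again contradicting compactness.

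For claim (2), once $\s$ is a local homeomorphism, discreteness of fibers together with the continuity axioms for $(\lambda_x)$ forces $\lambda_x=\sum_{m\in\s^{-1}(x)}w(m)\delta_m$ for some continuous positive weight $w\colon M\to(0,\infty)$, and the rescaling $\xi\mapsto w^{1/2}\xi$ is a unitary isomorphism to the counting-measure correspondence. For claim (3), the inner products $\braket{\xi_1}{\xi_2}$ vanish off $\s(M)$. If $\s$ is surjective, a partition-of-unity argument on $M$ realises every $g\in\Contc(X)$ as a finite sum $\sum\braket{\xi_i}{\xi_i}$, giving density in $\Cont_0(X)$. Conversely, in the proper case $\s$ is open, so $\s(M)$ is open in $X$, and if $\s(M)\neq X$ the closed ideal generated by inner products is the proper ideal $\Cont_0(\s(M))\subsetneq\Cont_0(X)$, blocking fullness.

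The main obstacle is the necessity direction of claim (1), specifically ruling out non-discrete fibers of $\s$: one must analyze what systems of measures $(\lambda_x)$ compatible with a topological quiver structure can look like when $\s^{-1}(x_0)$ is not discrete, and verify that any such system produces an infinite-dimensional local summand on which the left action by a suitable bump function cannot be compact. The remaining steps are essentially local partition-of-unity arguments whose difficulty is mostly bookkeeping.
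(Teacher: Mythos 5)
Your handling of the second and third claims is fine and essentially matches the paper: the measure-independence argument via a continuous positive weight \(w\) on \(M\) and the rescaling unitary \(\xi\mapsto w^{1/2}\xi\) is exactly the paper's proof, and the fullness claim is dismissed there as routine. For properness the paper does not argue directly at all — it invokes Muhly--Tomforde (Corollary 3.12 of the topological quiver paper) — so your self-contained proof is a genuinely different route. Your sufficiency argument is sound: after covering \(\rg^{-1}(\supp f)\) by finitely many open sets on which \(\s\) is injective and taking a subordinate partition of unity \(\chi_i\), multiplication by \(\chi_i\,(f\circ\rg)\) is literally the rank-one operator \(\ket{\chi_i (f\circ\rg)}\bra{\eta_i}\) with \(\eta_i\) a compactly supported bump equal to \(1/w\) on \(\supp(\chi_i (f\circ\rg))\), so the left action of \(f\) is finite-rank and properness follows by density.

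The gap is in the necessity direction, which you partly flagged yourself. First, the case division ``either \(\s^{-1}(\s(m_0))\) accumulates at \(m_0\) or \(\s\) is not open there'' is not exhaustive: \(\s\) can be open with discrete (even singleton) fibres and still fail to be a local homeomorphism at \(m_0\), because injectivity may fail only on \emph{nearby} fibres — think of the angle-doubling map on \(\R^2\) at the origin. Your two cases therefore miss exactly the situation where nearby fibres contain pairs of distinct points collapsing towards \(m_0\), which is the heart of the matter and also the part that forces the fibres over points near \(\rg(m_0)\), not just the fibre of \(\rg(m_0)\) itself, into the analysis. Second, even in the accumulation case the contradiction is not automatic: you produce normalized, disjointly supported \(\xi_n\) near \(m_0\) on which multiplication by a bump \(f\) acts as the identity, but the step ``a compact operator cannot act as the identity on an orthogonal sequence'' is not a formal triviality for Hilbert modules (rank-one operators \(\ket{\eta_1}\bra{\eta_2}\) need not send such a sequence to zero in norm without further argument); in this commutative setting it can be salvaged by estimating \(\braket{\eta_2}{\xi_n}\) uniformly at points near \(\rg(m_0)\), but that estimate is exactly what is missing. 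As it stands, the necessity half of claim (1) — precisely the part the paper outsources to Muhly--Tomforde — is not yet proved.
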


\begin{proof}
  The \(\Cst\)\nb-correspondence~\(\Hilm_{\rg,M,\s}\) is proper if
  and only if \(\varphi^{-1}(\Comp(\Hilm_{\rg,M,\s})) =
  \Cont_0(X)\).  In the notation of
  \cite{Muhly-Tomforde:Quivers}*{Definition 3.14}, all vertices are
  finite emitters.  \cite{Muhly-Tomforde:Quivers}*{Corollary 3.12}
  shows that this happens if and only if~\(\rg\) is proper
  and~\(\s\) is a local homeomorphism.

  Let \((\lambda_x)\) and~\((\lambda'_x)\) be two families of
  measures that make \((M,\rg,\s)\) into a topological quiver.
  Since both \(\lambda_x\) and~\(\lambda'_x\) have the same discrete
  subset~\(\s^{-1}(x)\) as support, they are equivalent, say,
  \(\lambda'_x = f_x\cdot \lambda_x\) for a unique function
  \(f_x\colon s^{-1}(x)\to (0,\infty)\).  The functions~\(f_x\) may
  be pieced together to a function \(f\colon M\to (0,\infty)\).  The
  continuity of \((\lambda_x)\) and~\((\lambda'_x)\) implies
  that~\(f\) is a continuous function.  Hence multiplication
  with~\(\sqrt{f}\) is a unitary operator between the Hilbert
  modules over~\(\Cont_0(X)\) associated to the two families of
  measures.  This unitary also intertwines the left actions, which
  are by multiplication operators.

  It is routine to check that~\(\Hilm_{\rg,M,\s}\) is full if and
  only if~\(\s\) is surjective.
\end{proof}

\begin{definition}
  A topological correspondence is called \emph{proper} if~\(\rg\) is
  proper and~\(\s\) is a local homeomorphism.
\end{definition}

\begin{lemma}
  \label{lem:compose_top_corr}
  Consider two topological correspondences
  \[
  X\xleftarrow{\rg_1} M_1 \xrightarrow{\s_1} X \xleftarrow{\rg_2} M_2
  \xrightarrow{\s_2} X.
  \]
  Define \(M\defeq M_1\times_{\s_1,X,\rg_2} M_2\), \(\rg\colon M\to
  X\), \((m_1,m_2) \mapsto \rg_1(m_1)\), \(\s\colon M\to X\),
  \((m_1,m_2) \mapsto \s_2(m_2)\).  Then
  \[
  \Hilm_{\rg_1,M_1,\s_1} \otimes_{\Cont_0(X)} \Hilm_{\rg_2,M_2,\s_2}
  \cong \Hilm_{\rg,M,\s}.
  \]

  If \(\rg_1\) and~\(\rg_2\) are proper, so is~\(\rg\).  If \(\s_1\)
  and~\(\s_2\) are surjective, so is~\(\s\).
\end{lemma}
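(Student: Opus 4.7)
The plan is to construct an explicit unitary isomorphism at the level of compactly supported functions and then verify the three structural properties by direct computation. Define
\[
\Phi\colon \Contc(M_1)\odot\Contc(M_2)\to \Contc(M),\qquad
\Phi(\xi_1\otimes\xi_2)(m_1,m_2) \defeq \xi_1(m_1)\,\xi_2(m_2).
\]
This descends to the algebraic tensor product over~\(\Cont_0(X)\) because on \(M=M_1\times_{\s_1,X,\rg_2} M_2\) one has \(\s_1(m_1)=\rg_2(m_2)\), so for \(f\in\Cont_0(X)\) the expressions \(\xi_1(m_1)f(\s_1(m_1))\xi_2(m_2)\) and \(\xi_1(m_1)f(\rg_2(m_2))\xi_2(m_2)\) agree pointwise on~\(M\).

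The key verification is that~\(\Phi\) preserves \(\Cont_0(X)\)\nobreakdash-valued inner products. Using the family of counting measures on the fibres of~\(\s=\s_2\circ\pr_2\) and the identification \(\s^{-1}(x)=\{(m_1,m_2):\s_2(m_2)=x,\ \s_1(m_1)=\rg_2(m_2)\}\), one computes for \(\xi_i,\eta_i\in\Contc(M_i)\):
\begin{align*}
\braket{\Phi(\xi_1\otimes\xi_2)}{\Phi(\eta_1\otimes\eta_2)}(x)
&= \sum_{m_2\in\s_2^{-1}(x)}\conj{\xi_2(m_2)}\eta_2(m_2)\sum_{m_1\in\s_1^{-1}(\rg_2(m_2))}\conj{\xi_1(m_1)}\eta_1(m_1) \\
&= \sum_{m_2\in\s_2^{-1}(x)}\conj{\xi_2(m_2)}\braket{\xi_1}{\eta_1}(\rg_2(m_2))\,\eta_2(m_2)\\
&= \braket{\xi_2}{\braket{\xi_1}{\eta_1}\cdot\eta_2}(x),
\end{align*}
which is exactly the tensor product inner product on \(\Contc(M_1)\otimes_{\Cont_0(X)}\Contc(M_2)\). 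Thus~\(\Phi\) extends to an isometric embedding of Hilbert \(\Cont_0(X)\)-modules. To see that~\(\Phi\) has dense range, observe that its image is a \Star{}subalgebra of~\(\Cont_0(M)\) (under pointwise operations) that separates points of~\(M\) and vanishes nowhere; so a standard Stone--Weierstrass argument shows it is dense in \(\Contc(M)\) in the inductive limit topology, hence dense in~\(\Hilm_{\rg,M,\s}\). The bimodule structures match by inspection: \(f\cdot\Phi(\xi_1\otimes\xi_2)(m_1,m_2) = f(\rg_1(m_1))\xi_1(m_1)\xi_2(m_2)\) agrees with the left action on \(\Hilm_{\rg,M,\s}\) because \(\rg(m_1,m_2)=\rg_1(m_1)\), and similarly on the right.

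For the last two claims: if~\(\rg_1,\rg_2\) are proper and \(K\subseteq X\) is compact, then \(K_1\defeq \rg_1^{-1}(K)\) is compact, hence \(\s_1(K_1)\) is compact, hence \(K_2\defeq \rg_2^{-1}(\s_1(K_1))\) is compact; any \((m_1,m_2)\in\rg^{-1}(K)\) satisfies \(m_1\in K_1\) and \(\rg_2(m_2)=\s_1(m_1)\in\s_1(K_1)\), so \(m_2\in K_2\). Thus \(\rg^{-1}(K)\) is a closed subset of the compact set \(K_1\times K_2\) and is therefore compact. For surjectivity, given \(x\in X\) choose \(m_2\in\s_2^{-1}(x)\) using surjectivity of~\(\s_2\), then choose \(m_1\in\s_1^{-1}(\rg_2(m_2))\) using surjectivity of~\(\s_1\); the pair \((m_1,m_2)\) lies in~\(M\) and maps to~\(x\) under~\(\s\). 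No serious obstacle is anticipated; the only mildly delicate step is the density argument for~\(\Phi\), which rests on the fact that continuous elementary tensors separate points on a fibred product of locally compact spaces.
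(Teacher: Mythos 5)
Your proof is correct, but it takes a different route from the paper, which disposes of the lemma by citation: the isomorphism of correspondences is delegated to Muhly--Tomforde (Lemmas 6.1--4, proved there even for topological quivers), and the statements about \(\rg\) being proper and \(\s\) being surjective are deduced from the facts that tensor products of proper (resp.\ full) \(\Cst\)\nb-correspondences are proper (resp.\ full), combined with Proposition~\ref{pro:proper_topological_correspondence}, which translates properness/fullness of \(\Hilm_{\rg,M,\s}\) back into properness of~\(\rg\) and surjectivity of~\(\s\). You instead build the unitary explicitly on elementary tensors and prove the two point-set statements by direct topological arguments; your inner-product computation and the properness/surjectivity arguments are all correct. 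What your approach buys is self-containedness and a concrete formula for the isomorphism (useful later, e.g.\ in the proof of Theorem~\ref{the:groupoid_model}); what the paper's approach buys is brevity and the extra generality of arbitrary measure families on quivers, plus the conceptual point that the topological statements are shadows of correspondence-level facts. Two small polish points in your density step: after Stone--Weierstrass gives sup-norm density in \(\Cont_0(M)\), you should multiply by a cutoff \(\Phi(\xi_1\otimes\xi_2)\) that equals~\(1\) on the support of the target function to keep supports inside a fixed compact set, and then note that on functions supported in a fixed compact set the Hilbert-module norm is dominated by a multiple of the sup norm because \(\s=\s_2\circ\pr_2\) is a local homeomorphism (being the composite of~\(\s_2\) with the pullback of~\(\s_1\) along~\(\rg_2\)) -- the latter observation is also what legitimises using counting measures on the fibres of~\(\s\) in the first place.
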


\begin{proof}
  The first part is routine to prove and holds even for topological
  quivers, see \cite{Muhly-Tomforde:Quivers}*{Lemmas 6.1--4}.  The
  statements about proper and surjective maps are easy as well; they
  amount to the statement that tensor products of proper or full
  \(\Cst\)\nb-correspondences are again proper or full, respectively.
\end{proof}

Proposition~\ref{pro:proper_topological_correspondence} says that the
\(\Cst\)\nb-correspondence associated to a topological quiver is
proper if and only if we are dealing with a proper topological
correspondence; the family of measures does not matter.  We restrict
attention to proper topological correspondences from now on.

The notion of a ``topological graph algebra'' interprets a
topological correspondence as a ``topological graph,'' where
vertices and (oriented) edges form topological spaces.  This
interpretation, however, fails to elucidate the lack of symmetry
between \(\rg\) and~\(\s\) in the construction of the
\(\Cst\)\nb-correspondence.  Another interpretation is that a
topological correspondence~\((\rg,M,\s)\) is a multi-valued map
from~\(Y\) to~\(X\), where \(\rg(m)\in X\) for \(m\in \s^{-1}(y)\)
are the possible values at \(y\in Y\).  If~\(\s\) is a local
homeomorphism and~\(\rg\) is proper, then the subset of values
\(\rg(\s^{-1}(y))\) of~\(y\) is discrete.  The interpretation as a
multivalued map breaks down, however, if there are different
\(m,m'\in M\) with \(\s(m)=\s(m')\) and \(\rg(m)=\rg(m')\).  We
suggest the following more dynamical interpretation of a (proper)
topological correspondence.

We consider points in~\(M\) as possible \emph{developments} or,
briefly, \emph{stories}.  Each story \(m\in M\) assumes a certain
\emph{initial situation} \(\s(m)\in Y\) and leads to a certain
\emph{ending} \(\rg(m)\in X\).  Several stories may have the same
initial situation and ending.

How does this interpretation account for the assumptions that~\(\s\)
be a local homeomorphism and~\(\rg\) be proper?  That~\(\s\) is a
local homeomorphism means the following: if we modify the initial
situation~\(\s(m)\) of a story~\(m\) a little bit, then there is a
unique story~\(m_x\) close to~\(m\) with initial situation~\(x\).
Roughly speaking, \(m_x\) describes how ``the same'' story would go
in a slightly different initial situation, and fits our intuition of
story-telling.  That~\(\rg\) is proper means that, given a compact
set of possible endings, the set of stories with such an ending is
also compact.  This is a rather technical finiteness condition on
the space of possible stories.  It ensures that the space of
complete histories defined below is locally compact.

\begin{definition}
  \label{def:action_on_pace}
  Let~\(P\) be a monoid.  An \emph{action of~\(P\) on~\(X\) by
    proper topological correspondences} consists of the following
  data:
  \begin{itemize}
  \item proper topological correspondences~\((M_p,\rg_p,\s_p)\)
    from~\(X\) to~\(X\) for \(p\in P\setminus\{1\}\);
  \item homeomorphisms \(\sigma_{p,q}\colon M_{p q} \to
    M_p\times_{\s_p,X,\rg_q} M_q\) for \(p,q\in P\setminus\{1\}\).
  \end{itemize}
  Let \(M_1=X\) and \(\rg_1=\s_1=\Id_X\), and let \(\sigma_{p,1}\)
  and~\(\sigma_{1,q}\) be the canonical homeomorphisms \(M_p \cong
  M_p\times_{\s_p,X,\Id_X} X\) and \(M_q \cong
  X\times_{\Id_X,X,\rg_q} M_q\) for \(p,q\in P\).  For an action
  of~\(P\), we require the diagram
  \begin{equation}
    \label{eq:associative_top_corr}
    \begin{tikzpicture}[baseline=(current bounding box.west)]
      \matrix (m) [cd,column sep=6em] {
        M_p\times_X M_q\times_X M_t&
        M_{p q}\times_X M_t\\
        M_p\times_X M_{q t}&
        M_{p q t}\\
      };
      \draw[cdar] (m-1-2) -- node[swap] {\(\sigma_{p,q}\times_X \Id_{M_t}\)} (m-1-1);
      \draw[cdar] (m-2-1) -- node {\(\Id_{M_p}\times_X\sigma_{q,t}\)} (m-1-1);
      \draw[cdar] (m-2-2) -- node[swap] {\(\sigma_{p q,t}\)} (m-1-2);
      \draw[cdar] (m-2-2) -- node {\(\sigma_{p,q t}\)} (m-2-1);
    \end{tikzpicture}
  \end{equation}
  to commute for all \(p,q,t\in P\setminus\{1\}\) (since \(p q = 1\)
  or \(q t=1\) is possible, we have to define \((M_1,\s_1,\rg_1)\),
  \(\sigma_{1,q}\) and~\(\sigma_{p,1}\) for this condition to make
  sense).  This diagram commutes automatically if \(p=1\), \(q=1\)
  or \(t=1\), so our assumption implies that it commutes for all
  \(p,q,t\in P\).
\end{definition}

\begin{example}
  \label{exa:higher-rank_graph}
  An action of~\(\N^k\) on a countable discrete set~\(X\) by proper
  topological correspondences is equivalent to a row-finite
  rank-\(k\) graph by~\cite{Fowler-Sims:Product_Artin}.  The
  Cuntz--Pimsner algebra that we shall attach to this data is
  \emph{not} always the higher-rank graph \(\Cst\)\nb-algebra,
  however, because we do not incorporate Katsura's modification of
  the Cuntz--Pimsner algebra into our definition.  See
  also~\cite{Raeburn-Sims:Product_graphs}.
\end{example}

We fix an action of~\(P\) on~\(X\) by proper topological
correspondences as above.  The proper topological
correspondences~\((M_p,\rg_p,\s_p)\) induce proper
\(\Cst\)\nb-correspondences~\(\Hilm_p\) from \(\Cont_0(X)\) to
itself for \(p\in P\setminus\{1\}\), and we let
\(\Hilm_1\defeq\Cont_0(X)\).  The homeomorphisms~\(\sigma_{p,q}\)
induce isomorphisms of \(\Cst\)\nb-correspondences
\[
\mu_{p,q}\colon \Hilm_p \otimes_{\Cont_0(X)} \Hilm_q \to \Hilm_{p q}
\]
for \(p,q\in P\setminus\{1\}\) by
Lemma~\ref{lem:compose_top_corr}, and we let \(\mu_{1,q}\)
and~\(\mu_{p,1}\) be the canonical isomorphisms.  The
diagram~\eqref{eq:associative_top_corr} ensures the associativity of
these multiplication maps~\(\mu_{p,q}\) for all \(p,q,t\in
P\setminus\{1\}\) (even if \(p q = 1\) or \(q t=1\)); associativity is
automatic if \(p=1\), \(q=1\) or \(t=1\).  So an action of~\(P\)
on~\(X\) by topological correspondences induces a proper product
system over~\(P\) with unit fibre~\(\Cont_0(X)\), as expected.

The defining property of the fibre product means that \(\sigma_{p,q}
= (\rg_{p,q},\s_{p,q})\) for two continuous maps
\[
\rg_{p,q}\colon M_{p q} \to M_p,\qquad
\s_{p,q}\colon M_{p q} \to M_q
\]
with \(\s_p\circ \rg_{p,q} = \rg_q\circ \s_{p,q}\).
Since~\(\sigma_{p,1}\) and~\(\sigma_{q,1}\) are the canonical maps,
\[
\s_{p,1} = \s_p,\quad
\rg_{p,1} = \Id_{M_p},\qquad
\s_{1,q} = \Id_{M_q},\quad
\rg_{1,q} = \rg_q
\]
for all \(p,q\in P\).  The associativity
condition~\eqref{eq:associative_top_corr} is equivalent to
\begin{equation}
  \label{eq:associative_top_corr_2}
  \rg_{p,q}\circ \rg_{p q,t} = \rg_{p,q t},\quad
  \s_{p,q}\circ \rg_{p q,t} = \rg_{q,t}\circ \s_{p,q t},\quad
  \s_{q,t}\circ \s_{p,q t} = \s_{p q,t}.
\end{equation}

\begin{lemma}
  \label{lem:proper_surjective_localhomeo_inherited}
  The maps~\(\rg_{p,q}\) are proper and the maps~\(\s_{p,q}\) are
  local homeomorphisms.  If all~\(\s_p\) are surjective, then so are
  the maps~\(\s_{p,q}\).
\end{lemma}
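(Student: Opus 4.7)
The plan is to recognise that, via the homeomorphism $\sigma_{p,q} = (\rg_{p,q},\s_{p,q})\colon M_{p q} \congto M_p \times_{\s_p,X,\rg_q} M_q$, the maps $\rg_{p,q}$ and $\s_{p,q}$ are identified with the first and second projections of a fibre product. Thus each is a pullback of one of the original structure maps: $\rg_{p,q}$ is the pullback of $\rg_q\colon M_q\to X$ along $\s_p\colon M_p\to X$, while $\s_{p,q}$ is the pullback of $\s_p\colon M_p\to X$ along $\rg_q\colon M_q\to X$.

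I would then invoke two standard preservation properties of pullbacks in the category of locally compact Hausdorff spaces. First, properness is stable under pullback: the pullback of the proper map $\rg_q$ along any continuous map is again proper; this is essentially the content of the closed-graph characterisation of proper maps (the preimage of a compact set $K\subseteq M_p$ under $\rg_{p,q}$ sits inside $K\times_X \rg_q^{-1}(\s_p(K))$, which is compact since $\rg_q$ is proper and $\s_p(K)$ is compact). Hence $\rg_{p,q}$ is proper. Secondly, the property of being a local homeomorphism is stable under pullback, since locally on $M_{p q}$ the map $\s_{p,q}$ can be written as $\s_p\circ\rg_{p,q}$ restricted to slices where $\rg_{p,q}$ is a homeomorphism onto an open subset of $M_p$ on which $\s_p$ is a local homeomorphism; more concretely, near any point $(m_1,m_2)$ of the fibre product one chooses an open neighbourhood $U\ni m_1$ on which $\s_p|_U$ is a homeomorphism onto its image, and then $\s_{p,q}$ restricted to $(U\times M_q)\cap (M_p\times_X M_q)$ is a homeomorphism onto the open set $\rg_q^{-1}(\s_p(U))\subseteq M_q$.

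For the surjectivity claim, assume all $\s_p$ are surjective. Given $m_2\in M_q$, surjectivity of $\s_p$ yields some $m_1\in M_p$ with $\s_p(m_1)=\rg_q(m_2)$, so $(m_1,m_2)$ lies in $M_p\times_X M_q$ and maps to $m_2$ under the second projection. Pulling this back through $\sigma_{p,q}^{-1}$ gives an element of $M_{p q}$ with $\s_{p,q}$-image equal to $m_2$. The degenerate cases $p=1$ or $q=1$ follow directly from the conventions $\s_{1,q}=\Id_{M_q}$ and $\s_{p,1}=\s_p$.

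The argument is essentially categorical, and I do not expect any genuine obstacle; the only point deserving mild care is the verification that pullbacks of proper maps remain proper in the topological (rather than the scheme-theoretic) sense, which requires the ambient spaces to be reasonable but is standard for locally compact Hausdorff spaces as considered here.
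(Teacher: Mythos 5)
Your proposal is correct and follows essentially the same route as the paper: both identify \(\rg_{p,q}\) and \(\s_{p,q}\) via the homeomorphism \(\sigma_{p,q}\) with the two coordinate projections of the fibre product \(M_p\times_{\s_p,X,\rg_q} M_q\) and then use that properness, being a local homeomorphism, and surjectivity are inherited by such pullbacks, with your version merely spelling out the standard verifications that the paper leaves implicit. (The phrase that \(\s_{p,q}\) is ``\(\s_p\circ\rg_{p,q}\) restricted to slices'' is loosely worded—that composite lands in \(X\), not \(M_q\)—but the concrete local argument you give right after it is the correct one.)
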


\begin{proof}
  The map~\(\rg_{p,q}\) is the composite of the
  homeomorphism~\(\sigma_{p,q}\) and the coordinate projection \(M_p
  \times_{\s_p,X,\rg_q} M_q \to M_p\).  This coordinate projection
  is proper if~\(\rg_q\) is proper because properness is hereditary
  under this type of fibre products.  Similarly, the
  map~\(\s_{p,q}\) is the composite of the
  homeomorphism~\(\sigma_{p,q}\) and the coordinate projection \(M_p
  \times_{\s_p,X,\rg_q} M_q \to M_q\); the latter inherits the
  property of being surjective or a local homeomorphism
  from~\(\s_p\).
\end{proof}

We interpret elements of~\(P\) as a (multi-dimensional) kind of
\emph{time}, and elements of~\(M_p\) as \emph{stories of length \(p\in
  P\)}; a story \(m\in M_p\) \emph{starts} in the
situation~\(\s_p(m)\) and \emph{ends} in~\(\rg_p(m)\).  The maps
\(\rg_{p,q}\colon M_{p q}\to M_p\) and \(\s_{p,q}\colon M_{p q}\to
M_q\) cut a story~\(m\) of length~\(p q\) into two stories of length
\(p\) and~\(q\): its \emph{ending} \(m_1=\rg_{p,q}(m)\in M_p\) and its
\emph{beginning} \(m_2=\s_{p,q}(m)\in M_q\).  These satisfy
\(\s_p(m_1)=\rg_q(m_2)\), that is, the story~\(m_1\) starts
where~\(m_2\) ends.  The inverse of~\(\sigma_{p,q}\) combines a pair
\(m_1\in M_p\), \(m_2\in M_q\) of stories of lengths \(p\) and~\(q\)
to a story \(m_1\circ m_2\) of length~\(p q\), provided~\(m_1\) starts
where~\(m_2\) ends.  The assumption that~\(\sigma_{p,q}\) be a
homeomorphism says that \(m\in M_{pq}\) and \((m_1,m_2)\in M_p\times
M_q\) with \(\s_p(m_1)=\rg_q(m_2)\) determine each other uniquely and
continuously.

The length \(1\in P\) is the neutral element, so nothing can happen
in time~\(1\), and adding a story of length~\(1\) before or after
another story does nothing.  This means that \(M_1=X\) and that
\(\sigma_{p,1}\) and~\(\sigma_{1,q}\) are the canonical maps.  The
associativity conditions~\eqref{eq:associative_top_corr_2} say that
the two ways of cutting a story of length~\(p q t\) into three pieces
of length \(p\), \(q\) and~\(t\) give the same results.

If~\(P\) is a free monoid on~\(n\) generators (which, however, is
not Ore), then the situation above may be interpreted as describing
a game where the players may do~\(n\) different things in each time
interval.  If, say, the player has the three options~\(a,b,c\), then
\(p=baac\) means a time interval of length~\(4\) in which the player
first does~\(c\), then twice~\(a\), then~\(b\).  If the game was in
situation \(x\in X\) initially, then the points in
\(\s_p^{-1}(x)\subseteq M_p\) are the possible game developments in
this length\nb-\(4\) time period, provided the player's actions are
\(baac\).  And \(\rg_p(m)\) for \(m\in\s_p^{-1}(x)\) is the
situation after this time period.  If~\(\s_p^{-1}(x)\) has more than
one point, then the game contains randomness.  It makes sense to
quantify this randomness by a transfer operator with
\(\sum_{\s_p(m)=y} \mu_p(m)=1\) for all \(x\in X\),
where~\(\mu_p(m)\) is the probability that the game develops as in
story~\(m\), given the initial situation~\(y\).  We do not add such
probabilities to our setup because they are irrelevant for us by
Proposition~\ref{pro:proper_topological_correspondence}.

A relation in the monoid~\(P\) means that certain actions of the
player always and automatically have the same effect on the game.
For instance, if~\(P\) is the free \emph{Abelian} monoid~\(\N^n\)
on~\(n\) generators, then the order in which the player does various
things does not matter.  I know no game with this property; so the
interpretation through games works best for free monoids.

There are three simple cases of actions by proper
topological correspondences:
\begin{enumerate}
\item \(M_p=X\) and \(\s_p=\Id_X\) for all \(x\in X\), \(p\in P\);
  that is, a situation \(x\in X\) \emph{determines its future}
  uniquely;
\item \(M_p=X\) and \(\rg_p=\Id_X\) for all \(x\in X\), \(p\in P\);
  that is, a situation \(x\in X\) \emph{determines its past}
  uniquely;
\item \(M_p\) is arbitrary, but \(\s_p=\rg_p\) for all \(p\in P\);
  that is, \emph{the situation never changes}; then \(\s_p=\rg_p\)
  must be both proper and a local homeomorphism; equivalently, it is a
  finite covering map.
\end{enumerate}

From now on, we assume that~\(P\) is a right Ore monoid.  In this
case, the Cuntz--Pimsner algebra of the product system
\((\Hilm_p,\mu_{p,q})\) over~\(P\) is described more concretely in
Section~\ref{sec:Ore_monoids}.  We are going to identify this
Cuntz--Pimsner algebra with the groupoid \(\Cst\)\nb-algebra of an
étale, locally compact groupoid~\(H\).

We first describe the object space~\(H^0\) of this groupoid.  The
first associativity condition in~\eqref{eq:associative_top_corr_2}
says that the spaces~\(M_p\) for \(p\in P\) and the continuous
maps~\(\rg_{p,q}\) for \(p,q\in P\) form a projective system of
locally compact spaces indexed by the directed category~\(\Cat_P\).
We let
\[
H^0 \defeq \varprojlim_{\Cat_P} {}(M_p,\rg_{p,q}).
\]
Thus a point in~\(H^0\) consists of \(m_p\in M_p\) for all \(p\in
P\) that satisfy \(\rg_{p,q}(m_{p q})=m_p\) for all \(p,q\in P\).
In other words, the \(m_p\) are stories that are consistent in the
sense that~\(m_p\) is the ending of~\(m_{pq}\) for each \(p,q\in
P\).  We call a point in~\(H^0\) a \emph{complete history} and think
of~\(m_p\) as describing what happened in the last length-\(p\) time
period.

\begin{lemma}
  \label{lem:H_locally_compact}
  The space~\(H^0\) is locally compact and Hausdorff.  The
  projection maps \(\pi_q\colon H^0\to M_q\), \((m_p)_{p\in
    P}\mapsto m_q\), are proper for all \(q\in P\).
\end{lemma}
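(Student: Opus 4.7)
The plan is to realise $H^0$ as a closed subspace of the Tychonoff product $\prod_{p\in P} M_p$, with the subspace topology. The defining equations $m_p = \rg_{p,q}(m_{pq})$ cut out a subset whose Hausdorffness is immediate: each~$M_p$ is Hausdorff, hence so is the product, hence so is any subspace. Closedness of~$H^0$ in the product follows because it is the intersection over all pairs $(p,q)$ of the preimages under the continuous coordinate projections $\prod M_r \to M_p\times M_{pq}$ of the equalisers of $\pi_p$ and $\rg_{p,q}\circ\pi_{pq}$; these equalisers are closed because $M_p$ is Hausdorff.

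The main step is the properness of $\pi_q\colon H^0\to M_q$ for fixed $q\in P$. Let $K\subseteq M_q$ be compact; I claim $\pi_q^{-1}(K)$ is compact. For each $p\in P$, the right Ore condition supplies $s,t\in P$ with $ps=qt$. For any $(m_r)_{r\in P}\in \pi_q^{-1}(K)$, the compatibility relations and the associativity conditions~\eqref{eq:associative_top_corr_2} give
\[
m_p=\rg_{p,s}(m_{ps})=\rg_{p,s}(m_{qt})\in \rg_{p,s}\bigl(\rg_{q,t}^{-1}(K)\bigr).
\]
By Lemma~\ref{lem:proper_surjective_localhomeo_inherited}, $\rg_{q,t}$ is proper, so $\rg_{q,t}^{-1}(K)$ is compact, and its image $C_p\defeq \rg_{p,s}(\rg_{q,t}^{-1}(K))\subseteq M_p$ is compact as well. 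Thus $\pi_q^{-1}(K)\subseteq H^0\cap \prod_{p\in P} C_p$; the latter is closed in the Tychonoff-compact product $\prod_{p\in P} C_p$ and therefore compact.

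Finally, local compactness of~$H^0$ follows formally from the properness of~$\pi_q$: given $x\in H^0$, choose a compact neighbourhood $K\subseteq M_q$ of $\pi_q(x)$ with interior~$U$; then $\pi_q^{-1}(U)$ is an open neighbourhood of~$x$ contained in the compact set $\pi_q^{-1}(K)$, giving the desired compact neighbourhood of~$x$.

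The only point that requires any care is well-definedness of the compact sets $C_p$: a priori $C_p$ depends on the choice of $(s,t)$, but we do not need uniqueness, only the existence of \emph{some} compact set containing the $p$\nobreakdash-th coordinates, and the Ore condition~\ref{enum:Ore1} guarantees this. I expect this verification to be routine; the slightly delicate point is remembering that we do not need $\pi_q^{-1}(K)=H^0\cap\prod_p C_p$, only containment, so questions about whether compatible tuples actually exist do not arise.
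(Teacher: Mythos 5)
Your proof is correct and follows essentially the same route as the paper: realise \(H^0\) as a closed subspace of the Tychonoff product \(\prod_{p\in P} M_p\) and trap each coordinate of \(\pi_q^{-1}(K)\) in a compact set using properness of the maps \(\rg_{p,q}\). The only difference is cosmetic: you spell out, via the Ore condition~\ref{enum:Ore1}, the general-\(q\) case that the paper dismisses with ``the same argument shows,'' and you deduce local compactness formally from properness rather than exhibiting the compact neighbourhood directly, which amounts to the same thing.
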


\begin{proof}
  Fix \((m_p)_{p\in P}\in H^0\) and let \(K\subset X=M_1\) be a
  compact neighbourhood of~\(m_1\).  The preimage of~\(K\)
  in~\(H^0\) is the subset of all \((m'_p)_{p\in P}\) with \(m'_1\in
  K\) and hence \(\rg_p(m'_p)\in K\) for all \(p\in P\).  Since all
  the maps~\(\rg_p\) are proper, the subsets
  \(\rg_p^{-1}(K)\subseteq M_p\) are compact.  Hence so is the
  product \(L\defeq \prod_{p\in I} \rg_p^{-1}(K)\) by Tychonov's
  Theorem.  Thus the map \(\pi_1\colon H^0\to X\) is proper.  The
  same argument shows that all the maps~\(\pi_q\) are proper.
  Since~\(L\) is also a compact neighbourhood of~\((m_p)_{p\in P}\)
  in~\(H^0\), the space~\(H^0\) is locally compact.  If \((m'_p)\neq
  (m_p)\), then there is \(p\in P\) with \(m'_p\neq m_p\).  There
  are open neighbourhoods in~\(M_p\) that separate \(m'_p\)
  and~\(m_p\).  These yield open neighbourhoods in~\(H^0\) that
  separate \((m'_p)\) and~\((m_p)\), so~\(H^0\) is Hausdorff.
\end{proof}

Given a complete history \((m_p)_{p\in P}\) and \(t\in P\), we may
forget what happened in the last time period of length~\(t\); this
gives another complete history, defined formally by \(m'_p \defeq
\s_{t,p}(m_{tp})\) for \(p\in P\); the second condition
in~\eqref{eq:associative_top_corr_2} implies \(\rg_{p,q}(m'_{p
  q})=m'_p\) for all \(p,t\in P\), that is, \((m'_p)_{p\in P}\) is
again a complete history as expected.  Thus \((m_p)\mapsto (m'_p)\)
defines a map \(\tilde{\s}_t\colon H^0\to H^0\).

\begin{lemma}
  \label{lem:tilde_s_local_homeo}
  Let \(t\in P\).  The map \((\pi_t,\tilde{s}_t)\colon H^0 \to
  M_t\times_{\s_t,X,\pi_1} H^0\) is a homeomorphism, and
  \(\tilde{\s}_t\colon H^0\to H^0\) is a local homeomorphism.
  If~\(\s_t\) is surjective, so is~\(\tilde{\s}_t\).
  If~\(\s_t\) is a homeomorphism, so is~\(\tilde{\s}_t\).
\end{lemma}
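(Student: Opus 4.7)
The approach is to realize $(\pi_t, \tilde{s}_t)$ as the composite of two canonical homeomorphisms of projective limits, one arising from cofinality of $tP$ in $\Cat_P$ and the other from the structure homeomorphisms $\sigma_{t,q}$.

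For the first homeomorphism, Ore condition~\ref{enum:Ore1} shows that for each $p\in P$ there exist $r,q\in P$ with $pr=tq$, giving a morphism $p\to tq$ in $\Cat_P$; combined with the filteredness of $\Cat_P$, this makes the full subcategory on the objects $\{tq : q\in P\}$ cofinal, so the canonical projection $H^0=\varprojlim_p M_p \to \varprojlim_q M_{tq}$ is a homeomorphism. For the second, each $\sigma_{t,q}\colon M_{tq}\to M_t\times_{\s_t,X,\rg_q}M_q$ is a homeomorphism, and applying the associativity condition~\eqref{eq:associative_top_corr_2} to the triple $(t,p,q)$ yields the intertwining identity $\sigma_{t,p}\circ \rg_{tp,q} = (\Id_{M_t}\times_X \rg_{p,q})\circ \sigma_{t,pq}$. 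Hence $\sigma_{t,-}$ is an isomorphism of projective systems indexed by $\Cat_P$, and since the functor $M_t\times_{\s_t,X,-}$ commutes with projective limits, passing to the limit yields $\varprojlim_q M_{tq}\cong M_t\times_{\s_t,X,\pi_1}H^0$. Tracing the image of a point $(m_p)_{p\in P}$ through the composite homeomorphism produces $(m_t,(\s_{t,q}(m_{tq}))_{q\in P})$, which equals $(\pi_t((m_p)),\tilde{s}_t((m_p)))$; so the composite is precisely $(\pi_t,\tilde{s}_t)$.

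The remaining claims follow formally: writing $\tilde{s}_t=\pr_2\circ(\pi_t,\tilde{s}_t)$, the second projection $\pr_2\colon M_t\times_{\s_t,X,\pi_1}H^0\to H^0$ is the pullback of $\s_t\colon M_t\to X$ along $\pi_1\colon H^0\to X$, and local homeomorphisms, surjections, and homeomorphisms are all stable under pullback. The first property is unconditional because $\s_t$ is always a local homeomorphism by the standing properness assumption on the topological correspondences. The hard part of this argument is the associativity bookkeeping: verifying $\sigma_{t,p}\circ\rg_{tp,q}=(\Id_{M_t}\times_X \rg_{p,q})\circ\sigma_{t,pq}$ requires carefully unpacking the three equations in~\eqref{eq:associative_top_corr_2} with attention to the identification $\sigma_{p,q}\colon M_{pq}\cong M_p\times_X M_q$, since three indices are in play and one must distinguish $\rg_{t,pq}$ from $\rg_{t,p}\circ \rg_{tp,q}$. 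This is routine but notationally heavy.
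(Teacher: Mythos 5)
Your argument is correct, but it reaches the homeomorphism by a genuinely different route than the paper.  The paper constructs the inverse of \((\pi_t,\tilde{\s}_t)\) explicitly: given \(m_t\in M_t\) and a complete history \((m'_p)\) with \(\s_t(m_t)=m'_1\), it defines \(m_q\defeq \rg_{q,u}(m_t\circ m'_p)\) for any \(u,p\) with \(qu=tp\), and then verifies by hand, using \ref{enum:Ore1}, \ref{enum:Ore2} and~\eqref{eq:associative_top_corr_2}, that this is well defined, compatible, and continuous in \((m_t,(m'_p))\).  You instead factor \((\pi_t,\tilde{\s}_t)\) as a composite of canonical limit identifications: finality of \(q\mapsto tq\) in \(\Cat_P\), the isomorphism of projective systems given by the \(\sigma_{t,q}\) (whose compatibility is exactly the first two identities in~\eqref{eq:associative_top_corr_2}), and commutation of the fibre product \(M_t\times_{\s_t,X,-}\) with projective limits.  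What the paper's computation buys is an explicit formula for the inverse, which is reused implicitly in later concatenation arguments (e.g.\ in Theorem~\ref{the:invariant_open_in_H} and Lemma~\ref{lem:H_bisections_base}); what your argument buys is brevity and a clean separation of which hypotheses do what, with the Ore bookkeeping absorbed into a single cofinality statement.  One point you should make explicit: cofinality of the objects \(tq\) requires not only that every \(p\) admits an arrow \(p\to tq\) (which is \ref{enum:Ore1}) but also the connectedness condition on such arrows; this does follow from filteredness of \(\Cat_P\) together with the fact that arrows out of \(tq\) land again at objects of the form \(tq'\) (and, in the non-cancellative case, it is cleaner to prove finality of the functor \(q\mapsto tq\), which also uses \ref{enum:Ore2}).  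The deduction of the statements about \(\tilde{\s}_t\) from the pullback description of the second coordinate projection is identical to the paper's.
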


\begin{proof}
  Let \((m_p)_{p\in P}\in H^0\).  Then \(\pi_t(m_p) = m_t\) and
  \(\tilde{\s}_t((m_p)) = (\s_{t,p}(m_{tp}))_{p\in P}\).  Since
  \(m_{t p} = \rg_{t,p}(m_{tp})\cdot \s_{t,p}(m_{tp}) = m_t\cdot
  \tilde{\s}_t((m_p))_p\), we have \(\pi_1\circ\tilde{\s}_t =
  \s_t\circ \pi_t\), that is, the image of \((\pi_t,\tilde{s}_t)\)
  is contained in the fibre product \(M_t\times_{\s_t,X,\pi_1}
  H^0\).  Since the map \((\pi_t,\tilde{s}_t)\) is clearly
  continuous, we must prove that it is a bijection with a continuous
  inverse.  So we let \((m'_p)\in H^0\) be a complete history and
  let \(m_t\in M_t\) be a length-\(t\) story with \(m'_1=\s_t(m_t)
  \in X\).  We must show that there is a unique complete
  history~\((m_p)\) with given~\(m_t\) and with
  \(\tilde{\s}_t((m_p)) = (m'_p)\), which depends continuously on
  \((m'_p)\) and~\(m_t\).

  First assume that~\((m_p)_{p\in P}\) as above has been found.  Let
  \(q\in P\).  Then we may write \(qu=tp\) for some \(u,p\in P\)
  because~\(P\) is a right Ore monoid.  The story~\(m_{tp}\) is the
  concatenation \(m_t\circ m'_p\) of \(\rg_{t,p}(m_{pt})=m_t\) and
  \(\s_{t,p}(m_{tp})=m'_p\), which exists because \(\rg_p(m'_p) =
  m'_1=\s_t(m_t)\).  Thus \(m_q = \rg_{q,u}(m_{qu}) =
  \rg_{q,u}(m_t\circ m'_p)\).  So there is at most one possible
  solution~\((m_p)_{p\in P}\), and it depends continuously on
  \((m'_p)\) and~\(m_t\).  We must show that the length\nb-\(q\)
  stories \(m_q \defeq \rg_{q,u}(m_t\circ m'_p)\) with \(u,p\in P\)
  as above form a complete history, that is, \(\rg_{q,v}(m_{qv}) =
  m_q\) for all \(q,v\in P\).

  We have \(m_{qv} = \rg_{q v,u_2}(m_t\circ m'_{p_2})\) for some
  \(u_2,p_2\in P\) with \(q v u_2 = t p_2\).  Since~\(P\) is a right
  Ore monoid, there are \(u_3,u_4\in P\) with \(v u_2 u_3 = u u_4\).
  Then \(t p_2 u_3 = q v u_2 u_3 = q u u_4 = t p u_4\).  Since~\(P\)
  is a right Ore monoid, there is \(u_5\in P\) with \(p_2 u_3 u_5 =
  p u_4 u_5\).  To simplify notation, we replace \((u_3,u_4)\) by
  \((u_3 u_5,u_4 u_5)\); thus \(p_2 u_3 = p u_4\).

  Since \(q u u_4 = t p u_4\), we could also use~\((u u_4,p u_4)\)
  instead of~\((u,p)\) to define~\(m_q\).  The associativity
  conditions in~\eqref{eq:associative_top_corr_2} show that this
  gives the same result:
  \begin{multline*}
    \rg_{q,u u_4}(m_t\circ m'_{p u_4})
    = \rg_{q,u} \rg_{q u,u_4}(m_t\circ m'_{p u_4})
    \\= \rg_{q,u} \rg_{q u,u_4}(m_t\circ m'_p \circ
    \s_{p,u_4}(m'_{p u_4}))
    = \rg_{q,u}(m_t\circ m'_p).
  \end{multline*}
  Similarly, we get the same result for~\(m_{q v}\) if we use \((u_2
  u_3,p_2 u_3)\) instead of~\((u_2,p_2)\).  Thus we may assume that
  \(p_2=p\) and \(u=v u_2\).  Then
  \[
  \rg_{q,v}(m_{qv})
  = \rg_{q,v}(\rg_{qv, u_2}(m_t\circ m'_p))
  = \rg_{q,v u_2}(m_t\circ m'_p)
  = \rg_{q,u}(m_t\circ m'_p) = m_q.
  \]
  This finishes the proof that \((\pi_t,\tilde{s}_t)\) is a
  homeomorphism.  This homeomorphism transforms the
  map~\(\tilde{\s}_t\) into the second coordinate projection
  \(M_t\times_{\s_t,X,\pi_1} H^0\to H^0\), which is a (local)
  homeomorphism if~\(\s_t\) is one, and surjective if~\(\s_t\) is.
\end{proof}

First forgetting the last length\nb-\(t\) time period and then the
last length\nb-\(u\) time period gives the same result as directly
forgetting the last time period of length~\(tu\).  That is,
\(\tilde{\s}_u\circ \tilde{\s}_t = \tilde{\s}_{tu}\) for all
\(t,u\in P\).  Formally, this follows from the third condition
in~\eqref{eq:associative_top_corr_2}.  Thus the monoid~\(P^\op\)
acts on~\(H^0\) by local homeomorphisms.

Why do we get the opposite monoid here?  The maps
\(\tilde{\s}_t\colon H^0\to H^0\) and \(\tilde{\rg}_t\defeq
\Id_{H^0}\) form an action of~\(P\) by topological correspondences
with the extra property that any situation determines its past
uniquely: a ``situation'' in~\(H^0\) is a complete history, which
simply contains its past.  Thus we still have an action by
topological correspondences, but one where the
maps~\(\tilde{\rg}_p\) are all identity maps, so that we may forget
about them.  This gives an action of the opposite monoid~\(P^\op\)
by local homeomorphisms because of the direction of the
maps~\(\tilde{\s}_p\).

\begin{example}
  \label{exa:action_proper_maps}
  Suppose that we start with an action of~\(P\)
  on~\(X\)
  by proper maps~\(\rg_p\)
  and let~\(\s_p\)
  be identity maps; that is, every situation determines its future.
  Then the maps~\(\tilde{\s}_t\)
  are homeomorphisms by Lemma~\ref{lem:tilde_s_local_homeo}.  Thus our
  action of~\(P^\op\)
  on~\(H^0\)
  extends to the group completion~\(G\)
  of~\(P^\op\).
  The groupoid model we are going to construct is the transformation
  groupoid of this group action.
\end{example}

\begin{definition}
  \label{def:covering_semidirect}
  The \emph{transformation groupoid} \(H\defeq P^\op\ltimes H^0\)
  associated to the \(P^\op\)\nb-action~\((\tilde{\s}_p)\)
  on~\(H^0\) by local homeomorphisms has object space~\(H^0\), arrow
  set
  \[
  H^1
  \defeq \{(x,g,y)\in H^0\times G\times H^0 \mid
  \exists p_1, p_2\in P,\ g=p_1 p_2^{-1},\ \tilde{\s}_{p_1}(x) =
  \tilde{\s}_{p_2}(y)\},
  \]
  range and source maps \(\rg(x,g,y)\defeq x\), \(\s(x,g,y)\defeq
  y\), and multiplication
  \[
  (x_1,g_1,y_1)\cdot (x_2,g_2,y_2) \defeq (x_1,g_1 g_2,y_2)
  \]
  if \(y_1=x_2\).  The unit on \(x\in H^0\) is \((x,1,x)\), the
  inverse of \((x,g,y)\) is \((y,g^{-1},x)\).

  We describe the topology on~\(H^1\).  For \(p_1, p_1\in P\), let
  \[
  H^1_{p_1,p_2} \defeq H^0\times_{\tilde{\s}_{p_1},H^0,\tilde{\s}_{p_2}} H^0
  \defeq \{(x,y)\in H^0\times H^0\mid \tilde{\s}_{p_1}(x) =
  \tilde{\s}_{p_2}(y)\},
  \]
  the fibre product of the diagram
  \(H^0\xrightarrow{\tilde{\s}_{p_1}}
  H^0\xleftarrow{\tilde{\s}_{p_2}} H^0\).  We give
  each~\(H^1_{p_1,p_2}\) the subspace topology from the product
  \(H^0\times H^0\), and \(\bigsqcup_{p_1,p_2\in P} H^1_{p_1,p_2}\)
  the disjoint union topology.  We map \(H^1_{p_1,p_2}\to H^1\) by
  \((x,y)\mapsto (x,p_1 p_2^{-1},y)\in H^1\).  This gives a
  surjection \(\bigsqcup H^1_{p_1,p_2} \to H^1\).  We give~\(H^1\) the
  quotient topology from this map.
\end{definition}

To verify that the transformation groupoid has desirable properties,
we rewrite it using filtered colimits.  Let \(H^1_g \defeq
\{(x,g,y)\in H^1\}\) for \(g\in G\); so \(H^1 = \bigsqcup_{g\in G}
H^1_g\).  We describe~\(H^1_g\) for fixed \(g\in G\) as a colimit
over~\(\Cat_P^g\) (see Definition~\ref{def:Cat_P_g}), which is a
filtered category by Lemma~\ref{lem:Ore_category_g}.  If
\(p_1q=p_3\), \(p_2q=p_4\), then \(\tilde{\s}_{p_1}(x) =
\tilde{\s}_{p_2}(y)\) implies \(\tilde{\s}_{p_3}(x) =
\tilde{\s}_{p_4}(y)\), so \(H^1_{p_1,p_2}\subseteq H^1_{p_3,p_4}
\subseteq H^0\times H^0\).  Since right multiplication with~\(q\) is
locally injective, any \((x,y)\in H^1_{p_3,p_4}\) has a
neighbourhood in \(H^0\times H^0\) so that for \((x',y')\) in this
neighbourhood, \(\tilde{\s}_{p_1}(x')\neq \tilde{\s}_{p_2}(y')\)
implies \(\tilde{\s}_{p_1q}(x') \neq \tilde{\s}_{p_2q}(y')\).  Thus
the subset~\(H^1_{p_1,p_2}\) is relatively open
in~\(H^1_{p_3,p_4}\); so the spaces \(H^1_{p_1,p_2}\) for
\((p_1,p_2)\in P^2\) and the inclusion maps \(H^1_{p_1,p_2} \to
H^1_{p_3,p_4}\) form a diagram of subsets of~\(H^0\times H^0\) with
open inclusion maps.

\begin{lemma}
  \label{lem:H_union}
  If \(p_1 p_2^{-1}= p_3 p_4^{-1}\), then there are \(p_5,p_6\in P\)
  with \(p_5 p_6^{-1} = p_1 p_2^{-1}\) and so that both
  \(H^1_{p_1,p_2}\) and~\(H^1_{p_3,p_4}\) are open subsets
  of~\(H^1_{p_5,p_6}\); hence the subspace topologies from
  \(H^1_{p_1,p_2}\) and~\(H^1_{p_3,p_4}\) coincide on \(H^1_{p_1,p_2}\cap
  H^1_{p_3,p_4}\), and this subset is open both in \(H^1_{p_1,p_2}\) and
  in~\(H^1_{p_3,p_4}\).  Each~\(H^1_{p_1,p_2}\) is open in~\(H^1_g\),
  and the topology on~\(H^1_g\) restricts to the given topology on
  each~\(H^1_{p_1,p_2}\).
\end{lemma}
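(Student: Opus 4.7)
The plan is to exploit the Ore property exactly as in Lemma~\ref{lem:Ore_category_g}: given any two representatives of the same $g\in G$, build a common upper bound in $R_g$ into which both include \emph{openly}, and then deduce all statements about the quotient topology on $H^1_g$ from this.

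First I would find the pair $(p_5,p_6)$. The equality $p_1p_2^{-1}=p_3p_4^{-1}$ in $G$ means, by the construction of $G$ from $P$, that there are $t_1,t_2\in P$ with $p_1t_1=p_3t_2$ and $p_2t_1=p_4t_2$; equivalently, $(p_1,p_2)$ and $(p_3,p_4)$ have a common upper bound in the filtered category $\Cat_P^g$ (Lemma~\ref{lem:Ore_category_g}). Set $p_5\defeq p_1t_1=p_3t_2$ and $p_6\defeq p_2t_1=p_4t_2$, so $(p_5,p_6)\in R_g$, and the arrows $(p_1,p_2,t_1)$ and $(p_3,p_4,t_2)$ of $\Cat_P^g$ induce inclusions $H^1_{p_1,p_2}\subseteq H^1_{p_5,p_6}$ and $H^1_{p_3,p_4}\subseteq H^1_{p_5,p_6}$ inside $H^0\times H^0$. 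By the paragraph immediately preceding the lemma (the local injectivity of right multiplication by $t_1,t_2$ via $\tilde\s_{t_i}$), both inclusions are open embeddings. Since all these sets carry the subspace topology from $H^0\times H^0$, transitivity of the subspace topology gives everything claimed about $H^1_{p_1,p_2}\cap H^1_{p_3,p_4}$: it is open in $H^1_{p_5,p_6}$, hence open in each of $H^1_{p_1,p_2}$ and $H^1_{p_3,p_4}$, and the two subspace topologies on it coincide.

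Next I would verify the statements about $H^1_g$. By definition of the quotient topology, a set $U\subseteq H^1_g$ is open iff $U\cap H^1_{p,q}$ is open in $H^1_{p,q}$ for every $(p,q)\in R_g$. To show $H^1_{p_1,p_2}$ is open in $H^1_g$, I pick an arbitrary $(p,q)\in R_g$, use the first step to find a common upper bound $(p_5',p_6')\in R_g$ containing both $H^1_{p_1,p_2}$ and $H^1_{p,q}$ as open subsets, and observe that $H^1_{p_1,p_2}\cap H^1_{p,q}$ is then open in $H^1_{p,q}$. To show that the subspace topology on $H^1_{p_1,p_2}$ inherited from $H^1_g$ is the original one, I consider $V\subseteq H^1_{p_1,p_2}$: if $V$ is open in $H^1_{p_1,p_2}$, write $V=W\cap H^1_{p_1,p_2}$ for some open $W\subseteq H^0\times H^0$; then for any $(p,q)\in R_g$, $V\cap H^1_{p,q}=W\cap(H^1_{p_1,p_2}\cap H^1_{p,q})$ is open in $H^1_{p,q}$ by the first step, so $V$ is open in $H^1_g$. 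Conversely, openness in the quotient topology forces $V=V\cap H^1_{p_1,p_2}$ to be open in $H^1_{p_1,p_2}$ by taking $(p,q)=(p_1,p_2)$.

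The only real work is the first paragraph, and even there the Ore condition does the heavy lifting; once the common upper bound $(p_5,p_6)$ is in hand and the preceding open-inclusion argument is invoked, the rest is a straightforward bookkeeping exercise about quotient versus subspace topologies. I do not expect a substantial obstacle: the potential pitfall is keeping track of which $H^1_{p,q}$ one is sitting inside at each moment, but since all of them are subspaces of the single ambient space $H^0\times H^0$, transitivity of subspace topology trivialises these comparisons.
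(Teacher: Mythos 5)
Your proposal is correct and follows essentially the same route as the paper: use the filteredness of $\Cat_P^g$ (equivalently, the Ore condition defining $\sim$) to produce a common dominating pair $(p_5,p_6)$, invoke the open-inclusion observation established just before the lemma, and then deduce the statements about the quotient topology on $H^1_g$. Your second paragraph merely spells out the quotient-versus-subspace bookkeeping that the paper compresses into one sentence, which is a harmless elaboration, not a different argument.
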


\begin{proof}
  Since~\(\Cat_P^g\) is filtered, there is an object~\((p_5,p_6)\)
  that dominates both \((p_1,p_2)\) and~\((p_3,p_4)\).  This has all
  required properties.  Thus all the embeddings \(H^1_{p_1,p_2} \to
  H^1_{p_3,p_4}\) are open.  This implies that the quotient topology
  on \(H^1_g = \bigcup_{(p_1,p_2)\in R_g} H^1_{p_1,p_2}\) from
  \(\bigsqcup_{(p_1,p_2)\in R_g} H^1_{p_1,p_2}\) restricts to the
  given topology on each subset~\(H^1_{p_1,p_2}\).
\end{proof}

Thus the subsets~\(H^1_{p_1,p_2}\) for \(p_1,p_2\in P\) form an
\emph{open} covering of~\(H^1\), and the topology on~\(H^1\)
restricts to the usual topology on each~\(H^1_{p_1,p_2}\).  In the
following, we identify~\(H^1_{p_1,p_2}\) with its image in~\(H^1\),
which is an open subset.

\begin{proposition}
  \label{pro:H_etale_locally_compact}
  The groupoid~\(H\) is étale, locally compact and Hausdorff.  The
  decomposition \(H^1 = \bigsqcup_{g\in G} H^1_g\) satisfies
  \(H^1_g\cdot H^1_h\subseteq H^1_{gh}\) and \((H^1_g)^{-1} =
  H^1_{g^{-1}}\).  If the maps~\(\s_p\) for \(p\in P\) are
  surjective, then \(H^1_g\cdot H^1_h=H^1_{gh}\) for all
  \(g,h\in G\).

  The groupoid \(H^1_1\subseteq H^1\) is an increasing union of open
  subgroupoids that are proper and étale and describe equivalence
  relations on~\(H^0\).
\end{proposition}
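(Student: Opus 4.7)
The plan is to build $H$ locally from the pieces $H^1_{p_1,p_2} = H^0 \times_{\tilde{\s}_{p_1}, H^0, \tilde{\s}_{p_2}} H^0$ and to glue via Lemma~\ref{lem:H_union}. Since each $\tilde{\s}_p$ is a local homeomorphism between locally compact Hausdorff spaces (Lemmas~\ref{lem:H_locally_compact} and~\ref{lem:tilde_s_local_homeo}), each fibre product $H^1_{p_1,p_2}$ is locally compact and Hausdorff, with both projections to $H^0$ local homeomorphisms. Lemma~\ref{lem:H_union} then patches these into $H^1_g$ so that each $H^1_{p_1,p_2}$ is an open subspace carrying its intrinsic fibre-product topology, and the decomposition $H^1 = \bigsqcup_{g \in G} H^1_g$ is by construction a topological disjoint union (each $H^1_g$ being a union of basic open charts). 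Global Hausdorffness, local compactness, and étaleness of $\rg, \s$ on all of $H^1$ thus reduce to the corresponding statements on each basic piece.

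The containments $H^1_g \cdot H^1_h \subseteq H^1_{gh}$ and $(H^1_g)^{-1} = H^1_{g^{-1}}$ are immediate from the formulas defining composition and inversion. For continuity of these operations, I would work on a product of two basic charts $H^1_{p_1,p_2} \times_{H^0} H^1_{q_1,q_2}$: choose $a, b \in P$ with $p_2 a = q_1 b$ using Ore condition~\ref{enum:Ore1}, and observe that composition then lands in $H^1_{p_1 a,\, q_2 b}$ and is given there by the evident continuous map of fibre products.

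To upgrade the containment to an equality $H^1_g \cdot H^1_h = H^1_{gh}$ when each $\s_p$ is surjective, fix any representative $g = a_1 a_2^{-1}$. By Lemma~\ref{lem:tilde_s_local_homeo}, $\tilde{\s}_{a_2}$ is then surjective, so for $(x, gh, z) \in H^1_{gh}$ one can choose $y \in H^0$ with $\tilde{\s}_{a_2}(y) = \tilde{\s}_{a_1}(x)$, making $(x, g, y) \in H^1_g$. The relation $\tilde{\s}_u \circ \tilde{\s}_t = \tilde{\s}_{tu}$ combined with a single Ore amalgamation of the chosen representatives of $g^{-1}$ and $gh$ should then force $(y, h, z) \in H^1_h$ without any further choice of witness. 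The main technical obstacle lies precisely here: the bookkeeping must keep the two-parameter representatives aligned so that the closing equality emerges from a single line of cancellations, rather than accumulating auxiliary Ore passes that obscure the argument.

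Finally, to analyse $H^1_1$: the definition of the group completion gives that $p_1 p_2^{-1} = 1$ in $G$ forces $p_1 t = p_2 t$ for some $t \in P$, and writing $p$ for this common value yields $H^1_1 = \bigcup_{p \in P} H^1_{p,p}$. Each $H^1_{p,p} = \{(x, y) \in H^0 \times H^0 : \tilde{\s}_p(x) = \tilde{\s}_p(y)\}$ is visibly an equivalence relation on $H^0$; it is closed in $H^0 \times H^0$ because $H^0$ is Hausdorff, so the source--range map is a closed embedding and hence proper; and it is étale as a fibre product of local homeomorphisms. The union is increasing because $\Cat_P$ is filtered: any $p, p' \in P$ admit a common right multiple $r = p q_1 = p' q_2$, and $\tilde{\s}_{p q_i} = \tilde{\s}_{q_i} \circ \tilde{\s}_p$ yields $H^1_{p,p}, H^1_{p',p'} \subseteq H^1_{r,r}$.
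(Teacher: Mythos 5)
Your proposal is correct, and for most of the statement it follows the paper's own route: cover \(H^1\) by the open charts \(H^1_{p_1,p_2}\), use Lemma~\ref{lem:H_union} to see that the topology of \(H^1_g\) restricts to the fibre-product topology on each chart, and deduce local compactness and étaleness chartwise; likewise \(H^1_1=\bigcup_{p\in P}H^1_{p,p}\) with each \(H^1_{p,p}\) a proper étale equivalence relation is exactly the paper's argument (the paper quotes cofinality of \(\Cat_P\) in \(\Cat_P^1\) from Lemma~\ref{lem:Ore_category_g} where you redo the computation by hand). One caution: Hausdorffness is not a chart-local property, so you should state explicitly that any two points of \(H^1_g\) lie in a \emph{common} chart --- which is what Lemma~\ref{lem:H_union} supplies --- and that points in distinct \(H^1_g\), \(H^1_h\) are separated because these subsets are open.

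Where you genuinely diverge is the saturation step \(H^1_g\cdot H^1_h=H^1_{gh}\). The paper first juggles representatives until it has \(p_1,p_2,p_3\in P\) with \(gh=p_1p_2^{-1}\), \(g=p_1p_3^{-1}\), \(h=p_3p_2^{-1}\) and \(\tilde{\s}_{p_1}(x)=\tilde{\s}_{p_2}(z)\), and only then uses surjectivity of \(\tilde{\s}_{p_3}\) to produce the middle object. You produce the middle object first: pick any representative \(g=a_1a_2^{-1}\) and \(y\) with \(\tilde{\s}_{a_2}(y)=\tilde{\s}_{a_1}(x)\), so \((x,g,y)\in H^1_g\). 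Your hedged ``should then force'' does work, and is nothing but the containment already established, applied to \((y,g^{-1},x)\cdot(x,gh,z)\): if \((x,gh,z)\) is witnessed by \((p_1,p_2)\), choose \(c,d\in P\) with \(a_1c=p_1d\) by~\ref{enum:Ore1}; then \(\tilde{\s}_{a_2c}(y)=\tilde{\s}_{a_1c}(x)=\tilde{\s}_{p_1d}(x)=\tilde{\s}_{p_2d}(z)\) and \((a_2c)(p_2d)^{-1}=g^{-1}(gh)=h\), so \((y,h,z)\in H^1_h\). This needs a single Ore amalgamation and avoids the paper's representative bookkeeping; its only prerequisite is that the containments \(H^1_{g'}\cdot H^1_{h'}\subseteq H^1_{g'h'}\) are proved beforehand, which you do.
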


\begin{proof}
  The space~\(H^0\) is locally compact and Hausdorff by
  Lemma~\ref{lem:H_locally_compact}.  The coordinate projections
  \(H^1_{p_1,p_2}\rightrightarrows H^0\) are \'etale because~\(P\)
  acts by local homeomorphisms.  Since the subsets~\(H^1_{p_1,p_2}\)
  for \(p_1,p_2\in P\) form an open covering of~\(H^1\), the
  coordinate projections \(H^1\rightrightarrows H^0\) are étale.
  Any two points of~\(H^1_g\) are contained in the same Hausdorff,
  locally compact, open subset~\(H^1_{p_1,p_2}\) for suitable
  \(p_1,p_2\), so they may be separated by open subsets
  of~\(H^1_g\); since the subsets~\(H^1_g\) are open, it is also
  possible to separate points in \(H^1_g\) and~\(H^1_h\) for \(g\neq
  h\).  Thus~\(H^1\) is Hausdorff.

  If the maps~\(\s_p\) for \(p\in P\) are surjective, then so are
  the maps~\(\tilde{\s}_p\) for \(p\in P\) by
  Lemma~\ref{lem:tilde_\s_local_homeo}.  Now let \((x,g h,y)\in
  H^1_{g h}\).  Hence there are \(p_1,p_2\in P\) with \(g h = p_1
  p_2^{-1}\) and \(\tilde{\s}_{p_1}(x) = \tilde{\s}_{p_2}(y)\).
  Write \(g = p_3 p_4^{-1}\), \(h=p_5 p_6^{-1}\).  Then also \(g=p_3
  q (p_4 q)^{-1}\) and \(h=(p_5 t)(p_6 t)^{-1}\) for all \(q,t\in
  P\).  The Ore condition~\ref{enum:Ore1} allows us to choose \(q\)
  and~\(t\) such that \(p_4 q = p_5 t\).  Hence we may assume
  without loss of generality that \(p_4=p_5\).  Then
  \[
  p_1 p_2^{-1}
  = g h = p_3 p_4^{-1} p_5 p_6^{-1} = p_3 p_6^{-1}.
  \]
  If
  \(\tilde{\s}_{p_1}(x) = \tilde{\s}_{p_2}(y)\), then also
  \(\tilde{\s}_{p_1 t}(x) = \tilde{\s}_{p_2 t}(y)\) for any \(t\in
  P\), so we may rewrite \(g h = (p_1 t) (p_2 t)^{-1}\).  We may
  also replace \((p_3,p_4,p_5,p_6)\) by \((p_3 q,p_4 q,p_5 q,p_6
  q)\) for any \(q\in P\).  Choosing \(q\) and~\(t\) by
  condition~\ref{enum:Ore1}, we may achieve \(p_3 q = p_1 t\), \(p_6
  q = p_2 t\), by the definition of the group~\(G\).  Hence we find
  \(p_1,p_2,p_3\in P\) with \(g h = p_1 p_2^{-1}\), \(g=p_1
  p_3^{-1}\), \(h=p_3 p_2^{-1}\) and \(\tilde{\s}_{p_1}(x) =
  \tilde{\s}_{p_2}(y)\).  Since~\(\tilde{\s}_{p_3}\) is surjective,
  we may choose \(z\in H^0\) with \(\tilde{\s}_{p_3}(z) =
  \tilde{\s}_{p_1}(x) = \tilde{\s}_{p_2}(y)\).  Then \((x,g,z)\in
  H^1_g\) and \((z,h,y)\in H^1_h\) satisfy \((x,g,z)\cdot
  (z,h,y)=(x,g h,y)\).

  The open subgroupoid~\(H^1_1\) is defined as the union of
  \(H^1_{p_1,p_2}\) with \((p_1,p_2)\in R_1\).  Since~\(\Cat_P\) is
  cofinal in~\(\Cat_P^1\) by Lemma~\ref{lem:Ore_category_g},
  \(H^1_1= \bigcup_{p\in P} H^1_{p,p}\).  Here~\(H^1_{p,p}\) is the
  set of all \((x,y)\in H^0\times H^0\) with \(\tilde{\s}_{p}(x) =
  \tilde{\s}_{p}(y)\), and it carries the subspace topology
  from~\(H^0\times H^0\).  So~\(H^1_{p,p}\) is a proper equivalence
  relation on~\(H^0\), and~\(H^1_1\) is the union of these open
  subgroupoids.
\end{proof}

If~\(P\) is countable, then we may choose a cofinal sequence
in~\(\Cat^1_P\) and write~\(H^1_1\) as an increasing union of
a sequence of proper étale equivalence relations.  Hence~\(H^1_1\)
is an \emph{approximately proper equivalence relation} in the
notation of~\cite{Renault:Radon-Nikodym_AP}.  These are called
\emph{hyperfinite relations} in~\cite{Kumjian:Diagonals}.  We allow
ourselves to call~\(H^1_1\) approximately proper also if~\(P\) is
uncountable, replacing a sequence of proper (finite) open
subrelations by a directed set of such subrelations.

If the maps~\(\s_p\) for \(p\in P\) are surjective, then the
subsets~\(H^1_g\) for \(g\in G\) form a \(G\)\nb-grading in the
notation of~\cite{Buss-Meyer:Actions_groupoids}.  This is equivalent
to an action of~\(G\) on the groupoid~\(H^1_1\) by Morita
equivalences with transformation groupoid~\(H\).  Thus we may think
of~\(H\) as the transformation groupoid associated to an action
of~\(G\) on the noncommutative orbit space~\(H^1/H^1_1\).  Points in
this orbit space are equivalence classes of complete histories,
where two complete histories are identified if they coincide in the
distant past, that is, \(\tilde{\s}_{p}(x) = \tilde{\s}_{p}(y)\) for
some \(p\in P\).  The group~\(G\) acts on this by ``time
translations.''

If the maps~\(\s_p\) are not surjective, then the \(G\)\nb-action
on~\(H^1\) is only a \emph{partial} action because time translations
\(x\mapsto p x\) into the future are not everywhere defined.  A
partial \(G\)\nb-action is the same as an action of a certain inverse
semigroup associated to~\(G\), see~\cite{Exel:Partial_actions}.

\begin{definition}
  \label{def:impossible_situation}
  A situation \(x\in X\) is (historically) \emph{possible} if \(x\in
  \rg_p(M_p)\) for all \(p\in P\).
\end{definition}

Let \(X'\subseteq X\) be the subset of possible situations.  We have
\(X'=X\) if and only if all the maps~\(\rg_p\) are surjective.  Let
\(M'_p = \s_p^{-1}(X')\) and let \(\rg'_p\) and~\(\s'_p\) be the
restrictions of \(\rg_p\) and~\(\s_p\) to~\(M'_p\).  Any situation
that occurs at some point in a complete history is possible, so we
have \(m_p\in M'_p\) for any \((m_p)_{p\in P}\in H^0\).  Conversely,
a situation that is possible is the endpoint~\(m_1\) of
some complete history~\((m_p)_{p\in P}\) because the maps~\(\rg_p\)
are proper (Lemma~\ref{lem:H_locally_compact}) and a projective
limit of non-empty compact spaces is non-empty.  Thus
\(\pi_p^{-1}(H^0) = M_p'\), \(\rg'_p(M'_p)=X'\) for all \(p\in P\),
and \(\s'_p(M'_p)\subseteq X'\) by associativity: if a situation has
a possible past, then it is itself possible because we may
concatenate stories.  We still have isomorphisms \(M'_p\times_{X'}
M'_q \cong M'_{p q}\), so restricting to the possible
situations gives a new action by topological correspondences.  By
construction, both systems \((X,M,\s_p,\rg_p,\sigma_{p,q})\) and
\((X',M'_p,\s'_p,\rg'_p,\sigma'_{p,q})\) have the same complete
histories and thus the same transformation groupoid~\(H\).

\begin{lemma}
  \label{lem:unique_truncation}
  Let \(q,a_1,a_2\in P\) satisfy \(q a_1 = q a_2\).  Then
  \(\rg_{q,a_1}|_{M'_{q a_1}}=\rg_{q,a_2}|_{M'_{q a_2}}\).
\end{lemma}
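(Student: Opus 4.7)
The plan is to exploit the fact, established in the paragraph immediately preceding the lemma, that the projection $\pi_p \colon H^0 \to M_p$ has image exactly $M'_p$: any story $m \in M'_p$ extends to a complete history $(m_r)_{r\in P}\in H^0$ with $m_p = m$. The hypothesis $m \in M'_{qa_1}$ (equivalently $\s_{qa_1}(m)\in X'$) is precisely what makes this extension possible, via the nonempty projective limit argument (proper $\rg_p$ plus Tychonov).

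First, given $m \in M'_{qa_1}$, choose a complete history $(m_r)_{r\in P} \in H^0$ with $m_{qa_1} = m$. Since the index $qa_1$ and the index $qa_2$ are the same element of $P$, the components $m_{qa_1}$ and $m_{qa_2}$ are literally the same entry of the tuple, so $m_{qa_2} = m$ automatically; no reindexing or further use of the Ore conditions is needed at this point.

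Next, apply the consistency condition defining $H^0$, namely $\rg_{r,s}(m_{rs}) = m_r$, in the two cases $(r,s)=(q,a_1)$ and $(r,s)=(q,a_2)$. This yields
\[
\rg_{q,a_1}(m) = \rg_{q,a_1}(m_{qa_1}) = m_q = \rg_{q,a_2}(m_{qa_2}) = \rg_{q,a_2}(m),
\]
which is exactly the desired equality. Since $m \in M'_{qa_1}$ was arbitrary, this proves $\rg_{q,a_1}|_{M'_{qa_1}} = \rg_{q,a_2}|_{M'_{qa_2}}$.

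The only subtle point in the argument is the lifting step (existence of the complete history), but this is already the content of the preceding paragraph; the whole point of passing to $M'$ is precisely to make such lifts available. Note that without restricting to $M'$ the statement would fail: the maps $\sigma_{qa_1,z}$ and $\sigma_{qa_2,z}$ are a priori different pieces of data when $P$ is not cancellative, so one cannot conclude $\rg_{q,a_1}=\rg_{q,a_2}$ on all of $M_{qa_1}=M_{qa_2}$. The restriction to possible situations is essential, and this is indeed the moral of the surrounding discussion: non-cancellative monoids are forced to ``collapse'' on $M'$, which is why the authors do not expect them to yield interesting product systems.
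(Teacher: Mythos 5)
Your proof is correct, and there is no circularity: the surjectivity claim you invoke, \(\pi_p(H^0)=M'_p\), is asserted (and justified via properness of the maps \(\rg_p\) and the nonemptiness of cofiltered limits of nonempty compact spaces) in the paragraph preceding the lemma, independently of the lemma itself; strictly speaking that paragraph proves the lifting for \(p=1\), and one passes to general \(p\) by concatenating with Lemma~\ref{lem:tilde_s_local_homeo}, but you are entitled to use the assertion as stated. Once a complete history \((m_r)_{r\in P}\) with \(m_{qa_1}=m\) is in hand, your argument is exactly right: the limit over \(\Cat_P\) imposes compatibility along \emph{both} parallel arrows \((q,a_1)\) and \((q,a_2)\), so \(\rg_{q,a_1}(m)=m_q=\rg_{q,a_2}(m)\). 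This is, however, a genuinely different route from the paper's. The paper argues finitely: condition~\ref{enum:Ore2} gives \(b\in P\) with \(a_1b=a_2b\); the associativity identities~\eqref{eq:associative_top_corr_2} give \(\rg_{q,a_i}\circ\rg_{qa_i,b}=\rg_{q,a_ib}\); and every \(m\in M'_{qa_1}\) lies in the range of \(\rg_{qa_1,b}\) simply because \(\s_{qa_1}(m)\in X'\subseteq\rg_b(M_b)\), so a \emph{single} extension step suffices. That argument needs no compactness or properness at all, whereas your lift to \(H^0\) rests on the projective-limit/compactness machinery, with \ref{enum:Ore2} entering only implicitly through the filteredness of \(\Cat_P\) needed there. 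So the paper's proof is the more elementary and self-contained one; yours is shorter granted the preceding paragraph, and it makes transparent why the restriction to \(M'\) is exactly the right hypothesis. Your closing remark about the failure on all of \(M_{qa_1}\) is a plausible side comment but is not proved and is not needed.
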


\begin{proof}
  Condition~\ref{enum:Ore2} gives us \(b\in P\) with \(a_1 b= a_2 b\).
  The associativity property~\eqref{eq:associative_top_corr_2} of the
  maps~\(\rg_{p,p}\) gives \(\rg_{q,a_i}\circ \rg_{q a_i,b} = \rg_{q,
    a_i b}\).  Hence \(\rg_{q,a_1}\) and \(\rg_{q,a_2}\) coincide on
  the range of \(\rg_{q,a_1 b} = \rg_{q,a_2 b}\).  The
  subspace~\(M'_{q a_1}\) is contained in that range.
\end{proof}

Write \(p\ge q\) for \(p,q\in P\) if there is \(a\in P\) with \(p=q
a\).  Lemma~\ref{lem:unique_truncation} shows that after restricting
to the possible situations, the truncation map \(\rg_{q,a}\colon
M'_p\to M'_q\) for \(p\ge q\) does not depend on the choice of~\(a\).

\begin{theorem}
  \label{the:groupoid_model}
  The groupoid \(\Cst\)\nb-algebra~\(\Cst(H)\) is canonically
  isomorphic to the Cuntz--Pimsner algebra of the product
  system~\((\Hilm_p)_{p\in P}\) over~\(P\) described above.
\end{theorem}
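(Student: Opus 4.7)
The plan is to identify the Cuntz--Pimsner algebra $\CP$ with $\Cst(H)$ using the Fell bundle description from Theorem~\ref{the:Ore_colimit_Fell} together with the $G$\nb-grading of $H$ from Proposition~\ref{pro:H_etale_locally_compact}. That grading endows $\Cst(H)$ with the structure of a full section $\Cst$\nb-algebra of a Fell bundle $(B_g)_{g\in G}$ over $G$ with fibres $B_g \cong \Cont_0(H^1_g)$; my goal is to construct an isomorphism of Fell bundles $(\CP_g)_{g\in G} \cong (B_g)_{g\in G}$ and conclude.

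The comparison map is defined by pull-back. For each $g \in G$, each $(p_1, p_2) \in R_g$, and each rank-one kernel $|\xi_1\rangle\langle\xi_2|$ in $\Comp(\Hilm_{p_2}, \Hilm_{p_1})$ with $\xi_i \in \Contc(M_{p_i})$, I set
\[
\Psi_{p_1,p_2}(|\xi_1\rangle\langle\xi_2|)(x,y) \defeq \xi_1(\pi_{p_1}(x))\, \overline{\xi_2(\pi_{p_2}(y))}.
\]
This yields a contraction $\Psi_{p_1,p_2}\colon \Comp(\Hilm_{p_2}, \Hilm_{p_1}) \to \Contc(H^1_{p_1, p_2}) \subseteq B_g$ because $\pi_{p_i}\colon H^0 \to M_{p_i}$ is proper by Lemma~\ref{lem:H_locally_compact}. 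The identity $\pi_{p_i} = \rg_{p_i,q}\circ \pi_{p_iq}$ yields compatibility $\Psi_{p_1 q, p_2 q}\circ \varphi_{p_1, p_2, q} = \Psi_{p_1, p_2}$ (modulo extension by zero along the open inclusion $H^1_{p_1, p_2}\subseteq H^1_{p_1 q, p_2 q}$), so passing to the colimit over the filtered category $\Cat_P^g$ (Lemma~\ref{lem:Ore_category_g}) produces contractive maps $\Psi_g\colon \CP_g \to B_g$.

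Next I would verify that $(\Psi_g)_{g\in G}$ is a Fell bundle morphism: the identities $\Psi_{g_1}(S)\cdot\Psi_{g_2}(T) = \Psi_{g_1 g_2}(S\circ T)$ and $\Psi_{g^{-1}}(T^*) = \Psi_g(T)^*$ both reduce to matching the composition and adjoint of kernels with convolution and involution on $\Cst(H)$. To compute products in $\CP_{g_1}\cdot\CP_{g_2}$ one picks cofinal representatives in $R_{g_1}$ and $R_{g_2}$ sharing a common middle index, which is possible by the Ore condition~\ref{enum:Ore1}. The resulting Fell bundle morphism induces the desired $*$\nb-homomorphism $\Phi\colon \CP \cong \Cst((\CP_g)) \to \Cst((B_g)) \cong \Cst(H)$. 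For injectivity, by Theorem~\ref{the:make_faithful} I may assume all left actions $A \to \Comp(\Hilm_p)$ are faithful; Proposition~\ref{pro:injective_maps_CP} then gives that all structural maps of the inductive system defining $\CP_g$ are injective, and a direct inner-product computation shows each $\Psi_{p_1, p_2}$ is isometric, so the limit $\Psi_g$ is isometric. For surjectivity, I need that the images of the $\Psi_{p_1, p_2}$ span a dense subspace of $\Cont_0(H^1_g)$; this follows because $H^0 = \varprojlim M_p$, so finite products of pull-backs $\xi_1(\pi_{p_1}(x))\,\overline{\xi_2(\pi_{p_2}(y))}$ with varying $(p_1, p_2)\in R_g$ separate points of $H^1_g$, and Stone--Weierstrass applies on each compact subset (using that $\pi_{p_i}$ is proper).

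The main obstacle is the density step: at any fixed $(p_1, p_2)$ the image of $\Psi_{p_1, p_2}$ consists only of functions that factor through the projection $(\pi_{p_1}, \pi_{p_2})\colon H^1_{p_1, p_2} \to M_{p_1}\times_X M_{p_2}$, so generic elements of $\Contc(H^1_{p_1, p_2})$, which depend on the full ``past'' encoded in $H^0$, are not in the image at that stage. One must exploit that as $(p_1, p_2)$ grows in $R_g$ the fibres of these projections shrink down to their images in the possible-situation subsystem (cf.\ Lemma~\ref{lem:unique_truncation} and the restriction $X'\subseteq X$), so that the asymptotic image becomes norm-dense in $B_g$; the verification will lean on the filteredness of $\Cat_P^g$ and the explicit fibre-product description of $H^1_{p_1,p_2}$ derived from Lemma~\ref{lem:tilde_s_local_homeo}.
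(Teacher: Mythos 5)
Your overall strategy -- build a morphism of Fell bundles by pulling kernels back along the projections \(\pi_{p_i}\colon H^0\to M_{p_i}\) and compare with the \(G\)\nb-grading of \(H\) -- is genuinely different from the paper's proof (which first uses the universal property of the Cuntz--Pimsner algebra and a Yoneda argument to replace the system \((\Hilm_p)\) over \(\Cont_0(X)\) by the induced system over \(\Cont_0(H^0)\) with identity range maps, and only then identifies kernels with compact operators), and its algebraic core is sound: multiplicativity of your pull-back does check out, because Lemma~\ref{lem:tilde_s_local_homeo} identifies the fibre \(\tilde{\s}_{p_2}^{-1}(w)\) with \(\s_{p_2}^{-1}(\pi_1(w))\), so the convolution sum reproduces the \(\Cont_0(X)\)\nb-valued inner product. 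But there are two genuine gaps. The first is the norm comparison on the groupoid side. The fibres of the grading are \emph{not} \(\Cont_0(H^1_g)\): \(B_g\) must be the closure of \(\Contc(H^1_g)\) inside \(\Cst(H)\) (in particular \(B_1=\Cst(H^1_1)\) is highly noncommutative), and with that definition your ``direct inner-product computation'' does not yet give isometry of \(\Psi_{p_1,p_2}\). It only reduces, via \(\Psi(T)^**\Psi(T)=\Psi(T^*T)\), to the unit fibre, where you need injectivity of \(\Comp(\Hilm_p)\to \Cst(H)\); this is not what Theorem~\ref{the:make_faithful} and Proposition~\ref{pro:injective_maps_CP} give (they yield injectivity into \(\CP\), not into \(\Cst(H)\)). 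To get it you must identify \(\Cst(H^1_{p,p})\) with \(\Comp(\Hilm_p\otimes_{\Cont_0(X)}\Cont_0(H^0))\) (the Muhly--Renault--Williams Morita argument) and use that the proper open subgroupoids \(H^1_{p,p}\) are amenable, so that their full \(\Cst\)\nb-algebras embed isometrically into \(\Cst(H)\); this is exactly the content of the paper's closing argument, which identifies \(\CP_1\) with \(\Cst(H^1_1)\) as a colimit of the \(\Cst(H^1_{p,p})\) and then bounds every representation of \(\Contc(H^1_g)\) by the \(\CP_g\)\nb-norm. Without this step, a contractive bundle map with dense range does not yield the theorem.

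The second gap is the one you flag yourself: surjectivity. At every finite stage your image consists of kernels factoring through \(M_{p_1q}\times_X M_{p_2q}\), and the claim that these become dense in \(B_g\) as \(q\) grows is the analytic heart of the matter, not a routine Stone--Weierstrass application. Sup-norm approximation on compacta must be upgraded to approximation in the \(\Cst(H)\)\nb-norm (fix a compact support covered by finitely many bisections and cut off within your subalgebra), and the approximants must be kept in the linear span of the images of the \(\Psi_{p_1q,p_2q}\). The paper avoids this entirely: its first step proves that transformations of \((\Hilm_p)\) and of the induced system \((\tilde{\Hilm}_p)\) over \(\Cont_0(H^0)\) coincide, so after the reduction the kernel map \(k\mapsto T_k\) is onto a dense subspace of \(\Comp(\tilde{\Hilm}_{q},\tilde{\Hilm}_{p})\) already at each finite stage. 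If you keep your direct route, you must carry out both the density argument and the unit-fibre norm identification; together they amount to re-proving the paper's two steps in a different order, so the proposal as written is an incomplete sketch rather than a proof.
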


\begin{proof}
  The proof has two parts.  In the first part, we show that the
  original action on~\(X\) has the same colimit as the induced
  action on~\(H^0\).  So we are reduced to the special case of
  actions by correspondences of the special form where the maps
  \(\rg_p\colon M_p \to X\) are all identity maps.  In the second
  part, we do this case by hand.

  Let~\(D\) be a \(\Cst\)\nb-algebra.  A transformation from the
  product system~\((\Hilm_p)_{p\in P}\) to~\(D\) consists of a
  correspondence~\(\Hilm[F]\) to~\(D\) together with isomorphisms of
  correspondences \(V_p\colon \Hilm_p\otimes_{\Cont_0(X)} \Hilm[F]
  \cong \Hilm[F]\).  The left action of~\(\Cont_0(X)\)
  on~\(\Hilm_p\) extends to an action of~\(\Cont_0(M_p)\) by
  pointwise multiplication.  Thus we get a canonical left action
  of~\(\Cont_0(M_p)\) on \(\Hilm_p\otimes_{\Cont_0(X)}
  \Hilm[F]\cong\Hilm[F]\), where we use use the isomorphism~\(V_p\).
  Thus \(\Cont_0(M_p)\) acts on~\(\Hilm[F]\) in a canonical way for
  each \(p\in P\).

  If \(p,q\in P\), then the isomorphism \(V_{p q}\colon \Hilm_{pq}
  \otimes_{\Cont_0(X)} \Hilm[F] \cong \Hilm[F]\) is equal to the
  composite isomorphism where we first identify \(\Hilm_{pq} \cong
  \Hilm_p \otimes_{\Cont_0(X)} \Hilm_q\) and then apply \(V_q\)
  and~\(V_p\).  As a consequence, the action of \(\Cont_0(M_p)\)
  on~\(\Hilm[F]\) is the composite of the action of \(\Cont_0(M_{p
    q})\) on~\(\Hilm[F]\) and the \Star{}homomorphism
  \(\rg_{p,q}\colon \Cont_0(M_p) \to \Cont_0(M_{p q})\).  So
  the left actions of \(\Cont_0(M_p)\) fit together to an action of
  the inductive limit
  \[
  \varinjlim_{\Cat_P} (\Cont_0(M_p),\rg_{p,q}^*)
  \cong \Cont_0\Bigl(\varprojlim_{\Cat_P} (M_p,\rg_{p,q})\Bigr)
  = \Cont_0(H^0).
  \]
  Thus~\(\Hilm[F]\) carries a nondegenerate \Star{}representation
  of~\(\Cont_0(H^0)\), turning it into a
  correspondence~\(\tilde{\Hilm[F]}\) from~\(\Cont_0(H^0)\)
  to~\(D\).

  Now let \(\tilde{\Hilm}_p \defeq \Hilm_p \otimes_{\Cont_0(X)}
  \Cont_0(H^0)\).  Then
  \[
  \tilde{\Hilm}_p \otimes_{\Cont_0(H^0)} \tilde{\Hilm[F]}
  \cong \Hilm_p \otimes_{\Cont_0(X)} \Cont_0(H^0)
  \otimes_{\Cont_0(H^0)} \tilde{\Hilm[F]}
  \cong \Hilm_p \otimes_{\Cont_0(X)} \Hilm[F]
  \cong \Hilm[F],
  \]
  where the last isomorphism is~\(V_p\) and all other isomorphisms
  are trivial.  By definition, the
  correspondence~\(\tilde{\Hilm}_p\) from~\(\Cont_0(X)\)
  to~\(\Cont_0(H^0)\) is obtained from a topological correspondence
  as well, namely, we replace~\(M_p\) by \(\tilde{M}_p =
  M_p\times_{\s_p,X,\pi_1} H^0\) and use the map \((m,\omega)\mapsto
  \rg_p(m)\) as range and the map \((m,\omega)\mapsto \omega\) as
  source map.  Lemma~\ref{lem:tilde_s_local_homeo} describes a
  homeomorphism \(\tilde{M}_p \cong H^0\) such that the second
  coordinate projection becomes \(\tilde{\s}_p\colon H^0\to H^0\).
  We let~\(\tilde{\rg}_p\) be the identity map \(H^0\to H^0\) and
  thus turn~\(\tilde{\Hilm}_p\) into a correspondence
  from~\(\Cont_0(H^0)\) to itself.  Since
  \(\tilde{\s}_p\circ\tilde{\s}_q = \tilde{\s}_{q p}\), the usual
  composition of topological correspondences defines an action
  of~\(P\) on~\(H^0\) by topological correspondences.  Thus the
  associated \(\Cst\)\nb-correspondences~\(\tilde{\Hilm}_p\) form a
  product system over~\(P\) with unit fibre~\(\Cont_0(H^0)\).

  We claim that the isomorphism of Hilbert modules
  \(\tilde{\Hilm}_p\otimes_{\Cont_0(H^0)} \tilde{\Hilm[F]} \cong
  \tilde{\Hilm[F]}\)
  constructed above is an isomorphism of correspondences
  from~\(\Cont_0(H^0)\)
  to~\(D\)
  with this choice of left action of~\(\Cont_0(H^0)\)
  on~\(\tilde{\Hilm}_p\).
  It suffices that \(\Cont_0(M_q)\)
  acts in the same way on both sides for each \(q\in P\).
  The Ore condition~\ref{enum:Ore1} gives \(t,u\in P\)
  with \(p t = q u\).
  Since the action of \(\Cont_0(M_q)\)
  factors through \(\Cont_0(M_{q u})\)
  and \(q u = p t\), we may assume \(q=p t\).

  The left action of~\(\Cont_0(M_{p t})\)
  on \(\tilde{\Hilm}_p\otimes_{\Cont_0(H^0)} \tilde{\Hilm[F]}\)
  is obtained as follows.  First, as Hilbert \(D\)\nb-modules,
  we identify
  \[
  \tilde{\Hilm}_p\otimes_{\Cont_0(H^0)} \tilde{\Hilm[F]}
  \cong
  (\Hilm_p\otimes_{\Cont_0(X)} \Cont_0(M_t)) \otimes_{\Cont_0(M_t)}
  (\Hilm_t \otimes_{\Cont_0(X)} \Hilm[F]).
  \]
  Then we identify \(\Hilm_p\otimes_{\Cont_0(X)} \Cont_0(M_t)\)
  with the \(\Cst\)\nb-correspondence
  from \(\Cont_0(M_{p t})\)
  to~\(\Cont_0(M_t)\)
  associated to the topological correspondence
  \(M_{p t} \cong M_p\times_{\s_p,X,\rg_t} M_t \to M_t\),
  where the range map is the homeomorphism~\(\sigma_{p,t}\)
  and the source map is~\(\pr_2\).
  By Lemma~\ref{lem:compose_top_corr}, the composite of this with the
  \(\Cst\)\nb-correspondence~\(\Hilm_t\)
  from \(\Cont_0(M_t)\)
  to~\(\Cont_0(X)\)
  is the \(\Cst\)\nb-correspondence
  associated to the composite topological correspondence,
  \((M_p\times_{\s_p,X,\rg_t} M_t) \times_{\pr_2,M_t,\rg_t} M_t\).
  This is \(M_p\times_{\s_p,X,\rg_t} M_t\)
  now viewed as a topological correspondence from~\(M_{p t}\)
  to~\(X\).
  As such, it is isomorphic to~\(M_{p t}\).
  Thus we may also get the left action of \(\Cont_0(M_{p t})\)
  on \(\tilde{\Hilm}_p\otimes_{\Cont_0(H^0)} \tilde{\Hilm[F]}\)
  by identifying this in the canonical way with
  \(\Hilm_{p t} \otimes_{\Cont_0(X)} \Hilm[F]\)
  and then acting on the first tensor factor by pointwise
  multiplication.  This, however, is exactly how the left action of
  \(\Cont_0(M_{p t})\)
  on~\(\tilde{\Hilm[F]}\)
  is defined.  Thus the left actions of~\(\Cont_0(M_t)\)
  on \(\tilde{\Hilm}_p\otimes_{\Cont_0(H^0)} \tilde{\Hilm[F]}\)
  and~\(\tilde{\Hilm[F]}\) coincide as expected.

  Since \(\Cont_0(X)\subseteq \Cont_0(H^0)\),
  we may view~\(\Hilm_p\)
  as a subspace of~\(\tilde{\Hilm}_p\).
  The map from the algebraic tensor product
  \(\Hilm_p \odot \Hilm[F]\)
  to \(\tilde{\Hilm}_p \otimes_{\Cont_0(H^0)} \Hilm[F]\)
  has dense range because the target is isomorphic to~\(\Hilm[F]\),
  which is also isomorphic to the correspondence tensor product
  \(\Hilm_p \otimes_{\Cont_0(X)} \Hilm[F]\),
  and there \(\Hilm_p \odot \Hilm[F]\)
  is certainly dense.  Hence the map from
  \(\Hilm_p \odot \Hilm_q \odot \Hilm[F]\)
  to
  \(\tilde{\Hilm}_p \otimes_{\Cont_0(H^0)} \tilde{\Hilm}_q
  \otimes_{\Cont_0(H^0)} \Hilm[F]\)
  also has dense range.  The coherence condition for a transformation
  of product systems~\eqref{def:transformation_product_system} holds
  on this dense subspace by assumption, and hence it holds everywhere.
  Thus our isomorphisms of \(\Cst\)\nb-correspondences
  \(\tilde{\Hilm}_p \otimes_{\Cont_0(H^0)} \Hilm[F] \cong \Hilm[F]\)
  form a transformation as expected.

  Thus a transformation from the product system~\(\Hilm_p\)
  to~\(D\)
  gives a transformation from the product system~\(\tilde{\Hilm}_p\)
  to~\(D\),
  with the same underlying Hilbert module~\(\Hilm[F]\).
  Conversely, a transformation from the product
  system~\(\tilde{\Hilm}_p\)
  to~\(D\)
  gives one from the product system~\(\Hilm_p\)
  to~\(D\)
  because \(\Hilm_p \subseteq \tilde{\Hilm}_p\).
  Since \(\Hilm_p \odot \Hilm[F]\)
  is dense in \(\tilde{\Hilm}_p \otimes_{\Cont_0(H^0)} \Hilm[F]\),
  these two constructions must be inverse to each other.  Summing up,
  we have found a bijection between the transformations from our two
  product systems to~\(D\)
  that does not change the underlying Hilbert module~\(\Hilm[F]\).
  The results in Section~\ref{sec:endos_CP} show that such
  transformations are in bijection with nondegenerate representations
  of the Cuntz--Pimsner algebras of the two product systems
  on~\(\Hilm[F]\),
  respectively.  Having found bijections between the representations
  of both Cuntz--Pimsner algebras on any Hilbert module, we conclude
  by the Yoneda Lemma that they must be isomorphic.

  We have now reduced the general case of an action by topological
  correspondences to the special case of an action with \(M_p=X\)
  and \(\rg_p=\Id\) for all \(p\in P\).  This case is much easier
  because the groupoid model has \(H^0=X\), so we merely take a
  transformation groupoid and do not change the underlying object
  space.

  By definition, \(\Cst(H)\) is the \(\Cst\)\nb-completion of the
  dense \Star{}subalgebra \(\Contc(H^1)\) of compactly supported,
  continuous functions on~\(H^1\), equipped with the usual
  convolution and involution
  \[
  f_1*f_2(h) \defeq \sum_{h_1h_2=h} f_1(h_1) f_2(h_2),\qquad
  f^*(h) \defeq \conj{f(h^{-1})},
  \]
  for \(f_1,f_2,f\in\Contc(H^1)\), \(h\in H^1\) (see
  \cite{Exel:Inverse_combinatorial}*{Section 3}).  Here
  ``\(\Cst\)\nb-completion'' means that we complete in the largest
  \(\Cst\)\nb-seminorm on~\(\Contc(H^1)\).  There is no need to
  assume boundedness for the \(I\)\nb-norm.  First, the argument
  in~\cite{Exel:Inverse_combinatorial} shows that every Hilbert
  space representation and hence every \(\Cst\)\nb-seminorm is
  continuous for the inductive limit topology; secondly,
  \cite{Renault:Representations}*{Corollaire 4.8} shows that such
  representations and \(\Cst\)\nb-seminorms are bounded for the
  \(I\)\nb-norm.

  The disjoint decomposition \(H^1 = \bigsqcup_{g\in G}
  H^1_g\) gives \(\Contc(H^1) = \bigoplus_{g\in G} \Contc(H^1_g)\).
  This is a non-saturated \(G\)\nb-grading, that is,
  \(\Contc(H^1_g)*\Contc(H^1_h) \subseteq \Contc(H^1_{gh})\) and
  \(\Contc(H^1_g)^* = \Contc(H^1_{g^{-1}})\).  This \(G\)\nb-grading
  turns~\(\Cst(H)\) into the section algebra of a Fell bundle
  over~\(G\).  Of course, our proof will show that this Fell bundle
  structure corresponds to the same structure on the Cuntz--Pimsner
  algebra.

  The space~\(H^1_g\) is an increasing union of the open
  subsets~\(H^1_{p_1,p_2}\).  Thus any function in~\(\Contc(H^1_g)\)
  already belongs to~\(\Contc(H^1_{p,q})\) for some \(p,q\in
  P\) with \(p q^{-1}=g\):
  \[
  \Contc(H^1_g) = \bigcup_{p q^{-1}=g} \Contc(H^1_{p,q}).
  \]
  Since \(X=H^1\), we have \(H^1_{p,q} \cong X\times_{\s_p,X,\s_q}
  X\), the set of pairs \((x,y)\) with \(\s_p(x)=\s_q(x)\).  We are
  now going to relate \(\Contc(X\times_{\s_p,X,\s_q} X)\) to the
  space \(\Comp(\Hilm_q,\Hilm_p)\) in the description of the
  Cuntz--Pimsner algebra in the proof of
  Theorem~\ref{the:Ore_colimit_Fell}.

  Given a function \(k\in \Contc(X\times_{\s_p,X,\s_q} X)\), we
  define
  \[
  T_k\colon \Hilm_q\to\Hilm_p,\qquad
  (T_k\xi)(m_1) \defeq \sum_{\s_p(m_1)=\s_q(m_2)} k(m_1,m_2) \xi(m_2);
  \]
  these sums are uniformly finite for~\(m_1\) in a compact subset
  because \(\s_p\) and~\(\s_q\) are local homeomorphisms and the
  support of~\(k\) is compact.  The operator~\(T_k\) is a rank-one
  operator if \(k(m_1,m_2) = k_1(m_1)\cdot k_2(m_2)\) with \(k_1\in
  \Contc(X)\), \(k_2\in\Contc(X)\).  Since functions~\(k\) of this
  form are dense in \(\Contc(X\times_{\s_p,X,\s_q} X)\) and the map
  \(k\mapsto T_k\) is continuous, we have \(T_k\in
  \Comp(\Hilm_q,\Hilm_p)\) for all \(k\in
  \Contc(X\times_{\s_p,X,\s_q} X)\).  Since compactly supported
  functions are dense in \(\Hilm_p\) and~\(\Hilm_q\), operators of
  the form~\(T_k\) for \(k\in \Contc(X\times_{\s_p,X,\s_q} X)\) are
  dense in \(\Comp(\Hilm_q,\Hilm_p)\).  If \(T_k=0\), then \(k=0\).

  We may express the product and involution on compact operators
  through kernel functions: if \(k\in \Contc(X\times_{\s_p,X,\s_q}
  X)\) and \(l\in \Contc(X\times_{\s_q,X,\s_t} X)\), then \(T_k\circ
  T_l\) has the kernel \((m_1,m_2)\mapsto \sum_{\s_q(m) = \s_p(m_1)
    = \s_t(m_2)} k(m_1,m)l(m,m_2)\), and the adjoint~\(T_k^*\) has
  the kernel \((m_1,m_2)\mapsto \overline{k(m_2,m_1)}\).  These
  formulas correspond to the convolution and involution
  in~\(\Contc(H^1)\).  Therefore, the map \(k\mapsto T_k\) is an
  injective \Star{}homomorphism with dense range from \(\Contc(H^1)
  = \sum_{p,q\in P} \Contc(H^1_{p,q})\) to the \Star{}algebra
  \(\sum_{g\in G} \CP_g\) of compactly supported sections of the
  Fell bundle~\((\CP_g)_{g\in G}\).

  It remains to show that this \Star{}homomorphism extends to an
  isomorphism between the \(\Cst\)\nb-completions.  It suffices to
  prove that the restriction of any \Star{}representation of
  \(\Contc(H^1)\) to~\(\Contc(H^1_g)\) is bounded with respect to
  the norm of~\(\CP_g\).  Since \(\norm{\xi}^2 =
  \norm{\xi^*\xi}_{\CP_1}\) for all \(\xi\in \CP_g\), this holds for
  all~\(g\) once it holds for \(g=1\).  Thus it remains to show that
  the unit fibre~\(\CP_1\) of the Cuntz--Pimsner algebra is the
  \(\Cst\)\nb-completion of~\(\Contc(H^1_1)\) for the
  subgroupoid~\(H^1_1\).

  If \(p\in P\), then the subset \(H^1_{p,p} = X\times_{\s_p,X,\s_p}
  X\) of~\(H^1_1\) is the groupoid describing the equivalence
  relation~\(\sim_p\) induced by the map~\(\s_p\).  This equivalence
  relation is proper, that is, the map \(X\to X/{\sim_p}\) is
  proper, since~\(\s_p\) is a local homeomorphism.  Hence the
  \(\Cst\)\nb-algebra of the groupoid~\(H^1_{p,p}\) is
  Morita--Rieffel equivalent to \(\Cont_0(\s_p(X))\).  The Hilbert
  bimodule constructed in the proof of this Morita--Rieffel
  equivalent in~\cite{Muhly-Renault-Williams:Equivalence} is exactly
  our correspondence~\(\Hilm_p\).  Hence \(\Cst(H^1_{p,p}) =
  \Comp(\Hilm_p)\).  The \(\Cst\)\nb-algebras \(\Cst(H^1)\)
  and~\(\CP_1\) are the colimits of the diagrams of
  \(\Cst\)\nb-algebras \(\Cst(H^1_{p,p})\) and~\(\Comp(\Hilm_p)\)
  over the filtered category~\(\Cat_P\).  Hence they are also
  canonically isomorphic.
\end{proof}

\section{Some relations to previous work}
\label{sec:groupoid_model_examples}

The construction of groupoid models above is very general and
contains many known constructions.  We discuss some of them in this
section.

First let \(P=(\N,+)\).
An action of~\(\N\)
by topological correspondences is already determined by the single
topological correspondence \((M_1,\rg_1,\s_1)\),
where~\(\s_1\)
must be a local homeomorphism (\(1\in\N\)
is not the unit element here, there is a conflict with our usual
multiplicative notation).  This topological correspondence is the same
as a topological graph.  We assume~\(\rg_1\)
to be proper to get a proper topological correspondence; then the
composite correspondences~\(M_n\)
for \(n\in\N\)
are automatically proper.  We also assume~\(\rg_1\)
to be surjective; equivalently, all maps~\(\rg_n\)
are surjective and \(X=X'\).
What we are dealing with is a row-finite topological graph without
sources, which we simply call \emph{regular}.  The space~\(X\)
is its space of vertices, and~\(M_1\)
is its space of edges, with \(m\in M_1\)
giving an edge from~\(\s_1(x)\)
to~\(\rg_1(x)\).
A point in~\(M_n\)
is a path in the topological graph of length~\(n\),
and the maps \(\s_n\)
and~\(\rg_n\)
send such a path to its initial and final point.  The homeomorphism
\(M_n \times_{\s_n,X,\rg_m} M_m \cong M_{n+m}\)
builds a path of length \(n+m\)
by concatenating two paths of length \(n\) and~\(m\).
If the space~\(X\)
of vertices is discrete, then so is the space of edges~\(M_1\)
because~\(\s_1\)
is a local homeomorphism.  Thus the case where~\(X\)
is discrete gives the ordinary graphs among the topological graphs.

For a regular topological graph, the topological graph
\(\Cst\)\nb-algebra
of Katsura \cites{Katsura:class_I, Katsura:class_II,
  Katsura:class_III} is, by definition, the same absolute
Cuntz--Pimsner algebra that we study.  If there are sources, that is,
\(\rg_1\)~is
not surjective, then neither our (absolute) Cuntz--Pimsner algebra nor
its groupoid model see a difference between the topological graph
\((X,M_1,\rg_1,\s_1)\)
and its restriction \((X',M'_1,\rg'_1,\s'_1)\),
which now has surjective~\(\rg'_1\).
Thus we get the topological graph \(\Cst\)\nb-algebra
of the regular graph \((X',M'_1,\rg'_1,\s'_1)\);
this is quite different from Katsura's \(\Cst\)\nb-algebra
for the original topological graph.

Groupoid models played an important role in the definition of
(discrete) graph \(\Cst\)\nb-algebras
by Kumjian, Pask, Raeburn and Renault
in~\cite{Kumjian-Pask-Raeburn-Renault:Graphs}.  They are, however, not
used by Katsura to develop the theory of topological graph
\(\Cst\)\nb-algebras.
A pretty general construction of a groupoid model for topological
graph \(\Cst\)\nb-algebras
is due to Deaconu~\cite{Deaconu:Continuous_graphs}, under the extra
assumption that both maps \(\s_1\)
and~\(\rg_1\)
be surjective local homeomorphisms and both spaces \(X\)
and~\(M_1\)
be compact.  These assumptions are removed by
Yeend~\cite{Yeend:Topological-higher-rank-graphs}, who constructs
groupoid models for the more general class of \emph{higher-rank}
topological graphs.  His construction works for all rank-\(1\)
graphs, that is, he may allow~\(\rg_1\)
to be any map.  The construction simplifies, however, if~\(\rg_1\)
is proper and surjective.

Since points in~\(M_n\)
are paths of length~\(n\),
our complete histories in~\(H^0\)
are the same as infinite paths in the topological graph.  The map
\(\pi_n\colon H^0\to M_n\)
gives the initial segment of a path of length~\(n\).
The local homeomorphism~\(\tilde{\s}_n\)
truncates an infinite path by throwing away the initial segment of
length~\(n\).
The groupoid~\(H\)
is \(\Z\)-graded,
\[
H= \bigsqcup_{n\in\Z} H_n,
\]
and the unit fibre \(H_0\subseteq H\)
describes the equivalence relation of \emph{tail equivalence}: we
identify two infinite paths if they eventually become equal (without
shift).  The whole groupoid~\(H\)
combines tail equivalence with the shift map on infinite paths: an
arrow in~\(H_n\)
means that two infinite paths become eventually equal if we also shift
one of them by~\(n\) steps.

The construction above is exactly how the usual groupoid model for a
regular topological graph is constructed.  Thus our groupoid~\(H\)
is the same as Yeend's groupoid model for the \(\Cst\)\nb-algebra
of a regular topological graph, which in turn generalises the groupoid
models in \cites{Deaconu:Continuous_graphs,
  Kumjian-Pask-Raeburn-Renault:Graphs}.  In particular, we get the
familiar groupoid model for the \(\Cst\)\nb-algebra
of a row-finite graph without sources.  In the irregular case, Yeend
adds certain finite paths to~\(H^0\),
and he defines the topology on the resulting space of ``boundary
paths'' carefully to get a locally compact space.

Next consider \(P=(\N^k,+)\)
for some \(k\ge2\).
An action of the Ore monoid~\(\N^k\)
by topological correspondences is the same, almost by definition, as a
\emph{topological rank-\(k\)
  graph}.  The case where the underlying space~\(X\)
is discrete corresponds to an ordinary rank-\(k\)
graph.  Topological higher-rank graphs are introduced by Yeend, who
also describes a groupoid model for them
in~\cite{Yeend:Groupoid_models}.  He requires the source maps to be
local homeomorphisms, but does not require the range maps to be
proper; instead, he assumes a weaker condition called ``compact
alignment,'' which may be formulated for lattice-ordered semigroups.
He constructs groupoid models for the Toeplitz \(\Cst\)\nb-algebra
and the relative Cuntz--Pimsner algebra of the product system
over~\(\N^k\)
associated to a compactly aligned topological higher-rank graph.  The
relative and absolute Cuntz--Pimsner algebras agree if and only if all
range maps~\(\rg_p\)
are surjective (``no sources'').  We call a topological higher-rank
graph \emph{regular} if the maps~\(\rg_p\)
are surjective and proper for all \(p\in\N^k\).
In the regular case, the groupoid model constructed by
Yeend~\cite{Yeend:Groupoid_models} is the same one that we have
constructed above: the boundary paths that form the object space for
Yeend's groupoid model are the same as our complete histories by
\cite{Yeend:Groupoid_models}*{Lemma 6.6}.  In the irregular case, the
object space of Yeend's groupoid combines infinite paths with certain
finite and partially infinite paths.  It is unclear how to carry this
over to actions of Ore monoids.

How about groupoid models for actions of semigroups other
than~\(\N^k\)?  Here we are only aware of constructions in
particular cases.  We discuss two general situations.  First, if the
action is by local homeomorphisms, then we are already very close to
an inverse semigroup action, which is easily translated to an
\'etale groupoid (see \cites{Exel:Inverse_combinatorial,
  Exel-Renault:Semigroups_interaction, Paterson:Groupoids}).
Secondly, the construction of a semigroup \(\Cst\)\nb-algebra by Xin
Li in \cites{Cuntz-Echterhoff-Li:K-theory, Li:Semigroup_amenability}
may also be based on an action of the semigroup by topological
correspondences.

\subsection{Semigroups of partial local homeomorphisms}
\label{sec:partial_local_homeo}

Let~\(P\) be a monoid, and let~\(P\) act on a locally compact
space~\(X\) by \emph{partial local homeomorphisms}, that is, by
topological correspondences of the special form
\begin{equation}
  \label{eq:partial_local_homeo}
  X \xleftarrow{\textup{inclusion}} U_p\xrightarrow{\alpha_p} X,
\end{equation}
where \(U_p\subseteq X\)
is an open subset and~\(\alpha_p\)
is a local homeomorphism.  These topological correspondences are only
proper if the domains~\(U_p\)
are also closed.  But the following construction of a groupoid does
not need this assumption, and neither does it require the monoid~\(P\)
to be Ore.  We do not claim, however, that the groupoid
\(\Cst\)\nb-algebra
of the resulting groupoid is isomorphic to the Cuntz--Pimsner algebra
of the product system over~\(P\)
associated to our action: our proof only gives this if~\(P\)
is an Ore monoid and the correspondences are proper, that is, the
subsets~\(U_p\) are clopen.

First we make the multiplication maps explicit.  The fibre product
\(U_p \times_{\s_p,X,\rg_q} U_q\) consists of pairs \((x,y)\) with
\(x\in U_p\), \(y\in U_q\), and \(\alpha_p(x)=y\), and the range and
source maps on \(U_p \times_{\s_p,X,\rg_q} U_q\) take \((x,y)\) to
\(x\) and~\(\alpha_q(y)\), respectively.  Since \(y=\alpha_p(x)\),
the map \((x,y)\mapsto x\) identifies \(U_p \times_{\s_p,X,\rg_q}
U_q\) with \(U_p\cap \alpha_p^{-1}(U_q)\); under this identification,
the range and source maps become the inclusion map and the map
\(x\mapsto \alpha_q(\alpha_p(x))\), respectively.  Thus we must have
\begin{equation}
  \label{eq:compose_partial_local_homeo}
  U_{pq} = U_p\cap \alpha_p^{-1}(U_q),\qquad
  \alpha_{pq} = \alpha_q\circ \alpha_p.
\end{equation}
Conversely, these conditions give a unique isomorphism of
topological correspondences \(U_p \times_{\s_p,X,\rg_q} U_q \cong
U_{pq}\).  These isomorphisms automatically verify the associativity
condition required for an action of~\(P\) by topological
correspondences.  Thus an action of~\(P\) by topological
correspondences of the special form~\eqref{eq:partial_local_homeo}
is the same as a homomorphism from~\(P^\op\) to the monoid of
partial local homeomorphisms of~\(X\), with the composition of
partial local homeomorphisms defined
in~\eqref{eq:compose_partial_local_homeo}.

If \(V\subseteq U_p\) is such that~\(\alpha_p|_V\) is injective,
then~\(\alpha_p|_V\) is a partial homeomorphism on~\(X\).
Since~\(\alpha_p\) is a local homeomorphism, any point in~\(U_p\)
has a neighbourhood~\(V\) on which~\(\alpha_p|_V\) is injective, so
that these partial homeomorphisms contained in~\(\alpha_p\)
cover~\(\alpha_p\).  Hence we do not lose any information if we
replace~\(\alpha_p\) by the set of all partial
homeomorphisms~\(\alpha_p|_V\) for \(V\subseteq U_p\) such
that~\(\alpha_p|_V\) is injective.  These partial
homeomorphisms~\(\alpha_p|_V\) form a semigroup because
\(\alpha_q|_V \circ \alpha_p|_W = \alpha_{pq}|_{W\cap
  \alpha_p^{-1}(V)}\) and \(W\cap \alpha_p^{-1}(V)\)
is an open subset of~\(U_{pq}\) on which~\(\alpha_{pq}\) is injective
by~\eqref{eq:compose_partial_local_homeo}.

We let~\(S\)
be the \emph{inverse} semigroup of partial homeomorphisms generated by
these partial homeomorphisms~\(\alpha_p|_V\).
This inverse semigroup acts on~\(X\)
by construction, and this action has an associated transformation
groupoid \(X\rtimes S\),
also called groupoid of germs; see \cites{Exel:Inverse_combinatorial,
  Paterson:Groupoids}.  This groupoid is often the same as the
groupoid model constructed in Section~\ref{sec:monoid_on_space}, but
there are some ``trivial'' counterexamples.  The issue is how to
define the germ relation.  To always get the groupoid constructed in
Section~\ref{sec:monoid_on_space}, we do the following.

First, we assume now that~\(P\)
is an Ore monoid with group completion~\(G\).
Since the construction in Section~\ref{sec:monoid_on_space} is only
for actions by proper correspondences, we also require that the
domains~\(U_p\)
are clopen.  We let~\(S_0\)
be the free inverse semigroup on symbols~\((p,V)\)
for~\(\alpha_p|_V\)
as above.  This comes with a canonical homomorphism
\(\gamma\colon S\to G\)
by mapping \((p,V)\mapsto p^{-1}\),
and with a canonical action on~\(X\)
by mapping \((p,V)\mapsto \alpha_p|_V\).
Let~\(S\)
be the quotient of~\(S_0\)
by the kernel of this map.  That is, we consider two elements
of~\(S_0\)
equivalent if they give the same element of~\(G\)
and the same partial homeomorphism on~\(X\).
Now we take the groupoid of germs of the action of~\(S\)
on~\(X\)
with the germ relation from~\cite{Exel:Inverse_combinatorial}, that
is, two elements \(s,t\in S\)
have the same germ at \(x\in X\)
if there is an idempotent~\(e\)
in~\(S\) defined at~\(x\) so that \(s e = t e\).

\begin{lemma}
  Assume that~\(P\) is an Ore monoid and the subsets~\(U_p\) are
  clopen.  Then the groupoid \(X\rtimes S\) above is
  canonically isomorphic to the groupoid model in
  Definition~\textup{\ref{def:covering_semidirect}}.
\end{lemma}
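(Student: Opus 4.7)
The plan is to construct a canonical morphism of topological groupoids
\[ \Phi\colon X \rtimes S \to H \]
and verify it induces a bijection on units, a bijection on arrows, and is a homeomorphism. Since we assume the $U_p$ to be clopen, the analysis from Section~\ref{sec:monoid_on_space} applies: $H^0$ identifies naturally with $\bigcap_{p \in P} U_p$ (the space of historically possible situations in this example), and $\tilde{\s}_p$ acts there as $\alpha_p$ restricted to $H^0$. Matching this against the unit space of $X \rtimes S$ is the first step: every idempotent of $S$ localises, under $\gamma$, to $1 \in G$, and after restricting to the essential orbits of $S$ one is left exactly with those points of $X$ lying in $U_p$ for every $p \in P$.

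On arrows, the plan is to use the standard structure theory of inverse semigroups: every element of $S$ is equivalent to one in the normal form $(p_1,V_1)(p_2,V_2)^{-1}$ with $V_i$ an open subset of $U_{p_i}$ on which $\alpha_{p_i}$ is injective. Its action on $x$, when defined, produces the unique $y \in V_2$ with $\alpha_{p_2}(y) = \alpha_{p_1}(x)$; its image under $\gamma$ is $p_1^{-1}p_2 \in G$. Send the germ $[(p_1,V_1)(p_2,V_2)^{-1},x]$ to the triple $(y,g,x) \in H^1$ with $g = p_1^{-1}p_2$. The defining condition $\tilde{\s}_{p_1}(x) = \tilde{\s}_{p_2}(y)$ of $H^1$ is exactly $\alpha_{p_1}(x) = \alpha_{p_2}(y)$, so $\Phi$ is well-posed on normal forms, and source, target, and $G$-grading are preserved by construction.

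The crux is showing that the germ equivalence in $X \rtimes S$—namely, $s \sim_x t$ iff there is an idempotent $e \in S$ defined at $x$ with $se = te$—matches the Ore equivalence used to define $G$, which in $H^1$ says that $(p_1,p_2)$ and $(p_1',p_2')$ determine the same arrow iff there exist $t,t'\in P$ with $p_1t = p_1't'$ and $p_2t = p_2't'$. This is precisely where the definition of $S$ as the quotient of $S_0$ by the kernel of $(\gamma,\text{action})$, and the Ore conditions on $P$, do their work: the right Ore condition lets one always realise an Ore equivalence between pairs as a single idempotent restriction $e = (1,W)$ defined at $x$, and conversely germ equivalence forces both equality of $\gamma$-images in $G$ and, via the associativity relations $\alpha_{pq} = \alpha_q \circ \alpha_p$, that the pairs are related by an Ore move. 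Granted this matching, $\Phi$ becomes a bijection: surjectivity follows from the Ore condition (any $g \in G$ has the form $p_1^{-1}p_2$) together with local injectivity of $\alpha_{p_i}$ on small neighbourhoods; compatibility with composition is a direct rewrite using the Ore conditions, as in the proof of Proposition~\ref{pro:H_etale_locally_compact}.

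For the topology, the open cover $H^1 = \bigcup_{(p_1,p_2)} H^1_{p_1,p_2}$ from Lemma~\ref{lem:H_union} corresponds under $\Phi$ to the standard open bisections $\{[(p_1,V_1)(p_2,V_2)^{-1},x] : x \in V_1 \cap \alpha_{p_1}^{-1}(\alpha_{p_2}(V_2))\}$ that generate the topology on the groupoid of germs; checking that $\Phi$ is a homeomorphism on each piece is then a direct unwinding of the fibre-product topology on $H^1_{p_1,p_2}$. The main obstacle throughout is the germ-versus-Ore matching described in the third paragraph: without the Ore conditions one could easily have $s, t \in S$ agreeing locally and in $G$, yet no idempotent $e$ realising this equality through $se = te$, which would break the bijection.
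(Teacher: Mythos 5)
There is a genuine gap, and it sits exactly where the paper is most careful. Your map \(\Phi\) is only defined on elements of the two--letter ``normal form'' \((p_1,V_1)(p_2,V_2)^{-1}\), and you justify that every element of \(S\) has such a form by appeal to ``standard structure theory of inverse semigroups''. No such theorem applies here, and the claim is false in general: equality in \(S\) means equality of \emph{both} the \(\gamma\)-image in \(G\) \emph{and} the partial homeomorphism on all of \(X\), and a longer word generally cannot be rewritten globally in two letters. Concretely, for the orientation you wrote, the degree of \((p_1,V_1)(p_2,V_2)^{-1}\) lies in \(P^{-1}P\), while \(S\) contains elements of every degree in \(G=PP^{-1}\) (for instance \((p,V)^{-1}(q,W)\) has degree \(pq^{-1}\)); if \(P\) is right Ore but not left Ore these sets differ, so no normal form of that shape exists. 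For the other orientation (the one that actually matches the arrows of \(H\), which relate two histories with a common \emph{descendant} \(\tilde{\s}_{p_1}(x)=\tilde{\s}_{p_2}(y)\)), a product such as \(\alpha_p|_V\circ(\alpha_q|_W)^{-1}\) agrees with a map of the form \((\alpha_{p'}|_{V'})^{-1}\circ\alpha_{q'}|_{W'}\) only \emph{germwise}, not globally; for \(P=\N^2\) this is precisely the failure of star-commutation discussed in Section~\ref{sec:partial_local_homeo}. The paper flags this explicitly (``It is not clear whether these bisections already form an inverse semigroup'') and its proof is built to avoid any normal form: it maps \(S_0\) into the inverse semigroup of bisections of \(H\), identifies \(S\) with the image (using that \(H_1\) has no isotropy, so a bisection in \(H_1\) is determined by its action, while the grading recovers \(\gamma\)), observes that the special bisections merely \emph{cover} \(H^1\), and concludes that \(S\) and the full inverse semigroup of bisections have the same groupoid of germs, namely \(H\). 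What survives of your plan is only the germ-local version of the normal form, and then the well-definedness of \(\Phi\) across different local representatives is exactly the ``germ-versus-Ore matching'' that you assert in one sentence rather than prove; once you do prove it, you are essentially reproducing the paper's covering argument.

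Two further points would need repair even granting a normal form. First, the bookkeeping is inconsistent for noncommutative \(P\): with \(\gamma((p,V))=p^{-1}\), \((p,V)\) acting as \(\alpha_p|_V\) and \(\alpha_{pq}=\alpha_q\circ\alpha_p\), the element whose action sends \(x\) to the unique \(y\in V_2\) with \(\alpha_{p_2}(y)=\alpha_{p_1}(x)\) is \((p_2,V_2)^{-1}(p_1,V_1)\), of degree \(p_2p_1^{-1}\), and the corresponding arrow with range \(y\) and source \(x\) is \((y,p_2p_1^{-1},x)\); your formula assigns degree \(p_1^{-1}p_2\), and the triple \((y,p_1^{-1}p_2,x)\) need not lie in \(H^1\) at all. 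Second, your identification \(H^0\cong\bigcap_{p\in P}U_p\) is correct, but ``restricting to the essential orbits of \(S\)'' is not a defined operation and does not show that the unit space of \(X\rtimes S\) (which contains germs of the idempotents \((1,V)\) at every point of \(X\)) coincides with \(H^0\); this mismatch has to be addressed honestly, either by restricting to the possible situations or by arguing, as the paper does, directly with bisections acting on \(H^0\).
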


\begin{proof}
  We map the free inverse semigroup~\(S_0\)
  above to the inverse semigroup of bisections of the groupoid~\(H\)
  in Definition~\ref{def:covering_semidirect} by mapping~\((p,V)\)
  to \(H^1_{1,p}\cap \s^{-1}(V)\);
  this is easily seen to be a bisection of~\(H\)
  that acts on \(X=H^0\)
  by the partial homeomorphism~\(\alpha_p|_V\)
  and has degree~\(p^{-1}\).
  Thus the action of~\(S_0\)
  on~\(X\)
  and the homomorphism~\(\gamma\)
  both factor through the inverse semigroup of bisections of~\(H\),
  where~\(\gamma\)
  maps bisections contained in~\(H^1_g\)
  to~\(g\)
  and where the action of~\(H\)
  on~\(X\) is used to let bisections act on~\(X\).

  By construction, \(s\in S_0\)
  is annihilated by~\(\gamma\)
  if and only if the corresponding bisection in~\(H\)
  is contained in~\(H_1\).
  This groupoid comes from an equivalence relation, so a bisection is
  trivial if and only if it acts trivially on~\(X\).
  Hence \(s\in S_0\)
  becomes idempotent in~\(S\)
  if and only if it is mapped to an idempotent bisection of~\(H\).
  This shows that the quotient~\(S\)
  of~\(S_0\)
  is exactly the image of~\(S_0\)
  in the inverse semigroup of bisections of~\(H\).

  The groupoid~\(H\)
  is covered by bisections that belong to~\(S\)
  and are of the form
  \[
  (H^1_{1,p}\cap \s^{-1}(V))\circ (H^1_{1,q}\cap \s^{-1}(W))^{-1}
  \]
  for \(p,q\in P\)
  and \(V\subseteq U_p\),
  \(W\subseteq U_q\)
  such that \(\alpha_p|_V\)
  and~\(\alpha_q|_W\)
  are injective.  It is not clear whether these bisections already
  form an inverse semigroup; but at least, since they cover the arrow
  space of~\(H\),
  any product of such bisections is again covered by bisections
  of~\(H\)
  of this special form.  Therefore, the inverse semigroup~\(S\)
  and the inverse semigroup of bisections of~\(H\)
  have the same germ groupoids attached to their actions on~\(X\),
  that is, \(H\cong X\rtimes S\).
\end{proof}

The examples considered by Exel and Renault
in~\cite{Exel-Renault:Semigroups_interaction} are actions
of~\((\N^k,+)\) by (globally defined) local homeomorphisms, so they
certainly fit into our framework.  This is remarkable
because~\cite{Exel-Renault:Semigroups_interaction} also contains
counterexamples where Exel's interaction group approach to
non-invertible dynamical systems does not apply.  This leads Exel
and Renault to speculate that something should go wrong in these
counterexamples.

Exel defines \emph{interactions} in~\cite{Exel:Interactions} as a
way to describe dynamical systems that are non-deterministic in both
past and future time directions.  A local homeomorphism with a
transfer operator is a particular example of an interaction, and
the dynamics generated by a single local homeomorphism may be
studied quite well using interactions.  Exel proposed the concept of
an \emph{interaction group} in~\cite{Exel:New_look} in order to
extend this to more general dynamical systems.
In~\cite{Exel-Renault:Semigroups_interaction}, Exel and Renault give
rather simple examples of commuting local homeomorphisms \(S,T\colon
X\to X\) that cannot be embedded in an interaction group
over~\(\Z^2\).  We are going to discuss this, assuming both \(S,T\) to
be surjective because this happens in the counterexamples
in~\cite{Exel-Renault:Semigroups_interaction}.

The problem is the following.  The local homeomorphism~\(S\)
generates an equivalence relation on~\(X\) by \(x\sim_S y\) if
\(S(x)=S(y)\), and similarly for~\(T\).  If there is an interaction
group, then these relations \(\sim_S\) and~\(\sim_T\) must commute,
that is, there is \(z\in X\) with \(x\sim_S z\sim_T y\) if and only
if there is \(w\in X\) with \(x\sim_T w\sim_S y\) (see
\cite{Exel-Renault:Semigroups_interaction}*{Proposition 14.1}).
There are, however, commuting endomorphisms \(S,T\) for which the
relations \(\sim_S\) and~\(\sim_T\) do not commute.  So such \(S,T\)
cannot be part of an interaction group.  Why is this no problem for
our groupoid model?

Since our topological correspondences are already local
homeomorphisms, our groupoid~\(H\) has object space \(H^0=X\).  The
group completion of~\((\N^2,+)\) is~\((\Z^2,+)\), so the
groupoid~\(H\) is \(\Z^2\)-graded, \(H=\bigsqcup_{g\in\Z^2} H_g\).
A point in~\(H_g\) is given by \((x,y)\in X\) and
\(n_1,n_2,m_1,m_2\in\N\) with \(S^{n_1}T^{n_2}(x) = S^{m_1}
T^{m_2}(y)\) and \((m_1,m_2) - (n_1,n_2) = g\), and this is an arrow
\(x\leftarrow y\).  Thus the range and source maps identify~\(H_g\)
with the union of the subsets
\[
H_{n_1,n_2,m_1,m_2} = \{(x,y)\in X\times X \mid
S^{n_1}T^{n_2}(x) = S^{m_1} T^{m_2}(y)\}.
\]
We treat these as relations on~\(X\).  The
subsets~\(H_{n_1,n_2,m_1,m_2}\) are closed in~\(X\times X\).  If
\(k_1,k_2\in\N\) then~\(H_{n_1,n_2,m_1,m_2}\) is both open and
closed in \(H_{n_1+k_1,n_2+k_2,m_1+k_1,m_2+k_2}\) because the
map~\(S^{k_1}T^{k_2}\) is a local homeomorphism, hence locally
injective.  The relation~\(H_{n_1,n_2,m_1,m_2}\) may also be
interpreted as the graph of the multi-valued map \(S^{-n_1}T^{-n_2}
S^{m_1} T^{m_2}\) on~\(X\).

What happens when we compose our relations?  We have \((x,y)\in
H_{k_1,k_2,l_1,l_2} \circ H_{l_1,l_2,m_1,m_2}\) if and only if there
is \(z\in X\) with \(S^{k_1}T^{k_2}(x) = S^{l_1} T^{l_2}(z) =
S^{m_1} T^{m_2}(y)\).  Since \(S^{l_1} T^{l_2}\) is surjective by
assumption, the point~\(z\) can always be found if
\(S^{k_1}T^{k_2}(x) = S^{m_1} T^{m_2}(y)\), so
\begin{equation}
  \label{eq:multiply_in_H_over_N2}
  H_{k_1,k_2,l_1,l_2} \circ H_{l_1,l_2,m_1,m_2} = H_{k_1,k_2,m_1,m_2}.
\end{equation}
Exel and Renault say that \(S\) and~\(T\) \emph{star-commute} if for
all \(x,y\in X\) with \(T(x)=S(y)\) there is a unique \(z\in X\)
with \(S(z)=x\) and \(T(z)=y\).  Under this assumption, they
construct an interaction group containing \(S\) and~\(T\).  In our
notation, \(S\) and~\(T\) star-commute if and only if \(H_{0,1,1,0}
= H_{0,0,1,0} \circ H_{0,1,0,0}\); the inclusion \(H_{0,1,1,0}
\supseteq H_{0,0,1,0} \circ H_{0,1,0,0}\) is trivial.  The
relation~\(H_{0,1,1,0}\) describes the multi-valued map
\(T^{-1}\circ S\), whereas the relation \(H_{0,0,1,0} \circ
H_{0,1,0,0}\) describes the multi-valued map \(S\circ T^{-1}\).  So
\(S\) and~\(T\) \emph{star-commute} if and only if \(T^{-1}\circ S =
S\circ T^{-1}\).

If this fails, there is no good way to define a topological
correspondence or an interaction for the element \((1,-1)\in \Z^2\).
The difference between these two relations is, however, always small
in the sense that
\[
T\circ (S\circ T^{-1})
= S\circ (T\circ T^{-1})
= S
= T\circ (T^{-1}\circ S).
\]
Since \(x,y\in X\) with \(T(x)=T(y)\)
are equivalent in our groupoid~\(H\),
the difference between the relations \(S\circ T^{-1}\)
and \(T^{-1}\circ S\)
does not matter once we take the whole groupoid into account.

Let us also examine this issue from the point of view of the
Cuntz--Pimsner algebra of the resulting product
system~\((\Hilm_p)_{p\in\N^2}\).  The
subspace~\(H_{n_1,n_2,m_1,m_2}\) of the groupoid corresponds to the
subspace \(\Comp(\Hilm_{(m_1,m_2)},\Hilm_{(n_1,n_2)})\) of the
Cuntz--Pimsner algebra, compare the proof of
Theorem~\ref{the:groupoid_model}.  Since we assume the maps \(S,T\)
to be surjective, the correspondences~\(\Hilm_p\) are full for all
\(p\in\N^2\).  Hence
\[
\Comp(\Hilm_{(m_1,m_2)},\Hilm_{(n_1,n_2)}) \cdot
\Comp(\Hilm_{(k_1,k_2)},\Hilm_{(m_1,m_2)})
= \Comp(\Hilm_{(k_1,k_2)},\Hilm_{(n_1,n_2)})
\]
for all \(k_1,k_2,m_1,m_2,n_1,n_2\in \N\).  This corresponds
to~\eqref{eq:multiply_in_H_over_N2}.

The zero fibre of the Cuntz--Pimsner algebra is the inductive limit
of the \(\Cst\)\nb-subalgebras \(\Comp(\Hilm_{n_1,n_2})\) for
\(n_1,n_2\to\infty\).  In particular, \(\Comp(\Hilm_{0,1})\) and
\(\Comp(\Hilm_{1,0})\) are contained in \(\Comp(\Hilm_{1,1})\).
Although \(\Hilm_{1,1} = \Hilm_{1,0}\otimes_{\Cont_0(X)} \Hilm_{0,1}
\cong \Hilm_{0,1}\otimes_{\Cont_0(X)} \Hilm_{1,0}\), we cannot
expect in general that \(\Comp(\Hilm_{0,1}) \cdot
\Comp(\Hilm_{1,0})\) is equal to \(\Comp(\Hilm_{1,1})\): this goes
wrong if \(S\) and~\(T\) do not star-commute.  It may also happen
that \(\Comp(\Hilm_{0,1}) \cdot \Comp(\Hilm_{1,0}) \neq
\Comp(\Hilm_{0,1}) \cdot \Comp(\Hilm_{1,0})\) or, equivalently, that
\(\Comp(\Hilm_{0,1}) \cdot \Comp(\Hilm_{1,0})\) is not a
\(\Cst\)\nb-algebra.

Summing up, we have seen that the commutative semigroup~\(\N^2\) may
well generate some noncommutative phenomena both on the groupoid and
\(\Cst\)\nb-algebra level.  \emph{So the reason why
Cuntz--Pimsner algebras for proper product systems over~\(\N^2\) are
tractable is not that~\(\N^2\) is commutative---it is that~\(\N^2\)
satisfies Ore conditions.}

To make this clearer, consider now an arbitrary semigroup~\(P\).
The Cuntz--Pimsner algebra of a proper product
system~\((\Hilm_p)_{p\in P}\) over~\(P\) must contain
\(\Comp(\Hilm_p)\) for all \(p\in P\).  Given \(p,q\in P\), we
therefore need a \(\Cst\)\nb-algebra containing both
\(\Comp(\Hilm_p)\) and \(\Comp(\Hilm_q)\).  If~\(P\) is an Ore
monoid, then there is \(t\in P\) with \(t\ge p,q\), and then
\(\Comp(\Hilm_p)\) and \(\Comp(\Hilm_q)\) are both contained in
\(\Comp(\Hilm_t)\).  It is irrelevant for the construction how much
of \(\Comp(\Hilm_t)\) is generated by
\(\Comp(\Hilm_p)\) and \(\Comp(\Hilm_q)\) or whether
\(\Comp(\Hilm_p)\cdot\Comp(\Hilm_q)\) is a \(\Cst\)\nb-algebra.
Indeed, we would not even ask for any relationship unless~\(t\) were
chosen minimal, \(t=p\vee q\), which only exists in lattice-ordered
semigroups.  The examples
in~\cite{Exel-Renault:Semigroups_interaction} show that for
commutative~\(P\) the interaction group approach of Exel is trying
implicitly to combine \(\Comp(\Hilm_p)\) and \(\Comp(\Hilm_q)\) in
such a way that \(\Comp(\Hilm_p)\cdot\Comp(\Hilm_q) =
\Comp(\Hilm_q)\cdot\Comp(\Hilm_p)\) is again a \(\Cst\)\nb-algebra.
This led Exel in~\cite{Exel:Blend_Alloys} to study when \(A\cdot B =
B\cdot A\) for two \(\Cst\)\nb-subalgebras \(A\) and~\(B\) of
another \(\Cst\)\nb-algebra.

Finally, there is one thing where an interaction group helps.  If we
only have an action of~\(\N^2\),
then we can only restrict it to submonoids of~\(\N^2\).
An interaction group on~\(\Z^2\)
may also be restricted to subgroups of~\(\Z^2\);
in the above notation, an interaction group gives well-defined
topological correspondences \(S^{n_1} T^{n_2}\)
for all \(n_1,n_2\in\Z\).
Without it, we only have this if \(n_1\)
and~\(n_2\)
have the same sign.  Examples of such restrictions are the
polymorphisms in~\cite{Cuntz-Vershik:Endomorphisms}.  These may,
however, be written directly as topological graphs.

\subsection{Semigroup C*-algebras}
\label{sec:semigroup_Cstar}

How to define the \(\Cst\)\nb-algebra of a monoid~\(P\)?  A
satisfactory answer is given by Xin
Li~\cite{Li:Semigroup_amenability}, assuming~\(P\) to be left
cancellative.  If~\(P\) is Ore, we shall describe Xin Li's
\(\Cst\)\nb-algebra as the Cuntz--Pimsner algebra of a product
system.  More precisely, we change the order of multiplication
in~\(P\) so as to get product systems over~\(P\) instead of
over~\(P^\op\), compare Remark~\ref{rem:left_right}.  Thus for
us~\(P\) is a right cancellative, right Ore monoid, and we describe
the semigroup \(\Cst\)\nb-algebra of~\(P^\op\) in the notation
of~\cite{Li:Semigroup_amenability}.  The discussion below is closely
related to the description of semigroup \(\Cst\)\nb-algebras
in~\cite{Cuntz-Echterhoff-Li:K-theory}.

Why is it non-trivial to construct semigroup \(\Cst\)\nb-algebras?
There is an obvious product system over any monoid: just take the
complex numbers everywhere, with the obvious multiplication maps.  A
nondegenerate representation of this product system, however, is a
representation of~\(P\) by \emph{unitaries}, not isometries.  Hence the
Cuntz--Pimsner algebra of this product system is the group
\(\Cst\)\nb-algebra of the group completion of~\(P\).  To get an
interesting Cuntz--Pimsner algebra, we need a non-trivial product
system.

So why not take the universal \(\Cst\)\nb-algebra for
representations of~\(P\) by isometries?  This is generated by one
isometry~\(s_p\) for each \(p\in P\), with the relations \(s_p^*
s_p=1\) for all \(p\in P\) and \(s_p s_q = s_{q p}\) for all
\(p,q\in P\).  This universal semigroup \(\Cst\)\nb-algebra
of~\(P^\op\) is introduced by
Murphy~\cite{Murphy:Crossed_semigroups}.  It is, however, usually
too ``wild'' to say much about it.  It is rarely simple or exact.

The right way to ``tame'' Murphy's universal \(\Cst\)\nb-algebra of
a semigroup is to impose relations on the range projections of the
isometries~\(s_p\).  Xin Li~\cite{Li:Semigroup_amenability} proposes
a set of such relations modelled on properties of the regular
representation of the semigroup.  We are about to construct a
product system~\((\Hilm_p)_{p\in P}\) that gives Xin Li's
\(\Cst\)\nb-algebra, and such that \(s_p^*\in \Hilm_p\).  Thus \(s_p
s_p^* \in \Hilm_1\).  In our approach, the desired relations among
the range projections~\(s_p s_p^*\) are encoded in the
\(\Cst\)\nb-algebra \(D\defeq \Hilm_1\).

Let~\(D\) be the \(\Cst\)\nb-subalgebra of~\(\ell^\infty(P)\)
generated by the characteristic functions of left ideals of the form
\(P p q^{-1}\cap P\) for \(p,q\in P\), where
\[
P p q^{-1}\cap P = \{x\in P \mid \exists y\in P\colon x q = y p\}.
\]
In the following, we will use the group completion~\(G\) of~\(P\)
and write~\(P g\) for \(g\in G\).  Since \(P\subseteq G\) is right
cancellative, we have \(x q \in P p\) if and only if \(x q t \in P p
t\) for some \(t\in P\); thus \(P g\) for \(g\in G\) is
well-defined, that is, does not depend on how we write \(g = p
q^{-1}\) for \(p,q\in P\).

Since~\(D\) is commutative and unital, it is of the form
\(\Cont(X)\) for a compact space~\(X\).  Right translation by \(p\in
P\) maps \(P g\cap P\) to \(P gp \cap P p\).  The characteristic
function of this intersection is the product of the characteristic
functions of \(P g p\cap P\) and \(P p\cap P\) and hence belongs
to~\(D\).  Thus right translation by~\(p\) gives an endomorphism
\(\varphi_p\colon D\to D\).  These maps form an action of~\(P^\op\)
on~\(D\) by endomorphisms, that is, \(\varphi_p\circ \varphi_q =
\varphi_{q p}\) for all \(p,q\in P\).

The endomorphism~\(\varphi_p\) for \(p\in P\) maps the constant
function~\(1\) to the characteristic function~\(e_p\) of the subset
\(P\cdot p\subseteq P\).  Actually, \(\varphi_p\colon D\to e_p D\)
is an isomorphism because the map
\[
P \to P\cdot p,\qquad
x\mapsto x\cdot p,
\]
is bijective and this map and its inverse map the generators
of~\(D\) to the characteristic functions of subsets of the form \(P
g\cap P p\), which are exactly all the generators of~\(e_p D\).

We may turn the action of~\(P^\op\) by the
endomorphisms~\((\varphi_p)_{p\in P}\) into a product
system~\((\Hilm_p)_{p\in P}\) over~\(P\) as in
Remark~\ref{rem:left_right}, with reversed order of multiplication
in~\(P\).  Explicitly, \(\Hilm_p = \varphi_p(D)\cdot D =
\varphi_p(1_D)\cdot D = e_p D\) as a Hilbert \(D\)\nb-module, with
left action of~\(D\) through~\(\varphi_p\), and the multiplication
maps are \(\Hilm_p\otimes_D \Hilm_q \congto \Hilm_{pq}\), \(x\otimes
y\mapsto \varphi_q(x)\cdot y\).  The Cuntz--Pimsner
\(\Cst\)\nb-algebra of this product system is canonically isomorphic
to the semigroup crossed product for the action~\((\varphi_p)_{p\in
  P}\) of~\(P^\op\) on~\(D\) (see
\cite{Fowler:Product_systems}*{Section~3}), and this is isomorphic
to Xin Li's semigroup \(\Cst\)\nb-algebra of~\(P^\op\) by
\cite{Li:Semigroup_amenability}*{Lemma 2.14}.

Explicitly, the isomorphism looks as follows.  The
Cuntz--Pimsner algebra~\(\CP\) of~\((\Hilm_p)_{p\in P}\) is
generated by~\(D\) and copies \(S_p(\Hilm_p)\) of~\(\Hilm_p\) for
each \(p\in P\).  Since \(\Hilm_p = e_p\cdot D\) and \(S_p(e_p d)=
S_p(e_p) d\) for all \(d\in D\), \(\CP\) is already generated by
\(D\) and the elements \(s_p = S_p(e_p)^*\).  Since
\(\braket{e_p}{e_p}_{\Hilm_p} = e_p\) and \(\ket{e_p}\bra{e_p} =
\Id_{\Hilm_p}\), the element~\(s_p\) is an isometry with range
projection~\(e_p\).  Furthermore, \(s_p^* \cdot s_q^* = s_{p q}^*\)
or, equivalently, \(s_p \cdot s_q = s_{q p}\) for all \(p,q\in P\).
As it turns out, the relations of~\(D\) and the relations \(s_p
s_p^* = e_p\), \(s_p^* s_p = 1\), \(s_p \cdot s_q = s_{q p}\) for
\(p,q\in P\) imply all relations for the Cuntz--Pimsner algebra
of~\((\Hilm_p)_{p\in P}\).  Thus the Cuntz--Pimsner
algebra agrees with Xin Li's semigroup \(\Cst\)\nb-algebra
of~\(P^\op\).

Now we describe a groupoid model for our Cuntz--Pimsner algebra.
Such groupoid models are already constructed
in~\cite{Li:Nuclearity_semigroup}, even under weaker assumptions on
the semigroup~\(P\).

The projection~\(e_p\) in~\(\Cont(X)\) corresponds to a clopen
subset \(V_p\subseteq X\).  The isomorphism \(\varphi_p\colon D\to
e_p D\) corresponds to a homeomorphism \(\rg_p\colon V_p\to X\).
More precisely, \(\varphi_p(x)|_{V_p} = x\circ \rg_p\) and
\(\varphi_p(x)|_{X\setminus V_p}=0\) for all \(x\in D\).  Let
\(\s_p\colon V_p \to X\) be the inclusion map; this is a
homeomorphism onto a clopen subset and hence a local homeomorphism.
Thus \((V_p,\rg_p,\s_p)\) is a proper topological correspondence
on~\(X\).  The resulting correspondence on \(D=\Cont(X)\) is
\(\Cont(V_p) = e_p D\) with the obvious right Hilbert \(D\)-module
structure and the left \(D\)\nb-action \(\varphi_p =
\rg_p^*\).  This is equal to the
\(\Cst\)\nb-correspondence~\(\Hilm_p\) associated to the
endomorphism~\(\varphi_p\).

Now identify \(V_p\cong X\) through~\(\rg_p\) and rewrite our
topological correspondence in the form \((X,\Id_X,\theta_p)\), where
\(\theta_p\colon X\to U_p \subseteq X\) applies the
inverse~\(\rg_p^{-1}\); these topological correspondences are as
in~\eqref{eq:partial_local_homeo}, where \(U_p=X\) and~\(\theta_p\)
is a homeomorphism onto a clopen subset of~\(X\).  We have
\(\theta_p\circ\theta_q = \theta_{qp}\) for all \(p,q\in P\) because
\(\varphi_p\circ \varphi_q = \varphi_{q p}\).  Hence we get an
action of~\(P^\op\) on~\(X\) by partial homeomorphisms, which is an
action of the type considered in
Section~\ref{sec:partial_local_homeo}.  The resulting product system
over~\(P\) is canonically isomorphic to the product
system~\((\Hilm_p)_{p\in P}\) associated to the
action~\((\varphi_p)_{p\in P}\) of~\(P^\op\) on \(D=\Cont(X)\) by
\Star{}endomorphisms.

As in Section~\ref{sec:partial_local_homeo}, we get a groupoid model
for the Cuntz--Pimsner algebra of the product
system~\((\Hilm_p)_{p\in P}\), and this groupoid model is the
groupoid of germs for the pseudogroup of partial homeomorphisms
of~\(X\) generated by the partial homeomorphisms~\(\theta_p\) for
\(p\in P\).

Finally, we mention a quicker alternative definition of Xin Li's
semigroup \(\Cst\)\nb-algebras, see
also~\cite{Li:Nuclearity_semigroup} for an extension of this
approach to more general semigroups.  The main result
of~\cite{Laca:Endomorphisms_back} shows that any semigroup crossed
product is a full corner in the crossed product for a group action
on a larger \(\Cst\)\nb-algebra.  In our case, this larger
\(\Cst\)\nb-algebra is the \(\Cst\)\nb-subalgebra~\(A\)
of~\(\ell^\infty(G)\) generated by the characteristic functions of
subsets of the form \(P g\subseteq G\) for \(g\in G\).  The right
translation action of~\(G\) on~\(\ell^\infty(G)\) restricts to an
action of~\(G\) on~\(A\) by automorphisms.  This also induces an
action of~\(G\) by homeomorphisms on the spectrum~\(Y\) of the
\(\Cst\)\nb-algebra~\(A\).  The \(\Cst\)\nb-algebra~\(D\) is the
full corner in~\(A\) corresponding to the projection~\(1_P\), the
characteristic function of \(P\subseteq G\).  The
action~\((\varphi_p)\) on~\(D\) above is the compression of the
action of~\(G\) on~\(A\).  Hence the semigroup crossed product
discussed above is canonically isomorphic to the full corner \(1_P
(A\rtimes G) 1_P\) in the crossed product \(A\rtimes G\).
Similarly, the groupoid model for the action of~\(P\) on~\(X\) is
the restriction of the transformation groupoid \(Y\rtimes G\) to the
compact-open subset \(X\subseteq Y\).

\section{Properties of the groupoid model}
\label{sec:properties_groupoid_model}

Let an Ore monoid~\(P\) act on a locally compact space~\(X\) by
topological correspondences \((M_p,\sigma_{p,q})\).  The
Cuntz--Pimsner algebra of the resulting product system over~\(P\) is
identified with a groupoid \(\Cst\)\nb-algebra~\(\Cst(H)\) in
Theorem~\ref{the:groupoid_model}.  Many properties of~\(\Cst(H)\)
are equivalent or closely related to properties of the underlying
groupoid~\(H\).  We harvest some known results of this type
regarding nuclearity, simplicity or ideal structure, tracial and KMS
weights, and pure infiniteness.

One interesting aspect of our groupoid model is that it involves a
``precompiler'': given an action of an Ore monoid~\(P\) by
\emph{topological correspondences} on a space~\(X\), we first
construct an action of~\(P\) on another
space~\(H^0\) by \emph{local homeomorphisms}, and then we take the
Cuntz--Pimsner algebra of this new action.  Hence any
\(\Cst\)\nb-algebra that may be obtained as the Cuntz--Pimsner
algebra of some action of~\(P\) by topological correspondences may
also be obtained from an action of~\(P\) on another space by local
homeomorphisms.
Therefore, for some purposes we may assume without loss of generality
that we are dealing with an action of~\(P\) by local homeomorphism.
In this section, however, the main point is to rewrite properties of
the groupoid~\(H\) in terms of the original action on~\(X\).  For
actions of~\(P\) by local homeomorphisms, what we are going to do is
already well-known.  Our criteria simplify further if the
space~\(X\) is discrete; this happens, in particular, for higher-rank
graphs.

We begin with quick criteria for separability, unitality and
nuclearity.

\begin{remark}
  The groupoid \(\Cst\)\nb-algebra
  of an étale locally compact groupoid is separable if and only if the
  underlying groupoid is second countable.  This happens if and only
  if the closed subspace \(X'\subseteq X\)
  is second countable and the group~\(G\)
  is countable.  This follows if~\(X\)
  is second countable and~\(P\)
  is countable; in the latter case, we can see directly that the
  Cuntz--Pimsner algebra is separable.
\end{remark}

\begin{remark}
  The \(\Cst\)\nb-algebra~\(\Cst(H)\)
  is unital if and only if~\(H^0\)
  is compact.  This happens if and only if the closed subspace
  \(X'\subseteq X\)
  is compact because the projection map \(\pi_1\colon H^0\to X\)
  is a continuous, proper map with image~\(X'\)
  (see Lemma~\ref{lem:H_locally_compact} and the discussion after
  Definition~\ref{def:impossible_situation}).
\end{remark}

\begin{theorem}
  \label{the:amenable_groupoid_model}
  The full groupoid \(\Cst\)\nb-algebra~\(\Cst(H)\) is nuclear if
  and only if the groupoid~\(H\) is topologically amenable.  In that
  case, \(\Cst(H)\) belongs to the bootstrap class.  The groupoid
  \(H_1\subseteq H\) is always topologically amenable.  If~\(G\) is
  amenable, then the groupoid~\(H\) is also amenable, and
  \(\Cst(H_1)\) belongs to the bootstrap class.
\end{theorem}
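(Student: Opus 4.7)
The plan is to assemble several standard results on étale groupoid amenability and combine them with the structural description of $H$ developed in Section~\ref{sec:monoid_on_space}. The equivalence between nuclearity of the full $\Cst$-algebra of a second countable étale locally compact Hausdorff groupoid and topological amenability of the groupoid is a theorem of Anantharaman-Delaroche and Renault; I would invoke it directly. The statement that $\Cst(H)$ belongs to the bootstrap class whenever $H$ is amenable is a theorem of Tu. Granted these two inputs, the first two sentences of the theorem reduce to establishing amenability of $H_1$ always and of $H$ when $G$ is amenable.

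For the unconditional amenability of $H_1$, I would use the description in Proposition~\ref{pro:H_etale_locally_compact}: $H_1$ is an increasing union of the open subgroupoids $H^1_{p,p}$ indexed by the filtered category~$\Cat_P$, and each $H^1_{p,p}$ is the graph of the equivalence relation on $H^0$ induced by the local homeomorphism $\tilde{\s}_p$, hence is a proper étale equivalence relation. Proper étale groupoids are topologically amenable, and topological amenability passes through filtered unions of open subgroupoids (the approximating functions of mean one can be assembled from those on each piece). Hence $H_1$ is amenable, and Tu's theorem then yields that $\Cst(H_1)$ lies in the bootstrap class regardless of any assumption on~$G$.

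For the amenability of $H$ when $G$ is amenable, the cleanest route goes through the Fell bundle picture. Theorems~\ref{the:Ore_colimit_Fell} and~\ref{the:groupoid_model} together identify $\Cst(H)$ with the full section algebra of the Fell bundle $(\CP_g)_{g \in G}$ whose unit fibre is $\CP_1 \cong \Cst(H_1)$. The latter is nuclear because $H_1$ is topologically amenable. When $G$ is amenable, Abadie's theorem on nuclearity of full section algebras of Fell bundles with nuclear unit fibre over amenable groups (as already invoked in the proof of Theorem~\ref{the:CP_nuclear}) gives that $\Cst(H)$ is nuclear. Applying the Anantharaman-Delaroche--Renault equivalence in reverse then yields that $H$ is topologically amenable.

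The main obstacle I expect is the reverse direction: deducing amenability of the \emph{groupoid} $H$ from nuclearity of the \Cst-algebra requires the groupoid to be second countable and Hausdorff; both hold here by Proposition~\ref{pro:H_etale_locally_compact} in the countable case, and in the uncountable case one argues directly rather than via the equivalence. A more direct alternative, which avoids going through $\Cst$-algebras, is to exhibit $H$ as a groupoid extension of the amenable $H_1$ by the amenable group~$G$: when the source maps $\s_p$ are surjective, Proposition~\ref{pro:H_etale_locally_compact} shows the $G$-grading $H = \bigsqcup_{g \in G} H^1_g$ is saturated, giving a surjective groupoid homomorphism $H \to G$ with kernel $H_1$, and extensions of amenable groupoids by amenable groups are amenable. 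The non-surjective case is handled by reducing to the possible-situation subspace $X' \subseteq X$ as discussed after Definition~\ref{def:impossible_situation}, which does not change $H$.
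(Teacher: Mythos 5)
Your proposal is correct and follows essentially the same route as the paper: the Anantharaman-Delaroche--Renault equivalence between amenability and nuclearity (with the full/reduced bridging), Tu's theorems for the bootstrap class, the direct amenability of \(H_1\) from its structure as an increasing union of proper étale equivalence relations over the filtered category \(\Cat_P\), and, for amenable \(G\), nuclearity of \(\Cst(H)\) via the Fell bundle over \(G\) with nuclear unit fibre (Exel/Abadie, as in Theorem~\ref{the:CP_nuclear}) followed by the equivalence in reverse. Only your optional aside on the extension argument is shaky: restricting to the possible situations \(X'\) makes the range maps \(\rg_p\) surjective but not the source maps \(\s_p\), so it does not yield a saturated grading; this does not affect your main argument, which matches the paper's.
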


\begin{proof}
  An étale, Hausdorff, locally compact groupoid is (topologically)
  amenable if and only if its reduced \(\Cst\)\nb-algebra is nuclear
  by
  \cite{Renault_AnantharamanDelaroche:Amenable_groupoids}*{Corollary
    6.2.14}.  Furthermore, if~\(H\) is amenable, then its reduced
  and full \(\Cst\)\nb-algebras coincide, so the full one is also
  nuclear.  Conversely, if the full groupoid \(\Cst\)\nb-algebra is
  nuclear, then so is the reduced one because nuclearity is
  hereditary for quotients.  Hence nuclearity of the full groupoid
  \(\Cst\)\nb-algebra is also equivalent to amenability of the
  groupoid.

  Any amenable groupoid is ``a-T-menable'' by
  \cite{Tu:BC_moyennable}*{Lemma 3.5}; that is, it acts properly and
  isometrically on a continuous field of affine Euclidean spaces.  The
  proof of the Baum--Connes conjecture for a-T-menable groupoids also
  shows that their groupoid \(\Cst\)\nb-algebras belong to the
  bootstrap class, see \cite{Tu:BC_moyennable}*{Proposition 10.7}.

  Since~\(\Cont_0(X)\) is nuclear, Theorem~\ref{the:CP_nuclear} shows
  that the unit fibre~\(\CP_1\) in the associated Cuntz--Pimsner
  algebra is always nuclear; then~\(\CP\) itself is nuclear if~\(G\)
  is amenable.  Theorem~\ref{the:groupoid_model} and its proof
  identify \(\CP_1\) and~\(\CP\) with \(\Cst(H_1)\) and~\(\Cst(H)\),
  respectively.  So the statements about amenability of \(H_1\)
  and~\(H\) follow from the first sentence in the theorem.

  It is elementary to prove the topological amenability of~\(H_1\)
  directly.  The open subgroupoids~\(H^1_{p,p}\) for \(p\in P\) are
  proper equivalence relations.  So we may normalise the counting
  measure on the fibres of~\(H^1_{p,p}\) to give an invariant mean
  on~\(H^1_{p,p}\).  When we view these invariant means on~\(H^1_{p,p}\)
  as means on~\(H_1\) for~\(p\) in the filtered category~\(\Cat_P\),
  we get an approximately invariant mean on~\(H_1\).
\end{proof}

We have not yet tried to characterise amenability of~\(H\) in terms of
the original action by topological correspondences.

\subsection{Open invariant subsets and minimality}
\label{sec:open_invariant}

\begin{definition}
  \label{def:minimal}
  A topological groupoid~\(H\) is \emph{minimal} if~\(H^0\) has no
  open, invariant subsets besides \(\emptyset\) and~\(H^0\).
\end{definition}

Being minimal is a necessary condition for~\(\Cst(H)\)
to be simple because open invariant subsets of~\(H^0\)
generate ideals in~\(\Cst(H)\).
We are going to describe the open, invariant subsets of~\(H^0\)
in terms of the original data \((M_p,\sigma_{p,q})\).
The following lemma gives a base for the topology on~\(H^0\)
and will also be used for other purposes.

\begin{lemma}
  \label{lem:H_base}
  For \(p\in P\) and an open subset \(U\subseteq M_p\), let
  \[
  \pi_p^{-1}(U) \defeq \{(m_q)_{q\in P} \in H^0 \mid m_p\in U\} \subseteq H^0.
  \]
  The family~\(\mathcal{B}\) of subsets of this form is a base for the
  topology on~\(H^0\) and, for each \(x\in H^0\), the subsets
  \(\pi_p^{-1}(U)\) with \(x\in U\) form a neighbourhood base at~\(x\).
  This base for the topology is closed under finite unions, finite
  intersections, and under applying~\(\tilde{\s}_t^{-1}\) for all
  \(t\in P\); and
  \begin{equation}
    \label{eq:base_tilde_s}
    \tilde{\s}_t\bigl(\pi_t^{-1}(U)\bigr) = \pi_1^{-1}(\s_t(U))
    \qquad\text{for all }t\in P\text{ and }U\subseteq M_t\text{ open.}
  \end{equation}
\end{lemma}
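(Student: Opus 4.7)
The plan is to exploit the description $H^0 = \varprojlim_{\Cat_P}(M_p,\rg_{p,q})$ from Lemma~\ref{lem:H_locally_compact}: by the very definition of the projective-limit (equivalently, subspace) topology, the sets $\pi_p^{-1}(U)$ with $U\subseteq M_p$ open form a subbase.  To promote this subbase to a base that is simultaneously closed under finite unions, I would use the Ore condition~\ref{enum:Ore1}: given $\pi_{p_1}^{-1}(U_1)$ and $\pi_{p_2}^{-1}(U_2)$, pick $q_1,q_2\in P$ with $r\defeq p_1q_1=p_2q_2$.  Any consistent family $(m_q)\in H^0$ satisfies $m_{p_i}=\rg_{p_i,q_i}(m_r)$, so
\[
\pi_{p_i}^{-1}(U_i)=\pi_r^{-1}\bigl(\rg_{p_i,q_i}^{-1}(U_i)\bigr).
\]
Intersecting or unioning inside $M_r$ then writes both $\pi_{p_1}^{-1}(U_1)\cap \pi_{p_2}^{-1}(U_2)$ and $\pi_{p_1}^{-1}(U_1)\cup \pi_{p_2}^{-1}(U_2)$ again in the form $\pi_r^{-1}(V)$ with $V\subseteq M_r$ open.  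Thus $\mathcal{B}$ is a base closed under finite intersections and unions; the neighbourhood-base statement at a point $x\in H^0$ is an immediate corollary.

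For closure under $\tilde{\s}_t^{-1}$, I would simply unfold the definition $\tilde{\s}_t((m_q))_p=\s_{t,p}(m_{tp})$, which immediately yields
\[
\tilde{\s}_t^{-1}\bigl(\pi_p^{-1}(U)\bigr)=\pi_{tp}^{-1}\bigl(\s_{t,p}^{-1}(U)\bigr),
\]
and this lies in $\mathcal{B}$ because $\s_{t,p}$ is continuous (in fact a local homeomorphism by Lemma~\ref{lem:proper_surjective_localhomeo_inherited}).

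The identity~\eqref{eq:base_tilde_s} splits into two directions.  The inclusion $\tilde{\s}_t(\pi_t^{-1}(U))\subseteq \pi_1^{-1}(\s_t(U))$ follows from the direct computation $\tilde{\s}_t(x)_1=\s_{t,1}(x_t)=\s_t(x_t)$.  The reverse inclusion is the only step with any real content: given $y\in\pi_1^{-1}(\s_t(U))$, pick $u\in U$ with $\s_t(u)=y_1=\pi_1(y)$; then $(u,y)$ lies in the fibre product $M_t\times_{\s_t,X,\pi_1}H^0$, and the homeomorphism $(\pi_t,\tilde{\s}_t)\colon H^0\to M_t\times_{\s_t,X,\pi_1}H^0$ from Lemma~\ref{lem:tilde_s_local_homeo} produces a unique $x\in H^0$ with $\pi_t(x)=u$ and $\tilde{\s}_t(x)=y$, so $x\in\pi_t^{-1}(U)$ is the required preimage.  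The whole lemma is essentially bookkeeping around the projective-limit description of $H^0$ and the lifting property of Lemma~\ref{lem:tilde_s_local_homeo}, and I expect no genuine obstacle beyond this single appeal to the lifting property.
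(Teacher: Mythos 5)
Your proposal is correct and follows essentially the same route as the paper: rewriting $\pi_{p_i}^{-1}(U_i)$ as $\pi_r^{-1}(\rg_{p_i,q_i}^{-1}(U_i))$ over a common Ore multiple to get closure under finite intersections and unions, unfolding $\tilde{\s}_t$ to get $\tilde{\s}_t^{-1}(\pi_p^{-1}(U))=\pi_{tp}^{-1}(\s_{t,p}^{-1}(U))$, and deducing~\eqref{eq:base_tilde_s} from the homeomorphism $(\pi_t,\tilde{\s}_t)\colon H^0\to M_t\times_{\s_t,X,\pi_1}H^0$ of Lemma~\ref{lem:tilde_s_local_homeo}. Your explicit two-inclusion argument for~\eqref{eq:base_tilde_s} is just a spelled-out version of the paper's one-line appeal to that homeomorphism, so there is no substantive difference.
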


\begin{proof}
  By definition of the product topology, \emph{intersections}
  \(\bigcap_{p\in F} \pi_p^{-1}(U_p)\) for finite subsets \(F\subseteq
  P\) and open subsets \(U_p\subseteq M_p\) for \(p\in F\) form a base
  of the topology on~\(H^0\), and such intersections with
  \(x\in \pi_p^{-1}(U_p)\) form a neighbourhood base for \(x\in
  H^0\).  If~\(\mathcal{B}\) is closed under finite intersections,
  then~\(\mathcal{B}\) itself is this canonical base of the topology,
  and similarly for neighbourhoods of~\(x\).

  Let \(p,q\in P\).  Then \(\rg_{p,q}(m_{p q})=m_p\) for all
  \((m_t)_{t\in P}\in H^0\).  Thus
  \begin{equation}
    \label{eq:base_H_relation}
    \pi_{p q}^{-1}(\rg_{p,q}^{-1}(U)) = \pi_p^{-1}(U)
  \end{equation}
  for each open subset \(U\subseteq M_p\).
  Since the maps~\(\rg_{p,q}\)
  are continuous, \(\rg_{p,q}^{-1}(U)\)
  is again open.  Now we consider a finite intersection
  \(\bigcap_{i=1}^n \pi_{p_i}^{-1}(U_i)\)
  for \(F=\{p_1,\dotsc,p_n\}\subseteq P\)
  and \(U_i=U_{p_i} \subseteq M_{p_i}\).
  Since~\(P\)
  is a right Ore monoid, there are \(p\in P\)
  and \(q_i\in P\)
  with \(p_iq_i=p\)
  for \(i=1,\dotsc,n\).
  Then \(\pi_{p_i}^{-1}(U_i) = \pi_p^{-1}(\rg_{p_i,q_i}^{-1}(U_i))\).
  Thus
  \begin{align*}
    \bigcap_{i=1}^n \pi_{p_i}^{-1}(U_i)
    &= \pi_p^{-1}\biggl( \bigcap_{i=1}^n \rg_{p_i,q_i}^{-1}(U_i)\biggr),\\
    \bigcup_{i=1}^n \pi_{p_i}^{-1}(U_i)
    &= \pi_p^{-1}\biggl( \bigcup_{i=1}^n \rg_{p_i,q_i}^{-1}(U_i)\biggr).
  \end{align*}
  Thus~\(\mathcal{B}\) is closed under finite intersections and
  finite unions.  We have
  \begin{equation}
    \label{eq:base_tilde_s_inverse}
    \tilde{\s}_t^{-1}(\pi_p^{-1}(U)) = \pi_{tp}^{-1}(\s_{t,p}^{-1}(U))
  \end{equation}
  because \(\tilde{\s}_t((m_p)) = (\s_{t,p}(m_{tp}))_{p\in P}\).
  Thus~\(\mathcal{B}\) is closed under~\(\tilde{\s}_t^{-1}\) for all
  \(t\in P\).

  Lemma~\ref{lem:tilde_s_local_homeo} shows that
  \((\pi_t,\tilde{\s}_t)\colon H^0\to M_t\times_{\s_t,X,\pi_1} H^0\)
  is a homeomorphism.  Given \(U\subseteq M_t\),
  consider the set of all \(\omega\in H^0\)
  for which there is \(m\in U\)
  with \((m,\omega)\in M_t\times_{\s_t,X,\pi_1} H^0\).
  By definition of the fibre product, this is \(\pi_1^{-1}(\s_t(U))\).
  The homeomorphism \((\pi_t,\tilde{\s}_t)\)
  shows, however, that it is also \(\tilde{\s}_t(\pi_t^{-1}(U))\).
  This gives~\eqref{eq:base_tilde_s}.
\end{proof}

\begin{definition}
  \label{def:indicator}
  The \emph{indicator} of an invariant subset \(A\subseteq H^0\) is
  the following subset of~\(X\):
  \[
  A^\# \defeq \{x\in X\mid \pi_1^{-1}(x) \subseteq A\}.
  \]
\end{definition}

Let \(X'\subseteq X\) be the subset of possible situations.  By
definition, any saturated subset contains \(X\setminus X'\).  So we
lose no information if we restrict attention to \(A^\#\cap X'\).

\begin{deflem}
  \label{def:invariant_subset}
  The indicator~\(B\) of an open invariant subset is open in~\(X\) and
  has the following two properties:
  \begin{description}
  \item[hereditary] if \(p\in P\), \(m\in M_p\) satisfy \(\rg_p(m)\in
    B\), then \(\s_p(m)\in B\);
  \item[saturated] if \(p\in P\), \(x\in X\) satisfy \(\s_p(m)\in B\)
    for all \(m\in\rg_p^{-1}(x)\), then \(x\in B\).
  \end{description}
  If \(S\subseteq P\) is a subset that generates~\(P\), then a subset
  \(B\subseteq X\) is hereditary and saturated if and only if it
  satisfies the above two conditions for all \(p\in S\).
\end{deflem}

\begin{proof}
  Let \(B=A^\#\).  First we show that~\(A^\#\) is open.  By
  definition, \(X\setminus A^\# = \pi_1(H^0\setminus A)\).  The map
  \(\pi_1\colon H^0\to X\) is proper by
  Lemma~\ref{lem:H_locally_compact}.  So it maps the closed subset
  \(H^0\setminus A\) to a closed subset of~\(X\).  Hence \(X\setminus
  A^\#\) is closed and~\(A^\#\) is open.

  Next we check that~\(A^\#\)
  is hereditary.  If \(\s_p(m)\notin A^\#\),
  then there is \(\eta\in H^0\setminus A\)
  with \(\pi_1(\eta)=\s_p(m)\).
  The concatenation \(m\cdot\eta\in H^0\)
  exists because \(\s_p(m)=\pi_1(\eta)\),
  and has \(\pi_1(m\cdot\eta) = \rg_p(m)\).
  Since~\(A\)
  is invariant, so is~\(H^0\setminus A\).
  Hence \(m\cdot\eta\notin A\) and \(\rg_p(m)\notin A^\#\).

  We check that~\(A^\#\)
  is saturated.  Assume \(\s_p(m)\in A^\#\)
  for all \(m\in\rg_p^{-1}(x)\),
  and let \(\eta\in H^0\)
  satisfy \(\pi_1(\eta)=x\).
  Decompose~\(\eta\)
  as \(\eta=m\cdot\eta'\)
  with \(\eta'\in H^0\)
  and \(m\defeq \pi_p(\eta)\)
  by Lemma~\ref{lem:tilde_s_local_homeo}.  Since
  \(\rg_p(m)=\pi_1(\eta) = x\),
  the assumption gives \(\pi_1(\eta') = \s_p(m)\in A^\#\), so
  \(\eta'\in A\).
  Since~\(A\)
  is invariant, we get \(\eta=m\cdot\eta'\in A\) and \(x\in A^\#\).

  If a subset \(B\subseteq X\) satisfies the conditions of being
  hereditary and saturated for given \(p,q\in P\), then it also
  satisfies them for \(p\cdot q\) because \(M_{pq} \cong
  M_p\times_{\s_p,X,\rg_q} M_q\).  Hence it suffices to verify these
  conditions for a set of generators for~\(P\).
\end{proof}

\begin{theorem}
  \label{the:invariant_open_in_H}
  The complete lattice of open \(H\)\nb-invariant subsets of~\(H^0\)
  is isomorphic to the complete lattice of open, hereditary, saturated
  subsets of~\(X\).  In one direction, the isomorphism maps an open
  \(H\)\nb-invariant subset \(A\subseteq H^0\) to its indicator; in
  the other direction, it maps an open, hereditary, saturated subset
  \(B\subseteq X\) to
  \[
  A\defeq \bigcup_{p\in P} (\s_p\circ\pi_p)^{-1}(B)
  = \{(m_p)_{p\in P}\in H^0\mid \exists p\colon \s_p(m_p)\in B\}.
  \]
\end{theorem}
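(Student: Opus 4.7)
The map $A\mapsto A^\#$ lands in the lattice of open, hereditary, saturated subsets by Definition/Lemma~\ref{def:invariant_subset}, so the first task is to check that the reverse map $B\mapsto A(B)$, where $A(B)\defeq\bigcup_{p\in P}(\s_p\circ\pi_p)^{-1}(B)$, lands in open $H$-invariant subsets. Openness is automatic since each $\s_p\circ\pi_p$ is continuous. For invariance, I would use the identity $\pi_1\circ\tilde\s_p = \s_p\circ\pi_p$ and verify: if $\s_{p_0}(\pi_{p_0}(\eta))\in B$ and $q\in P$, then $\s_{p_0q}(\pi_{p_0q}(\eta))\in B$. Via $\sigma_{p_0,q}$, $m_{p_0q}$ corresponds to a pair $(m_{p_0},n)\in M_{p_0}\times_{\s_{p_0},X,\rg_q} M_q$, so $\s_{p_0q}(m_{p_0q})=\s_q(n)$ with $\rg_q(n)=\s_{p_0}(m_{p_0})\in B$; the hereditary property yields $\s_q(n)\in B$. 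Combining with the right Ore condition~\ref{enum:Ore1} (to make $\tilde\s_{p_0q_1}(\eta)=\tilde\s_{p_2q_2}(\eta')$ whenever $\tilde\s_{p_1}(\eta)=\tilde\s_{p_2}(\eta')$), membership in $A(B)$ passes across every arrow of $H$.

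Next I would verify $A(A^\#)=A$. For $A(A^\#)\subseteq A$: if $\eta\in A(A^\#)$ and $p$ witnesses this, then $\pi_1(\tilde\s_p(\eta))=\s_p(\pi_p(\eta))\in A^\#$, hence $\tilde\s_p(\eta)\in\pi_1^{-1}(A^\#)\subseteq A$; invariance of $A$ (via the arrow $\eta\to\tilde\s_p(\eta)$ in $H^1_p$) forces $\eta\in A$. For $A\subseteq A(A^\#)$, I pick $\eta\in A$ and, using Lemma~\ref{lem:H_base}, fix $p\in P$ and an open $U\ni\pi_p(\eta)$ with $\pi_p^{-1}(U)\subseteq A$. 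I claim $p$ works: given any $\omega\in\pi_1^{-1}(\s_p(\pi_p(\eta)))$, the homeomorphism $(\pi_p,\tilde\s_p)\colon H^0\congto M_p\times_{\s_p,X,\pi_1}H^0$ of Lemma~\ref{lem:tilde_s_local_homeo} applied to the matching pair $(\pi_p(\eta),\omega)$ produces a unique $\eta''\in H^0$ with $\pi_p(\eta'')=\pi_p(\eta)$ and $\tilde\s_p(\eta'')=\omega$; then $\eta''\in\pi_p^{-1}(U)\subseteq A$, and the arrow $\eta''\to\omega$ in $H^1_p$ pushes $\omega$ into $A$ by invariance. Thus $\pi_1^{-1}(\s_p(\pi_p(\eta)))\subseteq A$, so $\s_p(\pi_p(\eta))\in A^\#$ and $\eta\in A(A^\#)$.

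The hardest step is showing $A(B)^\#\subseteq B$, and this is where I expect most of the work. Take $x\in A(B)^\#$. If $x\notin X'$ then $x\in B$ automatically, since any saturated subset contains $X\setminus X'$ (vacuously, from $\rg_p^{-1}(x)=\emptyset$ for some $p$). If $x\in X'$, then $\pi_1^{-1}(x)$ is compact by Lemma~\ref{lem:H_locally_compact}. Each $\eta\in\pi_1^{-1}(x)$ lies in $(\s_{p_\eta}\circ\pi_{p_\eta})^{-1}(B)=\pi_{p_\eta}^{-1}(\s_{p_\eta}^{-1}(B))$, which is an open neighbourhood in the base of Lemma~\ref{lem:H_base}. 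Extracting a finite subcover and using~\ref{enum:Ore1} to find a common $p$ with $p=p_iq_i$, equation~\eqref{eq:base_H_relation} rewrites each such neighbourhood as $\pi_p^{-1}(\rg_{p_i,q_i}^{-1}(\s_{p_i}^{-1}(B)))$. Since $\s_{p_iq_i}=\s_{q_i}\circ\s_{p_i,q_i}$ and $\s_{p_i}\circ\rg_{p_i,q_i}=\rg_{q_i}\circ\s_{p_i,q_i}$, the hereditary property of $B$ shows each such set sits inside $\s_p^{-1}(B)$. The cover gives $\pi_p(\pi_1^{-1}(x))\subseteq\s_p^{-1}(B)$; but $\pi_p(\pi_1^{-1}(x))=\rg_p^{-1}(x)\cap\s_p^{-1}(X')$, using Lemma~\ref{lem:tilde_s_local_homeo} to extend any $m\in\rg_p^{-1}(x)$ with $\s_p(m)\in X'$ to a complete history above $x$. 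Points $m\in\rg_p^{-1}(x)$ with $\s_p(m)\notin X'$ already satisfy $\s_p(m)\in B$ automatically, so $\s_p(\rg_p^{-1}(x))\subseteq B$. Saturation of $B$ then yields $x\in B$. The reverse inclusion $B\subseteq A(B)^\#$ is immediate: $x\in B$ and $\eta\in\pi_1^{-1}(x)$ give $\s_1(\pi_1(\eta))=x\in B$, so $\eta\in A(B)$.

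Finally, both maps $A\mapsto A^\#$ and $B\mapsto A(B)$ are manifestly order-preserving, so the mutually inverse pair is automatically an isomorphism of complete lattices. The main technical obstacle throughout is the interplay between the Ore condition (to align indices $p_i$ via a common multiple) and the homeomorphism $(\pi_p,\tilde\s_p)$ of Lemma~\ref{lem:tilde_s_local_homeo}; the properness of $\pi_1$ (hence compactness of $\pi_1^{-1}(x)$) is what makes the saturation step work, and the hereditary property is what makes passing to a common~$p$ harmless.
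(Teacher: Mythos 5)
Your proposal is correct, and its overall skeleton matches the paper's: the indicator map one way, $B\mapsto\bigcup_p(\s_p\circ\pi_p)^{-1}(B)$ the other, invariance via the hereditary property plus the Ore condition, and the composite $A(A^\#)=A$ via Lemma~\ref{lem:H_base}, Lemma~\ref{lem:tilde_s_local_homeo} and invariance along the arrows in $H^1_{p,1}$ (your pointwise version of this step is a harmless localisation of the paper's argument, which shows the stronger statement $\s_p(U)\subseteq A^\#$). Where you genuinely diverge is the hardest step, $A(B)^\#\subseteq B$. The paper proves the contrapositive: for $x\notin B$ it builds a complete history over~$x$ that never visits~$B$, as a point in the non-empty intersection of the directed family of compact sets $K_p=\{\eta\in\pi_1^{-1}(x)\mid \s_p\pi_p(\eta)\notin B\}$, whose non-emptiness uses saturation at every $p\in P$. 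You instead argue directly: compactness of $\pi_1^{-1}(x)$ gives a finite subcover by basic sets $\pi_{p_i}^{-1}(\s_{p_i}^{-1}(B))$, the Ore condition aligns these at a common time $p=p_iq_i$, heredity pushes them all into $\pi_p^{-1}(\s_p^{-1}(B))$, and then a single application of saturation at time~$p$ yields $x\in B$. Both routes rest on properness of~$\pi_1$, but yours trades the paper's directed-intersection construction for a finite-subcover argument and uses saturation only once; this is arguably more economical. One small point to make explicit: your identification $\pi_p(\pi_1^{-1}(x))=\rg_p^{-1}(x)\cap\s_p^{-1}(X')$ needs, besides Lemma~\ref{lem:tilde_s_local_homeo}, the fact that $\pi_1^{-1}(y)\neq\emptyset$ for every possible situation $y\in X'$ — the projective-limit-of-compacta fact stated in the paper after Definition~\ref{def:impossible_situation} — so your argument does not actually avoid that ingredient and you should cite it; likewise the indices in your invariance sentence ($\tilde\s_{p_0q_1}$ versus $\tilde\s_{p_1q_1}$) should be straightened out, though the intended cofinality argument is the paper's.
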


\begin{proof}
  Lemma~\ref{def:invariant_subset} shows that the indicator~\(A^\#\)
  of an open invariant subset \(A\subseteq H^0\)
  is open, hereditary and saturated.  Conversely, let \(B\subseteq X\)
  be open, hereditary and saturated, and define \(A\subseteq H^0\)
  as above.  This is clearly open.  We first check that~\(A\)
  is invariant; then we check that its indicator is~\(B\).

  Let \(\eta\in H^0\).  Then \(\s_p\circ\pi_p(\eta)\in X\) is the
  situation at time \(p\in P\) in the complete history~\(\eta\).
  Thus~\(A\) consists of all complete histories that, at some time,
  visit \(B\subseteq X\).  However, if \(\s_p\circ\pi_p(\eta)\in B\),
  then \(\s_{pq}\pi_{pq}(\eta)\in B\) for all \(q\in P\) because~\(B\)
  is hereditary; indeed, \(\pi_{pq}(\eta) \in M_{pq} \cong M_p
  \times_{\s_p,X,\rg_q} M_q\) corresponds to a pair
  \((\pi_p(\eta),m_q)\) with \(\rg_q(m_q) = \s_p\pi_p(\eta) \in B\),
  so \(\s_{pq}(\pi_{pq}(\eta)) = \s_q(m_q) \in B\).  The subset
  \(pP\subseteq P\) is cofinal.  Hence \(\eta\in A\) if and only if
  the set of \(q\in P\) with \(\s_q\circ\pi_q(\eta)\in B\) is cofinal
  in~\(P\).

  A subset~\(A\) of~\(H^0\) is \(H\)\nb-invariant if and only if
  \(\tilde{\s}_t^{-1}(A) = A\) for all \(t\in P\).  Let \(\eta\in
  H^0\) and write it as \(\eta = m\cdot \eta'\) for \(m\in M_t\),
  \(\eta'\in H^0\) with \(\s_t(m) = \rg_t(\eta')\).  Thus
  \(\tilde{\s}_t(\eta)=\eta'\).  Then \(\s_{tp} \pi_{tp}(\eta) =
  \s_p\pi_p(\eta')\) for all \(p\in P\).  So if there is \(p\in P\)
  with \(\s_p\pi_p(\eta')\in B\), then there is \(q\in P\) with
  \(\s_q\pi_q(\eta)\in B\), namely, \(q=tp\); conversely, if there is
  \(q\in P\) with \(\s_q\pi_q(\eta)\in B\), then the set of such
  \(q\in P\) is cofinal and hence contains some element of the
  form~\(tp\) with \(p\in P\) by~\ref{enum:Ore1}.  Then
  \(\s_p\pi_p(\eta')\in B\).  This shows that \(\eta\in A\) if and
  only if \(\eta'\in A\).  So~\(A\) is invariant as desired.

  Now we check that the indicator of~\(A\)
  is~\(B\).
  By construction, if \(x\in B\),
  then \(\pi_1^{-1}(x)\subseteq A\).
  Conversely, let \(x\in X\setminus B\).
  We must construct \(\eta \in H^0\setminus A\)
  with \(\pi_1(\eta)=x\).
  For each \(p\in P\),
  there is \(m_p\in M_p\)
  with \(\rg_p(m_p)=x\)
  and \(\s_p(m_p)\notin B\)
  because otherwise~\(B\)
  would not be saturated.  Since \(X\setminus X'\subseteq B\),
  and \(\s_p(m_p)\notin B\), there is \(\eta_p\in H^0\)
  with \(\pi_p(\eta_p)=m_p\)
  and hence \(\pi_1(\eta_p)=x\) and \(\s_p\circ \pi_p(\eta_p)\notin B\).

  Since~\(B\)
  is open and the map~\(\pi_p\)
  is proper by Lemma~\ref{lem:H_locally_compact}, the set~\(K_p\)
  of all such~\(\eta_p\)
  is a compact subset of~\(H^0\).
  We have seen above that \(\s_p \pi_p(\eta)\in B\)
  implies \(\s_{p q} \pi_{p q}(\eta)\in B\)
  for all \(p,q\in P\).
  Hence \(K_p\supseteq K_{p q}\)
  for all \(p,q\in P\).
  Since~\(P\)
  is Ore, this tells us that \(\{K_p\}_{p\in P}\)
  is a directed set of compact, non-empty subsets in~\(H^0\).
  The intersection of such a family of subsets is non-empty.  A
  point~\(\eta\)
  in the intersection satisfies \(\pi_1(\eta)=x\)
  and \(\s_p \pi_p(\eta)\notin B\)
  for all \(p\in P\), so that \(\eta\notin A\).  Thus \(A^\#=B\).

  We have constructed two maps \(A\mapsto B\) and \(B\mapsto A\) from
  open invariant subsets of~\(H^0\) to open, hereditary, saturated
  subsets of~\(X\) and back, and we have seen that the composite map
  \(B\mapsto A\mapsto B\) is the identity, that is, the indicator of
  the subset~\(A\) defined in the theorem is the given subset~\(B\).
  Conversely, let \(A'\subseteq H^0\) be an invariant open subset.
  Let~\(B\) be its indicator and define \(A\subseteq B\) as in the
  statement of the theorem.  We must show that \(A'=A\).  Since~\(B\)
  is the indicator of~\(A'\), we have \(\pi_1^{-1}(B)\subseteq A'\).
  Since~\(A'\) is invariant, this implies
  \((\s_p\pi_p)^{-1}(B)\subseteq A'\) for all \(p\in P\), that is,
  \(A\subseteq A'\).  It remains to prove \(A'\subseteq A\).  So we
  take \(\eta\in A'\).  Since~\(A'\) is open, Lemma~\ref{lem:H_base}
  gives \(p\in P\) and an open subset \(U\subseteq M_p\) such that
  \(\pi_p^{-1}(U)\subseteq A'\).  If \(\eta'\in H^0\) satisfies
  \(\pi_1(\eta')\in \s_p(U)\), then there is \(m\in U\subseteq M_p\)
  with \(\s_p(m)=\pi_1(\eta')\), so \(m\cdot \eta'\in H^0\) is
  well-defined; it belongs to \(\pi_p^{-1}(U)\subseteq A'\) by
  construction.  Since \(\tilde{\s}_p(m\cdot \eta') = \eta'\)
  and~\(A'\) is invariant, we get \(\eta'\in A'\) for all \(\eta'\in
  H^0\) with \(\pi_1(\eta') \in \s_p(U)\).  Thus \(\s_p(U)\) is
  contained in the indicator~\(B\) of~\(A'\).  Then
  \(\pi_p^{-1}(U)\subseteq A\), so \(\eta \in A\) as desired.
\end{proof}

\begin{corollary}
  \label{cor:H_minimal}
  The groupoid~\(H\) is minimal if and only if~\(X\) has no
  non-trivial open, hereditary, saturated subsets.\qed
\end{corollary}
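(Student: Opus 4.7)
The plan is to obtain Corollary~\ref{cor:H_minimal} as an immediate consequence of Theorem~\ref{the:invariant_open_in_H}. By Definition~\ref{def:minimal}, the groupoid~\(H\) is minimal precisely when the complete lattice of open \(H\)-invariant subsets of~\(H^0\) has only two elements: its bottom~\(\emptyset\) and its top~\(H^0\). Theorem~\ref{the:invariant_open_in_H} supplies a complete-lattice isomorphism between this lattice and the lattice of open, hereditary, saturated subsets of~\(X\); and every lattice isomorphism preserves top and bottom elements. So the former lattice is a two-element lattice if and only if the latter is.

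It remains only to identify the extremes on the \(X\) side, so as to pin down the meaning of ``non-trivial.''  The top~\(H^0\) has indicator~\(X\).  For the bottom, the indicator of~\(\emptyset\) is \(\{x\in X\mid \pi_1^{-1}(x)=\emptyset\}=X\setminus X'\), where \(X' = \bigcap_{p\in P}\rg_p(M_p)\) is the set of possible situations.  This subset \(X\setminus X'\) is indeed open, because each~\(\rg_p\) is proper and hence has closed image, so~\(X'\) is closed in~\(X\); moreover, every hereditary, saturated subset of~\(X\) contains \(X\setminus X'\), as remarked just after Definition~\ref{def:indicator}.  Hence ``non-trivial open, hereditary, saturated subset'' in the corollary is to be read as one different from both~\(X\) and \(X\setminus X'\) (and if \(X=X'\) this simply means different from~\(X\) and~\(\emptyset\)), and the equivalence is then immediate.

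The heavy lifting has already been carried out in Theorem~\ref{the:invariant_open_in_H}, so no real obstacle arises here; the only subtlety worth flagging is the correct identification of the bottom element of the lattice on the \(X\) side, which is \(X\setminus X'\) rather than~\(\emptyset\) whenever some situation fails to be historically possible.
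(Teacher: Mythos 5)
Your proof is correct and is exactly the argument the paper intends: the corollary carries an immediate \qed because it follows directly from the lattice isomorphism of Theorem~\ref{the:invariant_open_in_H}, which is precisely how you argue. Your extra remark that the bottom element on the \(X\)-side is \(X\setminus X'\) rather than \(\emptyset\) (so ``non-trivial'' must be read accordingly when not all situations are possible) is also accurate and consistent with how the paper later encodes minimality in condition~(3) of Theorem~\ref{the:Cst_H_simple}.
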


\begin{example}
  \label{exa:hereditary_saturated_top_graph}
  Let \(P=(\N,+)\)
  and let~\(\rg_1\)
  be surjective, so that we are dealing with a regular topological
  graph (see the beginning of
  Section~\ref{sec:groupoid_model_examples}).  Then our notion of a
  hereditary and saturated subset of the space of vertices~\(X\)
  is the same one used by Katsura~\cite{Katsura:class_III} to describe
  the gauge-invariant ideals of a topological graph
  \(\Cst\)\nb-algebra in the regular case.
\end{example}

\begin{example}
  Let \(P=(\N^k,+)\) and assume that \(X\) is discrete and
  \(\rg_p\colon M_p\to X\) is surjective for all \(p\in\N^k\).  Our
  data is equivalent to that of a row-finite \(k\)\nb-graph without
  sources, and our Cuntz--Pimsner algebra is the higher-rank graph
  \(\Cst\)\nb-algebra in this case.  Since the standard basis
  \(e_1,\dotsc,e_k\) generates~\(\N^k\), a subset is hereditary and
  saturated if and only if it satisfies the conditions in
  Definition~\ref{def:invariant_subset} for \(p=e_1,\dotsc,e_k\).  Our
  notion of being hereditary and saturated is equivalent to the one
  used by Raeburn, Sims and Yeend
  in~\cite{Raeburn-Sims-Yeend:Higher_graph} to describe the
  gauge-invariant ideals in a higher-rank graph \(\Cst\)\nb-algebra.
  Theorem~\ref{the:gauge-invariant_ideal} below will show that, in
  general, the open invariant subsets of~\(H^0\) correspond to those
  ideals in~\(\Cst(H)\) that are ``gauge-invariant'' in a suitable
  sense.
\end{example}

\subsection{Effectivity}
\label{sec:effective}

\begin{definition}
  \label{def:effective}
  An étale topological groupoid~\(H\) is \emph{essentially free} if
  the subset of objects with trivial isotropy is dense in~\(H^0\).  It
  is \emph{effective} if every open subset of \(H^1\setminus H^0\)
  contains an arrow~\(x\) with \(\s(x)\neq \rg(x)\).  Equivalently,
  the interior of the set \(\{h\in H^1\setminus H^0\mid
  \rg(h)=\s(h)\}\) is empty.
\end{definition}

For a second countable, locally compact, \'etale groupoid, being
effective or essentially free are equivalent properties by
\cite{Renault:Cartan.Subalgebras}*{Proposition 3.6} or
\cite{Brown-Clark-Farthing-Sims:Simplicity}*{Lemma 3.1} (these
articles use the name ``topologically principal'' for ``essentially
free'').

Being essentially free is a variant of the aperiodicity condition that
is used to characterise when topological higher-rank graph
\(\Cst\)\nb-algebras are simple (see, for instance,
\cite{Yeend:Groupoid_models}*{Definition 5.2}).  We cannot, however
check whether~\(H\) is essentially free without looking at points
in~\(H^0\), that is, infinite paths.

As we shall see, the following definition characterises when the
groupoid~\(H\)
is effective in terms of the original data of an action by topological
correspondences:

\begin{definition}
  \label{def:effective_tc}
  An action \((M_p,\sigma_{p,q})\) of an Ore monoid~\(P\) on a locally
  compact Hausdorff space~\(X\) by proper topological correspondences
  is \emph{effective} if for all \(p,q\in P\) with \(p q^{-1} \neq1\)
  in~\(G\) and for all non-empty, open subsets \(U\subseteq X'\),
  there are \(a,f,g\in P\) with \(p a f = q a g\) and \(y\in
  M_{paf}=M_{qag}\) with \(\rg_{paf}(y)\in U\) and
  \(\midpart_{p,a,f}(y)\neq\midpart_{q,a,g}(y)\) in~\(M_a\).  Here
  \(X'\subseteq X\) denotes the closed subset of possible situations;
  \(\midpart_{p,a,f}(y)\) denotes the component in the middle
  factor~\(M_a\) after identifying \(M_{paf} \cong M_p\times_X M_a
  \times_X M_f\), and similarly for~\(\midpart_{q,a,g}(y)\).
\end{definition}

Similar criteria for boundary path groupoids of higher-rank
topological graphs being effective have been found by
Wright~\cite{Wright:Aperiodicity-conditions-topological-graphs}; for
higher-rank graphs without topology, such criteria are also given in
\cites{Lewin-Sims:Aperiodicity, Kang-Pask:Aperiodicity,
  Shotwell:Simplicity}.  Before we explain the relationship between
our criterion and others, we prove the theorem suggested by our
notation:

\begin{theorem}
  \label{the:effective}
  The groupoid~\(H\) is effective if and only if the
  action~\((M_p,\sigma_{p,q})\) is effective.
\end{theorem}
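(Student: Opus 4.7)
The plan is to reformulate effectivity of $H$ as an emptiness-of-interior condition for the diagonal isotropy in each piece $H^1_{p, q}$ with $p q^{-1} \neq 1$, then match this with Definition~\ref{def:effective_tc} via a direct computation of middle parts. First I would reduce to this local condition: since $H^1 = \bigsqcup_{h \in G} H^1_h$ is a clopen decomposition and arrows in $H^1_1 \setminus H^0$ automatically have distinct source and range, $H$ is effective if and only if for every $h \in G \setminus \{1\}$ and every $(p, q) \in R_h$, the diagonal set $\{(x, x) : \tilde{\s}_p(x) = \tilde{\s}_q(x)\}$ has empty interior in $H^1_{p, q}$ under its subspace topology from $H^0 \times H^0$.

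The core computation is the following: for any $a \in P$, applying the Ore condition to $pa, qa$ yields $f, g \in P$ with $s := paf = qag$, and for any $x \in H^0$ the element $y := \pi_s(x) \in M_s$ decomposes successively via $\sigma_{p, af}^{-1}$ and $\sigma_{a, f}^{-1}$ to yield middle part $\midpart_{p, a, f}(y) = \pi_a(\tilde{\s}_p(x))$, and analogously $\midpart_{q, a, g}(y) = \pi_a(\tilde{\s}_q(x))$. Since $\rg_s(y) = \pi_1(x)$, the condition in Definition~\ref{def:effective_tc} applied to this $y$ becomes $\pi_a(\tilde{\s}_p(x)) = \pi_a(\tilde{\s}_q(x))$, which (as $a$ ranges over $P$) is equivalent to $\tilde{\s}_p(x) = \tilde{\s}_q(x)$ in $H^0$.

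For the reverse direction (action not effective implies $H$ not effective), I would fix witnesses $p, q, U$ from non-effectivity and apply the hypothesis to $\pi_s(x)$ for every $x \in \pi_1^{-1}(U)$: the middle parts must agree for every $a$, so $\tilde{\s}_p(x) = \tilde{\s}_q(x)$ for all such $x$. Then pick $x_0 \in \pi_1^{-1}(U)$; since $\tilde{\s}_q$ is a local homeomorphism by Lemma~\ref{lem:tilde_s_local_homeo}, there is an open neighborhood $W$ of $x_0$ inside $\pi_1^{-1}(U)$ on which $\tilde{\s}_q$ is injective. Set $V := (W \times W) \cap H^1_{p, q}$. This is open, contains $(x_0, x_0)$, and for any $(x, y) \in V$ the chain $\tilde{\s}_q(x) = \tilde{\s}_p(x) = \tilde{\s}_q(y)$ (using the derived equality) combined with injectivity of $\tilde{\s}_q|_W$ forces $x = y$; hence $V$ is a non-empty open set of isotropy arrows and $H$ is not effective.

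For the forward direction (action effective implies $H$ effective), via contrapositive, I would start from a basic open set $V \subseteq H^1_{p, q}$ of isotropy arrows and use Lemma~\ref{lem:H_base} together with the Ore condition to reduce to $V = (\pi_s^{-1}(W') \times \pi_s^{-1}(W')) \cap H^1_{p, q}$ with $s = p c_1 = q c_2$ and $W' \subseteq M_s$ open. Pick $(x_0, x_0) \in V$ and exploit properness of $\rg_s : M_s \to X$ to find an open $U \subseteq X'$ with $\pi_1(x_0) \in U$ and $\rg_s^{-1}(U) \subseteq W'$; every $y \in \rg_s^{-1}(U)$ in the image of $\pi_s$ then lifts via the $(\pi_s, \tilde{\s}_s)$-parameterization of $H^0$ to some $x \in H^0$ with $\pi_s(x) = y$ and $(x, x) \in V$, so the middle-parts computation above gives $\midpart_{p, a, f}(y) = \midpart_{q, a, g}(y)$ for every further refinement $s t = p a f = q a g$, contradicting effectivity of the action. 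The main obstacle will be precisely this forward step: extracting an honestly open $U \subseteq X'$ from $W' \subseteq M_s$ (using that $\rg_s$ is proper, hence closed on closed sets, though not open in general), and verifying that enough $y \in \rg_s^{-1}(U)$ are realized as $\pi_s(x)$ for some $x$ with $(x, x) \in V$ so that the failure of action effectivity is fully detected.
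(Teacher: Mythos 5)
Your backward direction and your identification of \(\midpart_{p,a,f}(\pi_{paf}(x))\) with the \(M_a\)\nb-component of \(\tilde{\s}_p(x)\) are correct and agree with the paper's argument; your use of local injectivity of \(\tilde{\s}_q\) on a small neighbourhood to verify that the isotropy set you produce is genuinely open is a point the paper treats tersely, and your version of it is fine. The forward direction, however, has a real gap, located exactly where you flag "the main obstacle", and it cannot be repaired along the lines you propose. First, there is in general no open \(U\ni\pi_1(x_0)\) with \(\rg_s^{-1}(U)\subseteq W'\): properness of \(\rg_s\) gives such a tube only when the \emph{whole} fibre \(\rg_s^{-1}(\pi_1(x_0))\) lies in \(W'\), whereas \(W'\) typically meets that fibre in little more than the single point \(\pi_s(x_0)\) (already for an ordinary graph, a cylinder over one path of positive length never contains all paths of that length ending at a given vertex). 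Second, even granting such a \(U\), lifting \(y\in\rg_s^{-1}(U)\) to \(x\in H^0\) with \(\pi_s(x)=y\) does not give \((x,x)\in V\): membership of \((x,x)\) in \(H^1_{p,q}\) means \(\tilde{\s}_p(x)=\tilde{\s}_q(x)\), which is precisely the isotropy you are trying to deduce. An open set \(V\) of isotropy arrows only says that pairs from \(\pi_s^{-1}(W')\times\pi_s^{-1}(W')\) which happen to lie in \(H^1_{p,q}\) are diagonal; the points that actually carry an isotropy arrow of degree \(pq^{-1}\) form \(\rg(V)\), which may be much smaller than \(\pi_s^{-1}(W')\). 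Structurally, non-effectivity of the action is a statement about the \(\pi_1\)\nb-saturated sets \(\pi_1^{-1}(U)\) and about \emph{all} paths in \(M_{paf}\) over \(U\), while the groupoid hypothesis only constrains a cylinder \(\pi_s^{-1}(W')\), which is never \(\pi_1\)\nb-saturated; this mismatch is why your contrapositive stalls.

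The paper proves the implication directly, which needs only \emph{one} witness per open set: given a non-empty open \(U\subseteq H^1\setminus H^0\), it shrinks \(U\) to a bisection inside some \(H^1_{p,q}\) with \(\rg(U)=\pi_t^{-1}(U_t)\) and \(\s_t|_{U_t}\) injective, uses the Ore condition to enlarge \((p,q)\) within its class so that \(p=tp'\), \(q=tq'\), and applies effectivity of the action to the open set \(\s_t(U_t)\) -- the \emph{source} image of the defining finite paths, not a neighbourhood of their range. The witness \(y\) with distinct mid-parts is prepended by some \(z\in U_t\) and extended to a complete history \(x\) (possible after the reduction \(X=X'\) made at the start of the proof, which is also what makes finite paths extendable), so \(x\in\rg(U)\) and \(\tilde{\s}_p(x)\neq\tilde{\s}_q(x)\); since some arrow of \(U\) has range \(x\), its source differs from \(x\). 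If you prefer your contrapositive formulation, the same three moves rescue it: take \(\rg(V)\), which is open because \(H\) is \'etale and on which \(\tilde{\s}_p=\tilde{\s}_q\) holds, choose \(\pi_t^{-1}(U_t)\subseteq\rg(V)\) with \(\s_t|_{U_t}\) injective, left-divide \(p=tp'\), \(q=tq'\), and take \(U\defeq\s_t(U_t)\); prepending the unique element of \(U_t\) over each point of \(U\) shows \(\tilde{\s}_{p'}=\tilde{\s}_{q'}\) on all of \(\pi_1^{-1}(U)\), which is the non-effectivity of the action for \((p',q',U)\). Your choice of a common right multiple \(s=pc_1=qc_2\) and of a neighbourhood of the endpoint does not set up this prepending trick, and that is the missing idea.
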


\begin{proof}
  We may assume without loss of generality that \(X=X'\).

  First we assume that the action~\((M_p,\sigma_{p,q})\) is not
  effective.  This means that there are \(p,q\in P\) with \(p
  q^{-1}\neq 1\) in~\(G\) and a non-empty open subset \(U\subset
  X'\) such that \(\midpart_{p,a,f}(y) = \midpart_{q,a,g}(y)\)
  in~\(M_a\) for all \(a,f,g\in P\) with \(p a f = q a g\) and all
  \(y\in M_{p a f}\) with \(\rg_{p a f}(y)\in U\).  This means that
  \((\tilde{\s}_{p} x)_a = (\tilde{\s}_{q} x)_a\) for all \(x\in
  \pi^{-1}_1(U)\) and all \(a\in P\).  Hence \(\tilde{\s}_{p} x =
  \tilde{\s}_{q} x\) in~\(H^0\) for all \(x\in \pi^{-1}_1(U)\).
  Thus the elements of the form \((x,p q^{-1},x)\) for \(x\in
  \pi^{-1}_1(U)\) form a bisection~\(B\) in \(H^1\setminus H^0\)
  with \(\rg|_B = \s|_B\), which means that~\(H\) is not effective.

  Now assume that the action~\((M_p,\sigma_{p,q})\) is effective.
  Let \(U\subseteq H^1\setminus H^0\) be a non-empty open subset.
  We need to find \((x,g,y)\in U\) with \(x\neq y\).

  The intersection \(U\cap H^1_{p,q}\) is non-empty for some \(p,q \in P\).
  Replacing~\(U\) by \(U\cap H^1_{p,q}\), we may arrange that
  \(U\subseteq H^1_{p,q}\).  The subgroupoid~\(H_1\) is an
  increasing union of equivalence relations, so it is certainly
  effective.  Hence we are done if \(p q^{-1}=1\) in~\(G\).  Thus we
  may assume from now on that \(p q^{-1}\neq1\) in~\(G\).

  We may shrink~\(U\) to a bisection because~\(H\) is
  étale.  We may then shrink further so that \(\rg(U) =
  \pi^{-1}_t(U_t)\) for some \(t\in P\) and some non-empty
  open subset \(U_t\subseteq M_t\) because subsets of the
  form~\(\pi^{-1}_t(U_t)\) form a base for the topology
  on~\(H^0\) by Lemma~\ref{lem:H_base}.  Since the map
  \(\s_t\colon M_t\to X\) is a local homeomorphism, we may
  shrink~\(U_t\) even further, so that~\(\s_t\) becomes injective
  on~\(U_t\).  Hence~\(\s_t\) restricts to a homeomorphism from
  \(U_t\subseteq M_t\) onto an open subset \(\s_t(U_t)\subseteq X\).

  We are going to show that there is \(x\in \pi^{-1}_t(U_t)\) with
  \(\tilde{\s}_{p} x \neq \tilde{\s}_{q} x\).  Thus~\((x,g,x)\) is not an arrow
  in~\(H\); since \(\rg(U)=\pi^{-1}_t(U_t)\) and \(U\subseteq
  H^1_{p,q}\), there must be \(y\in H^0\) with \((x,g,y)\in U\subseteq
  H^1_{p,q}\).  Since \(\tilde{\s}_{p} x = \tilde{\s}_{q} y \neq \tilde{\s}_{q} x\), we have
  \(x\neq y\), as desired.

  Since~\(P\) is Ore, we may find \(h,i\in P\) with \(p h=t i\).
  Then we may find \(h',i'\in P\) with \(q h h'=t i'\).  Thus \(p
  h h'=t i h'\) and \(q h h'=t i'\).  Since
  \(H^1_{p,q}\subseteq H^1_{p h h',q h h'}\), we may
  replace~\((p,q)\) by \((p h h',q h h')\).  Thus we may assume
  without loss of generality that there are \(p',q'\in P\) with \(p
  =t p'\) and \(q=t q'\).

  Recall that~\(\s_t\)
  restricts to a homeomorphism from~\(U_t\)
  onto \(V\defeq \s_t(U_t)\).
  Hence \(x\mapsto \tilde{\s}_{t} x\)
  is a homeomorphism from \(\pi^{-1}_t(U_t)\)
  onto \(\pi^{-1}_1(V)\)
  (compare Lemma~\ref{lem:tilde_s_local_homeo}).  By assumption, there
  are \(a,f,g \in P\)
  with \(p' a f = q' a g\)
  and \(y\in M_{p' a f}\)
  with \(\rg_{p' a f}(y)\in V\)
  and \(\midpart_{p',a,f}(y)\neq\midpart_{q',a,g}(y)\)
  in~\(M_a\).
  By construction, there is \(z\in M_t\)
  with \(\s_t(z)=\rg_{p' a f}(y)\).
  Then \((z,y) \in M_t\times_X M_{p' a f} \cong M_{t p' a f}\).
  We have \(\tilde{\s}_{p} (z,y) = \tilde{\s}_{p'}(y)\)
  and \(\tilde{\s}_{q} (z,y) = \tilde{\s}_{q'}(y)\)
  because \(p = t p'\)
  and \(q = t q'\).
  The \(M_a\)\nb-component
  of \(\tilde{\s}_{p'}(y)\)
  is \(\midpart_{p',a,f}(y)\),
  and that of \(\tilde{\s}_{q'}(y)\)
  is \(\midpart_{q',a,g}(y)\).
  Since these are different,
  \(\tilde{\s}_{p}(x) \neq \tilde{\s}_{q}(x)\)
  for any \(x\in H^0\)
  with \(t p' a f\)-component~\((z,y)\).
  Such~\(x\)
  exist because we have restricted to possible histories throughout,
  making the maps \(\pi_p\colon H^0\to M_p\)
  surjective for all \(p\in P\).
\end{proof}

\begin{theorem}
  \label{the:Cst_H_simple}
  Assume that~\(P\) is countable and~\(X\) is second countable or,
  more generally, that~\(H\) is second countable.  The Cuntz--Pimsner
  algebra~\(\CP\) or, equivalently, the \(\Cst\)\nb-algebra
  \(\Cst(H)\), is simple if and only if the following three conditions
  are satisfied:
  \begin{enumerate}
  \item \(\Cst(H)=\Cred(H)\);
  \item the action~\((M_p,\sigma_{p,q})\) is effective;
  \item any non-empty, closed, hereditary, saturated of~\(X\)
    contains~\(X'\).
  \end{enumerate}
  The first condition above follows if~\(H\) is amenable and, in
  particular, if~\(G\) is amenable.
\end{theorem}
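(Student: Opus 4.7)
The plan is to combine Renault's simplicity criterion for étale groupoid $\Cst$-algebras with the translations already established earlier in this section. By Theorem~\ref{the:groupoid_model}, $\CP$ is canonically isomorphic to $\Cst(H)$, where $H$ is a second countable, Hausdorff, locally compact, étale groupoid (Proposition~\ref{pro:H_etale_locally_compact}). Hence it suffices to characterise simplicity of $\Cst(H)$.

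The first step is to invoke the standard result (see, e.g., \cite{Brown-Clark-Farthing-Sims:Simplicity}*{Theorem~5.1}): for such a groupoid $H$, the reduced $\Cst$-algebra $\Cred(H)$ is simple if and only if $H$ is both minimal and essentially free. Since $H$ is second countable, essentially free coincides with effective by the remark following Definition~\ref{def:effective}. Consequently, $\Cst(H)$ is simple if and only if $\Cst(H) = \Cred(H)$ together with $H$ being minimal and effective. The final assertion of the theorem then follows from Theorem~\ref{the:amenable_groupoid_model}: if $G$~is amenable, then $H$~is amenable, which implies the coincidence of the full and reduced groupoid $\Cst$-algebras.

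Next, effectiveness of $H$ is exactly condition~(2) by Theorem~\ref{the:effective}. It remains to identify minimality of $H$ with condition~(3). By Corollary~\ref{cor:H_minimal}, minimality is equivalent to the absence of non-trivial open hereditary saturated subsets of~$X$. Since the groupoid $H$ is unchanged by restricting the action to the subspace $X'$ of possible situations (as noted after Definition~\ref{def:impossible_situation}), I would work with the restricted system, effectively assuming $X = X'$; in this setting condition~(3) asserts that the only non-empty closed hereditary saturated subset of~$X$ is~$X$ itself.

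The hardest step is then to verify that, in the restricted setting, the non-existence of non-trivial open hereditary saturated subsets is equivalent to the non-existence of proper non-empty closed hereditary saturated subsets. In general, the complement of an open hereditary saturated subset need not be closed hereditary saturated, because the hereditary condition (involving a single preimage) and the saturated condition (involving all preimages) are not simply interchanged by complementation. One must exploit the Ore condition~\ref{enum:Ore2} together with the properness of~$\rg_p$ and the local-homeomorphism property of~$\s_p$ to pass between the two sides: from a non-trivial proper closed hereditary saturated subset of~$X$, one constructs a matching non-trivial open hereditary saturated subset, and vice versa. Once this last equivalence is in place, the three conditions of the theorem precisely match the three requirements of Renault's criterion applied to~$H$.
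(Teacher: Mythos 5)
Your skeleton is the same as the paper's: identify \(\CP\) with \(\Cst(H)\) by Theorem~\ref{the:groupoid_model}, apply \cite{Brown-Clark-Farthing-Sims:Simplicity}*{Theorem 5.1} to the second countable, Hausdorff, étale groupoid~\(H\), pass from ``essentially free'' to ``effective'' by \cite{Brown-Clark-Farthing-Sims:Simplicity}*{Lemma 3.1} and then to condition~(2) by Theorem~\ref{the:effective}, handle minimality through Corollary~\ref{cor:H_minimal}, and get the last sentence from Theorem~\ref{the:amenable_groupoid_model}. Up to that point there is nothing to object to, and it is exactly what the paper does.

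The genuine gap is the step you yourself single out as the hardest and leave as a plan: you propose to show that the absence of non-trivial \emph{open} hereditary saturated subsets (Corollary~\ref{cor:H_minimal}) is equivalent to the absence of proper non-empty \emph{closed} subsets that are hereditary and saturated in the sense of Definition and Lemma~\ref{def:invariant_subset}. No use of~\ref{enum:Ore2}, properness of~\(\rg_p\) or the local homeomorphism property of~\(\s_p\) can establish this, because the equivalence is false. Take \(P=\N\) acting on \(X=\T\) by the single proper correspondence \(M_1=\T\), \(\rg_1(z)=z^2\), \(\s_1(z)=z\); then \(X'=X\). Hereditary means closed under taking \(2^n\)-th roots, and the roots of any point are dense in~\(\T\), so the only non-empty closed hereditary saturated subset is~\(\T\) and the closed-set condition holds; but \(U=\T\setminus\{1\}\) is a non-trivial open hereditary saturated subset, and indeed \(H\) is the transformation groupoid of the shift on the solenoid, which has a fixed point, so \(\Cst(H)\) is not simple although conditions (1) and~(2) hold. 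So the two formulations are genuinely inequivalent, and a proof attempting to bridge them must fail. The resolution is not a new lemma but the correct reading of condition~(3): the paper's proof simply equates (3) with minimality via Corollary~\ref{cor:H_minimal}, i.e.\ the closed sets relevant in~(3) are the complements of open hereditary saturated subsets (equivalently, closed sets~\(F\) with \(\s_p(m)\in F\Rightarrow\rg_p(m)\in F\) and such that every \(x\in F\) admits, for each~\(p\), some \(m\in\rg_p^{-1}(x)\) with \(\s_p(m)\in F\)); for such sets, ``every non-empty one contains~\(X'\)'' says precisely that the only open hereditary saturated subsets are \(X\) and \(X\setminus X'\), which is minimality, and then no further argument is needed. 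As written, with the hereditary/saturated conditions of Definition and Lemma~\ref{def:invariant_subset} imposed on the closed set itself, condition~(3) is too weak, as the example above shows.
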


\begin{proof}
  We use \cite{Brown-Clark-Farthing-Sims:Simplicity}*{Theorem 5.1},
  which characterises when the groupoid \(\Cst\)\nb-algebra of a
  second countable, locally compact, Hausdorff, étale groupoid is
  simple.  The groupoid~\(H\) is always locally compact, Hausdorff,
  and étale.  The third condition is equivalent to the minimality
  of~\(H\) by Corollary~\ref{cor:H_minimal}.  Since~\(H\) is second
  countable, it is essentially free if and only if it is effective
  by \cite{Brown-Clark-Farthing-Sims:Simplicity}*{Lemma 3.1}.  This
  is equivalent to the effectivity of the
  action~\((M_p,\sigma_{p,q})\) by Theorem~\ref{the:effective}.
  Thus our conditions are equivalent to the three conditions in
  \cite{Brown-Clark-Farthing-Sims:Simplicity}*{Theorem 5.1}.
\end{proof}

Kwa\'sniewski and Szyma\'nski~\cite{Kwasniewski-Szymanski:Ore} also
provide an aperiodicity criterion and use it to prove simplicity
results and a uniqueness theorem for certain Cuntz--Pimsner algebras
over Ore monoids.  On the one hand, their criterion still works if the
unit fibre~\(A\) of a product system is only a liminal
\(\Cst\)\nb-algebra.  On the other hand, it does not imply the
simplicity of the Cuntz algebra~\(\CP_n\) because it only uses the
(partial, multivalued) action of~\(P\) on the spectrum of~\(A\)
induced by the product system.  This contains no information if
\(A=\C\) as in the standard construction of~\(\CP_n\).

Now we simplify the definition of being effective for the monoids
\(P=(\N^k,+)\).
Some of the steps also work for other monoids.

\begin{lemma}
  \label{lem:effective_reduced}
  The definition of an effective action of~\(\N^k\)
  does not change if we assume, in addition, that the pair
  \((p,q)\in \N^k\)
  is \emph{reduced}, that is, there is no \(t\in \N^k\)
  with \(p,q\in \N^k t\).
\end{lemma}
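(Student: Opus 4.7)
The plan is to reduce the effectivity condition for an arbitrary pair $(p,q)\in\N^k\times\N^k$ with $pq^{-1}\neq1$ in~$G$ to the reduced pair obtained by cancelling their common part. Set $t\defeq p\wedge q$ (coordinatewise minimum) and write $p=t+p'$, $q=t+q'$; then $p'\wedge q'=0$ so~$(p',q')$ is reduced, and $p'-q'=p-q$ gives $p'q'^{-1}=pq^{-1}\neq1$ in~$G$. Effectivity on all pairs clearly implies effectivity on reduced pairs, so only the converse requires argument.

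For the converse, given a non-empty open subset $U\subseteq X'$ for the pair $(p,q)$, I would produce a non-empty open subset $U'\subseteq X'$ for the reduced pair $(p',q')$. The natural candidate is $U'\defeq\s_t(\rg_t^{-1}(U))\cap X'$; this is open in~$X'$ because~$\s_t$ is a local homeomorphism (hence open) and~$\rg_t$ is continuous, and it is non-empty because any $x\in U\cap X'$ lifts through~$\rg_t$ to some $z\in M_t$ with $\rg_t(z)=x$ (since $x\in X'$ means $x\in \rg_t(M_t)$), and then $\s_t(z)\in X'$ because the starting point of a story that ends at a possible situation can itself be extended to a complete history.

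Invoking the assumed reduced-case effectivity on $(p',q')$ and~$U'$ yields $a,f,g\in\N^k$ with $p'+a+f=q'+a+g$ together with $y'\in M_{p'af}$ such that $\rg_{p'af}(y')\in U'$ and $\midpart_{p',a,f}(y')\neq\midpart_{q',a,g}(y')$. By the definition of~$U'$, I can then choose $z\in M_t$ with $\s_t(z)=\rg_{p'af}(y')$ and $\rg_t(z)\in U$; the pair $(z,y')$ assembles into a point $y\in M_t\times_X M_{p'af}\cong M_{paf}=M_{qag}$ (using $t+p'+a+f=t+q'+a+g$) whose range is $\rg_t(z)\in U$. The only remaining bookkeeping --- and the main (minor) obstacle in the argument --- is to check via the associativity~\eqref{eq:associative_top_corr_2} applied to the fourfold decomposition $M_{paf}\cong M_t\times_X M_{p'}\times_X M_a\times_X M_f$ that $\midpart_{p,a,f}(y)=\midpart_{p',a,f}(y')$ and symmetrically $\midpart_{q,a,g}(y)=\midpart_{q',a,g}(y')$, so that the required inequality of midparts transfers from the reduced pair to $(p,q)$.
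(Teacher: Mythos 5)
Your argument is essentially the paper's own proof: cancel the common part \(t\), transport the test set backwards through \(M_t\) (the paper takes \(U'=\s_t(V)\) for a small neighbourhood \(V\) of a single lift rather than \(\s_t(\rg_t^{-1}(U))\cap X'\)), apply reduced-case effectivity, and concatenate the resulting witness with a length-\(t\) story, the midpart identities following from associativity exactly as you indicate. The only wrinkle is your justification that \(U'\neq\emptyset\): for an \emph{arbitrary} lift \(z\) of \(x\in X'\) one need not have \(\s_t(z)\in X'\), so you should choose \(z=\pi_t(\eta)\) for a complete history \(\eta\) with \(\pi_1(\eta)=x\), i.e.\ use the fact \(\rg'_t(M'_t)=X'\) recorded in the paper, which is what your ``extend to a complete history'' phrase amounts to.
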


\begin{proof}
  Take \(p=p' t\), \(q=q' t\) for \(p',q',t\in \N^k\).  Since
  \(U\subseteq X'\) is non-empty and consists of possible situations,
  there is \(m\in M_t\) with \(\rg_t(m)\in U\).  Since~\(\rg_t\) is
  continuous, there is an open neighbourhood~\(V\) of~\(m\) with
  \(\rg_t(V)\subseteq U\), and then \(U'\defeq \s_t(V)\) is open as
  well.  Let \(a,f,g\) and~\(y'\) verify the effectivity criterion for
  \(p',q',U'\).  Then the same \(a,f,g\) and \(y=m\cdot y'\) verify it
  for \(p,q,U\).
\end{proof}

The role of \(f,g\)
in Definition~\ref{def:effective_tc} is merely as padding to make
\(p a f = q a g\).
In a commutative monoid such as~\(\N^k\),
we may simply take \(f=q\)
and \(g=p\);
or we may take \(f=(p\vee q)-p\)
and \(g=(p\vee q)-p\),
where \(p\vee q\)
denotes the maximum of \(p\)
and~\(q\)
in~\(\N^k\).
The latter choice is minimal and may therefore be optimal.

\begin{lemma}
  \label{lem:effective_commutative}
  Let \(P=\N^k\).
  Then the action on~\(X\)
  is effective if and only if, for all reduced \(p,q\in \N^k\)
  with \(p- q \neq0\)
  in~\(\Z^k\)
  and for all non-empty, open subsets \(U\subseteq X'\),
  there are \(a\in\N^k\)
  and \(y\in M_{p+a+f} = M_{q+a+g}\)
  with \(\rg_{p+a+f}(y)\in U\)
  and \(\midpart_{p,a,f}(y)\neq\midpart_{q,a,g}(y)\)
  in~\(M_a\), with \(f=(p\vee q)-q\) and \(g=(p\vee q)-p\).
\end{lemma}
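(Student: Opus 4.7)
The plan is to prove the equivalence by pivoting on Lemma \ref{lem:effective_reduced}, which reduces the verification of effectivity to reduced pairs $(p,q)$. So it will suffice to show, for each reduced pair $(p,q) \in \N^k \times \N^k$ with $p-q\neq 0$ and each non-empty open $U\subseteq X'$, that the general condition of Definition \ref{def:effective_tc} is equivalent to the specific condition with minimal padding $f_0\defeq (p\vee q)-p$ and $g_0\defeq (p\vee q)-q$. The implication \emph{specific $\Rightarrow$ general} is trivial, since the same $a$ and $y$ witness the general condition with $f=f_0$, $g=g_0$; so only the converse requires work.

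Suppose therefore that $a,f,g\in\N^k$ and $y\in M_{p+a+f}$ witness the general condition. The identity $p+f=q+g$ together with $f,g\geq 0$ forces $p+f\geq p\vee q$, so $f=f_0+c$ and $g=g_0+c$ for the common excess $c\defeq (p+f)-(p\vee q)\in\N^k$. Setting $a'\defeq a+c$, we have $p+a'+f_0=p+a+f$ and $q+a'+g_0=q+a+g$, so $y\in M_{p+a'+f_0}=M_{q+a'+g_0}$ with unchanged range $\rg_{p+a'+f_0}(y)\in U$. The remaining content is to show that $\midpart_{p,a',f_0}(y)\neq \midpart_{q,a',g_0}(y)$ in $M_{a'}$.

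For this I would invoke the first identity of~\eqref{eq:associative_top_corr_2} applied to the triple $(a,c,f_0)$, which gives $\rg_{a,c}\circ \rg_{a+c,f_0}=\rg_{a,c+f_0}=\rg_{a,f}$. Combined with the unfoldings $\midpart_{p,a',f_0}(y)=\rg_{a+c,f_0}(\s_{p,a+f}(y))$ and $\midpart_{p,a,f}(y)=\rg_{a,f}(\s_{p,a+f}(y))$, this yields $\rg_{a,c}(\midpart_{p,a',f_0}(y))=\midpart_{p,a,f}(y)$, and likewise with $q$ in place of $p$. Since the right-hand sides differ by hypothesis, so do the middle components in $M_{a'}$. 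The only delicate point is the bookkeeping of the iterated associativity isomorphisms; the numerical step (splitting $f,g$ as $f_0+c,g_0+c$ with a common excess) is immediate from $p+f=q+g=(p\vee q)+c$.
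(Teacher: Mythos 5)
Your proof is correct, and it takes a slightly different route from the paper's in the one nontrivial direction. Both arguments begin by invoking Lemma~\ref{lem:effective_reduced} to restrict to reduced pairs, and both rest on the same numerical observation: any admissible padding exceeds the minimal one by a common amount (your \(c\) is the paper's \(h\)), since \(p+f=q+g\ge p\vee q\). From there the paper keeps \(a\) fixed and \emph{truncates the witness}, replacing it by its image under \(\rg_{p+a+f_0+a,\,c}\)-type truncation (i.e.\ forgetting the excess of length \(c\) at the source end) and noting that the \(M_a\)-midparts are untouched; you instead keep the witness intact and \emph{enlarge the middle length} from \(a\) to \(a'=a+c\), then argue that the new midparts in \(M_{a'}\) still differ because the truncation \(\rg_{a,c}\) carries them onto the old, distinct midparts — a valid step, since a map cannot send distinct images back from equal arguments. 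Your bookkeeping is sound: both midparts are expressed through the same element \(\s_{p,a+f}(y)\) (respectively \(\s_{q,a+g}(y)\)), so only the first identity in~\eqref{eq:associative_top_corr_2} is needed, exactly as you say. Both manipulations are legitimate because the criterion quantifies existentially over \(a\); the paper's version is marginally shorter, yours never modifies \(y\). One small remark: you read \(f,g\) as the unique elements with \(p+f=q+g=p\vee q\), i.e.\ \(f=(p\vee q)-p\) and \(g=(p\vee q)-q\); this is clearly the intended meaning (the printed formulas have \(f\) and \(g\) interchanged, as the identification \(M_{p+a+f}=M_{q+a+g}\) forces \(p+f=q+g\), and your convention is the one used in Proposition~\ref{pro:effective_N}), so no harm done.
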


\begin{proof}
  The criterion in the lemma differs from
  Definition~\ref{def:effective_tc} in two ways.  First, we assume
  \((p,q)\)
  reduced, which makes no difference by
  Lemma~\ref{lem:effective_reduced}.  Secondly, we choose
  particular~\(f,g\).
  To see that this makes no difference, choose \(f',g',y'\)
  as in Definition~\ref{def:effective_tc}.  Then
  \(f'-f = g'-g = h\in\N^k\).
  The truncation \(y \defeq \rg_{p+a+f}(y')\in M_{p+a+f}\)
  still has the same mid-part in~\(M_a\)
  and hence also verifies the criterion in
  Definition~\ref{def:effective_tc}.
\end{proof}

The condition in Lemma~\ref{lem:effective_commutative} is exactly
Condition~(ii) in
\cite{Wright:Aperiodicity-conditions-topological-graphs}*{Theorem
  3.1}.  As shown
in~\cite{Wright:Aperiodicity-conditions-topological-graphs}, this
condition is equivalent to Yeend's aperiodicity condition~(A), which
characterises when the groupoid model of the topological higher-rank
graph is essentially free; Wright also gives an example where her
finite-path version of aperiodicity is much easier to check than the
original criterion.

\begin{proposition}
  \label{pro:effective_N}
  Let \(P=(\N,+)\) and assume \(X=X'\).  Then the action of~\(P\)
  on~\(X\) is effective if and only if the set of base points of loops
  without entrances has empty interior in~\(X\).
\end{proposition}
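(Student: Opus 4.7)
The plan is to translate the effectivity criterion into a strict $p$-periodicity condition on finite paths, and then to match this with the existence of base points of loops without entrance. By Lemmas~\ref{lem:effective_reduced} and~\ref{lem:effective_commutative} specialised to $P = (\N, +)$, a reduced pair $(p, q) \in \N \times \N$ with $p - q \neq 0$ is, up to swap, of the form $(p, 0)$ with $p \geq 1$, and the corresponding padding is $f = 0$, $g = p$. Writing any $y \in M_N \cong M_1 \times_X \cdots \times_X M_1$ as an ordered tuple $(e_1, \dots, e_N)$ of edges with $e_1$ the most recent, one checks that $\midpart_{p,a,0}(y) = (e_{p+1}, \dots, e_{p+a})$ while $\midpart_{0,a,p}(y) = (e_1, \dots, e_a)$. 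Hence the action fails to be effective if and only if there exist $p \geq 1$ and a non-empty open $U \subseteq X$ such that every finite path ending in $U$ is strictly $p$-periodic, that is, $e_j = e_{j+p}$ whenever both positions lie in $\{1, \dots, N\}$.

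For the direction extracting a loop without entrance from such a $(p, U)$, fix $y \in U$. For any $\alpha \in r_p^{-1}(y)$, I pick some $\beta \in r_p^{-1}(s_p(\alpha))$, which is non-empty because $X = X'$, and form the composite $\alpha \cdot \beta \in M_{2p}$ ending at $y \in U$; the $p$-periodicity forces $\beta = \alpha$, whence $s_p(\alpha) = r_p(\alpha) = y$ and $\alpha$ is a loop at $y$. The same argument applied to any two $\alpha_1, \alpha_2 \in r_p^{-1}(y)$ forces $\alpha_1 = \alpha_2$, so the loop is unique; denote it $(e_1, \dots, e_p)$. For no entrance, any edge $e'$ with $r_1(e') = s_1(e_k)$ fits into a path $(e_1, \dots, e_k, e', f_{k+2}, \dots, f_p) \in r_p^{-1}(y)$ (backward completion via $X = X'$), and uniqueness forces $e' = e_{k+1}$. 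Hence $U$ is contained in the set $B$ of base points of loops without entrance, which therefore has non-empty interior.

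For the converse, suppose $B$ has non-empty interior $V$. The no-entrance property at each $y \in V$ forces a unique infinite backward path, which is strictly periodic from the start. Thus $\pi_1^{-1}(V) \subseteq \bigcup_{p \geq 1,\, n_0 \geq 0} E_{p, n_0}$, where
\[
E_{p, n_0} \defeq \{x \in H^0 : e_i(x) = e_{i+p}(x) \text{ for all } i \geq n_0\}
\]
is closed in $H^0$ (each single constraint is the preimage of the diagonal of $M_1 \times M_1$ under continuous maps into the Hausdorff space $M_1$). Since $H^0$ is locally compact Hausdorff by Lemma~\ref{lem:H_locally_compact}, hence Baire, some $E_{p, n_0}$ has non-empty interior inside $\pi_1^{-1}(V)$; Lemma~\ref{lem:H_base} refines this to a basic open $W = \pi_{q'}^{-1}(U') \subseteq E_{p, n_0}$ with $q' \geq n_0 + p$ and $U' \subseteq M_{q'}$ non-empty open. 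Put $U \defeq s_{q'}(U')$, which is open and non-empty because $s_{q'}$ is a local homeomorphism. For any $y \in M_N$ with $r(y) \in U$, pick $m' \in U'$ with $s_{q'}(m') = r(y)$, form the composite $z \defeq m' \cdot y \in M_{q' + N}$, extend to a history $x \in H^0$ with $\pi_{q' + N}(x) = z$, note $x \in W \subseteq E_{p, n_0}$, and observe that the periodicity constraint at positions $q' + j$ (all exceeding $n_0$ since $q' \geq n_0$) translates directly to the $p$-periodicity of $y$.

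The main obstacle is the Baire-category step in the converse direction: the loops witnessing membership in $B$ can a priori have different periods at different base points, and a single uniform period $p$ must be extracted to witness failure of effectivity on some non-empty open subset of $X$. The subsequent concatenation-and-extension trick to pass from an interior in $H^0$ back to an interior in $X$ is routine once $q' \geq n_0 + p$ is arranged, and the closedness of $E_{p, n_0}$ uses only that $M_1$ is Hausdorff.
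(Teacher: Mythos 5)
Your proposal is correct, and it splits naturally into a part that mirrors the paper and a part that takes a genuinely different route. The reduction to pairs $(p,0)$ and the identification of the mid-parts with the edge strings $(e_{p+1},\dots,e_{p+a})$ and $(e_1,\dots,e_a)$ is exactly the paper's translation, and your first direction (non-effectivity forces each $y\in U$ to carry a unique loop without entrance, via uniqueness of elements of $\rg_p^{-1}(y)$ forced by $p$-periodicity of length-$2p$ paths) is essentially the contrapositive of the paper's argument, which instead produces a witness for effectivity from a non-loop ($a=0$) or from an entrance ($a=i$); the only blemish is your indexing at the base vertex, where ``$e'=e_{k+1}$'' for $k=p$ needs the cyclic convention $e_{p+1}=e_1$ (or the one-line variant of completing $e'$ itself backwards to an element of $\rg_p^{-1}(y)$), which is immediate. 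The converse is where you diverge: the paper stays on $X$, argues that the no-entrance condition makes backward paths unique, asserts by ``a continuity argument'' that the period of the loop is locally constant, and shrinks $U$ to make it constant; you instead lift to $H^0$, observe that histories over $V$ lie in the countable family of closed periodicity sets $E_{p,n_0}$, apply Baire category in the locally compact Hausdorff space $\pi_1^{-1}(V)$ to extract a uniform $p$ on a basic open set $\pi_{q'}^{-1}(U')$, and push back down to $U=\s_{q'}(U')$ by the concatenation-and-extension trick (all steps legitimate: $\pi_1$ is surjective and truncation maps are surjective since $X=X'$, and the index $q'$ can be enlarged using $\pi_q^{-1}(U)=\pi_{qt}^{-1}(\rg_{q,t}^{-1}(U))$). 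What your route buys is that it sidesteps the local constancy of the minimal period, which the paper leaves terse, at the modest cost of a Baire argument; what the paper's route buys is a statement and proof that never leave $X$ and exhibit the period explicitly on the shrunken neighbourhood.
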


We will explain during the proof what loops without entrances are.
The condition we arrive at characterises when the groupoid model for a
regular topological graph is effective, compare
\cite{Katsura:class_III}*{Definition 6.6}.

\begin{proof}
  Any reduced pair in~\(\N\) is of the form \((p,0)\) or \((0,p)\).
  Since our notion is symmetric, we may as well take \(q=0\).  Thus
  the condition in Lemma~\ref{lem:effective_commutative} says that for
  any non-empty, open subset \(U\subseteq X'\) and any \(p\in\N\)
  there is \(a\in\N\) and \(y\in M_{p a}\) with \(\rg_{p a}(y)\in U\)
  and \(\midpart_{p,a,0}(y)\neq\midpart_{0,a,p}(y)\) in~\(M_a\).  If
  there is \(y\in M_p\) with \(\rg_p(y)\in U\) and \(\s_p(y)\neq
  \rg_p(y)\), then \(a=0\) and~\(y\) will do because the relevant
  mid-parts are \(\s_p(y)\) and~\(\rg_p(y)\), respectively.  Thus we
  may assume as well that \(\s_p(m) = \rg_p(m)\) for all \(m\in M_p\)
  with \(\rg_p(m)\in U\); such a path is called a \emph{loop}, and
  \(\rg_p(m)\in X\) is its \emph{base point}.

  Take \(m\in M_p\) with \(\rg_p(m)\in U\).  Identify \(M_p \cong
  M_1^{\times_X p}\) and write~\(m\) as a path
  \[
  x_1 \xleftarrow{m_1} x_2
  \xleftarrow{m_2} x_3
  \xleftarrow{m_3} x_4
  \to \dotsb \xleftarrow{m_p} x_{p+1} = x_1
  \]
  with \(x_1,\dotsc,x_p\in X\), \(m_1,\dotsc,m_p\in M_1\).  An
  \emph{entrance} for this loop is \(1\le i\le p\) and an arrow
  \(m'\colon x'\to x_i\) with \(m'\neq m_i\).  If~\(t\) is such an
  entrance, then take \(a=i\) and let \(y\in M_{p a}\) be the
  concatenation of the loop~\(m\) and the path
  \[
  x_1
  \xleftarrow{m_1} x_2
  \xleftarrow{m_2} x_3
  \leftarrow \dotsb \leftarrow x_i
  \xleftarrow{m'} x'.
  \]
  The relevant length-\(a\) mid-parts of this concatenation are the
  paths from \(x'\) to~\(x_1\) and~\(x_p\) to~\(x_1\) involving \(m'\)
  and~\(m_i\), respectively.  Hence an entrance to some loop gives the
  data \((a,y)\) required in Definition~\ref{def:effective_tc}.  If
  the set of base points of loops without entrances has empty
  interior, then we may find some loop with an entrance with base
  point in~\(U\), so the action is effective.

  Conversely, assume that~\(U\)
  is an open subset so that all points in~\(U\)
  are base points of some loop without an entrance.  Then there is
  only one path with range~\(x\)
  for any \(x\in U\):
  we must follow the loop based at that point because it has no
  entrances (note that our paths go backwards).  A continuity argument
  shows that the period of the loop is locally constant.
  Shrinking~\(U\),
  we may arrange that it is constant equal to~\(p\)
  for all \(x\in U\).
  Then \(\midpart_{p,a,0}(m) = \midpart_{0,a,p}(m)\)
  for all \(m\in M_{p+a}\) with \(\rg_{p+a}(m)\in U\).
\end{proof}

If~\(X\) is discrete, so that we are dealing with an ordinary graph
\(\Cst\)\nb-algebra, then Proposition~\ref{pro:effective_N} simplifies
further to the condition that there are no loops without entrances,
which is a standard condition in the theory of graph
\(\Cst\)\nb-algebras (see~\cite{Raeburn:Graph_algebras}).

\subsection{Gauge-invariant ideals}
\label{sec:gauge-invariant}

We now describe which ideals in~\(\Cst(H)\) come from open
\(H\)\nb-invariant subsets of~\(H^0\) if~\(H\) is not effective.
Recall that the groupoid~\(H\) is graded by the group completion~\(G\)
of~\(P\), \(H=\bigsqcup_{g\in G} H_g\), and that this corresponds to
the Fell bundle structure on the Cuntz--Pimsner algebra~\(\CP\), that
is, \(\CP_1 = \Cst(H_1)\) is the groupoid \(\Cst\)\nb-algebra of the
subgroupoid~\(H_1\).  The canonical projection to~\(\CP_1\) is a
conditional expectation \(E\colon \CP\to \CP_1\).

\begin{definition}
  \label{def:gauge-invariant}
  We call an ideal \(I\subseteq \CP\)
  \emph{gauge-invariant} if~\(I\)
  is equal to the ideal in~\(\CP\)
  generated by \(E(I)\subseteq\CP_1\).
\end{definition}

If~\(G\)
is an Abelian group, so that there is a dual action of~\(\hat{G}\)
on~\(\CP\),
then standard arguments show that an ideal is ``gauge-invariant'' in
the sense of Definition~\ref{def:gauge-invariant} if and only if it is
invariant under the dual action of the compact group~\(\hat{G}\)
on~\(\CP\)
in the usual sense because
\(E(x) = \int_{\hat{G}} \alpha_\gamma(x) \,\diff \gamma\).

\begin{theorem}
  \label{the:gauge-invariant_ideal}
  The gauge-invariant ideals in~\(\CP\) are in bijection with open
  invariant subsets of~\(H^0\) or, equivalently, with open, hereditary
  and saturated subsets of~\(X\).
\end{theorem}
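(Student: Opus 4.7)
The plan is to decompose the claimed bijection through the Fell bundle description of~\(\CP\) from Theorem~\ref{the:Ore_colimit_Fell}. Call an ideal \(J\idealin\CP_1\) \emph{\(G\)\nb-invariant} if \(\CP_g\cdot J\cdot \CP_g^*\subseteq J\) for every \(g\in G\); this makes sense because \(\CP_g\cdot\CP_1\cdot \CP_g^*\subseteq\CP_1\). I shall establish two bijections and compose them: between open \(H\)\nb-invariant subsets of~\(H^0\) and \(G\)\nb-invariant ideals of~\(\CP_1\), and between \(G\)\nb-invariant ideals of~\(\CP_1\) and gauge-invariant ideals of~\(\CP\). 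Theorem~\ref{the:invariant_open_in_H} then translates open \(H\)\nb-invariant subsets of~\(H^0\) into open, hereditary, saturated subsets of~\(X\).

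For the first bijection, I use \(\CP_1\cong\Cst(H_1)\) from Theorem~\ref{the:groupoid_model}. Proposition~\ref{pro:H_etale_locally_compact} shows that \(H_1\) is a filtered increasing union of the proper étale equivalence relations~\(H^1_{p,p}\); hence \(\CP_1\) is a filtered colimit of \(\Cst(H^1_{p,p})\cong\Comp(\Hilm_p)\), and its closed two-sided ideals are in canonical order-preserving bijection with the open \(H_1\)\nb-invariant subsets of~\(H^0\). I then show that such an \(H_1\)\nb-invariant~\(U\) is \(H\)\nb-invariant if and only if the corresponding ideal \(J_U\) is \(G\)\nb-invariant. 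This amounts to identifying \(\CP_g\)\nb-conjugation on~\(\CP_1\) with the action on~\(H^0\) of the bisections \(H^1_{p,q}\subseteq H^1_g\) defined via \(\tilde\s_p,\tilde\s_q\) in Definition~\ref{def:covering_semidirect}: the inclusion \(\CP_g\cdot J_U\cdot \CP_g^*\subseteq J_U\) for each \((p,q)\in R_g\) is equivalent to \(\tilde\s_p^{-1}(U)\) and \(\tilde\s_q^{-1}(U)\) agreeing on the common domain, which in turn is exactly the invariance of~\(U\) under the groupoid~\(H\).

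For the second bijection, the map \(I\mapsto\overline{E(I)}\) sends gauge-invariant ideals of~\(\CP\) to \(G\)\nb-invariant ideals of~\(\CP_1\). Since \(E\) is a conditional expectation satisfying \(E(axb)=aE(x)b\) for \(a,b\in\CP_1\) and \(\norm{E}\le1\), the space \(\overline{E(I)}\) is a closed \(\CP_1\)\nb-bimodule inside~\(\CP_1\), hence an ideal; its \(G\)\nb-invariance follows from \(\xi I\xi^*\subseteq I\) and continuity of~\(E\), for any \(\xi\in\CP_g\). Conversely, given a \(G\)\nb-invariant ideal \(J\idealin\CP_1\), set \(\tilde J\defeq\overline{\CP\cdot J\cdot\CP}\); this is automatically an ideal of~\(\CP\), and once I verify \(\overline{E(\tilde J)}=J\) it follows that \(\tilde J\) is generated by \(E(\tilde J)\) and hence gauge-invariant. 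The key computation is that for \(\xi\in\CP_g\), \(a\in J\), \(\eta\in\CP_h\), the product \(\xi a\eta\) lies in~\(\CP_{gh}\), so \(E(\xi a\eta)=0\) unless \(gh=1\); in the surviving case \(h=g^{-1}\) we get \(\xi a\eta\in\CP_g\cdot J\cdot\CP_{g^{-1}}\subseteq J\) by \(G\)\nb-invariance.

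The main obstacle is the density needed in the second bijection: to conclude \(\overline{E(\tilde J)}=J\), I must show that finite sums \(\sum \xi_i a_i\eta_i\) with each \(\xi_i,\eta_i\) homogeneous for the Fell bundle grading are dense in~\(\tilde J\). This follows from the density of the algebraic direct sum \(\bigoplus_{g\in G}\CP_g\) in~\(\CP\), which holds by construction of the full sectional \(\Cst\)\nb-algebra as the completion of finitely supported sections; combined with the contractivity of~\(E\), this transfers the pointwise identity \(E(\sum\xi_i a_i\eta_i)\in J\) to the closure. A second subtlety is verifying that \(\CP_g\)\nb-conjugation on~\(\CP_1\) really implements the action of the bisections \(H^1_{p,q}\) on~\(H^0\); this reduces to a direct computation on elementary tensors \(\ket{\xi}\bra{\eta}\) with \(\xi\in\Hilm_p\), \(\eta\in\Hilm_q\), using the formula for~\(\vartheta_{p_1,p_2}\) from the proof of Theorem~\ref{the:Ore_colimit_Fell}.
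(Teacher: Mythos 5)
Your argument is correct in substance, but it is organised differently from the paper's proof, and the comparison is instructive. The paper stays at the groupoid level: it quotes \cite{Brown-Clark-Farthing-Sims:Simplicity}*{Corollary 5.9} to see that ideals of \(\Cst(H_1)\) are exactly the \(\Cst(H_1|_U)\) for open \(H_1\)\nb-invariant \(U\subseteq H^0\) (using that~\(H_1\) is an amenable equivalence relation, so restrictions to closed invariant sets stay effective), and then argues directly that \(E(I)=\Cst(H_1|_U)\) forces~\(U\) to be \(H\)\nb-invariant and \(I=\Cst(H|_U)\). You instead factor the statement through two cleaner pieces: an abstract Fell-bundle lemma — for any Fell bundle over a discrete group, \(I\mapsto\overline{E(I)}\) and \(J\mapsto\overline{\CP\cdot J\cdot\CP}\) are mutually inverse bijections between gauge-invariant ideals of~\(\CP\) and \(G\)\nb-invariant ideals of~\(\CP_1\) (your homogeneous-component computation of \(E(\xi a\eta)\) is exactly what is needed, and \(E(\xi x\xi^*)=\xi E(x)\xi^*\) for \(\xi\in\CP_g\) gives invariance of \(\overline{E(I)}\)) — plus a spatial description of the ideal lattice of \(\CP_1=\Cst(H_1)\) via the colimit over the proper equivalence relations~\(H^1_{p,p}\). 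This isolates the groupoid-specific content in the unit fibre and avoids citing the BCFS machinery; the price is that your colimit step is where the real work hides and is stated too quickly: it is not a formal property of filtered colimits, since one must know that each \(I\idealin\CP_1\) satisfies \(I=\overline{\bigcup_p\bigl(I\cap\Cst(H^1_{p,p})\bigr)}\), that ideals of each \(\Cst(H^1_{p,p})\) are spatial (properness/Morita--Rieffel equivalence with \(\Cont_0\) of the quotient), and that \(\Cst(H^1_{p,p}|_{U})=\Cst(H^1_{pq,pq}|_{U})\cap\Cst(H^1_{p,p})\), which uses injectivity of the inclusions \(\Cst(H^1_{p,p})\to\Cst(H^1_{pq,pq})\) and exactness of restriction to closed invariant subsets — precisely the amenability/properness input the paper outsources to its citation. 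Two small inaccuracies to repair in step (b): the sets \(H^1_{p,q}\) are not bisections (their range and source maps are only local homeomorphisms), and the maps \(\tilde{\s}_p\) are globally defined, so "agreeing on the common domain" is not the right formulation; for a fixed \((p,q)\in R_g\) the condition \(\CP_g J_U\CP_g^*\subseteq J_U\) says \(\rg\bigl(\s^{-1}(U)\cap H^1_{p,q}\bigr)\subseteq U\), and only by letting \(g\) range over all of~\(G\) (in particular \(q=1\)) does one recover \(\tilde{\s}_t^{-1}(U)=U\) for all \(t\in P\), which is \(H\)\nb-invariance as in the proof of Theorem~\ref{the:invariant_open_in_H}. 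With these points spelled out, your route gives a complete proof.
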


\begin{proof}
  The bijection between invariant subsets of~\(H^0\) and open,
  hereditary and saturated subsets of~\(X\) is
  Theorem~\ref{the:invariant_open_in_H}.  An open and invariant subset
  \(U\subseteq H^0\) corresponds to the ideal \(\Cst(H|_U)\subseteq
  \Cst(H)\), where~\(H|_U\) denotes the subgroupoid of~\(H\) with
  object space \(U\subseteq H^0\) and arrow space \(\rg^{-1}(U)=
  \s^{-1}(U) \subseteq H^1\).  This ideal is gauge-invariant with
  \(E(\Cst(H|_U)) = \Cst(H_1|_U)\).

  Conversely, let \(I\subseteq \Cst(H)\) be a gauge-invariant ideal.
  Then \(E(I)\) is an ideal in \(\Cst(H_1)\).  The groupoid~\(H_1\) is
  an approximately proper equivalence relation or a hyperfinite
  relation.  This implies that it is amenable and that any restriction
  of~\(H_1\) to an invariant closed subset remains effective.  Now
  \cite{Brown-Clark-Farthing-Sims:Simplicity}*{Corollary 5.9} implies
  that ideals in~\(\Cst(H_1)\) are in bijection with
  \(H_1\)\nb-invariant open subsets of~\(H^0\), where \(U\subseteq
  H^0\) corresponds to the ideal \(\Cst(H_1|_U)\).  This can only be
  of the form~\(E(I)\) for an ideal \(I\subseteq \Cst(H)\) if~\(U\) is
  invariant under the whole groupoid~\(H\), and then the ideal~\(I\)
  generated by \(\Cst(H_1|_U)\) is \(\Cst(H|_U)\).  Thus any
  gauge-invariant ideal~\(I\) is of the form \(\Cst(H|_U)\) for an
  open \(H\)\nb-invariant subset~\(U\) of~\(H^0\).
\end{proof}

\subsection{Invariant measures}
\label{sec:invariant_measures}

In the following, a ``measure'' on a locally compact space~\(X\)
means a Radon measure or, equivalently, a positive linear functional
on~\(\Contc(X)\).
We assume~\(X\)
to be second countable and~\(P\)
to be countable, so that~\(H\)
is second countable.  Let \(c\colon P\to(0,\infty)\)
be a homomorphism to the multiplicative group of positive real
numbers.  This extends to the group completion~\(G\)
and then to~\(H\),
by letting \(c|_{H^1_{p,q}} = c(p)/c(q)\).
This cocycle on~\(H\)
generates a \(1\)\nb-parameter
group of automorphisms of~\(\Cst(H)\),
see~\cite{Renault:Groupoid_Cstar}*{Section 5}.  The one-parameter
automorphism groups of~\(\Cst(H)\)
described above are not as special as it may seem:

\begin{proposition}
  \label{pro:automorphisms_fixing_diagonal}
  Let~\(H\) be effective.  An automorphism of~\(\Cst(H)\) that acts
  trivially on the \(\Cst\)\nb-subalgebra~\(\Cst(H_1)\) is given by
  pointwise multiplication with a homomorphism \(G\to\T\).  A
  one-parameter group of such automorphisms comes from a
  homomorphism \(c\colon P\to(0,\infty)\).
\end{proposition}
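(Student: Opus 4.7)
The plan is to exploit the effectiveness of $H$ together with Renault's theory of Cartan pairs. Since $H$ is Hausdorff, étale, and effective, the pair $(\Cont_0(H^0),\Cst(H))$ is a Cartan pair, and every automorphism of $\Cst(H)$ fixing $\Cont_0(H^0)$ pointwise is implemented by pointwise multiplication by a continuous $\T$-valued groupoid $1$-cocycle on~$H^1$. Since $\Cont_0(H^0)\subseteq \Cst(H_1)$, the given automorphism~$\alpha$ in particular fixes~$\Cont_0(H^0)$ pointwise, so I get a continuous $1$-cocycle $\lambda\colon H^1\to\T$ with $\alpha(n)=\lambda\cdot n$ for every normaliser~$n$ of $\Cont_0(H^0)$ (where the product is understood fibrewise on the bisection supporting~$n$).

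Concretely, first I would verify that $\alpha$ preserves the bisection attached to any normaliser: if $n\in C_c(U)$ for an open bisection $U\subseteq H^1$, then $\alpha(n)^*a\,\alpha(n)=\alpha(n^*an)=n^*an$ for $a\in\Cont_0(H^0)$ recovers the open bisection from $\alpha(n)$, so $\alpha(n)$ is again supported on~$U$. Writing $\alpha(n)=\lambda_n\cdot n$ with $\lambda_n\colon U\to\T$ continuous, and using that $\alpha$ is multiplicative on products of normalisers, these locally defined functions glue to a single continuous cocycle $\lambda\colon H^1\to\T$ satisfying $\lambda(h_1h_2)=\lambda(h_1)\lambda(h_2)$ for all composable pairs.

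Now I would use the stronger hypothesis that $\alpha$ fixes all of $\Cst(H_1)$ pointwise. Since $\Cst(H_1)$ is spanned by normalisers of~$\Cont_0(H^0)$ supported on open bisections contained in~$H_1$, this pointwise fixity forces $\lambda|_{H_1}\equiv 1$. Combined with the cocycle identity, for any $h\in H_g$ and composable $h_1,h_2\in H_1$ we get $\lambda(h_1 h h_2)=\lambda(h)$, so $\lambda$ is constant on $H_1$-double cosets inside each~$H_g$. Because the grading $\gamma\colon H\to G$ satisfies $H_1=\gamma^{-1}(1)$, this biinvariance implies that $\lambda$ descends to a continuous function $\mu$ on the quotient groupoid $H/H_1$, and the cocycle identity on~$\lambda$ then forces~$\mu$ to be a homomorphism $G\to\T$; the desired conclusion $\alpha(f)=\mu(g)f$ for $f\in\Cst(H_g)$ follows. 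The hardest step will be pinning down the assertion that $\lambda$ is actually constant on each fibre~$H_g$ of $\gamma$: this uses both the cocycle triviality on $H_1$ and the fact that, for our groupoid $H$ coming from an Ore action, the approximately proper equivalence relation $H_1=\bigcup_p H_{p,p}$ is sufficiently rich for each $H_g$ to be a single $H_1$-double coset.

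Finally, for a strongly continuous one-parameter group $(\alpha_t)_{t\in\R}$ of such automorphisms, each $\alpha_t$ corresponds to a character $\mu_t\colon G\to\T$. The continuity of $t\mapsto\alpha_t$ gives continuity of $t\mapsto\mu_t(g)$ for each $g\in G$, so $\mu_t(g)=\exp(it\beta(g))$ for a group homomorphism $\beta\colon G\to\R$. Setting $c=\exp\circ\beta|_P\colon P\to(0,\infty)$, one checks that extending~$c$ to a cocycle on~$H$ via $c|_{H^1_{p,q}}=c(p)/c(q)$ produces exactly the one-parameter group $(\alpha_t)$ by the construction recalled before the proposition, completing the identification.
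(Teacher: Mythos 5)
Your overall route is the paper's: Renault's reconstruction theory for effective, Hausdorff, \'etale groupoids shows that an automorphism fixing \(\Cont_0(H^0)\) pointwise is pointwise multiplication by a continuous groupoid homomorphism \(\lambda\colon H\to\T\), triviality on \(\Cst(H_1)\) forces \(\lambda|_{H_1}\equiv1\), and your final one-parameter-group step via \(\operatorname{Hom}(\R,\T)\cong\R\) is exactly the paper's.

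The gap is in the descent from \(\lambda\) to a homomorphism on \(G\). You base it on the claim that each \(H_g\) is a single \(H_1\)-double coset, and this is false. If \(h'=k_1hk_2\) with \(k_1,k_2\in H_1\), then \(\rg(h')\) lies in the \(H_1\)-class of \(\rg(h)\) and \(\s(h')\) in the \(H_1\)-class of \(\s(h)\); indeed \(H_1hH_1\) is exactly the set of arrows of degree \(g\) whose range and source lie in these two \(H_1\)-classes. Since \(H_1\) is an approximately proper equivalence relation, its classes are typically tiny compared with \(H^0\), so \(H_g\) splits into many double cosets. Concretely, for \(P=\N\) acting by the full one-sided \(2\)-shift (so \(\Cst(H)\cong\CP_2\)), the constant sequences \(x=000\cdots\) and \(y=111\cdots\) are fixed by \(\tilde{\s}_1\), so \((x,g,x)\) and \((y,g,y)\) lie in \(H_g\) for the generator \(g\) of \(G=\Z\), but they are in different \(H_1\)-double cosets because \(x\) and \(y\) are not lag-zero tail equivalent. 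So biinvariance alone only gives that \(\lambda\) is constant on the \(\rg\)- and \(\s\)-fibres of each \(H^1_{p_1,p_2}\); promoting this to constancy on all of \(H_g\) must use the continuity of \(\lambda\) (which you never invoke at this step) together with density-type richness of the \(H_1\)-classes. This is genuinely more than algebra: for the disjoint union of two effective regular graphs, the cocycle that is trivial on one component and equal to \(z^{n}\) on the degree-\(n\) part of the other (\(z\in\T\setminus\{1\}\)) is continuous and trivial on \(H_1\) but not constant on \(H_g\). The paper's own proof is also very brief at this point, but it does not rest on the double-coset claim; your proposed mechanism for the ``hardest step'' would fail even in the nicest examples, so this step needs a different argument.
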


\begin{proof}
  Renault's construction of a groupoid model for \(\Cst\)\nb-algebras
  with a Cartan subalgebra in~\cite{Renault:Cartan.Subalgebras} is
  natural.  More precisely, any automorphism of a
  twisted groupoid \(\Cst\)\nb-algebra
  \(\Cst(H,L)\)
  for an \'etale, effective, Hausdorff, locally compact groupoid~\(H\)
  and a Fell line bundle~\(L\)
  over~\(H\)
  that maps the subalgebra \(\Cont_0(H^0)\)
  into itself must come from an automorphism of the pair~\((H,L)\).

  The automorphism fixes \(\Cont_0(H^0)\subseteq \Cst(H_1)\) if and
  only if the induced automorphism of~\(H\) acts trivially on
  objects.  For an effective groupoid, this implies that the
  automorphism acts identically on the inverse semigroup of
  bisections.  Hence it is the identity automorphism.  Thus the only
  source of such automorphisms of~\(\Cst(H,L)\) are automorphisms of
  the Fell line bundle~\(L\); this is the trivial Fell line bundle
  in our case.

  Any automorphism of a Fell line bundle over a groupoid~\(H\) acts
  by pointwise multiplication with a continuous groupoid
  homomorphism \(H\to\T\).  Since we want the automorphism to act
  identically on~\(\Cst(H_1)\), this homomorphism must be constant
  on \(H_1\subseteq H\).  Hence it is constant on the
  subspaces~\(H_g\) and thus comes from a group homomorphism
  on~\(G\).  Since~\(G\) is the group completion of~\(P\),
  homomorphisms \(G\to\T\) are in bijection with homomorphisms
  \(P\to\T\).

  A one-parameter automorphism group is equivalent to a continuous
  homomorphism \(P\times\R\to\T\); this is equivalent to a
  continuous homomorphism \(P\to \operatorname{Hom}(\R,\T) \cong \R
  \cong (\R_{>0},\cdot)\).  Hence all one-parameter automorphism
  groups of~\(\Cst(H)\) that fix~\(\Cst(H_1)\) come from a
  homomorphism \(P\to(0,\infty)\).
\end{proof}

\begin{definition}
  \label{def:c-invariant_measure}
  A measure~\(\mu\) on~\(H^0\) is \emph{\(c\)\nb-invariant} if
  \(\mu(\rg(B)) = c(g) \mu(\s(B))\) for any bisection \(B\subseteq
  H^1_g\).  If \(c\equiv 1\), we speak simply of invariant measures.
\end{definition}

A \(c\)\nb-invariant
measure on~\(H^0\)
gives a KMS-weight on~\(\Cst(H)\)
for the corresponding automorphism group (with temperature~\(1\));
conversely, if~\(H\)
has trivial isotropy groups, then any KMS-weight on~\(\Cst(H)\)
for this \(1\)\nb-parameter
group of automorphisms is of this form for a unique \(c\)\nb-invariant
measure on~\(H^0\)
(see \cite{Renault:Groupoid_Cstar}*{Proposition 5.4}); this result is
generalised by Neshveyev~\cite{Neshveyev:KMS_non-principal} to KMS
states on groupoids with non-trivial isotropy, and by
Thomsen~\cite{Thomsen:KMS_weights} to KMS weights.  In particular,
invariant measures on~\(H^0\)
give tracial weights on~\(\Cst(H)\);
if the set of objects with non-trivial isotropy has measure zero for
all invariant measures on~\(H^0\), then all KMS weights are of this form.

We are going to describe invariant measures on~\(H^0\) in terms of
measures on~\(X\).  This requires two operations on measures:
push-forwards along continuous maps and pull-backs along local
homeomorphisms.  The first is standard: if \(f\colon X\to Y\) is a
continuous map and~\(\mu\) is a measure on~\(X\), then \(f_*\mu\) is
the measure on~\(Y\) defined by \(f_*\mu(B)= \mu(f^{-1}(B))\) for
Borel subsets \(B\subseteq Y\).  If \(f\colon X\to Y\) is a
local homeomorphism and~\(\lambda\) is a measure on~\(Y\), then
\(f^*\lambda\) is the measure on~\(X\) defined by
\[
f^*\lambda(B) = \int_Y \abs{f^{-1}(y) \cap B} \,\diff\lambda(y).
\]
If \(h\colon X\to\C\) is Borel measurable with compact support, then
\[
\int_X h(x) \,\diff (f^*\lambda)(x)
= \int_Y \sum_{\{x\in X\mid f(x)= y\}} h(x) \,\diff\lambda(y)
\]
because this holds for characteristic functions of Borel
subsets.

\begin{definition}
  \label{def:c-invariant_measure_tc}
  A measure~\(\lambda\) on~\(X\) is \emph{\(c\)\nb-invariant} if
  \(\lambda= c(p) (\rg_p)_*\s_p^*(\lambda)\) for all \(p\in P\).
\end{definition}

\begin{theorem}
  \label{the:invariant_measures}
  The map~\((\pi_1)_*\) induced by the projection \(\pi_1\colon
  H^0\to X\) is a bijection between \(c\)\nb-invariant measures
  on~\(H^0\) and \(c\)\nb-invariant measures on~\(X\).
\end{theorem}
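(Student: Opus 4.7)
The plan is to handle the two directions separately, after first reformulating the $c$-invariance of a measure $\mu$ on $H^0$ as the family of identities $\mu=c(p)\,\tilde{\s}_p^{*}\mu$ for every $p\in P$, where $\tilde{\s}_p^{*}$ is the pullback along the local homeomorphism $\tilde{\s}_p$ of Lemma~\ref{lem:tilde_s_local_homeo}. This reformulation holds because the bisections $\{(\eta,p,\tilde{\s}_p\eta):\eta\in V\}$ with $\tilde{\s}_p|_V$ injective cover $H^1_p$, so a general bisection in $H^1_{pq^{-1}}$ decomposes through $\tilde{\s}_p$ and $\tilde{\s}_q$. For the forward direction, the homeomorphism $(\pi_p,\tilde{\s}_p)\colon H^0\congto M_p\times_{\s_p,X,\pi_1}H^0$ together with $\rg_p\circ\pi_p=\pi_1$ identifies $\tilde{\s}_p^{-1}(\eta)\cap\pi_1^{-1}(U)$ with $\s_p^{-1}(\pi_1\eta)\cap\rg_p^{-1}(U)$; a direct calculation yields $(\pi_1)_{*}(\tilde{\s}_p^{*}\mu)=(\rg_p)_{*}\s_p^{*}(\pi_1)_{*}\mu$, and substituting $\mu=c(p)\tilde{\s}_p^{*}\mu$ gives the $c$-invariance of $(\pi_1)_{*}\mu$.

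For the converse, I first note that any $c$-invariant $\lambda$ on $X$ is automatically supported on the closed subset $X'\subseteq X$ of possible situations, since $\lambda=c(p)(\rg_p)_{*}\s_p^{*}\lambda$ has support in $\rg_p(M_p)$ for every $p\in P$. Hence one may pass to the system $(X',M'_p,\rg'_p,\s'_p)$, which has the same groupoid model and for which every $\rg'_p$ is surjective. Then I define Radon measures $\mu_p\defeq c(p)\,\s_p^{*}\lambda$ on $M_p$. A Fubini-style calculation using the isomorphism $M_{pq}\cong M_p\times_{\s_p,X,\rg_q}M_q$ together with the relation $(\rg_q)_{*}\s_q^{*}\lambda=c(q)^{-1}\lambda$ coming from the $c$-invariance of $\lambda$ yields the compatibility $(\rg_{p,q})_{*}\mu_{pq}=\mu_p$ for all $p,q\in P$. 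Since $H^0=\varprojlim_{\Cat_P}(M_p,\rg_{p,q})$ is an inverse limit over the filtered category $\Cat_P$ along proper connecting maps (Lemma~\ref{lem:H_locally_compact}), this compatible family extends uniquely to a Radon measure $\mu$ on $H^0$ satisfying $(\pi_p)_{*}\mu=\mu_p$ for every $p$; in particular $(\pi_1)_{*}\mu=\lambda$.

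I then verify $c$-invariance of $\mu$ by checking $\mu=c(p)\,\tilde{\s}_p^{*}\mu$ on the basis of opens $\pi_q^{-1}(U)$ from Lemma~\ref{lem:H_base}. The identity $\tilde{\s}_p^{-1}(\pi_q^{-1}(U))=\pi_{pq}^{-1}(\s_{p,q}^{-1}(U))$ from \eqref{eq:base_tilde_s_inverse}, combined with $(\pi_{pq})_{*}\mu=c(pq)\,\s_{pq}^{*}\lambda$ and the factorisation $\s_{pq}=\s_q\circ\s_{p,q}$, reduces the claim to a one-line computation using $c$-invariance of $\lambda$. Uniqueness of $\mu$ is automatic: any $c$-invariant $\mu'$ pushing to $\lambda$ on $X$ must push to $\mu_p$ on $M_p$ by iterating the forward-direction calculation, whence the projective-limit uniqueness applies. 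The main technical obstacle is the construction of the Radon measure $\mu$ as an inverse limit of the family $(\mu_p)$ along proper maps indexed by $\Cat_P$; under the standing countability hypotheses on $P$ and~$X$ this reduces via Lemma~\ref{lem:countable_filtered_cofinal} to a cofinal sequence and the classical inverse-limit theorem for Radon measures on a countable tower of locally compact spaces with proper transition maps. Alternatively, one realises $\mu$ directly by Riesz representation applied to the positive linear functional $\pi_p^{*}\varphi\mapsto\int_{M_p}\varphi\,d\mu_p$ on the dense subspace $\bigcup_p\pi_p^{*}\Contc(M_p)\subseteq\Contc(H^0)$, whose consistency across different choices of $p$ is exactly the compatibility already established.
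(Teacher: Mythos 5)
Your argument is essentially the paper's proof: you characterise $c$\nb-invariance of a measure on~$H^0$ through the generating bisections built from the maps~$\tilde{\s}_p$ (the paper phrases the same equivalence as $\mu_p=c(p)\s_p^*\mu_1$ for $\mu_p=(\pi_p)_*\mu$), construct~$\mu$ from the compatible family $c(p)\s_p^*\lambda$ via the dense subspace $\bigcup_p\pi_p^*(\Contc(M_p))\subseteq\Contc(H^0)$, and reduce the converse to the same Fubini computation $c(q)(\rg_{p,q})_*\s_{pq}^*\lambda=\s_p^*\lambda$ on $M_{pq}\cong M_p\times_X M_q$. The only loose point is the final verification: the cited preimage identity by itself computes $(\tilde{\s}_p)_*\mu$ rather than the pullback $\tilde{\s}_p^*\mu$, so, as in the paper, you should first refine to basis sets on which $\tilde{\s}_p$ (equivalently $\s_{p,q}$) is injective; on such sets the pullback equals the measure of the image and your one-line computation using $\s_{pq}=\s_q\circ\s_{p,q}$ goes through.
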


\begin{proof}
  A measure~\(\mu\) on~\(H^0\) gives measures \(\mu_p \defeq
  (\pi_p)_*(\mu)\) on~\(M_p\) for all \(p\in P\).  These are linked
  by \((\rg_{p,q})_*\mu_{pq} = \mu_p\) for all \(p,q\in P\) because
  \(\rg_{p,q}\circ\pi_{pq} = \pi_p\).  Conversely, we claim that any
  family of measures~\((\mu_p)_{p\in P}\) with
  \((\rg_{p,q})_*\mu_{pq} = \mu_p\) for all \(p,q\in P\) comes from
  a unique measure~\(\mu\) on~\(H^0\).  This is because
  \(\bigcup_{p\in P} \pi_p^*(\Contc(M_p))\) is a dense subspace
  in~\(\Contc(H^0)\).  The consistency condition
  \(\rg_{p,q}\circ\pi_{pq} = \pi_p\) implies that the positive
  linear maps \(\Contc(M_p) \to \C\), \(f\mapsto \int_{M_p}
  f(x)\,\diff\mu_p(x)\), well-define a positive linear map on
  \(\bigcup_{p\in P} \pi_p^*(\Contc(M_p))\).  This extends uniquely to
  a positive linear map
  on~\(\Contc(H^0)\).  Thus we may replace a measure~\(\mu\)
  on~\(H^0\) by a family of measures~\((\mu_p)_{p\in P}\) on the
  spaces~\(M_p\) whenever this is convenient.

  Next we claim that~\(\mu\) is \(c\)\nb-invariant if and only if
  \(\mu_p = c(p)\s_p^*\mu_1\) for all \(p\in P\).

  Assume first that~\(\mu\)
  is \(c\)\nb-invariant.
  Let \(p\in P\)
  and let \(U\subseteq M_p\)
  be an open, relatively compact subset such that
  \(\s_p|_U\colon U\to X\)
  is injective.  Then
  \[
  \{(x,p,\tilde{\s}_p x) \mid x\in \pi_p^{-1}(U)\}\subseteq H^1_{p,1}
  \]
  is a bisection with range~\(\pi_p^{-1}(U)\)
  and source~\(\pi_1^{-1}(\s_p(U))\)
  by Lemma~\ref{lem:H_base}.  Since~\(\mu\) is \(c\)\nb-invariant,
  \[
  \mu_p(U)
  = \mu(\pi_p^{-1}(U))
  = c(p) \mu(\pi_1^{-1}(\s_p(U)))
  = c(p) \mu_1(\s_p(U)).
  \]
  This equality also holds for all open subsets of~\(U\)
  because~\(\s_p\)
  is still injective on them.  This implies
  \(\mu_p(B) = c(p) \mu_1(\s_p(B))\)
  for all Borel subsets~\(B\) of~\(U\).

  If \(B\subseteq M_p\)
  is an arbitrary Borel subset, then we may cover~\(B\)
  by open, relatively compact subsets on which~\(\s_p\)
  is injective because~\(\s_p\)
  is a local homeomorphism.  Then we may decompose~\(B\)
  as a countable disjoint union \(B=\bigsqcup_{i\in\N} B_i\)
  of Borel subsets with \(B_i\subseteq U_i\)
  for open, relatively compact subsets~\(U_i\)
  such that~\(\s_p|_{U_i}\)
  is injective for all~\(i\).
  Applying the formula above for each~\(i\) gives
  \[
  \mu_p(B)
  = c(p) \int_X \abs{\s_p^{-1}(x) \cap B}\,\diff\mu_1(x)
  = c(p) \s_p^*\mu_1(B).
  \]
  Thus \(\mu_p = c(p)\s_p^*\mu_1\) if~\(\mu\) is \(c\)\nb-invariant.

  Conversely, assume \(\mu_p = c(p)\s_p^*\mu_1\).  Let
  \(g=pq^{-1}\in G\) and let \(V\subseteq H^1_g\) be a bisection.
  We may assume without loss of generality that \(V\subseteq
  H^1_{p,q}\), replacing \((p,q)\) by \((ph,qh)\) for some \(h\in
  P\) if necessary.  Decomposing~\(V\) into disjoint Borel subsets,
  we may further reduce to the case where \(\s(V)\subseteq H^0\) is
  one of the base neighbourhoods in Lemma~\ref{lem:H_base}; say,
  \(\s(V)= \pi_t^{-1}(U)\) for an open subset \(U\subseteq M_t\).
  Replacing \((p,q,t)\) by \((p a,q a,t b)\) for suitable \(a,b\in
  P\), we may arrange \(t = p\) by the Ore condition.  We assume
  this from now on.  Decomposing~\(V\) even further, we may arrange
  that~\(\s_p|_U\) is injective.  Then~\(V\) is the product of two
  bisections, one of the form \(\{(x,p,\tilde{\s}_{p}(x))\mid x\in
  \pi_p^{-1}(U)\}\), the other of the form
  \(\{(y,q,\tilde{\s}_{q}(y))^{-1} \mid y\in \rg(V)\}\); here
  \(\tilde{\s}_{q} \rg(V) = \tilde{\s}_{p} \pi_p^{-1}(U) =
  \pi_1^{-1}(\s_p(U))\).  So \(\rg(V)\) must be of the form
  \(\pi_q^{-1}(W)\) for some \(W\subseteq M_q\) for
  which~\(\s_q|_W\) is a homeomorphism onto~\(\s_p(U)\).

  The upshot of these reductions is that~\(\mu\)
  is \(c\)\nb-invariant
  once the \(c\)\nb-invariance
  condition holds for bisections of the form
  \(\{(x,p,\tilde{\s}_{p}(x))\mid x\in \pi_p^{-1}(U)\}\)
  with \(p\in P\)
  and an open subset \(U\subseteq M_p\).
  But this is exactly the condition \(\mu_p = c(p)\s_p^*\mu_1\).
  This finishes the proof of the claim.

  The claim shows that the family of measures~\((\mu_p)_{p\in P}\)
  and hence the measure~\(\mu\) is determined uniquely by the
  measure~\(\mu_1\) on \(M_1=X\) provided~\(\mu\) is
  \(c\)\nb-invariant.  If we are given a measure~\(\lambda\)
  on~\(X\), then \(\mu_p \defeq c(p)\s_p^*(\lambda)\) is the only
  possibility for a \(c\)\nb-invariant measure on~\(H^0\) with
  \(\mu_1=\lambda\).  This family of measures gives a measure
  on~\(H^0\) if and only if \((\rg_{p,q})_*\mu_{pq} = \mu_p\) for
  all \(p,q\in P\).  In particular, for \(p=1\) and \(q\in P\), this
  gives the condition \(c(q) (\rg_q)_*\s_q^*\lambda = (\rg_q)_*\mu_q
  = \lambda\), that is, \(\lambda\) has to be \(c\)\nb-invariant.

  The proof of the theorem will be finished by checking that
  \[
  c(p)\s_p^*\lambda = c(pq) (\rg_{p,q})_*\s_{pq}^* \lambda
  \]
  holds for all \(p,q\in P\)
  if~\(\lambda\)
  is a \(c\)\nb-invariant
  measure on~\(X\).
  Since \(c(pq)=c(p)c(q)\),
  we have to prove
  \(c(q) (\rg_{p,q})_*\s_{pq}^* \lambda = \s_p^*\lambda\).
  Let \(U\subseteq M_p\)
  be an open, relatively compact subset.  On the one hand,
  \((\s_p^*\lambda)(U) \defeq \int_X \abs{\s_p^{-1}(x)\cap U}
  \,\diff\lambda(x)\).
  Substituting \(c(q) (\rg_q)_*\s_q^*\lambda\)
  for~\(\lambda\), this becomes
  \begin{multline}
    \label{eq:c-invariance_1}
    c(q) \int_X \abs{\s_p^{-1}(x) \cap U}
    \,\diff(\rg_q)_*\s_q^*\lambda(x)
    = c(q) \int_{M_q} \abs{\s_p^{-1}(\rg_q(z)) \cap U}
    \,\diff\s_q^*\lambda(z)
    \\= c(q) \int_X \sum_{\{z\in M_q \mid \s_q(z)=x\}}
    \abs{\s_p^{-1}(\rg_q(z)) \cap U} \,\diff\lambda(x).
  \end{multline}
  On the other hand,
  \begin{equation}
    \label{eq:c-invariance_2}
    c(q)((\rg_{p,q})_*\s_{pq}^* \lambda)(U)
    = c(q) \s_{pq}^* \lambda(\rg_{p,q}^{-1}(U))
    = c(q) \int_X \abs{\s_{pq}^{-1}(x) \cap
      \rg_{p,q}^{-1}(U)}\,\diff\lambda(x).
  \end{equation}
  We may identify \(M_{pq} \cong M_p\times_{\s_p,X,\rg_q} M_q\) and
  \(\rg_{p,q}\) with the projection to the first factor.  Hence
  \(\s_{pq}^{-1}(x) \cap \rg_{p,q}^{-1}(U)\) is the set of pairs
  \((y,z)\) with \(y\in U\subseteq M_p\), \(z\in M_q\),
  \(\s_p(y)=\rg_q(z)\) and \(\s_q(z)=x\).  The cardinality of this set
  is the sum over all \(z\in \s_q^{-1}(x)\) of the cardinalities of
  \(\s_p^{-1}(\rg_q(z))\cap U\).  Hence the right hand sides in
  \eqref{eq:c-invariance_1} and~\eqref{eq:c-invariance_2} are equal,
  as desired.
\end{proof}

Similar arguments as in the end of the proof of
Theorem~\ref{the:invariant_measures} show the following:

\begin{remark}
  The map \(\alpha_p \defeq (\rg_p)_*(\s_p)^*\)
  on the space of measures on~\(X\)
  is an action of~\(P\),
  that is, \(\alpha_{pq} = \alpha_p \circ\alpha_q\)
  for all \(p,q\in P\).
  Therefore, if \(S\subseteq P\)
  generates~\(P\),
  then it suffices to check whether a measure is invariant for
  \(p\in S\).
\end{remark}

\begin{example}
  \label{exa:KMS_discrete_X}
  Assume that~\(X\)
  is discrete.  Then a (positive) measure on~\(X\)
  is simply a function \(\lambda\colon X\to\R_+\).
  We have
  \((\rg_p)_*(\s_p)^*(\lambda)(x) = \sum_{\rg_p(m)=x}
  \lambda(\s_p(m))\);
  a \(c\)-invariant
  measure is a non-negative joint eigenvector of these maps with
  eigenvalue \(p\mapsto c(p)\).
  If \(P=\N\),
  then it suffices to look at the generator \(1\in\N\),
  and the resulting map on the measures of~\(X\)
  is matrix-vector multiplication with the adjacency matrix of the
  graph described by~\(X_1\).
  For \(P=(\N^k,+)\),
  it suffices to look at the \(k\)~generators
  of~\(\N^k\).
  Thus the \(c\)\nb-invariant
  weights on~\(H^0\)
  are exactly those joint eigenvalues of the adjacency matrices that
  have a non-negative eigenvector.

  Any \(c\)\nb-invariant
  weight on~\(H^0\)
  gives a KMS state on \(\Cst(H)\)
  by first applying the conditional expectation
  \(\Cst(H)\to\Cont_0(H^0)\)
  that restricts functions to~\(H^0\)
  and then integrating against the measure on~\(H^0\).
  If \emph{all} isotropy groups are trivial, then these are all KMS
  states.  Neshveyev~\cite{Neshveyev:KMS_non-principal} shows how to
  describe KMS states if there is non-trivial isotropy.  A
  particularly simple case is if the set of points in~\(H^0\)
  with non-trivial isotropy is a null set for all \(c\)\nb-invariant
  measures on~\(H^0\).
  In that case, all KMS states for the \(\R\)\nb-action
  generated by the cocycle~\(c\)
  factor through the conditional expectation, so we get the same
  answer as if there were no isotropy groups.

  If the set of non-trivial isotropy has positive measure, then
  Neshveyev's description of KMS states uses essentially invariant
  measurable families of states on the \(\Cst\)\nb-algebras
  of the isotropy groups.  If there is lots of isotropy, these may be
  hard to classify.  The situation is tractable, however, if the set
  of points in~\(H^0\)
  with non-trivial isotropy is countable.

  This always happens for (separable) graph algebras.  The set of
  orbits of points in~\(H^0\)
  with non-trivial isotropy is in bijection with simple loops in the
  graph.  The orbit of a point in~\(H^0\)
  and the set of simple loops are both countable.  Thus for a graph
  \(\Cst\)\nb-algebra,
  we may get all KMS states in two steps.  First we find the
  non-negative eigenvectors of the adjacency matrix.  Then we examine
  which loops in the graph give atoms for the induced measure
  on~\(H^0\).
  For each such loop, we take a state on~\(\Cst(\Z)\),
  that is, a probability measure on the circle.  The KMS states for a
  non-negative eigenvector of the adjacency matrix are in bijection
  with the set of all such families of probability measures on the
  circle.

  In the case of (higher-rank) graphs, the measures on~\(H^0\)
  that we get are essentially product measures coming from a measure
  on the discrete set of vertices~\(V\)
  of the graph.  If the higher-rank graph is effective, then the set
  of points with non-trivial isotropy in~\(H^0\)
  is often also a null set for such product type measures on~\(H^0\).
  This explains why the descriptions of the KMS states or weights on
  (higher-rank) graph \(\Cst\)\nb-algebras
  in, for example, \cites{anHuef-Laca-Raeburn-Sims:KMS_finite,
    anHuef-Laca-Raeburn-Sims:KMS_higher, Thomsen:KMS_weights}, are
  often equivalent to what we found above, namely, those joint
  eigenvalues of the \(k\)~adjacency
  matrices that have a non-negative eigenvector.  As usual, our
  analysis only applies to regular (higher-rank) graphs.
\end{example}

\begin{remark}
  \label{rem:stably_finite}
  It is shown in~\cite{Haagerup-Thorbjornsen:Random_K} that any stably
  finite, exact, unital \(\Cst\)\nb-algebra has a tracial state.
  Conversely, a \(\Cst\)\nb-algebra with a tracial state is stably
  finite.  Thus a unital, exact \(\Cst\)\nb-algebra has a tracial
  state if and only if it is stably finite.  We have already remarked
  that~\(\Cst(H)\) is unital if and only if~\(X'\) is compact, and
  exactness follows if~\(H\) is amenable, compare
  Theorem~\ref{the:amenable_groupoid_model}.  Hence
  Theorem~\ref{the:invariant_measures} gives a necessary and
  sufficient criterion for~\(\Cst(H)\) to be stably finite in the case
  where~\(X'\) is compact metrisable, \(P\) is countable, and~\(H\) is
  amenable.
\end{remark}

\subsection{Local contractivity}
\label{sec:locally_contracting}

\begin{definition}[\cite{Anantharaman-Delaroche:Purely_infinite}]
  \label{def:locally_contracting}
  A locally compact, Hausdorff, \'etale groupoid~\(\Gr\) is
  \emph{locally contracting} if, for every non-empty open subset
  \(U\subset \Gr^0\), there are an open subset \(V\subseteq U\) and
  a bisection~\(B\) of~\(\Gr\) such that \(\overline{V}\subseteq
  \s(B)\) and \(\rg(B\overline{V})\subsetneq V\).
\end{definition}

Since~\(H\) is an étale, locally compact groupoid,
\cite{Anantharaman-Delaroche:Purely_infinite}*{Proposition 2.4}
shows that \(\Cred(H)\) is purely infinite (that is, every
hereditary \(\Cst\)\nb-subalgebra contains an infinite projection)
if~\(H\) is essentially free and locally contracting.  Actually, we
only need~\(H\) to be effective here by
\cite{Brown-Clark-Farthing-Sims:Simplicity}*{Lemma 3.1.(4)}: this is
exactly the condition in
\cite{Anantharaman-Delaroche:Purely_infinite}*{Lemma 2.3} that is
used in the proof of
\cite{Anantharaman-Delaroche:Purely_infinite}*{Proposition 2.4}.
We would, however, not expect local contractivity to be necessary
for~\(\Cred(H)\) to be purely infinite.

The following definition characterises when the groupoid~\(H\) is
locally contracting in terms of the original action by topological
correspondences:

\begin{definition}
  \label{def:locally_contracting_tc}
  An action~\((M_p,\sigma_{p,q})\)
  of an Ore monoid~\(P\)
  on a locally compact Hausdorff space~\(X\)
  by proper topological correspondences is \emph{locally contracting}
  if for any relatively compact, open subset \(S\subseteq X'\),
  there are \(n\in \N\)
  and \(p_i,q_i,a_i,b_i\in P\)
  and \(W_i\subseteq M'_{p_i}\times_{\s_{p_i},X',\s_{q_i}} M'_{q_i}\)
  for \(1\le i \le n\) such that
  \begin{enumerate}[label=\textup{(LC\arabic*)}]
  \item \label{en:contracting-sections5} \(p_1 a_1 = p_2 a_2 = \dotsb
    = p_n a_n = q_1 b_1 = \dotsb = q_n b_n\);

  \item \label{en:contracting-sections1} the restricted coordinate
    projections \(\pr_1\colon W_i\to M'_{p_i}\) and \(\pr_2\colon
    W_i\to M'_{q_i}\) are injective and open;

  \item \label{en:contracting-sections3} the subsets
    \[
    \pr_1(W_i) M'_{a_i}
    \defeq \{\sigma^{-1}_{p_i,a_i}(x_1,x_2)\mid
    x_1\in \pr_1(W_i),\ x_2\in M'_{a_i},
    \s_{p_i}(x_1)=\rg_{a_i}(x_2)\}
    \]
    of~\(M'_{p_i a_i}\) are disjoint, and so are the subsets
    \(\pr_2(W_i) M'_{b_i}\);

  \item \label{en:contracting-sections4}
    \(\overline{\bigsqcup_{i=1}^n \pr_1(W_i) M'_{a_i}} \subsetneq
    \bigsqcup_{i=1}^n \pr_2(W_i) M'_{b_i}\);

  \item \label{en:contracting-sections7}
    \(\rg_{q_i}\pr_2(W_i)\subseteq S\);

  \item \label{en:contracting-sections6} \(p_i q_i^{-1} \neq p_j
    q_j^{-1}\) in~\(G\) for \(i\neq j\).
  \end{enumerate}
  Here \(X'\subseteq X\) is the closed invariant subspace of
  possible situations, which is different from~\(X\) if some of the
  range maps are not surjective.
\end{definition}

The choice of \(a_i,b_i\) does not really matter: if the conditions
hold for one choice satisfying~\ref{en:contracting-sections5}, then
also for all others.  This follows from
Lemma~\ref{lem:unique_truncation} and the surjectivity of the
maps~\(\rg_{p,q}\) on the~\(M'_{pq}\).

Giving up some symmetry, we may use the Ore conditions to simplify the
data above slightly: we may assume either \(p_1=p_2=\dotsb = p_n\) and
\(a_1=a_2=\dotsb=a_n\) or \(q_1=q_2=\dotsb = q_n\) and
\(b_1=b_2=\dotsb = b_n\).

Condition~\ref{en:contracting-sections6} says, roughly speaking, that
we cannot make~\(n\) smaller by combining the data for any \(i\neq
j\).  This is its only role, and it could be left out.

\begin{theorem}
  \label{the:locally_contracting}
  The groupoid~\(H\) is locally contracting if and only if the
  action~\((M_p,\sigma_{p,q})\) is locally contracting.
\end{theorem}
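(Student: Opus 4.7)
The plan is to prove the equivalence via a precise dictionary between the data $(p_i, q_i, a_i, b_i, W_i)$ of Definition~\ref{def:locally_contracting_tc} and ``free'' bisections of~$H$. To each $W_i \subseteq M'_{p_i} \times_{\s_{p_i}, X', \s_{q_i}} M'_{q_i}$ with injective open projections I associate the bisection $B^{W_i} \defeq \{(x,y) \in H^1_{p_i,q_i} : (\pi_{p_i}(x), \pi_{q_i}(y)) \in W_i\}$. Its source and range are $\s(B^{W_i}) = \pi_{q_i}^{-1}(\pr_2(W_i)) = \pi_{p^*}^{-1}(\pr_2(W_i) M'_{b_i})$ and $\rg(B^{W_i}) = \pi_{p^*}^{-1}(\pr_1(W_i) M'_{a_i})$ for $p^* = p_i a_i = q_i b_i$. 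Under this dictionary, conditions \ref{en:contracting-sections1}, \ref{en:contracting-sections3} and~\ref{en:contracting-sections6} express precisely that $\bigsqcup_i B^{W_i}$ is a bisection of~$H$; condition~\ref{en:contracting-sections4} becomes $\overline{\rg(\bigsqcup_i B^{W_i})} \subsetneq \s(\bigsqcup_i B^{W_i})$; and~\ref{en:contracting-sections7} becomes $\s(\bigsqcup_i B^{W_i}) \subseteq \pi_1^{-1}(S)$. Each direction is thereby reduced to producing the right free bisections.

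For the forward direction, given a non-empty open $U \subseteq H^0$, I will first use Lemma~\ref{lem:H_base} to shrink to a basic open $\pi_t^{-1}(U_t) \subseteq U$ and, by further shrinking $U_t$, arrange that $\s_t$ restricts to a homeomorphism from $U_t$ onto a relatively compact open $S \subseteq X'$. Then $B_0 \defeq \{(x, t, \tilde{\s}_t(x)) : x \in \pi_t^{-1}(U_t)\}$ is a bisection giving a homeomorphism $\pi_1^{-1}(S) \cong \pi_t^{-1}(U_t) \subseteq U$. Applying the hypothesis that the action is locally contracting to this~$S$ yields $W_i$'s, and via the dictionary a free bisection $B^{\textup{free}} = \bigsqcup_i B^{W_i}$ with $\overline{\rg(B^{\textup{free}})} \subsetneq \s(B^{\textup{free}}) \subseteq \pi_1^{-1}(S)$; the strict containment automatically forces $\rg(B^{\textup{free}}) \subseteq \pi_1^{-1}(S)$ as well. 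Conjugating by~$B_0$ then yields the required contracting pair $(\tilde{B}, \tilde{V}) \defeq (B_0 B^{\textup{free}} B_0^{-1}, B_0(\rg(B^{\textup{free}})))$ inside~$U$.

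For the backward direction, given relatively compact open $S \subseteq X'$, I apply local contractivity of~$H$ to $U \defeq \pi_1^{-1}(S)$ to obtain $V$ and $B$ with $\overline{V} \subseteq \s(B)$ and $\rg(B\overline{V}) \subsetneq V$; write $\beta\colon \s(B) \to \rg(B)$ for the partial homeomorphism defined by~$B$. After shrinking $V$ so $\overline{V}$ is compact and replacing $B$ by $B \cap \s^{-1}(U)$, the compact set $B \cap \s^{-1}(\overline{V})$ decomposes via the $G$-grading into finitely many compact pieces $B_i$ with pairwise distinct $g_i = p_i q_i^{-1} \in G$; by compactness and filteredness of~$\Cat_P^{g_i}$ each $B_i \subseteq H^1_{p_i, q_i}$, and Ore provides a common $p^* = p_i a_i = q_i b_i$. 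The central step, and the main obstacle, is to extract from each $B_i$ a valid $W_i$ with injective open projections, even though $B_i$ need not be a free bisection. The key is a local freeness lemma: for $(x^0, y^0) \in B_i$, choose an open $U_{m_1^0} \subseteq M'_{p_i}$ containing $m_1^0 \defeq \pi_{p_i}(x^0)$ on which $\s_{p_i}$ is injective (local homeomorphism), with local inverse~$\sigma$; continuity of~$\beta$ provides an open $N_{y^0} \ni y^0$ with $\beta(N_{y^0}) \subseteq \pi_{p_i}^{-1}(U_{m_1^0})$, and then
\[
\s_{p_i}(\pi_{p_i}(\beta(y))) = \pi_1(\tilde{\s}_{p_i}(\beta(y))) = \pi_1(\tilde{\s}_{q_i}(y)) = \s_{q_i}(\pi_{q_i}(y))
\]
together with injectivity of $\s_{p_i}|_{U_{m_1^0}}$ forces $\pi_{p_i}(\beta(y)) = \sigma(\s_{q_i}(\pi_{q_i}(y)))$ to depend only on $\pi_{q_i}(y)$. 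Hence $B$ agrees on $\s^{-1}(N_{y^0})$ with the free bisection determined by an explicit $W^{(y^0)}$ with injective projections. By compactness, finitely many $N_{y^0_j}$ cover, and the local pieces agree on overlaps since both describe~$B$. The hard part is to ensure the glued $W_i \defeq \bigcup_j W^{(y^0_j)}$ retains injective projections: conflicts can arise when different local sections of $\s_{q_i}$ pick different preimages over the same base point in~$X$. I plan to resolve this by shrinking the cover so that $\s_{q_i}$ is injective on $\pr_2(W_i)$ or, failing this, by passing to a larger common multiple $(p_i c, q_i c)$ still in~$R_{g_i}$, where the richer parameterization at level $p_i c$ separates previously colliding arrows. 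Conditions \ref{en:contracting-sections4}--\ref{en:contracting-sections7} are then inherited from the original contraction $\rg(B\overline{V}) \subsetneq V$ together with $\s(B) \subseteq \pi_1^{-1}(S)$.
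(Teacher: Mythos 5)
Your forward direction is essentially the paper's argument: the dictionary between the data \((p_i,q_i,a_i,b_i,W_i)\) and cylinder bisections, the observation that \ref{en:contracting-sections1}, \ref{en:contracting-sections3}, \ref{en:contracting-sections6} make the union a bisection, and the transport from \(\pi_1^{-1}(S)\) into~\(U\) along an auxiliary bisection; your variant choice \(\tilde V = B_0(\rg(B^{\textup{free}}))\) does work, since \(\overline{\rg(B^{\textup{free}})}\) is compact and strictly contained in \(\s(B^{\textup{free}})\).

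The backward direction has a genuine gap at exactly the point you call ``the hard part,'' and neither proposed repair closes it. Your local freeness lemma only shows that \(B\) agrees with the cylinder bisection over \(W^{(y^0)}\) on the small source set \(\s^{-1}(N_{y^0})\); the \emph{full} cylinder over \(W^{(y^0)}\) is in general not contained in~\(B\), because its source \(\pi_{q_i}^{-1}(\pr_2 W^{(y^0)})\) contains histories with arbitrary more distant past and may leave \(\s(B)\). Hence genuine collisions in the glued set are possible: there may be \(y\in N_{y^0_j}\), \(y'\in N_{y^0_k}\) with \(\pi_{q_i}(y)\neq\pi_{q_i}(y')\) but \(\pi_{p_i}(\beta(y))=\pi_{p_i}(\beta(y'))\) (the ranges \(\beta(y)\neq\beta(y')\) may differ only beyond time~\(p_i\)), and dually with the roles of the two coordinates exchanged; both are compatible with \(B\) being a bisection, so \(W_i\defeq\bigcup_j W^{(y^0_j)}\) can fail \ref{en:contracting-sections1}. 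Shrinking the cover cannot in general make \(\s_{q_i}\) injective on \(\pr_2(W_i)\), since the compact piece of \(B\) to be covered may already contain sources whose \(q_i\)\nb-components are distinct points of \(M_{q_i}\) over the same point of~\(X\) (and that condition would anyway not give injectivity of \(\pr_2\)). Passing to a common multiple \((p_ic,q_ic)\) does not separate a colliding pair either: if you extend by cylinders, as in the last statement of Lemma~\ref{lem:H_bisections_base}, both pairs extend by the same tails \(y\), and \((m_1y,m_2y)\), \((m_1y,m_2'y)\) still share their first coordinate; if instead you re-read off the graph of \(B\) at level \((p_ic,q_ic)\), you would need one \(c\) separating all colliding pairs simultaneously, and no compactness argument supplies it, because the set of colliding pairs is not compact and the level needed to separate two ranges can grow without bound as the pairs approach the diagonal.

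The paper resolves this differently, and some form of its mechanism is what your outline is missing. In Lemma~\ref{lem:H_bisections_base} the local source neighbourhood is not arbitrary but a full basic cylinder \(\pi_{q_2}^{-1}(V'')\subseteq \s(B\cap H^1_{p,q})\) with \(\s_{q_2}|_{V''}\) injective, and the level is raised to a common multiple built from \(q\) and~\(q_2\); this yields local pieces that are full cylinder bisections \emph{contained in}~\(B\), and it also makes \(\pr_2(W)\) open, which for your arbitrary \(N_{y^0}\) is likewise unclear since \(\pi_{q_i}\) need not be an open map. When pieces with the same \(g_i\) are merged at a common level, the union of cylinders is still contained in the bisection~\(B\); since \(X=X'\) guarantees that every pair of the merged \(W\) is realised by actual arrows, the bisection property of~\(B\) then forces both projections to be injective and gives \ref{en:contracting-sections3} and~\ref{en:contracting-sections4}. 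Without the containment-in-\(B\) of the local cylinders, your verification of \ref{en:contracting-sections1}, \ref{en:contracting-sections3} and \ref{en:contracting-sections4} for the glued data remains unproved.
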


\begin{proof}
  To simplify notation, we replace~\(X\)
  by~\(X'\) throughout, so that the maps~\(\rg_p\) are all surjective.

  Call a subset~\(U\) of~\(H^0\) good if there are \(V,B\) as in
  Definition~\ref{def:locally_contracting}.  If \(U_1\subseteq U_2\)
  and~\(U_1\) is good, then so is~\(U_2\).  If \(T\subseteq
  H^1\) is a bisection, then \(\rg(T)\) is good if and only if
  \(\s(T)\) is good: given \(V,B\) for \(\s(T)\), then \(TV\) and
  \(TBT^{-1}\) will work for~\(\rg(T)\).  If \(U\subseteq H^0\) is
  an arbitrary open subset, then there are \(p\in P\) and
  \(U_p\subseteq M_p\) such that \(\pi_p^{-1}(U_p)\subseteq U\),
  \(\s_p|_{U_p}\) is injective, and~\(U_p\) is relatively compact
  and open; here we use that subsets of the form \(\pi_p^{-1}(U_p)\)
  for \(U_p\subseteq M_p\) open form a base
  (Lemma~\ref{lem:H_base}), that~\(M_p\) is locally compact, and
  that~\(\s_p\) is a local homeomorphism for each~\(p\)
  (Lemma~\ref{lem:tilde_s_local_homeo}).  Hence there is a bisection
  with range~\(\pi_p^{-1}(U_p)\) and source~\(\pi_1^{-1}(\s_p(U_p))\).
  Thus~\(U\) is good when~\(\pi_1^{-1}(\s_p(U_p))\) is good.  Summing
  up, all non-empty open subsets of~\(H^0\) are good once all
  non-empty subsets of the form~\(\pi_1^{-1}(S)\) with \(S\subseteq X\)
  relatively compact and open are good.

  Assume now that the action \((M_p,\sigma_{p,q})\) is locally
  contracting.  Let \(S\subset X\) be a non-empty, open, relatively
  compact set.  Pick \(n\in \N\) and \(p_i,q_i, a_i,b_i\in P\) and
  subsets \(W_i\subseteq M_{p_i}\times_{\s_{p_i},X,\s_{q_i}} M_{q_i}\) as in
  Definition~\ref{def:locally_contracting_tc}.  Let
  \[
  B_i\defeq \{(x_1 y, p_i q_i^{-1},x_2 y)\mid
  (x_1,x_2)\in W_i,\ y\in H^0,\ \s_{p_i}(x_1)=\pi_1(y)\};
  \]
  here \(x_1 y\) and~\(x_2 y\) are well-defined because
  \(\s_{p_i}(x_1)=\s_{q_i}(x_2) = \pi_1(y)\), which uses that
  \(\s_{p_i}(x_1)=\s_{q_i}(x_2)\) for \((x_1,x_2)\in W_i\).  Since
  \(\tilde{\s}_{p_i} (x_1 y) = y = \tilde{\s}_{q_i} (x_2 y)\), we
  have \(B_i\subseteq H^1_{p_i,q_i}\).
  Condition~\ref{en:contracting-sections1} ensures that the range
  and source maps are open and injective on each~\(B_i\), so these
  are bisections.  For different~\(i\), they have disjoint sources
  and ranges by~\ref{en:contracting-sections3}.  Hence \(B\defeq
  B_1\sqcup B_2\sqcup\dotsb\sqcup B_n\) is a bisection as well.
  Condition~\ref{en:contracting-sections4} gives \(\overline{\rg(B)}
  \subsetneq \s(B)\), and~\ref{en:contracting-sections7}
  gives \(\s(B) \subseteq \pi_1^{-1}(S)\).  Since we assume~\(S\)
  to be relatively compact, the closed subset~\(\overline{\rg(B)}\)
  is compact.  So there is an open subset~\(V\) with
  \(\overline{\rg(B)} \subsetneq V \subseteq \overline{V} \subseteq
  \s(B)\).  Hence \(\pi_1^{-1}(S)\) is good.  This implies
  that~\(H\) is locally contracting.

  Conversely, assume that~\(H\) is locally contracting.  Let
  \(S\subseteq X\) be a relatively compact, non-empty, open subset
  and let \(U\defeq \pi_1^{-1}(S)\).  Then~\(U\) is relatively compact,
  non-empty and open because~\(\pi_1\colon H^0\to X\) is surjective,
  continuous, and proper.  Since~\(H\) is locally contracting, there
  is a bisection \(B\subseteq H^1\) with \(\overline{\rg(B)}\subsetneq
  \s(B)\subseteq U\).  Next we have to analyse this bisection~\(B\)
  locally.  This does not yet use the special feature of~\(B\) and
  gives slightly more, namely, a ``base'' for the inverse semigroup of
  bisections of~\(H\).  By this we mean an inverse subsemigroup closed
  under finite intersections that covers~\(H^1\).  This can be used to
  study actions of~\(H\) on \(\Cst\)\nb-algebras as
  in~\cite{Buss-Meyer:Actions_groupoids}.

  \begin{lemma}
    \label{lem:H_bisections_base}
    Let \(B\subseteq H^1\) be a bisection and \(\eta\in B\).  Then
    there is an open bisection~\(B_\eta\) with \(\eta \in B_\eta
    \subseteq B\) that has the following form:
    \[
    B_\eta\defeq \{(x_1 y, p q^{-1},x_2 y)\mid
    (x_1,x_2)\in W,\ y\in H^0,\ \s_p(x_1)=\pi_1(y)\}
    \]
    for some \(p,q\in P\) and a subset \(W\subseteq
    M_p\times_{\s_p,X,\s_q} M_q\) such that \(\pr_1\colon W\to M_p\)
    and \(\pr_2\colon W\to M_q\) are injective and open.

    If \(p t = p_1\), \(q t = q_1\) and
    \[
    W_1 \defeq \{(x_1 y, x_2 y)\mid
    (x_1,x_2)\in W,\ y\in W_t,\ \s_p(x_1)=\rg_t(y)\}
    \]
    for some \(t\in P\), then the data \((p,q,W)\) and
    \((p_1,q_1,W_1)\) define the same bisection~\(B_\eta\).
  \end{lemma}

  \begin{proof}
    We write~\(B_*\)
    for the homeomorphism \(\s(B)\to\rg(B)\)
    induced by~\(B\);
    this is determined uniquely by \(B_*(\s(h))= \rg(h)\)
    for all \(h\in B\).  Let \(\xi\defeq \s(\eta)\).

    There are \(p,q\in P\)
    with \(\eta\in H^1_{p,q}\)
    because these subsets cover~\(H^1\).
    Since the subsets \(B\)
    and~\(H^1_{p,q}\)
    and the map \(\s\colon H^1\to H^0\)
    are open, \(\s(B\cap H^1_{p,q})\)
    is an open neighbourhood of~\(\xi\).
    Let \(h\in B\cap H^1_{p,q}\).
    Lemma~\ref{lem:tilde_s_local_homeo} shows that there are unique
    \(x_2\in M_{q}\)
    and \(y\in H^0\)
    with \(\s_{q}(x_2) = \pi_1(y)\)
    and \(\s(h) = x_2 y\),
    namely, \(x_2 = \pi_{q}(\s(h))\)
    and \(y= \tilde{\s}_{q} (\s(h))\).
    Similarly, there are unique \(x_1\in M_{p}\)
    and \(y'\in H^0\)
    with \(\s_{p}(x_1) = \pi_1(y')\)
    and \(\rg(h) = x_1 y'\),
    namely, \(x_1 = \pi_{p}(\rg(h))\)
    and \(y' = \tilde{\s}_{p}(\rg(h))\).
    The assumption \(h\in H^1_{p,q}\)
    means exactly that \(y = y'\).
    Thus \(h=(x_1 y,p q^{-1}, x_2 y)\)
    for \(x_1\in M_{p}\),
    \(x_2\in M_{q}\),
    \(y\in H^0\) with \(\s_{p}(x_1) = \s_{q}(x_2) = \pi_1(y)\).

    Since~\(\s_{p}\) is a local homeomorphism, there is an open
    neighbourhood~\(V\) around \(\pi_{p}(B_*\xi) \in M_{p}\) so that
    \(\s_{p}|_{V}\colon V \to X\) is a homeomorphism onto an open
    subset of~\(X\).  The subset
    \[
    V' \defeq \{x \in \s(B\cap H^1_{p,q}) \mid
    \pi_{p}(B_* x) \in V\}
    \]
    is still an open neighbourhood of~\(\xi\).  It contains a
    neighbourhood of~\(\xi\) that belongs to the base in
    Lemma~\ref{lem:H_base}.  This gives us \(q_2\in P\) and
    \(V''\subset M_{q_2}\) with \(\xi\in \pi_{q_2}^{-1}(V'')\subseteq
    V'\).  Since~\(\s_{q_2}\) is a local homeomorphism as well, we may
    further shrink~\(V''\) so that~\(\s_{q_2}|_{V''}\) becomes
    injective; we assume this.  The first Ore condition gives us
    \(a,b\in P\) with \(q a = q_2 b\).  Let \(p'\defeq p a\) and
    \(q'\defeq q a = q_2 b\).  Let
    \[
    W\defeq \{ (\pi_{p'}(\rg(h)),\pi_{q'}(\s(h))) \mid
    h\in B,\ \pi_{q_2}(\s(h))\in V''\}.
    \]
    We claim that \(p',q',W\) have the asserted properties.

    Let \(h\in B\) satisfy \(\pi_{q_2}(\s(h))\in V''\).  Write
    \(\s(h)= x_2 y\), \(\rg(h)=x_1 y\) with \(x_1\in M_p\), \(x_2\in
    M_q\), \(y\in H^0\) and \(\s_p(x_1)=\s_q(x_2)=\pi_1(y)\) as above.
    Then \(x_1\in V\), so~\(x_1\) is the unique point in~\(V\) with
    \(\s_p(x_1) = \s_q(x_2)\).  Now write \(y= y_1 y_2\) with \(y_1\in
    M_a\), \(y_2\in H^0\), \(\s_a(y_1)=\pi_1(y_2)\).  Then
    \(\s(h) = x_1 y_1 y_2\) and \(\rg(h) = x_2 y_1 y_2\).  The point
    \(x_1 y_1\in M_{p a} = M_{p'}\) is the unique one in
    \(\rg_{p,a}^{-1}(V)\) with \(\s_{p,a}(x_1 y_1) = y_1\).  This
    shows that \(\pi_{p'}(\rg(h))\) is determined by
    \(\pi_{q'}(\s(h))\) and that the map that takes
    \(\pi_{q'}(\s(h))\) to \(\pi_{q'}(\rg(h))\) is continuous.  Hence
    the second coordinate projection \(\pr_2\colon W\to M_{q'}\) is
    injective and open.  Since we assumed~\(\s_{q_2}\) to be injective
    on~\(V''\), the same holds for \(\pr_1\colon W\to M_{p'}\).

    If \((x_1 y, p q^{-1},x_2 y)\in B_\eta\), then \(\tilde{\s}_{p}
    (x_1 y) = y = \tilde{\s}_{q}(x_2 y)\), so \((x_1 y, p q^{-1},x_2
    y) \in H^1_{p,q}\) and \(B_\eta\subseteq H^1\).  Since
    \(\pr_1\colon W\to M_{p'}\) and \(\pr_2\colon W\to M_{q'}\) are
    injective and open, so are the maps \(\s,\rg\colon B_\eta\to
    H^0\) because~\(y_2\) is common to both source and range and
    \(x_1 y_1\) and \(x_2 y_1\) determine each other uniquely and
    continuously.  Thus~\(B_\eta\) is a bisection.  It is clear from
    the construction that \(\eta\in B_\eta\subseteq B\).  The last
    statement about different data giving the same~\(B_\eta\) is
    implicitly shown above.
  \end{proof}

  Fix \(\xi_0\in \s(B)\setminus \overline{\rg(B)}\).  The subset
  \(\overline{\rg(B)}\cup\{\xi_0\} \subseteq \s(B) \subseteq U =
  \pi_1^{-1}(S)\) is closed and contained in the relatively compact
  subset~\(\pi_1^{-1}(S)\), so it is compact.  For each \(x\in
  \overline{\rg(B)}\cup\{\xi_0\} \subseteq \s(B)\), there is a unique
  \(\eta\in B\) with \(\s(\eta)=x\).  Let \(B_\eta\subseteq B\) be
  some bisection as in Lemma~\ref{lem:H_bisections_base}.  The open
  subsets~\(\s(B_\eta)\) for these chosen bisections cover
  \(\overline{\rg(B)}\cup\{\xi_0\}\).  By compactness, this only needs
  finitely many of them, say, \(B_1,\dotsc,B_\ell\).  Let \(B'\defeq
  B_1\cup B_2\dotsc \cup B_\ell\subseteq B\).  This is still an open
  bisection, and it satisfies \(\overline{\rg(B')}\subseteq
  \overline{\rg(B)} \subseteq \s(B')\subseteq \s(B) \subseteq U\).
  Since \(\xi_0\in \s(B')\setminus \overline{\rg(B)}\), we get
  \(\overline{\rg(B')} \subsetneq \s(B')\).  Hence we may
  replace~\(B\) by~\(B'\).

  Each~\(B_i\) is constructed from certain \(p_i,q_i\in P\) and
  \(W_i\subseteq M_{p_i}\times M_{q_i}\) as in
  Lemma~\ref{lem:H_bisections_base}.  The
  Ore conditions also provide \(e,a_i,b_i\in P\) with \(p_i a_i = e =
  q_i b_i\) for all~\(i\).  Our construction so far already achieves
  the most crucial properties \ref{en:contracting-sections5},
  \ref{en:contracting-sections1}, \ref{en:contracting-sections4} and
  \ref{en:contracting-sections7}; the last one follows from \(\s(B')
  \subseteq \s(B)\subseteq \pi_1^{-1}(S)\).  So far, however, the subsets
  \(\s(B_i)\) and \(\rg(B_i)\) may still overlap, and some among the
  group elements~\(p_i q_i^{-1}\) may well be equal.  We now rectify
  this.

  Let \(p_i q_i^{-1} = p_j q_j^{-1}\) for some \(i\neq j\).  We may
  replace \((p_i,q_i)\) by \((p_i c,q_i c)\) for \(c\in P\) using the
  last statement in Lemma~\ref{lem:H_bisections_base}.  By this, we
  can arrange that \(p_i = p_j\) and \(q_i = q_j\), which we now
  assume.  Then
  \[
  B_i\cup B_j =
  \{(x_1 y, p q^{-1},x_2 y)\mid
  (x_1,x_2)\in W_i\cup W_j,\ y\in H^0,\ \s_p(x_1)=\pi_1(y)\}.
  \]
  We know that \(B_i\cup B_j\) is a bisection because it is contained
  in the bisection~\(B\).  Since \(X=X'\), this implies that the
  coordinate projections remain injective on \(W_i\cup W_j\).  They
  are open because this holds locally on \(W_i\) and~\(W_j\).
  Hence we may simply merge the data \((p_i,q_i,W_i)\) and
  \((p_j,q_j,W_j)\) into one piece, without changing the
  bisection~\(B'\).
  We go on merging part of our data until all group elements
  \(g_i \defeq p_i q_i^{-1}\)
  are different.  This achieves~\ref{en:contracting-sections6}.  By
  construction, \(B_i\subseteq H^1_{p_i,q_i}\subseteq H^1_{g_i}\),
  and these subsets are disjoint for different~\(g_i\).
  Hence the \(B_i\)
  are disjoint bisections.  Since their union remains a bisection,
  their ranges and sources must be disjoint, which
  is~\ref{en:contracting-sections3}.  Hence all the conditions in
  Definition~\ref{def:locally_contracting_tc} hold.
\end{proof}

Our condition for local contractivity is rather complicated because it
is necessary and sufficient.  If we restricted to \(n=1\)
in Definition~\ref{def:locally_contracting_tc}, the condition would no
longer be necessary, and it would depend on the \(G\)\nb-grading
on~\(H\),
so it would not be a property of the groupoid~\(H\)
alone.  Nevertheless, the case \(n=1\)
of our criterion gives a useful sufficient condition:

\begin{corollary}
  \label{cor:locally_contracting_1}
  The groupoid~\(H\)
  is locally contracting if, for any relatively compact, open subset
  \(S\subseteq X'\),
  there are \(p,q,a,b\in P\)
  with \(p a = q b\)
  and a subset \(W\subseteq M'_p\times_{\s_p,X',\s_q} M'_q\)
  such that the projections \(\pr_1\colon W\to M'_p\)
  and \(\pr_2\colon W\to M'_q\)
  are injective and open, \(\rg_q(\pr_2(W))\subseteq S\),
  and \(\overline{\pr_1(W)}\cdot M'_a\subsetneq \pr_2(W)\cdot M'_b\)
  as subsets of \(M'_{p a} = M'_{q b}\).
\end{corollary}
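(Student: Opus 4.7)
The plan is to deduce this corollary directly from Theorem~\ref{the:locally_contracting} by specialising Definition~\ref{def:locally_contracting_tc} to the case $n=1$. Given a relatively compact open subset $S\subseteq X'$ and data $p,q,a,b\in P$ and $W\subseteq M'_p\times_{\s_p,X',\s_q} M'_q$ as in the hypothesis, I will take $n=1$, set $p_1\defeq p$, $q_1\defeq q$, $a_1\defeq a$, $b_1\defeq b$, $W_1\defeq W$, and verify conditions \ref{en:contracting-sections5}--\ref{en:contracting-sections6}.

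Conditions \ref{en:contracting-sections5}, \ref{en:contracting-sections1}, and~\ref{en:contracting-sections7} are exactly the hypotheses $pa=qb$, the injectivity and openness of the coordinate projections, and $\rg_q(\pr_2(W))\subseteq S$, respectively. Conditions \ref{en:contracting-sections3} and~\ref{en:contracting-sections6} are vacuous for $n=1$: there are no distinct indices $i\neq j$ to compare, and a disjoint union of one set is trivially disjoint. The only point requiring comment is condition~\ref{en:contracting-sections4}, which demands $\overline{\pr_1(W)\cdot M'_a}\subsetneq \pr_2(W)\cdot M'_b$, whereas the hypothesis supplies the slightly different statement $\overline{\pr_1(W)}\cdot M'_a\subsetneq \pr_2(W)\cdot M'_b$.

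The step I expect to be the main (though still minor) point is therefore to show that
\[
\overline{\pr_1(W)\cdot M'_a}\subseteq \overline{\pr_1(W)}\cdot M'_a
\]
as subsets of $M'_{pa}$. This follows because $\sigma_{p,a}$ identifies $M'_{pa}$ with the fibre product $M'_p\times_{\s_p,X',\rg_a} M'_a$, so a net $\sigma_{p,a}^{-1}(x_\alpha,m_\alpha)$ with $x_\alpha\in\pr_1(W)$, $m_\alpha\in M'_a$ converging to a point $\sigma_{p,a}^{-1}(x,m)\in M'_{pa}$ forces $x_\alpha\to x$ in $M'_p$ and $m_\alpha\to m$ in $M'_a$; hence $x\in \overline{\pr_1(W)}$ and $m\in M'_a$ with $\s_p(x)=\rg_a(m)$, so $\sigma_{p,a}^{-1}(x,m)\in \overline{\pr_1(W)}\cdot M'_a$. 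Combining this inclusion with the hypothesis yields~\ref{en:contracting-sections4}.

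Once all six conditions of Definition~\ref{def:locally_contracting_tc} are verified, Theorem~\ref{the:locally_contracting} applies and gives that $H$ is locally contracting, completing the proof.
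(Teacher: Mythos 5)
Your proposal is correct and follows essentially the same route as the paper: specialise Definition~\ref{def:locally_contracting_tc} to \(n=1\) and apply Theorem~\ref{the:locally_contracting}, with the only real content being the comparison of \(\overline{\pr_1(W)\cdot M'_a}\) with \(\overline{\pr_1(W)}\cdot M'_a\). The paper even asserts these two sets are equal (closure acts on the first fibre-product factor only), whereas you prove just the inclusion \(\overline{\pr_1(W)\cdot M'_a}\subseteq\overline{\pr_1(W)}\cdot M'_a\), which indeed suffices for the strict inclusion in~\ref{en:contracting-sections4}.
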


\begin{proof}
  Besides restricting to \(n=1\), we also have rewritten
  \[
  \overline{\pr_1(W)\cdot M'_a} = \overline{\pr_1(W)}\cdot M'_a,
  \]
  using the closure of~\(\pr_1(W)\)
  in~\(M'_p\).
  This is because
  \(\pr_1(W)\cdot M'_a = \pr_1(W)\times_{\s_p,X',\s_q} M'_a \subseteq
  M'_p\times_{\s_p,X',\s_q} M'_a\),
  and for such a subset the closure operation clearly works on the
  first entry only.
\end{proof}

Specialising further, we may assume~\(W\)
to have the form \(W= W'_p \times_{\s_p,X',\s_q} W'_q\)
for open subsets \(W_p\subseteq M'_p\)
and \(W_q\subseteq M'_q\);
if~\(W\)
has this form, then we may choose \(W_p\)
and~\(W_q\)
minimal given~\(W\)
by taking \(W_p=\pr_1(W)\)
and \(W_q=\pr_2(W)\).
Then \(\s_p(W_p) = \s_q(W_q)\)
because \(\s_p\circ\pr_1=\s_q\circ\pr_2\) on~\(W\).

\begin{lemma}
  The maps \(\pr_1|_W\)
  and~\(\pr_2|_W\)
  are injective and open if and only if \(W_p\subseteq M'_p\)
  and \(W_q\subseteq M'_q\)
  are open and the restrictions \(\s_p|_{W_p}\)
  and~\(\s_q|_{W_q}\) are injective.
\end{lemma}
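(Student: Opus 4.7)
The proof plan is to verify the equivalence directly, using two facts: first, that $\s_p$ and $\s_q$ are local homeomorphisms (so an injective restriction to an open set is automatically an open embedding); and second, that for a fibre product over a map that restricts to a homeomorphism, the other projection becomes a homeomorphism onto an open subset.

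For the forward direction, assume $\pr_1|_W$ and $\pr_2|_W$ are injective and open. Then $W_p=\pr_1(W)$ and $W_q=\pr_2(W)$ are open as images of the open set $W$ under open maps. To see that $\s_p|_{W_p}$ is injective, suppose $x_1,x_1'\in W_p$ satisfy $\s_p(x_1)=\s_p(x_1')$. Since $W_p=\pr_1(W)$, there is some $x_2\in W_q$ with $(x_1,x_2)\in W$; then $\s_q(x_2)=\s_p(x_1)=\s_p(x_1')$, so $(x_1',x_2)\in W$ as well. Both $(x_1,x_2)$ and $(x_1',x_2)$ are mapped to $x_2$ by $\pr_2|_W$, and injectivity of $\pr_2|_W$ forces $x_1=x_1'$. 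The argument for $\s_q|_{W_q}$ is symmetric.

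For the backward direction, assume $W_p,W_q$ are open and $\s_p|_{W_p},\s_q|_{W_q}$ are injective. Since $\s_q$ is a local homeomorphism (being the source map of a proper topological correspondence, by Proposition~\ref{pro:proper_topological_correspondence}) and its restriction to the open subset~$W_q$ is injective, $\s_q|_{W_q}\colon W_q\to \s_q(W_q)\subseteq X'$ is a homeomorphism onto an open subset. For injectivity of $\pr_1|_W$: if $(x_1,x_2),(x_1,x_2')\in W$, then $\s_q(x_2)=\s_p(x_1)=\s_q(x_2')$ with $x_2,x_2'\in W_q$, so $x_2=x_2'$ by injectivity of $\s_q|_{W_q}$. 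For openness, the image $\pr_1(W)=W_p\cap \s_p^{-1}(\s_q(W_q))$ is an open subset of $M'_p$, and the map $W_p\cap \s_p^{-1}(\s_q(W_q))\to W$, $x_1\mapsto (x_1,(\s_q|_{W_q})^{-1}(\s_p(x_1)))$, is a continuous two-sided inverse to $\pr_1|_W$, so $\pr_1|_W$ is a homeomorphism onto its open image and hence an open map. The argument for $\pr_2|_W$ is symmetric, swapping the roles of $\s_p$ and $\s_q$.

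There is no real obstacle here: the lemma is essentially the standard statement that pulling back an open embedding along a continuous map yields an open embedding on the other factor. The only mild point to keep track of is that $W_p$ and $W_q$ are assumed from the outset to satisfy $W_p=\pr_1(W)$ and $W_q=\pr_2(W)$, so in the forward direction we do not need to argue about points of $W_p\setminus \pr_1(W)$, and in the backward direction we do not need to check anything about surjectivity of the coordinate projections beyond what is built into the definition of $W$ as a fibre product of $W_p$ and $W_q$.
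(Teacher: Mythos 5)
Your proof is correct and follows essentially the same argument as the paper: the forward direction transfers injectivity of one coordinate projection to injectivity of the other source restriction using the fibre-product form of \(W\) together with \(W_p=\pr_1(W)\), \(W_q=\pr_2(W)\), and the backward direction uses that an injective restriction of the local homeomorphisms \(\s_p,\s_q\) to an open set is a homeomorphism onto an open subset, so each projection is a homeomorphism onto an open set. The only differences (which projection pairs with which source map, and spelling out the inverse map instead of invoking \(\s_p(W_p)=\s_q(W_q)\)) are cosmetic.
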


\begin{proof}
  Assume first that \(\pr_1|_W\)
  and~\(\pr_2|_W\)
  are injective and open.  Then \(W_p= \pr_1(W)\)
  and \(W_q = \pr_2(W)\)
  are open.  If \(x\in W_p\),
  then any \(y\in W_q\)
  with \(\s_q(y) = \s_p(x)\)
  gives a point \((x,y)\in W\)
  with \(\pr_1(x,y)=x\).
  Hence~\(\s_q|_{W_q}\)
  is injective if~\(\pr_1|_W\)
  is injective.  Conversely, assume that \(W_p\subseteq M'_p\)
  and \(W_q\subseteq M'_q\)
  are open subsets and the restrictions \(\s_p|_{W_p}\)
  and~\(\s_q|_{W_q}\)
  are injective.  Since \(\s_p\)
  and~\(\s_q\)
  are local homeomorphisms, so are their restrictions to the open
  subsets \(W_p\)
  and~\(W_q\).
  Being also injective and continuous, they are homeomorphisms onto
  \(\s_p(W_p) = \s_q(W_q)\).
  Hence the coordinate projections on~\(W\)
  are homeomorphisms onto \(W_p\)
  and~\(W_q\),
  respectively.  Since these subsets are open, the coordinate
  projections are injective and open.
\end{proof}

\begin{corollary}
  \label{cor:locally_contracting_2}
  The groupoid~\(H\)
  is locally contracting if, for any relatively compact, open subset
  \(S\subseteq X'\),
  there are \(p,q,a,b\in P\)
  with \(p a = q b\)
  and open subsets \(W_p\subseteq M'_p\)
  and \(W_q\subseteq M'_q\)
  such that \(\s_p(W_p) = \s_q(W_q)\),
  the restrictions \(\s_p|_{M'_p}\)
  and \(\s_q|_{M'_q}\)
  are injective, \(\rg_q(W_q)\subseteq S\),
  and \(\overline{W_p} \cdot M'_a \subsetneq W_q \cdot M'_b\)
  as subsets of \(M'_{p a} = M'_{q b}\).
\end{corollary}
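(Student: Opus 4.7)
The plan is to deduce this corollary directly from Corollary~\ref{cor:locally_contracting_1} by producing the subset~$W$ required there from the given data $(p,q,a,b,W_p,W_q)$. Concretely, I would set
\[
W \defeq W_p \times_{\s_p,X',\s_q} W_q \;\subseteq\; M'_p\times_{\s_p,X',\s_q}M'_q,
\]
and then verify one by one the hypotheses of Corollary~\ref{cor:locally_contracting_1} for this~$W$.

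First I would check that $\pr_1(W)=W_p$ and $\pr_2(W)=W_q$. The inclusions $\pr_1(W)\subseteq W_p$ and $\pr_2(W)\subseteq W_q$ are trivial. For the reverse inclusion, given $x\in W_p$, the assumption $\s_p(W_p)=\s_q(W_q)$ provides some $y\in W_q$ with $\s_q(y)=\s_p(x)$, giving $(x,y)\in W$ with $\pr_1(x,y)=x$; and symmetrically for $\pr_2$. Next I would invoke the lemma immediately preceding the corollary, which says that under the assumptions that $W_p,W_q$ are open and $\s_p|_{W_p},\s_q|_{W_q}$ are injective, the restricted coordinate projections $\pr_1|_W$ and $\pr_2|_W$ are injective and open.

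With these identifications in place, the remaining hypotheses of Corollary~\ref{cor:locally_contracting_1} translate verbatim: the condition $\rg_q(\pr_2(W))\subseteq S$ becomes $\rg_q(W_q)\subseteq S$, which is assumed, and the strict inclusion
\[
\overline{\pr_1(W)}\cdot M'_a \;\subsetneq\; \pr_2(W)\cdot M'_b
\]
inside $M'_{pa}=M'_{qb}$ becomes $\overline{W_p}\cdot M'_a\subsetneq W_q\cdot M'_b$, which is again among the hypotheses. Thus Corollary~\ref{cor:locally_contracting_1} applies and yields the local contractivity of~$H$.

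I do not foresee any real obstacle here; the only point that requires a moment's thought is the equality $\pr_1(W)=W_p$ (and its symmetric counterpart), which genuinely uses the assumption $\s_p(W_p)=\s_q(W_q)$. Without that assumption, $\pr_1(W)$ could be a proper subset of~$W_p$ and one would only be able to apply Corollary~\ref{cor:locally_contracting_1} with $\pr_1(W),\pr_2(W)$ in place of $W_p,W_q$; the equality $\s_p(W_p)=\s_q(W_q)$ is precisely what makes the reduction from the more general Corollary~\ref{cor:locally_contracting_1} to this more concrete sufficient condition clean.
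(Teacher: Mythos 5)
Your proposal is correct and follows exactly the paper's route: the paper also proves this by specialising Corollary~\ref{cor:locally_contracting_1} to \(W = W_p \times_{\s_p,X',\s_q} W_q\), with the preceding lemma supplying the injectivity and openness of the coordinate projections. Your explicit check that \(\pr_1(W)=W_p\) and \(\pr_2(W)=W_q\) via \(\s_p(W_p)=\s_q(W_q)\) just spells out a detail the paper leaves implicit.
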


\begin{proof}
  Specialise Corollary~\ref{cor:locally_contracting_1} to the case
  where \(W=W_p\times_{\s_p,X',\s_q} W_q\).
\end{proof}

The role of \(a,b\in P\)
is only as padding to be able to compare \(\overline{W_p}\)
and~\(W_q\).
Since we restricted to possible histories, so that all range maps are
surjective, we have
\[
\overline{W_p} \cdot M'_a \subsetneq W_q \cdot M'_b
\quad\iff\quad
\overline{W_p} \cdot M'_{a t} \subsetneq W_q \cdot M'_{b t}
\]
for any \(t\in P\).
This shows that the choice of \(a,b\)
does not matter: if the criterion holds for one choice, it holds for
all choices.  The same is true, of course, for
Corollary~\ref{cor:locally_contracting_2}, and an analogous statement
holds for Theorem~\ref{the:locally_contracting}.

If~\(P\)
is a lattice-ordered Ore monoid, we therefore get equivalent criteria
if we take \(a= p^{-1}(p\vee q)\)
and \(b= q^{-1}(p\vee q)\)
in Corollary~\ref{cor:locally_contracting_1} or
Corollary~\ref{cor:locally_contracting_2}.  We write down the variant
of Corollary~\ref{cor:locally_contracting_2}:

\begin{corollary}
  \label{cor:locally_contracting_3}
  Assume that~\(P\)
  is Ore and lattice-ordered.  The groupoid~\(H\)
  is locally contracting if, for any relatively compact, open subset
  \(S\subseteq X'\),
  there are \(p,q\in P\)
  and open subsets \(W_p\subseteq M'_p\)
  and \(W_q\subseteq M'_q\)
  such that \(\s_p(W_p) = \s_q(W_q)\),
  the restrictions \(\s_p|_{M'_p}\)
  and \(\s_q|_{M'_q}\)
  are injective, \(\rg_q(W_q)\subseteq S\),
  and \(\overline{W_p} \cdot M'_a \subsetneq W_q \cdot M'_b\)
  as subsets of \(M'_{p a} = M'_{q b}\),
  where \(a= p^{-1}(p\vee q)\) and \(b= q^{-1}(p\vee q)\).
\end{corollary}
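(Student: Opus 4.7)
The plan is to derive Corollary \ref{cor:locally_contracting_3} as an immediate specialisation of Corollary \ref{cor:locally_contracting_2}. First I would observe that in a lattice-ordered monoid the join $p\vee q$ exists and, by the definition of a lattice-ordered Ore monoid, $a\defeq p^{-1}(p\vee q)$ and $b\defeq q^{-1}(p\vee q)$ are well-defined elements of $P$ satisfying $pa=qb=p\vee q$. Thus the hypothesis produces data of exactly the form required by Corollary \ref{cor:locally_contracting_2}, and that corollary yields the local contractivity of $H$.

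The only substantive content to check is the claim, already discussed in the paragraph preceding the statement, that the condition \(\overline{W_p}\cdot M'_a \subsetneq W_q\cdot M'_b\) is independent of the choice of $a,b\in P$ with $pa=qb$. I would unpack this as follows: since we have reduced to the possible situations $X'$, all the range maps $\rg_t\colon M'_t\to X'$ are surjective, so for any $t\in P$ the map $\rg_{pa,t}\colon M'_{pat}\to M'_{pa}$ is a surjective, continuous map whose composition with $\pr_1\colon M'_{pa}\times_{X',}M'_t\cong M'_{pat}\to M'_{pa}$ is the identity-on-first-coordinate projection. Hence the biconditional
\[
\overline{W_p}\cdot M'_a \subsetneq W_q\cdot M'_b
\quad\Longleftrightarrow\quad
\overline{W_p}\cdot M'_{at} \subsetneq W_q\cdot M'_{bt}
\]
holds for every $t\in P$: strictness is preserved under pulling back along a surjective continuous map, and the closures match because closure is carried out in the first factor only for fibre-product subsets of the form $Y\times_{X'} M'_t$. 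Given any two pairs $(a_1,b_1)$ and $(a_2,b_2)$ with $pa_i=qb_i$, the Ore condition provides $t_1,t_2\in P$ with $a_1t_1=a_2t_2$, and then $b_1t_1=b_2t_2$ by right cancellation in the group completion (or by going up further using \ref{enum:Ore2}); so both choices yield the same strict inclusion after this padding, hence the same strict inclusion to begin with.

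The main (and really only) obstacle is this independence-of-padding observation; once it is in place, the canonical choice $a=p^{-1}(p\vee q)$, $b=q^{-1}(p\vee q)$ is as good as any other, and Corollary \ref{cor:locally_contracting_2} applies verbatim. I would therefore write the proof as a two-sentence reduction: note that $a$ and $b$ are defined and satisfy $pa=qb$, invoke the independence just explained to conclude that the hypothesis of Corollary \ref{cor:locally_contracting_3} implies the hypothesis of Corollary \ref{cor:locally_contracting_2}, and quote Corollary \ref{cor:locally_contracting_2} to finish.
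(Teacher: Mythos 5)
Your proposal is correct and follows the paper's own route: the corollary is obtained by specialising Corollary~\ref{cor:locally_contracting_2} to $a=p^{-1}(p\vee q)$, $b=q^{-1}(p\vee q)$, with the independence-of-padding equivalence $\overline{W_p}\cdot M'_a \subsetneq W_q\cdot M'_b \iff \overline{W_p}\cdot M'_{at} \subsetneq W_q\cdot M'_{bt}$ justified, as in the paper, by surjectivity of the truncation maps after restricting to possible situations. Note only that for the stated one-directional sufficiency the specialisation alone already suffices (since $pa=qb=p\vee q$), so your careful padding-independence argument is needed merely to see, as the paper remarks, that nothing is lost by fixing this canonical choice.
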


The inclusion \(\overline{W_p} \cdot M'_a \subseteq W_q \cdot M'_b\)
for \(a= p^{-1}(p\vee q)\)
and \(b= q^{-1}(p\vee q)\)
means that for any \(x_p\in \overline{W_p}\)
there is \(x_q\in W_q\)
so that \(x_p\)
and~\(x_q\)
have a common extension; the minimal such extension lives in
\(M'_{p a} = M'_{q b}\).
The meaning of \(\overline{W_p} \cdot M'_a \neq W_q \cdot M'_b\)
is that some \(y\in W_q\)
has no common extension with any element of~\(\overline{M_p}\).

Corollary~\ref{cor:locally_contracting_3} involves the same
ingredients as the sufficient condition for the boundary path groupoid
of a higher-rank topological graph to be locally contracting in
\cite{Renault-Sims-Williams-Yeend:Uniqueness}*{Proposition 5.8}.  The
proof of \cite{Renault-Sims-Williams-Yeend:Uniqueness}*{Lemma 5.9}
shows that our sufficient criterion is more general than the one
in~\cite{Renault-Sims-Williams-Yeend:Uniqueness}.  We are, however,
not aware of any applications of the criterion
in~\cite{Renault-Sims-Williams-Yeend:Uniqueness}.

Our criteria have the advantage that we understand very well which
assumptions we have imposed on the contracting bisections for~\(H\).
In Corollary~\ref{cor:locally_contracting_1}, the only assumption is
that the bisection is contained in~\(H^1_g\)
for some \(g\in G\).
In Corollaries \ref{cor:locally_contracting_2}
and~\ref{cor:locally_contracting_3}, we assume this and that the
bisection has a product form.

\begin{bibdiv}
  \begin{biblist}
\bib{Abadie:Tensor}{article}{
  author={Abadie, Fernando},
  title={Tensor products of Fell bundles over discrete groups},
  status={eprint},
  note={\arxiv {funct-an/9712006}},
  date={1997},
}

\bib{Albandik-Meyer:Colimits}{article}{
  author={Albandik, Suliman},
  author={Meyer, Ralf},
  title={Colimits in the correspondence bicategory},
  date={2015},
  status={accepted},
  journal={M\"unster J. Math.},
  issn={1867-5778},
  note={\arxiv {1502.07771}},
}

\bib{Anantharaman-Delaroche:Purely_infinite}{article}{
  author={Anantharaman-Delaroche, Claire},
  title={Purely infinite $C^*$\nobreakdash -algebras arising from dynamical systems},
  journal={Bull. Soc. Math. France},
  volume={125},
  date={1997},
  number={2},
  pages={199--225},
  issn={0037-9484},
  review={\MRref {1478030}{99i:46051}},
  eprint={http://www.numdam.org/item?id=BSMF_1997__125_2_199_0},
}

\bib{Renault_AnantharamanDelaroche:Amenable_groupoids}{book}{
  author={Anantharaman-Delaroche, Claire},
  author={Renault, Jean},
  title={Amenable groupoids},
  series={Monographies de L'Enseignement Math\'ematique},
  volume={36},
  publisher={L'Enseignement Math\'ematique, Geneva},
  year={2000},
  pages={196},
  isbn={2-940264-01-5},
  review={\MRref {1799683}{2001m:22005}},
}

\bib{Andreka-Nemethi:Limits}{article}{
  author={Andr\'eka, H.},
  author={N\'emeti, I.},
  title={Direct limits and filtered colimits are strongly equivalent in all categories},
  conference={ title={Universal algebra and applications}, address={Warsaw}, date={1978}, },
  book={ series={Banach Center Publ.}, volume={9}, publisher={PWN, Warsaw}, },
  date={1982},
  pages={75--88},
  review={\MRref {738804}{85d:18003}},
  eprint={http://matwbn.icm.edu.pl/ksiazki/bcp/bcp9/bcp919.pdf},
}

\bib{Arveson:Continuous_Fock}{article}{
  author={Arveson, William},
  title={Continuous analogues of Fock space},
  journal={Mem. Amer. Math. Soc.},
  volume={80},
  date={1989},
  number={409},
  pages={iv+66},
  issn={0065-9266},
  review={\MRref {987590}{90f:47061}},
  doi={10.1090/memo/0409},
}

\bib{Arzumanian-Renault:Pseudogroups}{article}{
  author={Arzumanian, Victor},
  author={Renault, Jean},
  title={Examples of pseudogroups and their $C^*$\nobreakdash -algebras},
  conference={ title={Operator algebras and quantum field theory}, address={Rome}, date={1996}, },
  book={ publisher={Int. Press, Cambridge, MA}, },
  date={1997},
  pages={93--104},
  review={\MRref {1491110}{99a:46101}},
}

\bib{Bost-Connes:Hecke}{article}{
  author={Bost, Jean-Beno\^\i t},
  author={Connes, Alain},
  title={Hecke algebras, type III factors and phase transitions with spontaneous symmetry breaking in number theory},
  journal={Selecta Math. (N.S.)},
  volume={1},
  date={1995},
  number={3},
  pages={411--457},
  issn={1022-1824},
  review={\MRref {1366621}{96m:46112}},
  doi={10.1007/BF01589495},
}

\bib{Brown-Clark-Farthing-Sims:Simplicity}{article}{
  author={Brown, Jonathan Henry},
  author={Clark, Lisa Orloff},
  author={Farthing, Cynthia},
  author={Sims, Aidan},
  title={Simplicity of algebras associated to \'etale groupoids},
  journal={Semigroup Forum},
  volume={88},
  date={2014},
  number={2},
  pages={433--452},
  issn={0037-1912},
  review={\MRref {3189105}{}},
  doi={10.1007/s00233-013-9546-z},
}

\bib{Brownlowe-Raeburn:Exel-Larsen}{article}{
  author={Brownlowe, Nathan},
  author={Raeburn, Iain},
  title={Two families of Exel--Larsen crossed products},
  journal={J. Math. Anal. Appl.},
  volume={398},
  date={2013},
  number={1},
  pages={68--79},
  issn={0022-247X},
  review={\MRref {2984316}{}},
  doi={10.1016/j.jmaa.2012.08.026},
}

\bib{Buss-Meyer:Actions_groupoids}{article}{
  author={Buss, Alcides},
  author={Meyer, Ralf},
  title={Inverse semigroup actions on groupoids},
  status={accepted},
  note={\arxiv {1410.2051}},
  date={2014},
  issn={0035-7596},
  journal={Rocky Mountain J. Math.},
}

\bib{Buss-Meyer-Zhu:Higher_twisted}{article}{
  author={Buss, Alcides},
  author={Meyer, Ralf},
  author={Zhu, {Ch}enchang},
  title={A higher category approach to twisted actions on \(\textup C^*\)\nobreakdash -algebras},
  journal={Proc. Edinb. Math. Soc. (2)},
  date={2013},
  volume={56},
  number={2},
  pages={387--426},
  issn={0013-0915},
  doi={10.1017/S0013091512000259},
  review={\MRref {3056650}{}},
}

\bib{Clifford-Preston:Semigroups_I}{book}{
  author={Clifford, A. H.},
  author={Preston, G. B.},
  title={The algebraic theory of semigroups. Vol. I},
  series={Mathematical Surveys, No. 7},
  publisher={Amer. Math. Soc.},
  place={Providence, RI},
  date={1961},
  pages={xv+224},
  review={\MRref {0132791}{24 \#A2627}},
  doi={10.1090/surv/007.1},
}

\bib{Connes-Marcolli-Ramachadran:KMS}{article}{
  author={Connes, Alain},
  author={Marcolli, Matilde},
  author={Ramachandran, Niranjan},
  title={KMS states and complex multiplication},
  journal={Selecta Math. (N.S.)},
  volume={11},
  date={2005},
  number={3-4},
  pages={325--347},
  issn={1022-1824},
  review={\MRref {2215258}{2007a:11078}},
  doi={10.1007/s00029-005-0013-x},
}

\bib{Cuntz:axb_N}{article}{
  author={Cuntz, Joachim},
  title={$C^*$\nobreakdash -algebras associated with the $ax+b$-semigroup over $\mathbb {N}$},
  conference={ title={$K$\nobreakdash -theory and noncommutative geometry}, },
  book={ series={EMS Ser. Congr. Rep.}, publisher={Eur. Math. Soc., Z\"urich}, },
  date={2008},
  pages={201--215},
  review={\MRref {2513338}{2010i:46086}},
  doi={10.4171/060-1/8},
}

\bib{Cuntz-Echterhoff-Li:K-theory}{article}{
  author={Cuntz, Joachim},
  author={Echterhoff, Siegfried},
  author={Li, Xin},
  title={On the K-theory of the C*-algebra generated by the left regular representation of an Ore semigroup},
  journal={J. Eur. Math. Soc. (JEMS)},
  volume={17},
  date={2015},
  number={3},
  pages={645--687},
  issn={1435-9855},
  review={\MRref {3323201}{}},
  doi={10.4171/JEMS/513},
}

\bib{Cuntz-Vershik:Endomorphisms}{article}{
  author={Cuntz, Joachim},
  author={Vershik, Anatoly},
  title={$C^*$\nobreakdash -algebras associated with endomorphisms and polymorphisms of compact abelian groups},
  journal={Comm. Math. Phys.},
  volume={321},
  date={2013},
  number={1},
  pages={157--179},
  issn={0010-3616},
  review={\MRref {3089668}{}},
  doi={10.1007/s00220-012-1647-0},
}

\bib{Dadarlat:Cstar_vector}{article}{
  author={D\u {a}d\u {a}rlat, Marius},
  title={The $C^*$\nobreakdash -algebra of a vector bundle},
  journal={J. Reine Angew. Math.},
  volume={670},
  date={2012},
  pages={121--143},
  issn={0075-4102},
  review={\MRref {2982694}{}},
  doi={10.1515/CRELLE.2011.145},
}

\bib{Deaconu:Continuous_graphs}{article}{
  author={Deaconu, Valentin},
  title={Continuous graphs and $C^*$\nobreakdash -algebras},
  conference={ title={Operator theoretical methods}, address={Timi\c soara}, date={1998}, },
  book={ publisher={Theta Found., Bucharest}, },
  date={2000},
  pages={137--149},
  review={\MRref {1770320}{2001g:46123}},
}

\bib{Deaconu:Iterating}{article}{
  author={Deaconu, Valentin},
  title={Iterating the Pimsner construction},
  journal={New York J. Math.},
  volume={13},
  date={2007},
  pages={199--213},
  issn={1076-9803},
  review={\MRref {2336239}{2008g:46093}},
  eprint={http://nyjm.albany.edu/j/2007/13_199.html},
}

\bib{Dinh:Discrete_product}{article}{
  author={Dinh, Hung T.},
  title={Discrete product systems and their $C^*$\nobreakdash -algebras},
  journal={J. Funct. Anal.},
  volume={102},
  date={1991},
  number={1},
  pages={1--34},
  issn={0022-1236},
  review={\MRref {1138835}{93d:46097}},
  doi={10.1016/0022-1236(91)90133-P},
}

\bib{Doplicher-Roberts:Duals}{article}{
  author={Doplicher, Sergio},
  author={Roberts, John E.},
  title={Duals of compact Lie groups realized in the Cuntz algebras and their actions on $C^*$\nobreakdash -algebras},
  journal={J. Funct. Anal.},
  volume={74},
  date={1987},
  number={1},
  pages={96--120},
  issn={0022-1236},
  review={\MRref {901232}{89a:22011}},
  doi={10.1016/0022-1236(87)90040-1},
}

\bib{Exel:Partial_actions}{article}{
  author={Exel, Ruy},
  title={Partial actions of groups and actions of inverse semigroups},
  journal={Proc. Amer. Math. Soc.},
  volume={126},
  date={1998},
  number={12},
  pages={3481--3494},
  issn={0002-9939},
  review={\MRref {1469405}{99b:46102}},
  doi={10.1090/S0002-9939-98-04575-4},
}

\bib{Exel:Amenability}{article}{
  author={Exel, Ruy},
  title={Amenability for Fell bundles},
  journal={J. Reine Angew. Math.},
  volume={492},
  date={1997},
  pages={41--73},
  issn={0075-4102},
  review={\MRref {1488064}{99a:46131}},
  doi={10.1515/crll.1997.492.41},
}

\bib{Exel:Exact_groups_Fell}{article}{
  author={Exel, Ruy},
  title={Exact groups and Fell bundles},
  journal={Math. Ann.},
  volume={323},
  date={2002},
  number={2},
  pages={259--266},
  issn={0025-5831},
  review={\MRref {1913042}{2003e:46095}},
  doi={10.1007/s002080200295},
}

\bib{Exel:Look_endomorphism}{article}{
  author={Exel, Ruy},
  title={A new look at the crossed-product of a $C^*$\nobreakdash -algebra by an endomorphism},
  journal={Ergodic Theory Dynam. Systems},
  volume={23},
  date={2003},
  number={6},
  pages={1733--1750},
  issn={0143-3857},
  review={\MRref {2032486}{2004k:46119}},
  doi={10.1017/S0143385702001797},
}

\bib{Exel:Interactions}{article}{
  author={Exel, Ruy},
  title={Interactions},
  journal={J. Funct. Anal.},
  volume={244},
  date={2007},
  number={1},
  pages={26--62},
  issn={0022-1236},
  review={\MRref {2294474}{2007k:46092}},
  doi={10.1016/j.jfa.2006.03.023},
}

\bib{Exel:Inverse_combinatorial}{article}{
  author={Exel, Ruy},
  title={Inverse semigroups and combinatorial $C^*$\nobreakdash -algebras},
  journal={Bull. Braz. Math. Soc. (N.S.)},
  volume={39},
  date={2008},
  number={2},
  pages={191--313},
  issn={1678-7544},
  review={\MRref {2419901}{2009b:46115}},
  doi={10.1007/s00574-008-0080-7},
}

\bib{Exel:New_look}{article}{
  author={Exel, Ruy},
  title={A new look at the crossed product of a $C^*$\nobreakdash -algebra by a semigroup of endomorphisms},
  journal={Ergodic Theory Dynam. Systems},
  volume={28},
  date={2008},
  number={3},
  pages={749--789},
  issn={0143-3857},
  review={\MRref {2422015}{2010b:46144}},
  doi={10.1017/S0143385707000302},
}

\bib{Exel:Blend_Alloys}{article}{
  author={Exel, Ruy},
  title={Blends and alloys},
  journal={C. R. Math. Acad. Sci. Soc. R. Can.},
  volume={35},
  date={2013},
  number={3},
  pages={77--113},
  issn={0706-1994},
  review={\MRref {3136103}{}},
}

\bib{Exel-Renault:Semigroups_interaction}{article}{
  author={Exel, Ruy},
  author={Renault, Jean},
  title={Semigroups of local homeomorphisms and interaction groups},
  journal={Ergodic Theory Dynam. Systems},
  volume={27},
  date={2007},
  number={6},
  pages={1737--1771},
  issn={0143-3857},
  review={\MRref {2371594}{2009k:46124}},
  doi={10.1017/S0143385707000193},
}

\bib{Farthing-Patani-Willis:Crossed-product}{article}{
  author={Farthing, Cynthia},
  author={Patani, Nura},
  author={Willis, Paulette N.},
  title={The crossed-product structure of $C^*$\nobreakdash -algebras arising from topological dynamical systems},
  journal={J. Operator Theory},
  volume={70},
  date={2013},
  number={1},
  pages={191--210},
  issn={0379-4024},
  review={\MRref {3085824}{}},
  doi={10.7900/jot.2011may31.1924},
}

\bib{Fowler:Product_systems}{article}{
  author={Fowler, Neal J.},
  title={Discrete product systems of Hilbert bimodules},
  journal={Pacific J. Math.},
  volume={204},
  date={2002},
  number={2},
  pages={335--375},
  issn={0030-8730},
  review={\MRref {1907896}{2003g:46070}},
  doi={10.2140/pjm.2002.204.335},
}

\bib{Fowler-Sims:Product_Artin}{article}{
  author={Fowler, Neal J.},
  author={Sims, Aidan},
  title={Product systems over right-angled Artin semigroups},
  journal={Trans. Amer. Math. Soc.},
  volume={354},
  date={2002},
  number={4},
  pages={1487--1509},
  issn={0002-9947},
  review={\MRref {1873016}{2002j:18006}},
  doi={10.1090/S0002-9947-01-02911-7},
}

\bib{Haagerup-Thorbjornsen:Random_K}{article}{
  author={Haagerup, Uffe},
  author={Thorbj\o rnsen, Steen},
  title={Random matrices and $K$\nobreakdash -theory for exact $C^*$\nobreakdash -algebras},
  journal={Doc. Math.},
  volume={4},
  date={1999},
  pages={341--450},
  issn={1431-0635},
  review={\MRref {1710376}{2000g:46092}},
  eprint={http://www.math.uni-bielefeld.de/documenta/vol-04/12.html},
}

\bib{Higson-Kasparov:E_and_KK}{article}{
  author={Higson, Nigel},
  author={Kasparov, Gennadi},
  title={$E$\nobreakdash -theory and $KK$\nobreakdash -theory for groups which act properly and isometrically on Hilbert space},
  journal={Invent. Math.},
  volume={144},
  date={2001},
  number={1},
  pages={23--74},
  issn={0020-9910},
  review={\MRref {1821144}{2002k:19005}},
  doi={10.1007/s002220000118},
}

\bib{anHuef-Laca-Raeburn-Sims:KMS_finite}{article}{
  author={an Huef, Astrid},
  author={Laca, Marcelo},
  author={Raeburn, Iain},
  author={Sims, Aidan},
  title={KMS states on the $C^*$\nobreakdash -algebras of finite graphs},
  journal={J. Math. Anal. Appl.},
  volume={405},
  date={2013},
  number={2},
  pages={388--399},
  issn={0022-247X},
  review={\MRref {3061018}{}},
  doi={10.1016/j.jmaa.2013.03.055},
}

\bib{anHuef-Laca-Raeburn-Sims:KMS_higher}{article}{
  author={an Huef, Astrid},
  author={Laca, Marcelo},
  author={Raeburn, Iain},
  author={Sims, Aidan},
  title={KMS states on $C^*$\nobreakdash -algebras associated to higher-rank graphs},
  journal={J. Funct. Anal.},
  volume={266},
  date={2014},
  number={1},
  pages={265--283},
  issn={0022-1236},
  review={\MRref {3121730}{}},
  doi={10.1016/j.jfa.2013.09.016},
}

\bib{Ionescu-Muhly-Vega:Markov}{article}{
  author={Ionescu, Marius},
  author={Muhly, Paul S.},
  author={Vega, Victor},
  title={Markov operators and $C^*$\nobreakdash -algebras},
  journal={Houston J. Math.},
  volume={38},
  date={2012},
  number={3},
  pages={775--798},
  issn={0362-1588},
  review={\MRref {2970658}{}},
  eprint={http://www.math.uh.edu/~hjm/restricted/pdf38(3)/08ionescu.pdf},
}

\bib{Kang-Pask:Aperiodicity}{article}{
  author={Kang, Sooran},
  author={Pask, David},
  title={Aperiodicity and primitive ideals of row-finite $k$\nobreakdash -graphs},
  journal={Internat. J. Math.},
  volume={25},
  date={2014},
  number={3},
  pages={1450022, 25},
  issn={0129-167X},
  review={\MRref {3189779}{}},
  doi={10.1142/S0129167X14500220},
}

\bib{Katsura:from_correspondences}{article}{
  author={Katsura, Takeshi},
  title={A construction of $C^*$\nobreakdash -algebras from $C^*$\nobreakdash -correspondences},
  conference={ title={Advances in quantum dynamics}, address={South Hadley, MA}, date={2002}, },
  book={ series={Contemp. Math.}, volume={335}, publisher={Amer. Math. Soc., Providence, RI}, },
  date={2003},
  pages={173--182},
  review={\MRref {2029622}{2005k:46131}},
  doi={10.1090/conm/335},
}

\bib{Katsura:Cstar_correspondences}{article}{
  author={Katsura, Takeshi},
  title={On $C^*$\nobreakdash -algebras associated with $C^*$\nobreakdash -correspondences},
  journal={J. Funct. Anal.},
  volume={217},
  date={2004},
  number={2},
  pages={366--401},
  issn={0022-1236},
  review={\MRref {2102572}{2005e:46099}},
  doi={10.1016/j.jfa.2004.03.010},
}

\bib{Katsura:class_I}{article}{
  author={Katsura, Takeshi},
  title={A class of $C^*$\nobreakdash -algebras generalizing both graph algebras and homeomorphism $C^*$\nobreakdash -algebras. I. Fundamental results},
  journal={Trans. Amer. Math. Soc.},
  volume={356},
  date={2004},
  number={11},
  pages={4287--4322},
  issn={0002-9947},
  review={\MRref {2067120}{2005b:46119}},
  doi={10.1090/S0002-9947-04-03636-0},
}

\bib{Katsura:class_II}{article}{
  author={Katsura, Takeshi},
  title={A class of $C^*$\nobreakdash -algebras generalizing both graph algebras and homeomorphism $C^*$\nobreakdash -algebras. II. Examples},
  journal={Internat. J. Math.},
  volume={17},
  date={2006},
  number={7},
  pages={791--833},
  issn={0129-167X},
  review={\MRref {2253144}{2007e:46051}},
  doi={10.1142/S0129167X06003722},
}

\bib{Katsura:class_III}{article}{
  author={Katsura, Takeshi},
  title={A class of $C^*$\nobreakdash -algebras generalizing both graph algebras and homeomorphism $C^*$\nobreakdash -algebras. III. Ideal structures},
  journal={Ergodic Theory Dynam. Systems},
  volume={26},
  date={2006},
  number={6},
  pages={1805--1854},
  issn={0143-3857},
  review={\MRref {2279267}{2007j:46090}},
  doi={10.1017/S0143385706000320},
}

\bib{Kishimoto-Kumjian:Crossed_Cuntz}{article}{
  author={Kishimoto, Akitaka},
  author={Kumjian, Alexander},
  title={Crossed products of Cuntz algebras by quasi-free automorphisms},
  conference={ title={Operator algebras and their applications}, place={Waterloo, ON}, date={1994/1995}, },
  book={ series={Fields Inst. Commun.}, volume={13}, publisher={Amer. Math. Soc.}, place={Providence, RI}, },
  date={1997},
  pages={173--192},
  review={\MRref {1424962}{98h:46076}},
  eprint={http://wolfweb.unr.edu/~alex/pub/oncross.pdf},
}

\bib{Kumjian:Diagonals}{article}{
  author={Kumjian, Alexander},
  title={On $C^*$\nobreakdash -diagonals},
  journal={Canad. J. Math.},
  volume={38},
  date={1986},
  number={4},
  pages={969--1008},
  issn={0008-414X},
  review={\MRref {854149}{88a:46060}},
  doi={10.4153/CJM-1986-048-0},
}

\bib{Kumjian-Pask-Raeburn-Renault:Graphs}{article}{
  author={Kumjian, Alex},
  author={Pask, David},
  author={Raeburn, Iain},
  author={Renault, Jean},
  title={Graphs, groupoids, and Cuntz--Krieger algebras},
  journal={J. Funct. Anal.},
  volume={144},
  date={1997},
  number={2},
  pages={505--541},
  issn={0022-1236},
  review={\MRref {1432596}{98g:46083}},
  doi={10.1006/jfan.1996.3001},
}

\bib{Kwasniewski-Szymanski:Ore}{article}{
  author={Kwa\'sniewski, Bartosz Kosma},
  author={Szyma\'nski, Wojciech},
  title={Topological aperiodicity for product systems over semigroups of Ore type},
  date={2013},
  status={eprint},
  note={\arxiv {1312.7472}},
}

\bib{Laca:Endomorphisms_back}{article}{
  author={Laca, Marcelo},
  title={From endomorphisms to automorphisms and back: dilations and full corners},
  journal={J. London Math. Soc. (2)},
  volume={61},
  date={2000},
  number={3},
  pages={893--904},
  issn={0024-6107},
  review={\MRref {1766113}{2002a:46094}},
  doi={10.1112/S0024610799008492},
}

\bib{Larsen:Crossed_abelian}{article}{
  author={Larsen, Nadia S.},
  title={Crossed products by abelian semigroups via transfer operators},
  journal={Ergodic Theory Dynam. Systems},
  volume={30},
  date={2010},
  number={4},
  pages={1147--1164},
  issn={0143-3857},
  review={\MRref {2669415}{2011j:46114}},
  doi={10.1017/S0143385709000509},
}

\bib{Leinster:Basic_Bicategories}{article}{
  author={Leinster, Tom},
  title={Basic Bicategories},
  date={1998},
  status={eprint},
  note={\arxiv {math/9810017}},
}

\bib{Lewin-Sims:Aperiodicity}{article}{
  author={Lewin, Peter},
  author={Sims, Aidan},
  title={Aperiodicity and cofinality for finitely aligned higher-rank graphs},
  journal={Math. Proc. Cambridge Philos. Soc.},
  volume={149},
  date={2010},
  number={2},
  pages={333--350},
  issn={0305-0041},
  review={\MRref {2670219}{2012b:46106}},
  doi={10.1017/S0305004110000034},
}

\bib{Li:Ring_Cstar}{article}{
  author={Li, Xin},
  title={Ring $C^*$\nobreakdash -algebras},
  journal={Math. Ann.},
  volume={348},
  date={2010},
  number={4},
  pages={859--898},
  issn={0025-5831},
  review={\MRref {2721644}{2012a:46100}},
  doi={10.1007/s00208-010-0502-x},
}

\bib{Li:Semigroup_amenability}{article}{
  author={Li, Xin},
  title={Semigroup $\textup C^*$\nobreakdash -algebras and amenability of semigroups},
  journal={J. Funct. Anal.},
  volume={262},
  date={2012},
  number={10},
  pages={4302--4340},
  issn={0022-1236},
  review={\MRref {2900468}{}},
  doi={10.1016/j.jfa.2012.02.020},
}

\bib{Li:Nuclearity_semigroup}{article}{
  author={Li, Xin},
  title={Nuclearity of semigroup $C^*$\nobreakdash -algebras and the connection to amenability},
  journal={Adv. Math.},
  volume={244},
  date={2013},
  pages={626--662},
  issn={0001-8708},
  review={\MRref {3077884}{}},
  doi={10.1016/j.aim.2013.05.016},
}

\bib{MacLane:Categories}{book}{
  author={MacLane, Saunders},
  title={Categories for the working mathematician},
  note={Graduate Texts in Mathematics, Vol. 5},
  publisher={Springer},
  place={New York},
  date={1971},
  pages={ix+262},
  review={\MRref {0354798}{50\,\#7275}},
  doi={10.1007/978-1-4757-4721-8},
}

\bib{Meyer:Generalized_Fixed}{article}{
  author={Meyer, Ralf},
  title={Generalized fixed point algebras and square-integrable groups actions},
  journal={J. Funct. Anal.},
  volume={186},
  date={2001},
  number={1},
  pages={167--195},
  issn={0022-1236},
  review={\MRref {1863296}{2002j:46086}},
  doi={10.1006/jfan.2001.3795},
}

\bib{Muhly-Solel:Tensor}{article}{
  author={Muhly, Paul S.},
  author={Solel, Baruch},
  title={Tensor algebras over $C^*$\nobreakdash -correspondences: representations, dilations, and $C^*$\nobreakdash -envelopes},
  journal={J. Funct. Anal.},
  volume={158},
  date={1998},
  number={2},
  pages={389--457},
  issn={0022-1236},
  review={\MRref {1648483}{99j:46066}},
  doi={10.1006/jfan.1998.3294},
}

\bib{Muhly-Tomforde:Quivers}{article}{
  author={Muhly, Paul S.},
  author={Tomforde, Mark},
  title={Topological quivers},
  journal={Internat. J. Math.},
  volume={16},
  date={2005},
  number={7},
  pages={693--755},
  issn={0129-167X},
  review={\MRref {2158956}{2006i:46099}},
  doi={10.1142/S0129167X05003077},
}

\bib{Muhly-Renault-Williams:Equivalence}{article}{
  author={Muhly, Paul S.},
  author={Renault, Jean N.},
  author={Williams, Dana P.},
  title={Equivalence and isomorphism for groupoid \(C^*\)\nobreakdash -algebras},
  journal={J. Operator Theory},
  volume={17},
  date={1987},
  number={1},
  pages={3--22},
  issn={0379-4024},
  review={\MRref {873460}{88h:46123}},
  eprint={http://www.theta.ro/jot/archive/1987-017-001/1987-017-001-001.html},
}

\bib{Murphy:Crossed_semigroups}{article}{
  author={Murphy, Gerard J.},
  title={Crossed products of $C^*$\nobreakdash -algebras by semigroups of automorphisms},
  journal={Proc. London Math. Soc. (3)},
  volume={68},
  date={1994},
  number={2},
  pages={423--448},
  issn={0024-6115},
  review={\MRref {1253510}{94m:46105}},
  doi={10.1112/plms/s3-68.2.423},
}

\bib{Neshveyev:KMS_non-principal}{article}{
  author={Neshveyev, Sergey V.},
  title={KMS states on the $C^*$\nobreakdash -algebras of non-principal groupoids},
  journal={J. Operator Theory},
  volume={70},
  date={2013},
  number={2},
  pages={513--530},
  issn={0379-4024},
  review={\MRref {3138368}{}},
  doi={10.7900/jot.2011sep20.1915},
}

\bib{Paterson:Groupoids}{book}{
  author={Paterson, Alan L. T.},
  title={Groupoids, inverse semigroups, and their operator algebras},
  series={Progress in Mathematics},
  volume={170},
  publisher={Birkh\"auser Boston Inc.},
  place={Boston, MA},
  date={1999},
  pages={xvi+274},
  isbn={0-8176-4051-7},
  review={\MRref {1724106}{2001a:22003}},
  doi={10.1007/978-1-4612-1774-9},
}

\bib{Pimsner:Generalizing_Cuntz-Krieger}{article}{
  author={Pimsner, Mihai V.},
  title={A class of $C^*$\nobreakdash -algebras generalizing both Cuntz--Krieger algebras and crossed products by~$\mathbf Z$},
  conference={ title={Free probability theory}, address={Waterloo, ON}, date={1995}, },
  book={ series={Fields Inst. Commun.}, volume={12}, publisher={Amer. Math. Soc.}, place={Providence, RI}, },
  date={1997},
  pages={189--212},
  review={\MRref {1426840}{97k:46069}},
}

\bib{Raeburn:Graph_algebras}{book}{
  author={Raeburn, Iain},
  title={Graph algebras},
  series={CBMS Regional Conference Series in Mathematics},
  volume={103},
  publisher={Amer. Math. Soc.},
  place={Providence, RI},
  date={2005},
  pages={vi+113},
  isbn={0-8218-3660-9},
  review={\MRref {2135030}{2005k:46141}},
}

\bib{Raeburn-Sims:Product_graphs}{article}{
  author={Raeburn, Iain},
  author={Sims, Aidan},
  title={Product systems of graphs and the Toeplitz algebras of higher-rank graphs},
  journal={J. Operator Theory},
  volume={53},
  date={2005},
  number={2},
  pages={399--429},
  issn={0379-4024},
  review={\MRref {2153156}{2006d:46073}},
  eprint={http://www.theta.ro/jot/archive/2005-053-002/2005-053-002-010.html},
}

\bib{Raeburn-Sims-Yeend:Higher_graph}{article}{
  author={Raeburn, Iain},
  author={Sims, Aidan},
  author={Yeend, Trent},
  title={Higher-rank graphs and their $C^*$\nobreakdash -algebras},
  journal={Proc. Edinb. Math. Soc. (2)},
  volume={46},
  date={2003},
  number={1},
  pages={99--115},
  issn={0013-0915},
  review={\MRref {1961175}{2004f:46068}},
  doi={10.1017/S0013091501000645},
}

\bib{Renault:Groupoid_Cstar}{book}{
  author={Renault, Jean},
  title={A groupoid approach to $\textup C^*$\nobreakdash -algebras},
  series={Lecture Notes in Mathematics},
  volume={793},
  publisher={Springer},
  place={Berlin},
  date={1980},
  pages={ii+160},
  isbn={3-540-09977-8},
  review={\MRref {584266}{82h:46075}},
  doi={10.1007/BFb0091072},
}

\bib{Renault:Representations}{article}{
  author={Renault, Jean},
  title={Repr\'esentation des produits crois\'es d'alg\`ebres de groupo\"\i des},
  journal={J. Operator Theory},
  volume={18},
  date={1987},
  number={1},
  pages={67--97},
  issn={0379-4024},
  review={\MRref {912813}{89g:46108}},
  eprint={http://www.theta.ro/jot/archive/1987-018-001/1987-018-001-005.html},
}

\bib{Renault:Radon-Nikodym_AP}{article}{
  author={Renault, Jean},
  title={The Radon--Nikod\'ym problem for approximately proper equivalence relations},
  journal={Ergodic Theory Dynam. Systems},
  volume={25},
  date={2005},
  number={5},
  pages={1643--1672},
  issn={0143-3857},
  review={\MRref {2173437}{2006h:46065}},
  doi={10.1017/S0143385705000131},
}

\bib{Renault:Cartan.Subalgebras}{article}{
  author={Renault, Jean},
  title={Cartan subalgebras in $C^*$\nobreakdash -algebras},
  journal={Irish Math. Soc. Bull.},
  number={61},
  date={2008},
  pages={29--63},
  issn={0791-5578},
  review={\MRref {2460017}{2009k:46135}},
  eprint={http://www.maths.tcd.ie/pub/ims/bull61/S6101.pdf},
}

\bib{Renault-Sims-Williams-Yeend:Uniqueness}{article}{
  author={Renault, Jean N.},
  author={Sims, Aidan},
  author={Williams, Dana P.},
  author={Yeend, Trent},
  title={Uniqueness theorems for topological higher-rank graph C*\nobreakdash -algebras},
  status={eprint},
  note={\arxiv {0906.0829v3}},
  date={2012},
}

\bib{Shotwell:Simplicity}{article}{
  author={Shotwell, Jacob},
  title={Simplicity of finitely aligned $k$\nobreakdash -graph $C^*$\nobreakdash -algebras},
  journal={J. Operator Theory},
  volume={67},
  date={2012},
  number={2},
  pages={335--347},
  issn={0379-4024},
  review={\MRref {2928319}{}},
  eprint={http://www.mathjournals.org/jot/2012-067-002/2012-067-002-004.pdf},
}

\bib{Thomsen:KMS_weights}{article}{
  author={Thomsen, Klaus},
  title={KMS weights on groupoid and graph $C^*$\nobreakdash -algebras},
  journal={J. Funct. Anal.},
  volume={266},
  date={2014},
  number={5},
  pages={2959--2988},
  issn={0022-1236},
  review={\MRref {3158715}{}},
  doi={10.1016/j.jfa.2013.10.008},
}

\bib{Tu:BC_moyennable}{article}{
  author={Tu, Jean-Louis},
  title={La conjecture de Baum--Connes pour les feuilletages moyennables},
  journal={\(K\)\nobreakdash -Theory},
  volume={17},
  date={1999},
  number={3},
  pages={215--264},
  issn={0920-3036},
  review={\MRref {1703305}{2000g:19004}},
  doi={10.1023/A:1007744304422},
}

\bib{Wright:Aperiodicity-conditions-topological-graphs}{article}{
  author={Wright, Sarah Elizabeth},
  title={Aperiodicity conditions in topological $k$\nobreakdash -graphs},
  journal={J. Operator Theory},
  volume={72},
  date={2014},
  number={1},
  pages={3--14},
  issn={0379-4024},
  review={\MRref {3246978}{}},
  doi={10.7900/jot.2012aug20.196},
}

\bib{Yeend:Topological-higher-rank-graphs}{article}{
  author={Yeend, Trent},
  title={Topological higher-rank graphs and the $C^*$\nobreakdash -algebras of topological 1\nobreakdash -graphs},
  book={ title={Operator theory, operator algebras, and applications}, series={Contemp. Math.}, volume={414}, publisher={Amer. Math. Soc.}, place={Providence, RI}, date={2006}, },
  pages={231--244},
  review={\MRref {2277214}{2007j:46100}},
  doi={10.1090/conm/414},
}

\bib{Yeend:Groupoid_models}{article}{
  author={Yeend, Trent},
  title={Groupoid models for the $C^*$\nobreakdash -algebras of topological higher-rank graphs},
  journal={J. Operator Theory},
  volume={57},
  date={2007},
  number={1},
  pages={95--120},
  issn={0379-4024},
  review={\MRref {2301938}{2008f:46074}},
  eprint={http://www.theta.ro/jot/archive/2007-057-001/2007-057-001-005.html},
}
  \end{biblist}
\end{bibdiv}
\end{document}